\let\C\undefined
\numberwithin{equation}{section}
\newtheorem{proposition}{Proposition}[section]
\newtheorem{theorem}[proposition]{Theorem}
\newtheorem{lemma}[proposition]{Lemma}
\newtheorem{corollary}[proposition]{Corollary}
\theoremstyle{definition}
\newtheorem{remark}[proposition]{Remark}
\DeclareMathOperator{\id}{id}
\DeclareMathOperator{\dist}{dist}
\DeclareMathOperator{\tr}{tr}
\newcommand{\sm}{\textnormal{sm}}
\newcommand{\lip}{\textnormal{lip}}
\newcommand{\ext}{\textnormal{ext}}
\newcommand{\restr}[1]{\vert_{#1}}
\newcommand{\Deriv}{\mathrm{D}}
\newcommand{\defeq}{\coloneqq}
\newcommand{\Nset}{\mathbb{N}}
\newcommand{\Zset}{\mathbb{Z}}
\newcommand{\Rset}{\mathbb{R}}
\newcommand{\Sset}{\mathbb{S}}
\newcommand{\Tset}{\mathbb{T}}
\newcommand{\Bset}{\mathbb{B}}
\newcommand{\dif}{\,\mathrm{d}}
\newcommand{\compose}{\,\circ\,}
\newcommand{\manifold}[1]{\mathcal{#1}}
\newcommand{\lifting}[1]{\smash{\widetilde{#1}}}
\DeclarePairedDelimiter{\brk}{(}{)}
\DeclarePairedDelimiter{\abs}{\lvert}{\rvert}
\DeclarePairedDelimiter{\norm}{\lVert}{\rVert}
\DeclarePairedDelimiter{\floor}{\lfloor}{\rfloor}
\DeclarePairedDelimiterX{\intvc}[2]{[}{]}{#1,#2}
\DeclarePairedDelimiterX{\intvl}[2]{(}{]}{#1,#2}
\DeclarePairedDelimiterX{\intvr}[2]{[}{)}{#1,#2}
\DeclarePairedDelimiterX{\intvo}[2]{(}{)}{#1,#2}
\newcommand\stSymbol[1][]{%
\nonscript\;#1\vert
\allowbreak
\nonscript\;
\mathopen{}}
\DeclarePairedDelimiterX\set[1]\{\}{%
\renewcommand\st{\stSymbol[\delimsize]}
#1
}
\providecommand{\st}{\stSymbol}
\renewcommand{\PrintDOI}[1]{%
  \href{http://dx.doi.org/#1}{doi:#1}%
}
\title%
[%
Analytical obstructions to the weak approximation of Sobolev mappings%
]%
{%
Analytical obstructions to the weak approximation of Sobolev mappings into manifolds%
}
\author{Antoine Detaille}
\address[A. Detaille]{
Universite Claude Bernard Lyon 1\\ CNRS\\ Centrale Lyon\\ INSA Lyon\\ Université Jean Monnet\\ ICJ UMR5208\\
69622 Villeurbanne\\
France}
\email{antoine.detaille@univ-lyon1.fr }
\author{Jean Van Schaftingen}
\keywords{Weak approximation of Sobolev mappings by smooth maps; relaxed energy; estimate on the degree; Whitehead product}
\subjclass[2020]{58D15 (46E35, 46T10, 58C25)}
\begin{document}

	\begin{abstract}
		For any integer $ p \geq 2 $, we construct a compact Riemannian manifold $ \mathcal{N} $ such that if $ \dim \mathcal{M} > p $, there is a map in the Sobolev space of mappings $ W^{1,p} (\mathcal{M}, \mathcal{N})$ which is not a weak limit of smooth maps into $ \mathcal{N} $
		due to a mechanism of analytical obstruction.
		For $ p = 4n - 1 $, the target manifold can be taken to be the sphere $ \mathbb{S}^{2n} $
		thanks to the construction by Whitehead product of maps with nontrivial Hopf invariant, generalizing the result by Bethuel for $ p = 4n -1 = 3$.
		The results extend to higher order Sobolev spaces $ W^{s,p} $, with $ s \in \mathbb{R} $, $s \geq 1 $, $ sp \in \mathbb{N}$, and $ sp \ge 2 $.
	\end{abstract}

\address[J. Van Schaftingen]{
Universit\'e catholique de Louvain, Institut de Recherche en Math\'ematique et Physique, Chemin du Cyclotron 2 bte L7.01.01, 1348 Louvain-la-Neuve, Belgium}

\email{Jean.VanSchaftingen@UCLouvain.be}

\thanks{This work was initiated during several visits of A. Detaille at the Institut de Recherche en Mathématique et Physique of Université catholique de Louvain, and he warmly thanks them for their hospitality.
	Part of this work was completed during a visit of A. Detaille at CUNEF Universidad (Madrid).
	He is grateful to them for their support and hospitality, and to the Université Claude Bernard Lyon 1 for supporting this stay through a funding \emph{Aide à la mobilité doctorale}.\newline
	\indent J. Van Schaftingen was supported by the Projet de Recherche T.0229.21 ``Singular Harmonic Maps and Asymptotics of Ginzburg--Landau Relaxations'' of the Fonds de la Recherche Scientifique--FNRS}

\maketitle

\setcounter{tocdepth}{1}
\tableofcontents
\setcounter{tocdepth}{5}

\section{Introduction}

\subsection{The weak approximation problem}
Given \(p \in \intvr{1}{\infty}\) and Riemannian manifolds \(\manifold{M}\) and \(\manifold{N}\), the \emph{Sobolev space of mappings} from \( \manifold{M} \) to \( \manifold{N} \) is defined as
\[
 W^{1, p} \brk{\manifold{M}, \manifold{N}}
 \defeq
 \set{u \in W^{1, p} \brk{\manifold{M}, \Rset^\nu} \st  \text{\( u\brk{x} \in \manifold{N} \) for a.e.\ \( x \in \manifold{M} \)}}\text{,}
\]
where \(\manifold{N}\) is isometrically embedded into \(\Rset^\nu\), without loss of generality by the Nash isometric embedding theorem~\cites{Nash54, Nash56}.
The Sobolev space \(W^{1, p} \brk{\manifold{M}, \manifold{N}}\) naturally inherits the distance on \(W^{1, p} \brk{\manifold{M}, \Rset^{\nu}} \), with respect to which it enjoys completeness properties.

Sobolev spaces of mappings are the natural functional analytic framework for partial differential equations and calculus of variations problems arising in many contexts such as harmonic maps \citelist{\cite{Helein_Wood_2008}\cite{Eells_Lemaire_1978}},  the modeling of ordered media in condensed matter physics \citelist{\cite{Mermin_1979}\cite{Bethuel_Chiron_2007}}, the study of Cosserat models in elasticity~\cite{Ericksen_Truesdell_1958}, or the description of attitudes of a square or a cube and meshing of domains for numerical  graphics~\cite{Huang_Tong_Wei_Bao_2011}.

A classical fundamental property of linear Sobolev spaces is the \emph{density of smooth maps} in \(W^{1, p} \brk{\manifold{M}, \Rset^\nu}\)~\cites{DenyLions1954, Meyers_Serrin_1964}.
This question of approximation happens to be much more delicate for spaces of mappings \(W^{1, p} \brk{\manifold{M}, \manifold{N}}\).
Indeed, whereas the definition and the classical theory of linear spaces yield for every \(u \in W^{1, p} \brk{\manifold{M}, \manifold{N}}\) a sequence \(\brk{u_k}_{k \in \Nset}\) in \(C^{\infty} \brk{\manifold{M}, \Rset^\nu}\) that strongly converges to \(u\), there is no reason to hope that the standard linear convolution construction would provide a map satisfying the nonlinear manifold constraint.

The \emph{strong approximation problem} asks whether
\begin{equation}
\label{eq_shee2ee9Ieghooja6wei4bee}
 H^{1, p}_S \brk{\manifold{M}, \manifold{N}} = W^{1, p} \brk{\manifold{M}, \manifold{N}}\text{,}
\end{equation}
where
\(
 H^{1, p}_S \brk{\manifold{M}, \manifold{N}}\)
 is the strong closure of \(C^\infty \brk{\manifold{M}, \manifold{N}}\) in \(W^{1, p} \brk{\manifold{M}, \manifold{N}}\).
When \(p \ge \dim \manifold{M}\),
Sobolev mappings are essentially continuous and the strong approximation property~\eqref{eq_shee2ee9Ieghooja6wei4bee} holds; see~\cites{Schoen_Uhlenbeck_1983, Brezis_Nirenberg_1995}.
When \(p < \dim \manifold{M}\), the answer depends on the  \emph{topology of the manifolds} \( \manifold{M} \) and \( \manifold{N} \).
For instance, when \(\pi_{1} \brk{\manifold{M}} \simeq \dotsb \simeq \pi_{\floor{p - 1}} \brk{\manifold{M}} \simeq \set{0}\),
which includes the sphere \(\manifold{M} = \Sset^m\) with \(m \ge p\), Bethuel~\cite{Bethuel_1991} has proved that the strong approximation property~\eqref{eq_shee2ee9Ieghooja6wei4bee} holds if and only if
\(
 \pi_{\floor{p}} \brk{\manifold{N}} \simeq \set{0}
\), where \(\pi_{\ell} \brk{\manifold{N}}\) denotes the \(\ell^{\text{th}}\) homotopy group of \(\manifold{N}\).
(In the general case,~\eqref{eq_shee2ee9Ieghooja6wei4bee} holds if and only if there is no obstruction to the extension of a continuous map from a \(\floor{p}\)-dimensional triangulation of \(\manifold{M}\) into \(\manifold{N}\) to a continuous map from \(\manifold{M}\) into \(\manifold{N}\)~\cite{Hang_Lin_2003_II}.)

When smooth maps fail to be strongly dense in the space of Sobolev mappings, a natural approach is to investigate approximation with respect to \emph{weaker} but still useful other notions of convergence.
One can thus consider
the \emph{sequential weak closure} of smooth maps 
\( H^{1, p}_W \brk{\manifold{M}, \manifold{N}}
\) as the set of mappings \(u \in W^{1, p} \brk{\manifold{M}, \manifold{N}}\) that are almost everywhere limits of a sequence \(\brk{u_k}_{k \in \Nset}\) in \(C^\infty \brk{\manifold{M}, \manifold{N}}\) that is bounded in \(W^{1, p} \brk{\manifold{M}, \manifold{N}}\).
This set \( H^{1, p}_W \brk{\manifold{M}, \manifold{N}}
\) is quite natural in partial differential equations and calculus of variations, where sequences of approximate solutions or minimizers typically converge weakly rather than strongly.
Moreover, weak convergence may sometimes be sufficient to extend desirable properties from smooth to Sobolev mappings; see~\cite{Mironescu_VanSchaftingen_2021_APDE}*{Theorem~3} for one instance of such phenomenon.

The \emph{weak approximation problem} is to determine whether
\begin{equation}
\label{eq_cahkoh3me3ohBeeQu7iengeb}
 H^{1, p}_W \brk{\manifold{M}, \manifold{N}} = W^{1, p} \brk{\manifold{M}, \manifold{N}}\text{.}
\end{equation}
It follows immediately from the definitions that
\(
  H^{1, p}_S \brk{\manifold{M}, \manifold{N}} \subseteq H^{1, p}_W \brk{\manifold{M}, \manifold{N}}\),
so that the strong approximation property~\eqref{eq_shee2ee9Ieghooja6wei4bee} implies the weak one~\eqref{eq_cahkoh3me3ohBeeQu7iengeb}.

When \(p \not \in \Nset\), Bethuel has proved~\cite{Bethuel_1991}*{Theorem~3} that \(H^{1, p}_S \brk{\manifold{M}, \manifold{N}} = H^{1, p}_W \brk{\manifold{M}, \manifold{N}}\) and that the strong and weak approximation properties~\eqref{eq_shee2ee9Ieghooja6wei4bee} and~\eqref{eq_cahkoh3me3ohBeeQu7iengeb} are hence equivalent.
In the only remaining interesting case where \( p \in \Nset \) and the strong approximation fails,
weak approximation \emph{may still hold}.
If \(\pi_{p} \brk{\manifold{N}} \not \simeq \set{0}\), then
\(H^{1, p}_S \brk{\manifold{M}, \manifold{N}} \subsetneq H^{1, p}_W \brk{\manifold{M}, \manifold{N}}\)~\citelist{\cite{Bethuel_1991}*{Theorem~5}\cite{Hang_Lin_2003_III}*{Theorem~5.5}}.
In a first instance, Bethuel has proved in~\cite{Bethuel_1990} that
\[
	H^{1,2}_{S}\brk{\Bset^{3}, \Sset^{2}} \subsetneq H^{1,2}_{W}\brk{\Bset^{3}, \Sset^{2}} = W^{1,2}\brk{\Bset^{3}, \Sset^{2}}\text{.}
\]
More generally, Bethuel~\cite{Bethuel_1991}*{Theorem~6}, and finally Hajłasz~\cite{Hajlasz_1994} have proved that if \(p \in \Nset\) and if
\(
\pi_1 \brk{\manifold{N}} \simeq \dotsb \simeq \pi_{p - 1} \brk{\manifold{N}}\simeq \set{0}\),
then the weak approximation property~\eqref{eq_cahkoh3me3ohBeeQu7iengeb} holds. 
In particular,~\eqref{eq_cahkoh3me3ohBeeQu7iengeb} always holds when \(p = 1\).
(By lack of reflexivity for \(p = 1\), the space \(H^{1,1}_W \brk{\manifold{M}, \manifold{N}}\) need not be the sequential closure of \(C^\infty \brk{\manifold{M}, \manifold{N}}\) with respect to the weak topology of \(W^{1,1} \brk{\manifold{M}, \Rset^\nu}\) induced by its dual space which Hang has shown to be \(W^{1,1} \brk{\manifold{M}, \manifold{N}}\)~\cite{Hang_2002}.)

Further relevant research on this question
includes the work of Pakzad and Rivière~\cite{pakzad_riviere_2003} who showed that, when \( \manifold{M} \) is simply connected and \(p = 2\),~\eqref{eq_cahkoh3me3ohBeeQu7iengeb} holds for more general target manifolds \( \manifold{N} \) than those covered by Haj\l asz's result, including some non-simply connected \( \manifold{N} \), which are not handled by~\cite{Hajlasz_1994};
the adaptation of the methodology to give a new proof of~\eqref{eq_cahkoh3me3ohBeeQu7iengeb} when \(p = 1\)~\cite{pakzad_2003};
the development of the concept of \emph{scan}, in order to provide a new tool for understanding strong and weak approximation problems, in the case of \( W^{1,3}\brk{\manifold{M}, \Sset^{2}} \) and in the more general case  of a topological obstruction induced by any nontorsion part of \( \pi_{p}\brk{\manifold{N}} \)~\citelist{\cite{Hardt_Riviere_2003}\cite{Hardt_Riviere_2008}}.

\subsection{Analytical obstructions to the weak approximation}

All these partial results suggested that no local obstruction might arise for weak density when \(p \in \Nset\), that is,
when \(\manifold{M}\) is topologically sufficiently simple, then \( H^{1,p}_{W}\brk{\manifold{M}, \manifold{N}} = W^{1,p}\brk{\manifold{M}, \manifold{N}} \) regardless of \( \manifold{N} \) \citelist{\cite{Hang_Lin_2003_II}*{Conjecture 7.1}\cite{Bethuel_1991}}.
However, in a groundbreaking contribution, Bethuel \cite{Bethuel_2020} has shown the presence of \emph{analytical} obstructions to the weak approximation problem: if \(\dim \manifold{M} \ge 4\), then
\begin{equation}
\label{eq_iehon4gaitho3temeihaeFeM}
  H^{1, 3}_W \brk{\manifold{M}, \Sset^2}
  \subsetneq W^{1, 3} \brk{\manifold{M}, \Sset^2}\text{.}
\end{equation}
Whereas the role of analytical obstructions had already been well-understood for the lifting~\citelist{\cite{Bourgain_Brezis_Mironescu_2000}\cite{Bethuel_Chiron_2007}\cite{Mironescu_VanSchaftingen_2021_APDE}} and for the extension of traces~\citelist{\cite{Bethuel_Demengel_1995}\cite{Bethuel_2014}\cite{Mironescu_VanSchaftingen_2021_AFST}} for Sobolev mappings, Bethuel's result was the first and only known instance of analytical obstruction for the weak approximation problem for compact manifolds --- noncompact manifolds can exhibit analytical obstructions provoked by the geometry of their ends \cite{Bousquet_Ponce_VanSchaftingen_2018}.

The main result of the present work is that analytic obstructions to the weak approximation actually occur for \emph{every} \(p \in \Nset \setminus \set{0, 1}\).

\begin{theorem}
\label{theorem_main}
For every \(p \in \Nset \setminus \set{0, 1}\), there exists a compact manifold \(\manifold{N}\)
such that if \(\dim \manifold{M} > p\), then 
\[
 H^{1, p}_W \brk{\manifold{M}, \manifold{N}}
  \subsetneq W^{1, p} \brk{\manifold{M}, \manifold{N}}\text{.}
\]
\end{theorem}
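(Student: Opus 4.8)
The plan is to produce, for each \(p \in \Nset \setminus \set{0, 1}\), a compact manifold \(\manifold{N}\) together with a single map \(u \in W^{1, p} \brk{\manifold{M}, \manifold{N}}\) that is not almost everywhere the limit of a sequence of smooth maps \(\manifold{M} \to \manifold{N}\) bounded in \(W^{1, p}\); equivalently, whose \emph{relaxed energy}, the infimum of \(\liminf_{k \to \infty} \int_{\manifold{M}} \abs{\Deriv u_k}^p\) over all sequences \(\brk{u_k}_{k \in \Nset}\) in \(C^\infty \brk{\manifold{M}, \manifold{N}}\) converging to \(u\) almost everywhere, is infinite. A routine slicing argument --- Fubini along the fibres of a local submersion, combined with Fatou's lemma --- shows that it suffices to construct such a map when \(\dim \manifold{M} = p + 1\) (the borderline case, since \(p \in \Nset\) makes the condition \(\dim \manifold{M} > p\) equivalent to \(\dim \manifold{M} \ge p + 1\)), and then, by localisation, when \(\manifold{M}\) is a ball \(\Bset^{p + 1}\); on a higher-dimensional \(\manifold{M}\) the obstructed map is obtained by composing with a coordinate submersion onto \(\Bset^{p + 1}\).

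The target \(\manifold{N}\) is built so as to carry, through a \emph{Whitehead product}, the nonabelian topology needed to defeat the previous density results. Choose integers \(a, b \ge 2\) with \(a + b = p + 1\) (possible for \(p \ge 3\); for \(p = 2\) see below), and let \(\manifold{N}\) be a closed smooth manifold that coincides with the wedge \(\Sset^a \vee \Sset^b\) up to dimension \(p + 1\) --- concretely, the boundary of a regular neighbourhood of an embedded \(\Sset^a \vee \Sset^b\) in a Euclidean space of large dimension, so that \(\pi_\ell \brk{\manifold{N}} \simeq \pi_\ell \brk{\Sset^a \vee \Sset^b}\) for \(\ell \le p\). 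The Whitehead product \(\omega \defeq [\iota_a, \iota_b] \in \pi_p \brk{\manifold{N}}\) of the two factor classes is then nonzero. When \(p = 4n - 1\), one may instead fold \(\Sset^{2n} \vee \Sset^{2n}\) onto \(\manifold{N} = \Sset^{2n}\), under which \(\omega\) becomes the Whitehead square \([\iota_{2n}, \iota_{2n}]\), of Hopf invariant \(\pm 2 \neq 0\) since \(2n\) is even; for \(n = 1\) this recovers Bethuel's target \(\Sset^2\). When \(p = 2\), one takes \(\manifold{N}\) to be the mapping torus of the antipodal map of \(\Sset^2\), whose \(\pi_1\) acts by \(-1\) on \(\pi_2\), so that \([\iota_1, \iota_2] = -2 \iota_2 \neq 0\) plays the same role. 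Pulling back normalised volume forms \(\omega_a\) and \(\omega_b\) from the two factors, we attach to a smooth map \(v \colon \manifold{M} \to \manifold{N}\) the \emph{linking invariant}
\[
 \mathcal{L} \brk{v} \defeq \int_{\manifold{M}} v^\# \omega_a \wedge \xi_b \text{,}
\]
where \(\xi_b\) is a primitive of the closed form \(v^\# \omega_b\); this generalised Hopf-type quantity detects \(\omega\), and it is biadditive with respect to disjoint superposition of maps.

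The obstructed map \(u\) is then assembled by superposition of shrinking \emph{Whitehead bubbles}. On pairwise disjoint balls \(B \brk{x_i, 2 r_i} \subset \Bset^{p + 1}\), with \(r_i \to 0\) and the \(x_i\) accumulating at one interior point, one puts on each shell \(B \brk{x_i, 2 r_i} \setminus B \brk{x_i, r_i}\) a radially constant copy of a smooth map homotopic to \(N_i \omega\), with \(N_i \to \infty\); such a representative is obtained by composing a fixed representative of \(\omega\) with high-degree maps on the factor spheres, and the bilinearity of the Whitehead product makes the \(W^{1, p}\)-energy of this bubble grow \emph{sublinearly} in its charge \(N_i\), say like \(N_i^\beta\) with \(\beta < 1\). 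Filling \(B \brk{x_i, r_i}\) with a fixed smooth map makes \(u\) continuous off \(\set{x_i}\), and the radial scaling gives \(\int_{B \brk{x_i, 2 r_i}} \abs{\Deriv u}^p \sim r_i N_i^\beta\); choosing, for instance, \(N_i = i\) and \(r_i\) decaying fast enough yields \(\sum_i r_i < \infty\) and \(\sum_i \int_{B \brk{x_i, 2r_i}} \abs{\Deriv u}^p < \infty\), hence \(u \in W^{1, p} \brk{\Bset^{p + 1}, \manifold{N}}\). Arranging all the bubbles to be positively linked with one common core makes the improper integral \(\mathcal{L} \brk{u} = \sum_{i, j} (\cdots) = + \infty\). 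That this is compatible with \(u \in W^{1, p}\) reflects the \emph{borderline} nature of the problem: in dimension \(p + 1\) the integrand \(v^\# \omega_a \wedge \xi_b\) lies only in \(L^s\) with \(\tfrac{1}{s} = \tfrac{p^2 + p + 1}{p^2 + p} > 1\) under the sole \(W^{1, p}\)-bound, so it just fails to be integrable --- by precisely the logarithmic-type margin that allows infinitely many finite-energy bubbles to carry an infinite total linking.

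Finally, one proves the quantitative \emph{estimate on the degree}: there exist \(\theta > 0\) and \(C > 0\) such that every smooth \(v \colon \Bset^{p + 1} \to \manifold{N}\) satisfies \(\abs{\mathcal{L} \brk{v}} \le C \brk{1 + \int_{\Bset^{p+1}} \abs{\Deriv v}^p}^\theta\) and, through a stability argument, every \(v\) with finite relaxed energy has a well-defined finite linking invariant with \(\abs{\mathcal{L} \brk{v}} \le C \brk{1 + E^{\mathrm{rel}}_p \brk{v}}^\theta\). Since \(\mathcal{L} \brk{u} = + \infty\), this forces \(E^{\mathrm{rel}}_p \brk{u} = + \infty\), that is, \(u \notin H^{1, p}_W \brk{\Bset^{p + 1}, \manifold{N}}\); by the reductions of the first step, \(H^{1, p}_W \brk{\manifold{M}, \manifold{N}} \subsetneq W^{1, p} \brk{\manifold{M}, \manifold{N}}\) whenever \(\dim \manifold{M} > p\). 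The main obstacle is this estimate on the degree. For a fixed smooth map, \(\abs{\mathcal{L}}\) is controlled by the energy through a direct Sobolev--Gagliardo--Nirenberg inequality; the difficulty is to bound \(\mathcal{L}\) \emph{uniformly} along arbitrary approximating sequences --- whose bubbles may concentrate simultaneously at every scale --- and to extract from this a single power-law lower bound for the relaxed energy. This calls for a multi-scale analysis: a dyadic decomposition of \(\manifold{M}\), a good-sphere selection with energy and homotopy estimates on the traces in \(W^{1, p} \brk{\Sset^p} \hookrightarrow C^0\), the sharp isoperimetric cost of realising a Whitehead class on an annulus, and summation over scales. A secondary difficulty, on the topological side, is to verify for the general, non-spherical target that \(\mathcal{L}\) is well defined, detects \(\omega\), and is biadditive, and to handle the exceptional case \(p = 2\), where the obstruction stems from \(\pi_1\) acting on \(\pi_2\) rather than from a higher homotopy group.
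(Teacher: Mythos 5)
Your overall architecture differs from the paper's (which never uses a single map of infinite relaxed energy, nor Whitehead products for this theorem: it takes for \(\manifold{N}\) a thickening of the \(p\)-skeleton of \(\Tset^{p+1}\), proves that the relaxed energy of a periodic grid map on \(Q_\ell\) grows like \(\ell^{p+1}\ln\ell\) against an energy \(\ell^{p+1}\), and concludes by the nonlinear uniform boundedness principle), and the place where your proposal genuinely breaks down is exactly the step you defer: the claim that every map with finite relaxed energy carries a well-defined linking invariant with \(\abs{\mathcal{L}} \le C\brk{1+E^{\mathrm{rel}}}^{\theta}\). This is not a ``stability argument'': Jacobian/linking quantities are precisely the ones that are \emph{not} preserved under weak convergence, because bubbles can carry the invariant away at a cost that your own sublinearity makes small. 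Concretely, your architecture of isolated singularities \(x_i\) with diverging charges \(N_i\) cannot produce an obstruction: on a.e.\ sphere \(\partial B_{\rho}\brk{x_i}\), \(\rho\in\brk{r_i}{2r_i}\), a smooth competitor close to \(u\) must only pay the minimal sphere energy of the class \(N_i\omega\), of order \(N_i^{p/(p+1)}\), so after integration in \(\rho\) its cost in \(B\brk{x_i,2r_i}\) is \(\gtrsim r_i N_i^{p/(p+1)}\) --- the same order as the energy of \(u\) itself there. Summing over \(i\) gives a finite total, so bounded-energy approximating sequences are not excluded; in fact, by capping each singularity at a tiny scale (the standard dipole-type construction, whose cost scales linearly with the capping radius), one expects your \(u\) to belong to \(H^{1,p}_W\). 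The known analytical obstructions gain only a \emph{logarithmic} factor over the Sobolev energy, and that logarithm comes from unit charges spread over a full \((p+1)\)-dimensional grid together with an induction over scales (the comparison of \(Q_{5\ell}\) with \(Q_\ell\), localized by the retractions \(\Theta_{\ell,\alpha}\) and the conical degree estimates --- a branched-transport effect); a power-law bound on a single invariant of the limit map, even if true, would not reproduce it, and concentrating diverging charges at accumulating points gives nothing. Note also that as written \(\mathcal{L}\brk{v}=\int_{\manifold{M}} v^{\#}\omega_a\wedge\xi_b\) is a \(p\)-form integrated over a \((p+1)\)-dimensional domain, so your invariant is not even defined on \(\Bset^{p+1}\) without the very slicing/singular-set theory whose quantitative control is the heart of the problem.

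Two secondary points. For \(p=2\), your proposed target (the mapping torus of the antipodal map of \(\Sset^2\)) has \(\pi_2\simeq\Zset\), and the minimal \(W^{1,2}\)-energy of a degree-\(d\) map into an \(\Sset^2\)-fiber is \(8\pi\abs{d}\), i.e.\ \emph{linear} in the charge; the sublinear cost of realizing large charges, which drives every known analytical obstruction (and your own heuristic \(N^{\beta}\), \(\beta<1\)), is therefore absent, so this case of your construction has no mechanism at all. The paper's target instead has \(\pi_p\brk{\manifold{N}}\) infinitely generated over \(\Zset\) though finitely generated over \(\pi_1\brk{\manifold{N}}\), which is what permits total charge \(\ell^{p+1}\) at energy \(\ell^{p}\). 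Finally, even granting your target and invariant, a complete proof would still need the analogues of the paper's bubbling proposition, the localized (conical) degree estimates, the passage from a noncompact or non-manifold model to a compact manifold (lifting/covering arguments, thickening), and the reduction from general \(\manifold{M}\) --- none of which are routine and all of which occupy the bulk of the paper.
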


In particular, Theorem~\ref{theorem_main} is the \emph{first} instance of the failure of the weak approximation property when \( \manifold{M} \) is a ball for \( p \neq 3 \).

The manifold \(\manifold{N}\) is defined explicitly, depending on \(p\); it retracts onto the \(p\)-dimensional skeleton of a \(\brk{p + 1}\)-dimensional torus.
Moreover, one can show that \( \manifold{N} \) can be obtained as
\[
	\manifold{N} \simeq \brk{\Tset^{p+1} \setminus \Bset^{p+1}} \times \Sset^{m - 1} \cup_{\partial} \Sset^{p} \times \Bset^m\text{,}
\]
where \( m \in \Nset_{\ast} \) is a parameter involved in the construction.
There is a trade-off in the choice of the parameter \( m \).
On the one hand, one can choose it to be minimal, that is, take \( m = 1 \), in order to obtain a target of dimension as small as possible.
In this case, it can be observed that \( \manifold{N} \) is actually a connected sum of two tori:
\[
	\manifold{N} \simeq \Tset^{p+1}\,\#\, \Tset^{p+1}\text{.}
\]
On the other hand, one can try to choose \( m \) in order to have as much information as possible on the topology of the resulting target \( \manifold{N} \).
In this case, this leads to taking \( m \) large.
For instance, choosing \( m > p \), one can ensure that \(\pi_2 \brk{\manifold{N}} \simeq \dotsb \simeq \pi_{p - 1} \brk{\manifold{N}} \simeq \set{0}\) while \(\pi_{1} \brk{\manifold{N}}\) is abelian and \(\pi_{p} \brk{\manifold{N}}\) is finitely generated as a \(\pi_1 \brk{\manifold{N}}\)-module but not as a \(\Zset\)-module.
The precise description of the target \( \manifold{N} \) that we construct, as well as the proof of its topological properties, will be given in Proposition~\ref{proposition_manifold} and Remark~\ref{rmk:target_form}.

Let us explain how Theorem~\ref{theorem_main} fits into the existing weak approximation results.
So far, there are three strategies to prove the weak density.
The first one is to deduce it from the strong appproximation; this works when \(\pi_{p} \brk{\manifold{N}} \simeq \set{0}\), which will not be the case in the setting of Theorem~\ref{theorem_main}.
A second strategy relies on a controlled almost retraction \cite{Hajlasz_1994} (see also \cite{Bethuel_Zheng_1988}); for topological reasons, the restriction \(\pi_{1} \brk{\manifold{N}} \simeq  \dotsb \simeq \pi_{p - 1} \brk{\manifold{N}}\simeq \set{0}\) is essential.
The last class of methods is based first on the construction and analysis of a topological singular set and of minimal connections, and second on the elimination of the topological singularities via a dipole construction; see e.g.~\citelist{\cite{BrezisCoronLieb1986}\cite{AlmgrenBrowderLieb1988}\cite{Bethuel_1990}\cite{AlbertiBaldoOrlandi2003}\cite{Canevari_Orlandi_2019}} for mappings into the sphere, and~\cite{pakzad_riviere_2003} for more general targets.
Although there is no evidence that the assumption \(\pi_{1} \brk{\manifold{N}} \simeq \dotsb \simeq \pi_{p - 1} \brk{\manifold{N}} \simeq \set{0}\) should be essential, it simplifies the setting considerably, since one then only has to consider the charges in \(\pi_{p} \brk{\manifold{N}}\) without having to take into account their interplay with lower-dimensional phenomena.
In a situation like ours, where the only nontrivial lower homotopy group is \(\pi_1 \brk{\manifold{N}}\), one could naturally try to use the universal covering \(\lifting{\manifold{N}}\)
for which \(\pi_1 \brk{\lifting{\manifold{N}}} \simeq \dotsb \simeq \pi_{p - 1} \brk{\lifting{\manifold{N}}} \simeq \set{0}\).
Although from the point of view of homotopy theory,
the \(\brk{p-1}\)-dimensional skeleton of \(\lifting{\manifold{N}}\) is homotopically equivalent to a point, there is no reason to believe that there is in general a reasonable quantitative control on the resulting homotopy that could be used for analytical constructions.
Actually, in our situation, the non-simple connectedness of \(\manifold{N}\) can be exploited to thwart the such weak approximation scheme.
More details about the phenomenon at work here will be given after Proposition~\ref{proposition_topological_properties_skeleton}, where the precise topological properties of the target manifold we construct are proved.

Even though~\cref{theorem_main} covers all the integer exponents \( p \ge 2 \), the resulting manifold \(\manifold{N}\) is not as simple as the sphere \(\Sset^2\) in Bethuel’s result \eqref{eq_iehon4gaitho3temeihaeFeM}.
Using a variant of our construction, we also recover Bethuel's counterexample, and show that it is actually part of an infinite family.

\begin{theorem}
	\label{theorem_counterexample_hopf}
	For every \(n \in \Nset \setminus \set{0}\), if \(\dim \manifold{M} > 4 n-1\), then 
	\[
	H^{1, 4n - 1}_W \brk{\manifold{M}, \Sset^{2n}}
	\subsetneq W^{1, 4n-1} \brk{\manifold{M}, \Sset^{2n}}\text{.}
	\]
\end{theorem}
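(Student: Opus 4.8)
The plan is to carry out the construction and obstruction argument underlying \cref{theorem_main} with the target $\manifold{N}$ replaced by the even-dimensional sphere $\Sset^{2n}$, exploiting the fact that the only homotopical feature on which that scheme depends is already present in $\Sset^{2n}$ when $p = 4n-1$: the Whitehead square $[\iota_{2n},\iota_{2n}] \in \pi_{4n-1}(\Sset^{2n})$ is a nontorsion element, since its Hopf invariant equals $\pm 2 \neq 0$. It plays here the role that the attaching class of the top cell of the $(p+1)$-torus played in \cref{theorem_main}. The point of using the Whitehead square is that it is available for \emph{every} $n$, whereas an element of $\pi_{4n-1}(\Sset^{2n})$ of Hopf invariant $1$ exists only for $n \in \set{1,2,4}$ by Adams's theorem; for $n = 1$ this recovers Bethuel's counterexample from \cite{Bethuel_2020}.

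First I would fix an isometric embedding of a small ball $\Bset^{4n}$ into $\manifold{M}$, which is possible since $\dim \manifold{M} \geq 4n$, build the singular map on that ball, and extend it by a constant. On $\Bset^{4n}$ one places a sequence of point singularities $x_j$ converging to an interior point $x_\infty$, the $j$-th one being a rescaled copy of the standard map $x \mapsto [\iota_{2n},\iota_{2n}]\bigl((x - x_j)/\abs{x - x_j}\bigr)$, supported in a ball $B(x_j, r_j)$, possibly arranged in a cluster in the spirit of the dipole clusters used for \cref{theorem_main}. Such a singularity contributes to $\int_{\Bset^{4n}} \abs{\Deriv u}^{4n-1}$ an amount comparable to $r_j$; choosing the radii with $\sum_j r_j < \infty$ therefore produces a map $u \in W^{1,4n-1}(\manifold{M}, \Sset^{2n})$. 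Accumulating the singularities at an \emph{interior} point is what creates the asymmetry that drives the whole argument: the energy required to \emph{create} the $j$-th singularity scales with $r_j$, but the energy required of a smooth map to \emph{remove} it does not, since the class $[\iota_{2n},\iota_{2n}]$ it carries must then be transported to the boundary of $\Bset^{4n}$, or cancelled against another singularity, over a distance bounded below independently of $j$.

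The core of the proof is the lower bound showing that $u \notin H^{1,4n-1}_W(\manifold{M},\Sset^{2n})$. Suppose $u_k \to u$ almost everywhere with each $u_k$ smooth and $\sup_k \int_{\manifold{M}} \abs{\Deriv u_k}^{4n-1} < \infty$. Two analytical inputs enter. The first is the \emph{Hopf invariant estimate}: for every smooth $w \colon \Sset^{4n-1} \to \Sset^{2n}$,
\[
 \abs{H(w)} \leq C \Bigl( \int_{\Sset^{4n-1}} \abs{\Deriv w}^{4n-1} \Bigr)^{\frac{4n}{4n-1}},
\]
which follows by writing $H(w) = \int_{\Sset^{4n-1}} \beta \wedge w^{*} \omega$, where $\omega$ is the normalised volume form of $\Sset^{2n}$ and $\Deriv \beta = w^{*}\omega$, and then combining the pointwise bound $\norm{w^{*}\omega}_{L^{(4n-1)/2n}} \leq C \norm{\Deriv w}_{L^{4n-1}}^{2n}$ with the Sobolev inequality $\norm{\beta}_{L^{(4n-1)/(2n-1)}} \leq C \norm{w^{*}\omega}_{L^{(4n-1)/2n}}$ on $\Sset^{4n-1}$ and Hölder's inequality. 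The second is a slicing argument over the spheres $\Sset^{4n-1}(x_j, \rho)$, $0 < \rho < r_j$, which detects the charge $[\iota_{2n},\iota_{2n}]$ of $u$ near $x_j$ on the approximants $u_k$: a quantitative stability argument based on the Hopf invariant estimate shows that if $u_k$ spent less than a fixed amount $c_0 > 0$ of energy in a neighbourhood of $x_j$ then the Hopf invariant of $u_k$ on a small sphere would be stably equal to that of $u$ there, namely $\pm 2$, contradicting that $u_k$ is smooth and hence null on that sphere. The constant $c_0$ does not depend on $r_j$, so summing these contributions over the infinitely many $j$ gives $\liminf_k \int_{\manifold{M}} \abs{\Deriv u_k}^{4n-1} = +\infty$, contradicting the assumed bound.

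The main obstacle is this slicing step: one has to make the assertion ``removing a Whitehead-square charge near $x_j$ costs at least $c_0$'' both quantitative and uniform in the scale $r_j$, and one has to exclude the intermediate strategies available to the approximants --- resolving several copies of $[\iota_{2n},\iota_{2n}]$ simultaneously by mutual cancellation, or spreading the resolution over a wide range of dyadic scales --- which is exactly where the Whitehead-product structure of the charge, its controlled realisation by explicit maps, and the Hopf invariant estimate (together with matching upper bounds for creating the charge) must be combined carefully. Here I would follow, step by step, the obstruction machinery developed for \cref{theorem_main}: it is enough to check that the topological input it requires of the target --- a nontorsion class in $\pi_p$ realised by geometrically controlled maps, together with the vanishing of the intermediate homotopy groups --- is satisfied by $\Sset^{2n}$ at $p = 4n-1$ once $[\iota_{2n},\iota_{2n}]$ is substituted for the torus top-cell class. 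The only genuinely new point with respect to \cite{Bethuel_2020} is then homotopy-theoretic: producing, via iterated Whitehead products, maps into $\Sset^{2n}$ of nontrivial Hopf invariant with the quantitative control that the analytic scheme needs, for every $n$ and not just for the $n$ for which a Hopf map is available.
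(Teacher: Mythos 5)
You have correctly identified the paper's homotopical input --- the Whitehead square $[\iota_{2n},\iota_{2n}]$ of Hopf invariant $2$, available for every $n$ in contrast with Adams's theorem --- and the Hopf invariant estimate of Rivi\`ere. But the analytic core of your argument has a genuine gap: the claimed removal cost $c_0>0$ per singularity, uniform in $r_j$, is not available. What the estimate $\abs{\deg_H \brk{w}}^{1-\frac{1}{4n}}\lesssim\int_{\Sset^{4n-1}}\abs{\Deriv w}^{4n-1}$ forces on a smooth approximant $u_k$ is an energy of order $1$ on almost every \emph{sphere} $\partial B(x_j,\rho)$, $0<\rho<r_j$, where the Hopf degree of $u_k$ (zero) differs from that of $u$ (two); integrating in $\rho$, the contribution to the bulk energy $\int\abs{\Deriv u_k}^{4n-1}$ inside $B(x_j,r_j)$ is only of order $r_j$ --- the same order as the energy $u$ itself spends there --- and the sum over $j$ is finite, not $+\infty$. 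Your sentence ``the constant $c_0$ does not depend on $r_j$'' conflates the $(4n-1)$-dimensional energy of a single slice with the $4n$-dimensional energy. Outside the balls, charges of equal sign can moreover be merged and transported in bulk: since the exponent $1-\frac{1}{4n}$ is sublinear, evacuating a total charge $Q$ over a unit distance costs only of order $Q^{1-\frac{1}{4n}}$, so the per-charge cost tends to zero. For charge accumulating at a single interior point, the resulting lower bound on $\liminf_k\int\abs{\Deriv u_k}^{4n-1}$ is comparable to the energy of $u$ itself, and no obstruction follows. (The construction also needs repair as stated: a map constant outside $B(x_j,r_j)$ and continuous on $\Bar{B}(x_j,r_j)\setminus\set{x_j}$ restricts to a null-homotopic map on every small sphere around $x_j$, since the punctured ball furnishes a homotopy to the constant boundary value, so such a ``supported'' singularity carries no charge at all; the charge must be connected through the rest of the domain, and that connection energy is absent from the bookkeeping $\sum_j r_j$.)

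This is why the paper's mechanism is genuinely different and cannot be obtained by running the machinery of \cref{theorem_main} around an accumulation point: the obstruction is not a fixed cost per singularity but a \emph{logarithmic excess} for charge distributed over a large volume. The paper takes the $\Zset^{4n}$-periodic Whitehead-product map of \cref{proposition_whitehead_periodic_link}, with one Hopf-charge-$2$ singularity per unit cube, whose energy on $Q_\ell=\intvc{0}{\ell}^{4n}$ is of order $\ell^{4n}$; \cref{proposition_grid_branched_transport} converts the degree gap between $u$ and its smooth approximants into face degrees $d_\sigma$ on the cubical grid with $\sum_\sigma\abs{d_\sigma}^{1-\frac{1}{4n}}$ controlled by the energy (via a cylinder construction and Rivi\`ere's estimate), and, because $1-\frac{1}{4n}$ is the critical irrigation exponent for the Lebesgue measure in dimension $4n$, the branched-transport lower bound gives $\liminf_k\int\abs{\Deriv u_k}^{4n-1}\gtrsim\ell^{4n}\ln\ell$. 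It is this superlinear growth of the relaxed energy relative to the Sobolev energy, fed into the nonlinear uniform boundedness principle, that yields a map in $W^{1,4n-1}\brk{\manifold{M},\Sset^{2n}}\setminus H^{1,4n-1}_W\brk{\manifold{M},\Sset^{2n}}$ whenever $\dim\manifold{M}\ge 4n$; the counterexample is not exhibited as an explicit map with singularities accumulating at a point, and a single accumulation-point configuration cannot produce the required energy gap.
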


\subsection{Analytical vs.\ topological obstructions}
The obstructions in Theorems~\ref{theorem_main} and~\ref{theorem_counterexample_hopf} are called \emph{analytical}, in opposition with the other already known \emph{topological} obstructions. This states that, if \(p \in \Nset\) and if  density~\eqref{eq_cahkoh3me3ohBeeQu7iengeb} holds, then one has \cite{Hang_Lin_2003_II}
\begin{equation}
	\label{eq_ceig4JicohCe1eeda3xeiw1i}
	\tr_{\manifold{M}^{p-1}} \brk{C \brk{\manifold{M}, \manifold{N}}}
	= \tr_{\manifold{M}^{p-1}}  \brk{C \brk{\manifold{M}^{p}, \manifold{N}}}\text{.}
\end{equation}
In other words, if all Sobolev mappings are weak limits of smooth mappings,
then the restriction of a continuous map from the \(p\)-dimensional component \(\manifold{M}^{p}\) of a triangulation of \(\manifold{M}\) to its \(\brk{p-1}\)-dimensional component \(\manifold{M}^{p - 1}\) is the restriction of a continuous map defined on the entire manifold.
If \( \manifold{M} \) is a ball, a cube, a sphere, or more generally if \(\pi_{1} \brk{\manifold{M}} \simeq \dotsb \simeq \pi_{p - 1} \brk{\manifold{M}} \simeq \set{0}\), then the condition~\eqref{eq_ceig4JicohCe1eeda3xeiw1i} is satisfied independently of \(\manifold{N}\).

The difference between topological and analytical obstructions can be formalized by considering the strong closure
 \(\smash{\overline{H^{1, p}_W \brk{\manifold{M}, \manifold{N}}}}\) of \(H^{1, p}_W \brk{\manifold{M}, \manifold{N}}\) in \(W^{1, p} \brk{\manifold{M}, \manifold{N}}\).
This set can be characterised as the set of mappings \(W^{1, p} \brk{\manifold{M}, \manifold{N}}\) that are connected by a path to \(C^\infty \brk{\manifold{M}, \manifold{N}}\)~\citelist{\cite{Hang_Lin_2003_III}*{Theorem~5.5}}, which is weakly sequentially closed and open~\cite{Hang_Lin_2003_II}*{Proposition~4.1} (see also \cite{White_1988}).
The topological obstruction can be refined~\citelist{\cite{Bethuel_1991}*{Theorem~5 and its proof}\cite{Hang_Lin_2003_III}*{Theorem~5.5}}  into the equivalence between~\eqref{eq_ceig4JicohCe1eeda3xeiw1i}
and equality in the second inclusion in
\begin{equation}
\label{eq_baWooMah0OoJ1yu8laingoon}
H^{1, p}_W \brk{\manifold{M}, \manifold{N}}
\subseteq
\smash{\overline{H^{1, p}_W \brk{\manifold{M}, \manifold{N}}}}
\subseteq W^{1, p} \brk{\manifold{M}, \manifold{N}}\text{.}
\end{equation}
The analytical obstructions in Bethuel's work and ours exhibit a failure of equality in the first inclusion in~\eqref{eq_baWooMah0OoJ1yu8laingoon}; the proofs even show that \(H^{1, p}_W \brk{\manifold{M}, \manifold{N}}\) is meager relatively to \(\smash{\overline{H^{1, p}_W \brk{\manifold{M}, \manifold{N}}}}\) with respect to the strong topology (see~\cite{Monteil_VanSchaftingen_2019}*{Theorem~3.3}).

\subsection{About the proof}

For a given integer \( p \geq 2 \), the basic construction in our proof of~\cref{theorem_main}
defines \(
  u \colon \Rset^{p + 1} \setminus \Sigma
  \to \lifting{\manifold{N}}_{0}
\)
as the (singular) retraction of \(\Rset^{p + 1} \setminus \Sigma\) to \(\lifting{\manifold{N}}_{0} \), where \(\lifting{\manifold{N}}_{0} \) is the \(p\)-dimensional component of the decomposition of \(\Rset^{p + 1}\) into cubes with vertices in \(\Zset^{p + 1}\) and \(\Sigma \defeq \brk{\Zset +1/2}^{p + 1}\) is the corresponding \(0\)-dimensional dual complex.
More specifically, \( u \) is defined on every cube centered at the point \(\sigma \in \Sigma\) as \(u \brk{x}  \defeq \sigma + \brk{x - \sigma}/\brk{2 \abs{x - \sigma}_{\infty}}\).

For every \(\ell \in \Nset\), we have by periodicity of \( u \)
\[
  \int_{\intvc{0}{\ell}^{p + 1}} \abs{\Deriv u}^p =
  \ell^{p + 1} \int_{\intvc{0}{1}^{p + 1}} \abs{\Deriv u}^p\text{.}
\]
The relaxed energy of a Sobolev map \(w\) on a domain \( \manifold{M} \) is defined as
\begin{multline}
\label{eq_oochohL1iezal0eephoo5phe}
 \mathcal{E}^{1, p}_{\mathrm{rel}} \brk{w, \manifold{M}}\\
 \defeq
 \inf \set[\bigg]{\liminf_{k \to \infty} \mathcal{E}^{1,p}\brk{u_{k}, \manifold{M}} \st u_{k} \to w \text{ a.e.} \text{ and }   u_{k} \in W^{1,p}\brk{\manifold{M}, \lifting{\manifold{N}}_{0}} \cap C\brk{\manifold{M}, \lifting{\manifold{N}}_{0}}}\text{,}
\end{multline}
where the Sobolev energy is defined by
\[
	\mathcal{E}^{1,p}\brk{w, \manifold{M}} \defeq \int_{\manifold{M}} \abs{\Deriv w}^{p}\text{.}
\]
Although our definition of the relaxed energy is not identical to the
usual definition where the infimum is taken over sequences of smooth maps, they are nonetheless equivalent,  as any continuous Sobolev mapping may always be \emph{strongly} approximated by smooth maps by a classical regularization and reprojection argument, and the equivalence follows then from a diagonal argument, since the convergence in measure is metrizable.

If we can show that the relaxed energy \( \mathcal{E}^{1, p}_{\mathrm{rel}} \brk{u, \intvc{0}{\ell}^{p + 1}} \) of \( u \) on \( \intvc{0}{\ell}^{p + 1} \) grows faster than the Sobolev energy \( \mathcal{E}^{1,p}\brk{u, \intvc{0}{\ell}^{p + 1}} \) as \( \ell \to \infty \),
the obstruction to the weak approximation will follow from a nonlinear uniform boundedness principle~\cite{Hang_Lin_2003_III}*{Theorem~9.6} (see also~\cite{Monteil_VanSchaftingen_2019}), which is a kind of nonlinear counterpart of the classical Banach--Steinhaus theorem in functional analysis.

In order to achieve this, we will compare \(\mathcal{E}^{1, p}_{\mathrm{rel}} \brk{u, \intvc{0}{5\ell}^{p +1}}\) to \(\mathcal{E}^{1, p}_{\mathrm{rel}} \brk{u, \intvc{0}{\ell}^{p +1}}\).
A simple additivity and translation argument shows that 
\begin{equation}
\label{eq_OogaiSh3neiy7ahl6eeF7eiV}
 \mathcal{E}^{1, p}_{\mathrm{rel}} \brk{u, \intvc{0}{5\ell}^{p +1}} \ge
 5^{p+1} \mathcal{E}^{1, p}_{\mathrm{rel}} \brk{u, \intvc{0}{\ell}^{p +1}}\text{;}
\end{equation}
we are going to strengthen this inequality.
For this purpose, we consider a sequence \(\brk{u_k}_{k \in \Nset}\) in \(W^{1,p}\brk{\intvc{0}{5\ell}^{p + 1}, \lifting{\manifold{N}}_{0}} \cap C\brk{\intvc{0}{5\ell}^{p + 1}, \lifting{\manifold{N}}_{0}} \) realizing the infimum in~\eqref{eq_oochohL1iezal0eephoo5phe}.
By a classical Fatou and Fubini--Tonelli argument, the sequence converges weakly on the boundary \(\partial Q\) of some cube \(Q\) with the same center as \(\intvc{0}{5\ell}^{p + 1}\) and edge length between \(3\ell\) and \(5\ell\), with 
\begin{equation}
\label{eq_ahd7Iphee8deeYoos4daeth7}
 \liminf_{k \to \infty} \int_{\partial Q} \abs{\Deriv u_{k}}^p
 \le \frac{C}{\ell} \liminf_{k \to \infty} \int_{\intvc{0}{5\ell}^{p+1}} \abs{\Deriv u_{k}}^p\text{.}
\end{equation}
The sequence \(\brk{u_{k} \restr{\partial Q}}_{k \in \Nset}\) is thus a sequence of maps homotopic to a constant converging to \(u\restr{\partial Q}\) which is not homotopic to a constant.
Using the fact that \(\dim \partial Q = p\), we show in Section~\ref{section_bubbling} that, when \(k\) is sufficiently large, there is a finite family of disjoint balls such that \(u_{k}\) and \(u\) are homotopic outside these balls. (Although such bubbling phenomena are ubiquitous in the analysis of Sobolev and harmonic mappings, we had to develop a statement providing all the information we need in Proposition~\ref{proposition_bubble_decomposition}.)
The gap in homotopy classes between \(u_k\) and \(u\) needs thus to be compensated by the homotopical charge beared on the small balls.
In geometric terms, every singular point should be engulfed in the image of \(u_{k}\) on some singular ball.
By the isoperimetric theorem, if \(u_{k} = b\) on \(\partial B_{\rho} \brk{a}\cap \partial Q\), the number of singularities engulfed in the image of a small ball \( B_{\rho}\brk{a} \cap \partial Q\) is controlled by
\begin{equation*}
   \brk[\Big]{\int_{B_\rho \brk{a}\cap \partial Q} \mathcal{J} u_{k}}^{1 + \frac{1}{p}} \le  \brk[\Big]{\int_{B_\rho \brk{a}\cap \partial Q} \abs{\Deriv u_{k}}^p}^{1 + \frac{1}{p}}
\end{equation*}
(where \(\mathcal{J} u_k = \abs{\det \brk{\Deriv u_k}}\) is the Jacobian of \( u_k \)),
leading to a total contribution to the energy \(\mathcal{E}^{1, p} \brk{u_k, \partial Q}\) of those small disks containing \(\ell^{p + 1}\) singularities of the order of \(\ell^p\), and thus by \eqref{eq_ahd7Iphee8deeYoos4daeth7} to a contribution \(\ell^{p + 1}\) to the energy \(\mathcal{E}^{1,p}\brk{u_{k}, \intvc{0}{5\ell}^{p+1}} \).

\begin{figure}
\includegraphics{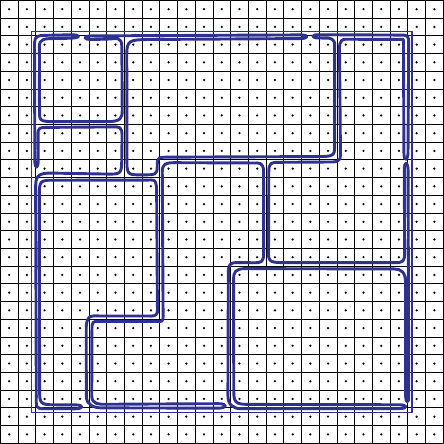}
\caption{On a generic square, a smooth map \( u_k \) approximating \( u \) should take at most points a value close to \( u \) while engulfing at the other points all the singularities through the creation of bubbles.}
\end{figure}

In order to transform this idea into an improvement of~\eqref{eq_OogaiSh3neiy7ahl6eeF7eiV}, we localize the estimate on the number of engulfed singularities: assuming that \(B_{\rho} \brk{a} \subseteq \intvc{0}{\ell}^{p + 1}\), the number of singularities in the cube \(\intvc{2\ell}{3\ell}^{p + 1}\) can be controlled by 
\[
  \brk[\Big]{\int_{B_\rho \brk{a}\cap \partial Q} \abs{\Deriv u_{k}}^p - \int_{B_{\rho} \brk{a}\cap \partial Q} \abs{\Deriv \brk{\Pi \compose u_{k}}}^{p}}^{1 + \frac{1}{p}}\text{,}
\]
where \(\Pi\) is a suitable retraction from \(\lifting{\manifold{N}}_{0}\) to \(\intvc{0}{\ell}^{p+1} \cap \lifting{\manifold{N}}_{0}\).
For such a retraction, we have 
\[
 \liminf_{k \to \infty} \int_{\intvc{0}{\ell}^{p+1} \cap \partial Q} \abs{\Deriv \brk{\Pi \compose u_{k}}}^p \ge \mathcal{E}^{1, p}_{\mathrm{rel}} \brk{u, \intvc{0}{\ell}^{p+1} \cap \partial Q}\text{.}
\]
Combining these ingredients, one gets 
\begin{equation*}
 \mathcal{E}^{1, p}_{\mathrm{rel}} \brk{u, \intvc{0}{5\ell}^{p +1}} \ge
 5^{p+1} \mathcal{E}^{1, p}_{\mathrm{rel}} \brk{u, \intvc{0}{\ell}^{p +1}} + c \ell^{p + 1}
\end{equation*}
with \(c > 0\), from which it follows that 
\begin{equation*}
 \lim_{\ell \to \infty} \frac{\mathcal{E}^{1, p}_{\mathrm{rel}} \brk{u, \intvc{0}{\ell}^{p +1}}}{\ell^{p + 1}}= \infty\text{.}
\end{equation*}

If \(\lifting{\manifold{N}}_{0}\) were compact and a manifold, we would have our conclusion by the uniform boundedness principle.
In order to remedy these issues, we first consider the space \(\manifold{N}_{0} = \lifting{\manifold{N}}_{0}/\Zset^{p + 1}\). By construction, \( \lifting{\manifold{N}}_{0}\) is the universal covering of \(\manifold{N}_{0} \), and the latter is compact. 
Since \(p \ge 2\), the results of the analysis above can be transferred to get a mapping \(v \colon \Rset^{p + 1} \to \manifold{N}_{0} \) such that
\begin{equation*}
 \lim_{\ell \to \infty} \frac{\mathcal{E}^{1, p}_{\mathrm{rel}} \brk{v, \intvc{0}{\ell}^{p +1}}}{\ell^{p + 1}}= \infty\text{.}
\end{equation*}
While \(\manifold{N}_{0}\) is a compact simplicial complex, it is not yet a manifold. 
In order to fix this last issue, we define explicitly a manifold containing \(\manifold{N}_{0}\) and retracting to \(\manifold{N}_{0}\). (Alternatively, this could be done in a more abstract fashion, constructing a simplicial embedding of \(\manifold{N}_{0}\) in a Euclidean space, giving it a tubular neighborhood, and endowing the boundary of the latter with a smooth structure.)

\medskip 

Concerning Theorem~\ref{theorem_counterexample_hopf}, a quite delicate part of Bethuel’s proof is the construction of mappings having a prescribed number of singularities of the same degree, starting from the spaghetton map on \(\Sset^3\) and then defining a map on \(\Rset^4\) by a Gordian cut through a suitable deformation.

The presentation is substantially simplified and the transfer  to higher dimension is eased by noting that there is a periodic map \(u \in W^{1, 4n- 1}_{\mathrm{loc}} \brk{\Rset^{4n}, \Sset^{2n}}\) constructed by Whitehead product.
This map can be constructed thanks to periodic smooth maps \(\Rset^{2n} \to \Sset^{2n}\) which are constant on the boundary of unit cubes and have Brouwer degree \(1\) in these cubes, so that the resulting homotopy class is the Whithead product of the \(\brk{2n}\)-dimensional homotopy classes and has Hopf invariant \(2\).
When \(n = 1\), we have a short construction of a map having the same properties as Bethuel's. 
Having Hopf invariant \(2\) is essential for covering the full range of \(n \in \mathbb{N} \setminus \set{0}\), since maps having Hopf invariant \(1\) only exist when \(n \in \set{1, 2, 4}\) \cite{Adams_1960}.

At a more technical level, this periodic character of the maps we construct and the arrangement of the singularities on a regular grid eliminate the need of relying on the notion of scans introduced by Hardt and Rivière~\citelist{\cite{Hardt_Riviere_2003}\cite{Hardt_Riviere_2008}} to analyze the concentration, and allow using instead more elementary arguments.

The lower estimate on the relaxed energy is performed thanks to a branched transport argument, as in~\cite{Bethuel_2020}.

\subsection{Higher order spaces}

We close this introduction with a short word on extensions of the results in this paper to higher order Sobolev spaces of mappings.
Just as we defined the space of first order Sobolev mappings, we may define the space of \( W^{s,p} \) mappings from \( \manifold{M} \) to \( \manifold{N} \), with \( 0 < s < \infty \) a real number, as
\[
	W^{s, p} \brk{\manifold{M}, \manifold{N}}
	\defeq
	\set{u \in W^{s, p} \brk{\manifold{M}, \Rset^\nu} \st  \text{\( u\brk{x} \in \manifold{N} \) for a.e.\ \( x \in \manifold{M} \)}}\text{.}
\]
Higher order Sobolev spaces of mappings arise, for instance, in the study of biharmonic maps (\( s = 2 \)) or more generally polyharmonic maps (\( s \geq 2 \) integer); see e.g.~\citelist{\cite{Eells_Sampson_1966}\cite{Urakawa_2019}\cite{Gastel_2016}\cite{DaLio_Riviere_2011}} and the references therein.

The problem of strong approximation has been addressed for the whole range \( 0 < s < \infty \): the counterpart of Bethuel's theorem for \( W^{s,p} \) states that, when \( \pi_{1}\brk{\manifold{M}} \simeq \dotsb \simeq \pi_{\floor{sp - 1}}\brk{\manifold{M}} \simeq \set{0} \), then
\begin{equation} 
\label{eq_higher_order_strong_density_problem}
	H^{s,p}_{S} \brk{\manifold{M}, \manifold{N}} = W^{s,p} \brk{\manifold{M}, \manifold{N}}
\end{equation} 
if and only if \( \pi_{\floor{sp}}\brk{\manifold{N}} \simeq \set{0} \); see~\citelist{\cite{Bousquet_Ponce_VanSchaftingen_2015}\cite{Brezis_Mironescu_2015}\cite{Detaille2023}}.
(In the general case,~\eqref{eq_higher_order_strong_density_problem} holds if and only if there is no obstruction to the extension of a continuous map from a \(\floor{sp}\)-dimensional triangulation of \(\manifold{M}\) into \(\manifold{N}\) to a continuous map from \(\manifold{M}\) into \(\manifold{N}\), as in~\cite{Hang_Lin_2003_II} for \( s = 1 \).)

Similarly, one may consider the weak approximation problem for spaces of \( W^{s,p} \) mappings.
As above, we define the sequential weak closure of smooth maps
\(H^{s, p}_W \brk{\manifold{M}, \manifold{N}}\) as
the set of mappings \(u \in W^{s, p} \brk{\manifold{M}, \manifold{N}}\) for which there is a sequence \(\brk{u_k}_{k \in \Nset}\) in \(C^\infty \brk{\manifold{M}, \manifold{N}}\)
converging almost everywhere to \(u\) and such that \(\brk{\mathcal{E}^{s,p}\brk{u_{k}, \manifold{M}}}_{k \in \Nset}\) is bounded,
where the Sobolev energy is now defined by
\begin{align*}
	\mathcal{E}^{s,p}\brk{w, \manifold{M}}
	&\defeq
	\int_{\manifold{M}} \abs{\Deriv^{s}w}^{p}
	&& \text{if \( s \in \Nset_{\ast} \),}
\intertext{and}
	\mathcal{E}^{s,p}\brk{w, \manifold{M}}
	&\defeq
\iint_{\manifold{M} \times \manifold{M}} \frac{\abs{\Deriv^{\floor{s}}w\brk{x} - \Deriv^{\floor{s}}w\brk{y}}^{p}}{\dist_{\manifold{M}}\brk{x,y}^{\dim{\manifold{M}}+\brk{s-\floor{s}}p}} \dif x \dif y	
	&&
	\text{if \( s \notin \Nset \).}
\end{align*}
One may then ask when
\begin{equation}
\label{eq_cahkoh3me3ohBeeQu7iengeb_bis}
	H^{s, p}_W \brk{\manifold{M}, \manifold{N}} = W^{s, p} \brk{\manifold{M}, \manifold{N}}\text{.}
\end{equation}

Following~\cite{Bethuel_1991}*{Proof of Theorem~3}, \( H^{s,p}_{S}\brk{\manifold{M}, \manifold{N}} = H^{s,p}_{W}\brk{\manifold{M}, \manifold{N}} \) when \( sp \notin \Nset \), as when \( s = 1 \), so that the only interesting case is once again when \( sp \in \Nset_{\ast} \) and strong density fails.

In this setting, some partial results have been obtained for \( s \geq 1 \).
In~\cite{Bousquet_Ponce_VanSchaftingen_2013}, Haj\l asz's theorem was extended to the full range \( s \geq 1 \), proving that, if \( sp \in \Nset \) and \( \pi_{1}\brk{\manifold{N}} \simeq \dotsb \simeq \pi_{sp-1}\brk{\manifold{N}} \simeq \set{0} \), then the weak approximation property~\eqref{eq_cahkoh3me3ohBeeQu7iengeb_bis} holds.
In~\cite{Hardt_Riviere_2015}, Hardt and Rivière proved that \( H^{2,2}_{W}\brk{\Bset^{5}, \Sset^{3}} = W^{2,2}\brk{\Bset^{5}, \Sset^{3}} \).
Although their work is restricted to second order Sobolev maps and the target \( \manifold{N} = \Sset^{3} \) for technical reasons, the key feature in their analysis is the fact that the relevant homotopy group \( \pi_{4}\brk{\Sset^{3}} \simeq \Zset/2\Zset \) is finite.

To the best of our knowledge, no analytical obstruction to the weak approximation was known until now for \( s \neq 1 \).
In Section~\ref{section_higher_order}, we prove that both our main results admit a natural counterpart for any \( s \geq 1 \).
Namely, we prove the following theorems.

\begin{theorem}
	\label{theorem_counterexample_higher_order}
	For every \( 1 \leq s < \infty \) and \( 1 \leq p < \infty \) such that \( sp \in \Nset \setminus \set{0,1} \), there exists a manifold \(\manifold{N}\)
	such that if \(\dim \manifold{M} > sp\), then 
	\[
	H^{s, p}_W \brk{\manifold{M}, \manifold{N}}
	\subsetneq W^{s, p} \brk{\manifold{M}, \manifold{N}}\text{.}
	\]
\end{theorem}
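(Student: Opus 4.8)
The plan is to run the argument behind \cref{theorem_main} with the integer \(d \defeq sp\) in the role previously played by \(p\). Take \(\manifold{N}\) to be the compact manifold constructed there for the exponent \(d\): it retracts onto the \(d\)-dimensional skeleton \(\manifold{N}_{0}\) of a \(\brk{d + 1}\)-dimensional torus, and one may ensure that \(\pi_{2}\brk{\manifold{N}} \simeq \dotsb \simeq \pi_{d - 1}\brk{\manifold{N}} \simeq \set{0}\), that \(\pi_{1}\brk{\manifold{N}}\) is abelian, and that \(\pi_{d}\brk{\manifold{N}}\) is finitely generated as a \(\pi_{1}\brk{\manifold{N}}\)-module but not as a \(\Zset\)-module; write \(\lifting{\manifold{N}}_{0}\) for the \(d\)-skeleton of \(\Rset^{d + 1}\), which is the universal cover of \(\manifold{N}_{0}\). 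The reduction to the borderline case \(\dim \manifold{M} = d + 1\) is the same as in \cref{theorem_main}. The competitor is the same periodic singular retraction \(u \colon \Rset^{d + 1} \setminus \Sigma \to \lifting{\manifold{N}}_{0}\), \(\Sigma = \brk{\Zset + 1/2}^{d + 1}\); since \(\Sigma\) has codimension \(d + 1 > d = sp\), the computation for the homogeneous map \(x \mapsto x/\abs{x}\) shows \(u \in W^{s, p}_{\mathrm{loc}}\brk{\Rset^{d + 1}, \lifting{\manifold{N}}_{0}}\), and periodicity together with the decay of the (Gagliardo or classical) integrand away from the diagonal gives \(c\,\ell^{d + 1} \le \mathcal{E}^{s, p}\brk{u, \intvc{0}{\ell}^{d + 1}} \le C\,\ell^{d + 1}\) for every \(\ell \in \Nset\). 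The transfer of the analysis below from \(\lifting{\manifold{N}}_{0}\) to \(\manifold{N}_{0}\) (using \(d \ge 2\)), and then the thickening of \(\manifold{N}_{0}\) into the manifold \(\manifold{N}\), are carried out verbatim as in the first-order proof.

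It thus remains to show that \(\mathcal{E}^{s, p}_{\mathrm{rel}}\brk{u, \intvc{0}{\ell}^{d + 1}} / \ell^{d + 1} \to \infty\) as \(\ell \to \infty\): the nonlinear uniform boundedness principle~\cite{Hang_Lin_2003_III}*{Theorem~9.6} (see also~\cite{Monteil_VanSchaftingen_2019}) then yields a map in \(W^{s, p}\brk{\manifold{M}, \manifold{N}} \setminus H^{s, p}_{W}\brk{\manifold{M}, \manifold{N}}\). The superadditivity \(\mathcal{E}^{s, p}_{\mathrm{rel}}\brk{u, \intvc{0}{5\ell}^{d + 1}} \ge 5^{d + 1}\, \mathcal{E}^{s, p}_{\mathrm{rel}}\brk{u, \intvc{0}{\ell}^{d + 1}}\) holds for both definitions of \(\mathcal{E}^{s, p}\) --- the integrand being nonnegative and \(\intvc{0}{5\ell}^{d + 1}\) splitting into \(5^{d + 1}\) congruent translates on which the relaxed energy is the same by periodicity of \(u\) --- and I would strengthen it by the slicing step. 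Taking a sequence \(\brk{u_{k}}_{k \in \Nset}\) of continuous Sobolev maps into \(\lifting{\manifold{N}}_{0}\) realizing \(\mathcal{E}^{s, p}_{\mathrm{rel}}\brk{u, \intvc{0}{5\ell}^{d + 1}}\), one uses Fatou together with a Fubini-type inequality for \emph{smooth} maps --- valid for non-integer \(s\) as well, on splitting the bulk Gagliardo integral according to pairs of points on and near a common affine hyperplane, and of the kind already used in the higher-order strong-density theory --- to select a cube \(Q\) concentric with \(\intvc{0}{5\ell}^{d + 1}\), with edge length between \(3\ell\) and \(5\ell\), along whose boundary the \(u_{k}\restr{\partial Q}\) converge weakly and satisfy \(\liminf_{k \to \infty} \mathcal{E}^{s, p}\brk{u_{k}, \partial Q} \le \frac{C}{\ell} \liminf_{k \to \infty} \mathcal{E}^{s, p}\brk{u_{k}, \intvc{0}{5\ell}^{d + 1}}\), the left-hand side being the intrinsic \(W^{s, p}\brk{\partial Q}\) energy. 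The point of arranging \(\dim \partial Q = d = sp\) is that \(W^{s, p}\brk{\partial Q}\) is then critical and embeds into \(\mathrm{VMO}\), so that each \(u_{k}\restr{\partial Q}\) and \(u\restr{\partial Q}\) has a well-defined homotopy class, and hence a well-defined charge in \(\pi_{d}\brk{\manifold{N}}\).

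Now \(u_{k}\restr{\partial Q}\) is homotopic to a constant, since \(u_{k}\) extends continuously to \(Q\), while \(u\restr{\partial Q}\) --- for a generic such \(Q\) --- is not: its homotopy class records a unit charge for each \(\sigma \in \Sigma\) enclosed by \(\partial Q\), borne on the corresponding \(d\)-sphere factor of the skeleton, and there are of order \(\ell^{d + 1}\) such \(\sigma\) in \(\intvc{2\ell}{3\ell}^{d + 1}\). Two ingredients, both to be lifted from the first-order proof to the present setting, then close the argument. First, a bubble decomposition in the spirit of \cref{proposition_bubble_decomposition}, valid for \(W^{s, p}\) maps on the \(d\)-dimensional manifold \(\partial Q\): for \(k\) large, \(u_{k}\restr{\partial Q}\) agrees up to homotopy with \(u\restr{\partial Q}\) away from a finite disjoint family of small balls, so that the charge these balls must absorb sums, in \(\pi_{d}\brk{\manifold{N}}\), to the (nontrivial) class of \(u\restr{\partial Q}\). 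Second, in place of the isoperimetric--Jacobian bound --- which is meaningless for non-integer \(s\), and more fundamentally for non-integer \(p\), where no pointwise Jacobian exists --- a quantitative lower bound, obtained by a branched transport argument as in~\cite{Bethuel_2014}, for the \(W^{s, p}\) energy required to absorb \(N\) units of that charge through such a family of balls, of the form \(c\,N^{sp/\brk{sp + 1}}\). Localizing this estimate by a retraction \(\Pi\) from \(\lifting{\manifold{N}}_{0}\) onto the part of \(\lifting{\manifold{N}}_{0}\) lying in a box of edge length \(\ell\) --- for which \(\liminf_{k \to \infty} \mathcal{E}^{s, p}\brk{\Pi \compose u_{k}, \partial Q} \ge \mathcal{E}^{s, p}_{\mathrm{rel}}\brk{u, \partial Q}\) --- and summing over the grid points in \(\intvc{2\ell}{3\ell}^{d + 1}\), the order \(\ell^{d + 1}\) of enclosed singularities forces \(\liminf_{k \to \infty}\brk[\big]{\mathcal{E}^{s, p}\brk{u_{k}, \partial Q} - \mathcal{E}^{s, p}\brk{\Pi \compose u_{k}, \partial Q}} \ge c\,\ell^{d}\). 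Combined with the slice inequality, this yields \(\mathcal{E}^{s, p}_{\mathrm{rel}}\brk{u, \intvc{0}{5\ell}^{d + 1}} \ge 5^{d + 1}\, \mathcal{E}^{s, p}_{\mathrm{rel}}\brk{u, \intvc{0}{\ell}^{d + 1}} + c\,\ell^{d + 1}\), from which the required super-linear growth follows by iteration.

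The genuinely new difficulty is concentrated in those two upgrades: carrying out the concentration and bubble analysis on the critical-dimensional sphere \(\partial Q\) in the full fractional range \(sp \in \Nset\), where bubbles and their \(\pi_{d}\brk{\manifold{N}}\)-charges are far less transparent than for \(W^{1, p}\) maps between manifolds, and producing the matching branched transport lower bound on the energy cost of one unit of charge in that setting, which replaces the isoperimetric--Jacobian estimate of the first-order case. The Fubini-type inequality linking the bulk energy to the intrinsic \(W^{s, p}\brk{\partial Q}\) energy of smooth maps, and the analogous restriction argument behind the reduction from \(\dim \manifold{M} = d + 1\) to \(\dim \manifold{M} > d + 1\), also demand care, but are of the same nature as estimates already available in the higher-order strong-density literature; everything else is a notational transcription of the proof of \cref{theorem_main}.
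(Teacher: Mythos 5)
There is a genuine gap, and it is twofold. First, your starting point fails: the periodic singular retraction \(u(x)=\sigma+\brk{x-\sigma}/\brk{2\abs{x-\sigma}_\infty}\) is only Lipschitz away from \(\Sigma\) (its differential jumps across the cones where the \(\ell^\infty\)-norm changes the maximizing coordinate), so for general \(s>1\) with \(sp=d\) it does \emph{not} belong to \(W^{s,p}_{\mathrm{loc}}\); the comparison with the genuinely smooth-off-the-origin map \(x\mapsto x/\abs{x}\) is not valid here. This is exactly why the paper's proof begins with a smoothing step (Lemma~\ref{lemma_higher_order_construction}): regularization by convolution plus reprojection near the skeleton, and a thickening map \(\lambda\brk{\abs{x}_{2q}}x\) replacing the homogeneous extension, producing a periodic \(v^{\sm}\in C^{\infty}\brk{\Rset^n\setminus\Sigma,\manifold{N}_\lambda}\) with \(\abs{\Deriv^k v^{\sm}}\lesssim \dist\brk{\cdot,\Sigma}^{-k}\) and uniformly close to the original map, so that both \(v^{\sm}\in W^{s,p}\) and, after composing with the retraction \(\Phi\colon\manifold{N}_\lambda\to\manifold{N}_0\) and lifting, Proposition~\ref{proposition_gap_v_and_proj} still applies under the perturbed inclusion \(u\brk{Q_{\ell,\alpha}}\subseteq Q_{\ell,\alpha}+B_{C\varepsilon}\). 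Your proposal never addresses membership of the competitor in \(W^{s,p}\), nor the stability of the first-order lower bound under the necessary modification.

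Second, the heart of your argument --- slicing at the \(W^{s,p}\) level with an intrinsic \(W^{s,p}\brk{\partial Q}\) control of a.e.\ slice, a fractional bubble decomposition on the critical-dimensional slice, and a branched-transport lower bound of the form \(cN^{sp/\brk{sp+1}}\) replacing the Jacobian/isoperimetric degree estimate --- is asserted rather than proved, and these are precisely the difficulties the paper deliberately avoids. (Restriction of \(W^{s,p}\) to hypersurfaces loses \(1/p\) derivatives for non-integer \(s\), so even the averaged slice inequality you invoke is not a routine Fubini argument; the paper itself points out, in its discussion of \(0<s<1\), that fractional versions of the localized degree estimates are open and nontrivial.) The actual proof short-circuits all of this with the Gagliardo--Nirenberg inequality \(W^{s,p}\cap L^\infty\subset W^{1,sp}\), i.e.\ estimate~\eqref{eq_tuLooTadaoya9Ceilie2ieta}: any sequence of smooth maps with bounded \(\mathcal{E}^{s,p}\) is bounded in \(W^{1,sp}\), so the superlinear growth of the \emph{first-order} relaxed energy already established in Section~\ref{section_analytical_obstruction} transfers directly to \(\mathcal{E}^{s,p}_{\mathrm{rel}}\), and one concludes with the higher-order uniform boundedness principle of Monteil--Van Schaftingen. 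Unless you either adopt this interpolation reduction or genuinely prove the fractional slicing, bubbling, and degree/branched-transport estimates you postulate, your outline does not constitute a proof of Theorem~\ref{theorem_counterexample_higher_order}.
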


It will appear in the proof that the manifold \( \manifold{N} \) only depends on the number \( sp  \ge 2 \).

\begin{theorem}
	\label{theorem_counterexample_hopf_higher_order}
	For every \(n \in \Nset \setminus \set{0}\), if \(\dim \manifold{M} > 4 n-1 \), for every \( 1 \leq s < \infty \) and \( 1 \leq p < \infty \) such that \( sp = 4n-1 \), then 
	\[
	H^{s, p}_W \brk{\manifold{M}, \Sset^{2n}}
	\subsetneq W^{s, p} \brk{\manifold{M}, \Sset^{2n}}\text{.}
	\]
\end{theorem}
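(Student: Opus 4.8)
The plan is to follow the proof of \cref{theorem_counterexample_hopf}, keeping the target \(\Sset^{2n}\) and the same geometric construction, and replacing the first order function space ingredients by their \(W^{s, p}\) analogues. Fix \(s \geq 1\) and \(p \geq 1\) with \(sp = 4n - 1\); in particular \(sp < 4n\). By the nonlinear uniform boundedness principle \cite{Hang_Lin_2003_III}*{Theorem~9.6} (see also \cite{Monteil_VanSchaftingen_2019}), it suffices to produce a periodic map \(u \in W^{s, p}_{\mathrm{loc}}(\Rset^{4n}, \Sset^{2n})\) whose relaxed \(W^{s, p}\) energy grows strictly faster than its Sobolev energy on large cubes, that is,
\[
 \lim_{\ell \to \infty} \frac{\mathcal{E}^{s, p}_{\mathrm{rel}}(u, \intvc{0}{\ell}^{4n})}{\ell^{4n}} = \infty\text{,}
\]
and then to assemble rescaled copies of \(u\) into a single bad map on a chart of \(\manifold{M}\), extending by a constant along the remaining \(\dim \manifold{M} - 4n \geq 0\) coordinates --- which preserves membership in \(W^{s, p}\) since the singular set then has codimension \(4n > sp\) --- exactly as in the proof of \cref{theorem_counterexample_hopf}. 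This is where the hypothesis \(\dim \manifold{M} \geq 4n\) enters.

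For \(u\) I take the very map built there by Whitehead product: it is smooth on \(\Rset^{4n} \setminus \brk{\Zset + 1/2}^{4n}\), it is \(0\)-homogeneous near each of its point singularities, and on a small sphere around such a singularity it represents the Whitehead product \([\iota_{2n}, \iota_{2n}] \in \pi_{4n - 1}(\Sset^{2n})\), which has Hopf invariant \(2\). Since \(u\) is \(0\)-homogeneous near its singular set and \(sp < 4n\), a standard computation for homogeneous maps in fractional Sobolev spaces gives \(u \in W^{s, p}_{\mathrm{loc}}(\Rset^{4n}, \Sset^{2n})\); and by periodicity, together with a direct estimate of the \(W^{s, p}\) energy on \(\intvc{0}{\ell}^{4n}\) --- splitting the Gagliardo double integral into its near-diagonal and far-away parts when \(s \notin \Nset\) --- one obtains \(\mathcal{E}^{s, p}(u, \intvc{0}{\ell}^{4n}) \leq C \ell^{4n}\). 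It thus remains to prove that \(\mathcal{E}^{s, p}_{\mathrm{rel}}(u, \intvc{0}{\ell}^{4n}) / \ell^{4n} \to \infty\).

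For this I would reproduce the slicing--bubbling--branched transport scheme. Let \(\brk{u_k}_{k \in \Nset}\) be continuous maps from \(\intvc{0}{5\ell}^{4n}\) into \(\Sset^{2n}\), bounded in \(W^{s, p}\), converging almost everywhere to \(u\) and almost realizing \(\mathcal{E}^{s, p}_{\mathrm{rel}}(u, \intvc{0}{5\ell}^{4n})\). Using a Fatou argument together with the restriction properties of \(W^{s, p}\) functions --- which for non-integer \(s\) take the form of the Fubini property of Besov spaces, so that the restrictions to almost every sphere of a concentric family still lie in \(W^{s, p}\) --- one selects a sphere \(\partial Q\), concentric with \(\intvc{0}{5\ell}^{4n}\), of radius comparable to \(\ell\) and enclosing a subcube of sidelength of order \(\ell\), along which \(u_k \restr{\partial Q} \to u \restr{\partial Q}\) and
\[
 \liminf_{k \to \infty} \mathcal{E}^{s, p}(u_k, \partial Q) \leq \frac{C}{\ell} \liminf_{k \to \infty} \mathcal{E}^{s, p}(u_k, \intvc{0}{5\ell}^{4n})\text{.}
\]
Because \(\dim \partial Q = 4n - 1 = sp\), the maps \(u_k \restr{\partial Q}\) and \(u \restr{\partial Q}\) lie in the critical space \(W^{s, p}(\partial Q, \Sset^{2n})\), so their Hopf invariants are well defined and the \(W^{s, p}\) version of the bubble decomposition (\cref{proposition_bubble_decomposition}) applies: for \(k\) large, \(u_k\) and \(u\) are homotopic on \(\partial Q\) outside finitely many disjoint small balls. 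As \(u_k \restr{\partial Q}\) is homotopically trivial while \(u \restr{\partial Q}\) has Hopf invariant \(2 N\), with \(N \geq c \ell^{4n}\) the number of enclosed singularities, these bubbles must jointly carry a Hopf charge of order \(\ell^{4n}\); a branched transport estimate for the \(W^{s, p}\) energy, in the spirit of \cite{Bethuel_2014}, then bounds \(\mathcal{E}^{s, p}(u_k, \partial Q)\) from below by the cost of irrigating that charge inside a region of size \(\ell\). Combining this with the trace inequality and the elementary additivity and translation bound yields a self-improvement of the form
\[
 \mathcal{E}^{s, p}_{\mathrm{rel}}(u, \intvc{0}{5\ell}^{4n}) \geq 5^{4n} \mathcal{E}^{s, p}_{\mathrm{rel}}(u, \intvc{0}{\ell}^{4n}) + c\, \ell^{4n}\text{,}
\]
with \(c > 0\); iterating gives \(\mathcal{E}^{s, p}_{\mathrm{rel}}(u, \intvc{0}{\ell}^{4n}) / \ell^{4n} \to \infty\), which concludes the proof.

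The main obstacle is the \(W^{s, p}\) branched transport lower bound: one must quantify the \(W^{s, p}\) energy on a \((4n - 1)\)-sphere forced by a prescribed total Hopf charge that is compelled to concentrate, and verify that this cost --- which lives precisely at the critical branching exponent --- is still large enough that, after being multiplied by \(\ell\) through the trace inequality, it dominates \(\ell^{4n}\). The remaining new ingredients are technical but routine: the Fubini and trace theory of fractional Sobolev spaces used to extract the good sphere; the higher order and fractional version of the bubble decomposition and of the homotopical bookkeeping on \(\partial Q\); and, when \(p = 1\), the consistent use of almost everywhere convergence rather than weak convergence throughout, in accordance with the definition of \(H^{s, 1}_W\).
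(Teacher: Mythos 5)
There are two genuine gaps, and both are points where the paper deliberately takes a different (and much lighter) route.

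First, your starting point is not in the right space. You assert that the Whitehead-product map \(u\) of \cref{proposition_whitehead_periodic_link} is smooth on \(\Rset^{4n} \setminus \brk{\Zset + 1/2}^{4n}\) and hence, being \(0\)-homogeneous near its singularities, lies in \(W^{s,p}_{\mathrm{loc}}\). It is not smooth there: the map \(v\) in \eqref{eq_Eich4AiPhebahshaichoa7te} is only Lipschitz across the interfaces where the two factors of the Whitehead product are glued (it need not be constant near the boundaries of the faces of \(\partial\intvc{-1/2}{1/2}^{4n}\)), and the homogeneous extension adds further gradient discontinuities along the cones over these interfaces. A Lipschitz map whose gradient jumps across hypersurfaces fails in general to belong to \(W^{s,p}\) once \(s\) is large (already for integer \(s \ge 2\) the second derivatives are measures concentrated on hypersurfaces), so for general \(s\ge 1\) with \(sp = 4n-1\) your map need not even be admissible. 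This is exactly why the paper inserts a smoothing step: extend continuously to a neighbourhood of the faces, regularize by convolution with reprojection, and replace the homogeneous extension by a thickening procedure (as in Lemma~\ref{lemma_higher_order_construction} for the torus case), producing a periodic map smooth away from the singular set with \(\abs{\Deriv^k u}\dist\brk{\cdot,\Sigma}^k\) bounded, whose \(\mathcal{E}^{s,p}\) energy on \(Q_\ell\) (and on \(2Q_\ell\)) is \(O\brk{\ell^{4n}}\).

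Second, your plan to rerun the whole slicing--bubbling--Hopf-degree--branched-transport scheme directly at the \(W^{s,p}\) level leaves the decisive estimate unproved: you yourself flag the ``\(W^{s,p}\) branched transport lower bound'' as the main obstacle, and the required fractional/higher-order versions of the bubbling statement and of Rivi\`ere's Hopf invariant estimate are not available off the shelf (the paper notes that even the fractional extension of the Hopf estimate is only partial). The paper avoids all of this: since \(\Sset^{2n}\) is compact, any sequence of smooth maps converging a.e.\ with bounded \(\mathcal{E}^{s,p}\) energy is bounded in \(L^\infty\), hence by the Gagliardo--Nirenberg inequality \eqref{eq_tuLooTadaoya9Ceilie2ieta} it is bounded in \(\mathcal{E}^{1,sp}\); consequently \(\mathcal{E}^{s,p}_{\mathrm{rel}}\brk{u, Q_\ell}\) is bounded below (up to an additive \(O\brk{\ell}\) term) by the \emph{first-order} relaxed energy, for which the \(\ell^{4n}\ln\ell\) lower bound of Section~\ref{section_whitehead} applies verbatim. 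This reduction \(H^{s,p}_W \subseteq H^{1,sp}_W\) is the whole reason for the restriction \(s \ge 1\), and the conclusion then follows from the higher-order uniform boundedness principle of \cite{Monteil_VanSchaftingen_2019}*{Theorem~6.1} (the first-order statement \cite{Hang_Lin_2003_III}*{Theorem~9.6} you cite does not cover \(W^{s,p}\)). Redirect your argument along these two lines --- smooth the construction first, then interpolate down to \(W^{1,4n-1}\) --- and the rest of your outline becomes unnecessary.
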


Our results are restricted to \( s \geq 1 \) since our proof relies on the Gagliardo--Nirenberg inequality~\eqref{eq_tuLooTadaoya9Ceilie2ieta} to use the fact that
\(H^{s, p}_W \brk{\manifold{M}, \manifold{N}}
\subseteq H^{1, sp}_W \brk{\manifold{M}, \manifold{N}}\).
Since this procedure is not available for \( 0 < s < 1 \), the study of this case is not a byproduct of our method, although several of the ideas developped in this work could be useful.

\section{Bubbling of sequences of Sobolev mappings}
\label{section_bubbling}
In this section, we take as a target space a set \( \manifold{N} \subset \Rset^{\nu} \) which has the property of being a \emph{uniform Lipschitz neighborhood retract}, that is, there exists \( \iota > 0 \) such that there is a Lipschitz-continuous retraction \( \Pi_{\manifold{N}} \colon \manifold{N} + B_{\iota} \to \manifold{N} \).
This includes the particular case where \( \manifold{N} \) is an embedded smooth compact manifold, but also more general cell complexes.
For instance, one may take \( \manifold{N} \) to be  a skeleton of \( \Rset^{\nu} \), which will be crucial for us in the sequel.

The goal of this section is to prove the following \emph{bubbling proposition}.

\begin{proposition}
\label{proposition_bubble_decomposition}
Let \(\manifold{M}\) be a compact manifold and let  \(p = \dim \manifold{M}\).
For every \(\varepsilon \in \intvo{0}{\infty}\) and \(M \in \intvo{0}{\infty}\), there exists \( \delta \in \intvo{0}{\infty} \) such that, given \(\rho_0 \in \intvo{0}{\delta}\) and
\(u \), \( v \in W^{1, p}\brk{\manifold{M}, \manifold{N}} \cap C\brk{\manifold{M}, \manifold{N}}\) satisfying
\begin{gather}
\label{eq_dei4ahxeehei4Uigoov6Soo1}
 \int_{\manifold{M}} \abs{\Deriv u}^p \le M\text{,}\\
\label{eq:ControlGradientv}
 \int_{\manifold{M}} \abs{\Deriv v}^p \le M\text{,}\\
 \label{eq_vohzohehie2aeWa0Ie8vuesh}
 \sup_{a \in \manifold{M}} \int_{B_{\rho_0} \brk{a}} \abs{\Deriv u}^p \le \delta^p\text{,}\\
\intertext{and}
\label{eq_ek3eith6shahp2eeleiweeSh}
  \int_{\manifold{M}} \abs{u - v} \le \rho_0^p \delta\text{,}
\end{gather}
there exist
\(w \in W^{1, p}\brk{\manifold{M}, \manifold{N}} \cap C\brk{\manifold{M}, \manifold{N}}\), \( J \in \Nset\), 
\( a_1, \dotsc, a_J \in \manifold{M} \),
\( \rho_1, \dotsc, \rho_J \in \intvo{0}{\rho_0} \), 
and \( b_1, \dotsc, b_J \in \manifold{N}\) such that
\begin{enumerate}[label=(\roman*)]
\item
\label{item:BallsDisjoint}
the balls \(B_{\rho_j} \brk{a_j}\) are disjoint,
\item
\label{item:Radii}
\(\sum_{j = 1}^J \rho_i \le \rho_0\),
\item
\label{item:ImageAnnulusw} \(b_j \in u\brk{\partial B_{\rho_j}\brk{a_j}}\),
\item
\label{item:BoundaryConditionw}
\(w\restr{\partial B_{\rho_j/2} \brk{a_j}} = b_j\),
\item
\label{item:Locationbj}
\(w \brk{B_{\rho_j} \brk{a_j} \setminus B_{\rho_j/4} \brk{a_j}}\subseteq B_{\varepsilon} \brk{b_j}\),
\item
\label{item:Outside}
\(w = v\) on \(\manifold{M} \setminus \bigcup_{j = 1}^J B_{\rho_j} \brk{a_j}\),
\item
\label{item:annuli}
\(\displaystyle \sum_{j = 1}^J \int_{B_{\rho_j} \brk{a_j} \setminus B_{\rho_j/4} \brk{a_j}} \abs{\Deriv w}^p \le \varepsilon^p\),
\item
  \label{item:wDilationv}
  for every \(x \in B_{\rho_j/4} \brk{a_j}\),
  \(w \brk{x} = v \brk{4x}\) in exponential coordinates,
\item
\label{item:v_homotopic_w}
\(w\) is homotopic to \(v\),
\item
  \label{item:u_homotopic_w0}
  \(w_0\) is homotopic to \(u\), where
  \begin{equation}
  \label{eq:defw0}
  w_0 \defeq
  \begin{cases}
    w & \text{in \(\manifold{M} \setminus \bigcup_{j = 1}^J B_{\rho_j/2} \brk{a_j}\),}\\
    b_j & \text{in \(B_{\rho_j/2} \brk{a_j}\).}
  \end{cases}
  \end{equation}

\end{enumerate}
\end{proposition}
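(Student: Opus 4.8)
The plan is to produce the bubbles one at a time by a greedy, iterative selection procedure, controlling at each step the energy and the $L^1$ distance on the complement of the bubbles already chosen, and then to stop after finitely many steps because the hypothesis \eqref{eq_vohzohehie2aeWa0Ie8vuesh} forces each bubble to carry a definite amount of the (bounded) energy of $u$. The starting point is the observation, via a Fubini--Tonelli and mean-value argument on spheres, that for a suitable radius $\rho \in \intvo{\rho_0/2}{\rho_0}$ the restrictions $u\restr{\partial B_\rho(a)}$ and $v\restr{\partial B_\rho(a)}$ lie in $W^{1,p-1}$ on the $(p-1)$-sphere, have controlled energy there (by averaging the bound $M$), and are $C^0$-close in measure (by averaging \eqref{eq_ek3eith6shahp2eeleiweeSh}). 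Since $p-1 = \dim \partial B_\rho(a)$, the Morrey--Sobolev embedding $W^{1,p-1}\hookrightarrow C^{0}$ fails by exactly one dimension, which is why the small-energy assumption \eqref{eq_vohzohehie2aeWa0Ie8vuesh} is needed: on a sphere where the energy of $u$ is below a threshold, $u\restr{\partial B_\rho(a)}$ has small oscillation and is homotopic to a constant, and the same holds for $v\restr{\partial B_\rho(a)}$ once $\delta$ is small enough, by the $L^1$-closeness.

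Concretely, I would fix $\varepsilon, M$, choose $\delta$ small (to be determined by the Morrey threshold on $(p-1)$-spheres, by the uniform Lipschitz retraction constant $\iota$ of $\manifold{N}$, and by a covering constant for $\manifold{M}$), and run the following loop. Suppose $a_1,\dots,a_{i-1}$, $\rho_1,\dots,\rho_{i-1}$, $b_1,\dots,b_{i-1}$ and an intermediate map $w^{(i-1)}$ have been produced with $w^{(i-1)} = v$ off $\bigcup_{j<i} B_{\rho_j}(a_j)$ and with $w^{(i-1)}$ homotopic to $v$. If $w^{(i-1)}$ is already homotopic to $u$ on all of $\manifold{M}$ (equivalently, if there is no remaining topological discrepancy to resolve), stop and set $w = w^{(i-1)}$, $J = i-1$. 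Otherwise pick a point $a_i$ where $u$ and $w^{(i-1)}$ are topologically incompatible, select a good radius $\rho_i < \rho_0$ (disjoint from the previous balls, with room so that $\sum_j \rho_j \le \rho_0$, and with the above sphere estimates valid on $\partial B_{\rho_i}(a_i)$), let $b_i \defeq u(x_i)$ for some $x_i \in \partial B_{\rho_i}(a_i)$, and redefine $w^{(i)}$ on $B_{\rho_i}(a_i)$: on the outer annulus $B_{\rho_i}(a_i)\setminus B_{\rho_i/2}(a_i)$ interpolate (after composing with $\Pi_{\manifold{N}}$) between $v\restr{\partial B_{\rho_i}(a_i)}$ and the constant $b_i$ through the small-oscillation homotopy, getting \ref{item:BoundaryConditionw}, \ref{item:Locationbj}, and the annular energy bound; on the core $B_{\rho_i/4}(a_i)$ put the rescaled copy $w^{(i)}(x) \defeq v(4x)$ of \ref{item:wDilationv}; and glue smoothly in between on $B_{\rho_i/2}(a_i)\setminus B_{\rho_i/4}(a_i)$. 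This rescaled core is precisely the device that lets $w^{(i)}$ absorb the homotopical charge that $u$ carries near $a_i$ into a ``bubble'', so that $w^{(i)}_0$ (where the core is collapsed to $b_i$) gains the correct homotopy type relative to $u$, while $w^{(i)}$ itself stays homotopic to $v$ because collapsing a ball is a homotopy.

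The termination of the loop is the main obstacle, and it is exactly where \eqref{eq_vohzohehie2aeWa0Ie8vuesh} and \eqref{eq_dei4ahxeehei4Uigoov6Soo1} enter jointly: I would argue that each bubble that is genuinely created forces $\int_{B_{\rho_i}(a_i)}\abs{\Deriv u}^p$ to exceed some fixed $\eta = \eta(\manifold{N}) > 0$ (a topological ``quantum'' of energy, coming from the fact that a nonconstant homotopy class on $S^{p-1}$ cannot be represented with arbitrarily small $W^{1,p-1}$-energy, by a degree/isoperimetric lower bound as in the introduction's sketch), so that the number $J$ of bubbles is at most $M/\eta$ — in particular finite and, crucially, bounded \emph{independently of} $\rho_0$, which is what makes the radius bookkeeping \ref{item:Radii} feasible once $\delta$ is chosen small enough that $J\rho_0 \le \rho_0$ is not the constraint but rather $\sum \rho_j \le \rho_0$ with each $\rho_j$ taken $\le \rho_0/(2J)$. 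The remaining points are then routine: \ref{item:BallsDisjoint} and \ref{item:Radii} by construction; \ref{item:ImageAnnulusw} since $b_i \in u(\partial B_{\rho_i}(a_i))$; \ref{item:Outside} since we only ever modified $v$ inside the chosen balls; \ref{item:annuli} by summing the $J \le M/\eta$ annular bounds, each made $\le (\varepsilon/J^{1/p})^p$ by shrinking $\delta$; \ref{item:v_homotopic_w} by induction as above; and \ref{item:u_homotopic_w0} because, after the loop exits, $w_0$ differs from the final intermediate map only by collapsed cores and by construction has no remaining topological discrepancy with $u$. I would also note that the one delicate analytic point hidden in the annular interpolation is checking that $\Pi_{\manifold{N}}$ applied to the linear (or geodesic) homotopy between $v\restr{\partial B_{\rho_i}(a_i)}$ and $b_i$ stays in the domain $\manifold{N} + B_\iota$ of the retraction; this is guaranteed precisely by the small-oscillation estimate on the good sphere together with the choice $\delta \ll \iota$.
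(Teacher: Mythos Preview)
Your termination argument has the roles of \(u\) and \(v\) reversed, and this is fatal. You claim that each genuinely created bubble forces \(\int_{B_{\rho_i}(a_i)} \abs{\Deriv u}^p\) to exceed a fixed \(\eta > 0\); but hypothesis \eqref{eq_vohzohehie2aeWa0Ie8vuesh} says precisely that \(\int_{B_{\rho_0}(a)} \abs{\Deriv u}^p \le \delta^p\) for \emph{every} \(a\), so no ball of radius at most \(\rho_0\) can carry an energy quantum of \(u\). The map whose energy may concentrate is \(v\), not \(u\), and the balls must be chosen accordingly. This also undermines the iterative scheme itself: the instruction ``pick a point where \(u\) and \(w^{(i-1)}\) are topologically incompatible'' has no content, since a homotopy discrepancy between continuous maps on \(\manifold{M}\) is a global datum and cannot be localised to a point without first doing the analytic work that singles out the balls. (A smaller slip: the restrictions to \(\partial B_\rho(a)\) lie in \(W^{1,p}\) of a \((p-1)\)-sphere, which is \emph{supercritical}, so Morrey--Sobolev does apply and gives oscillation control directly; you wrote \(W^{1,p-1}\) and said the embedding ``fails by one dimension''.)

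The paper proceeds in the opposite order. The balls are selected \emph{a priori} and purely analytically: one covers the concentration set \(A = \{a : \int_{B_\rho(a)}\abs{\Deriv v}^p \ge \eta^p\}\) by a finite Vitali family (whose cardinality is bounded by \(M/\eta^p\) via \eqref{eq:ControlGradientv}), then runs a Sandier--Serfaty growing-and-merging balls procedure together with a Fubini--Fatou averaging to pick disjoint \(B_{\rho_j}(a_j)\) with \(\sum \rho_j \le \rho_0\) and with controlled \(W^{1,p}\) energy of \(u\), of \(v\), and controlled \(L^1\) distance on the bounding spheres. The annular interpolation and the construction of \(w\) then follow as you describe. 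The homotopy \(w_0 \sim u\) in \ref{item:u_homotopic_w0} is obtained not by topological bookkeeping but by a VMO-type criterion: once the concentration set of \(v\) has been excised, \(w_0\) has small energy on every small ball, \(u\) already does by \eqref{eq_vohzohehie2aeWa0Ie8vuesh}, and they are \(L^1\)-close by \eqref{eq_ek3eith6shahp2eeleiweeSh}, so their averages stay in the retraction neighbourhood of \(\manifold{N}\) and one can interpolate continuously.
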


\begin{figure}
\includegraphics{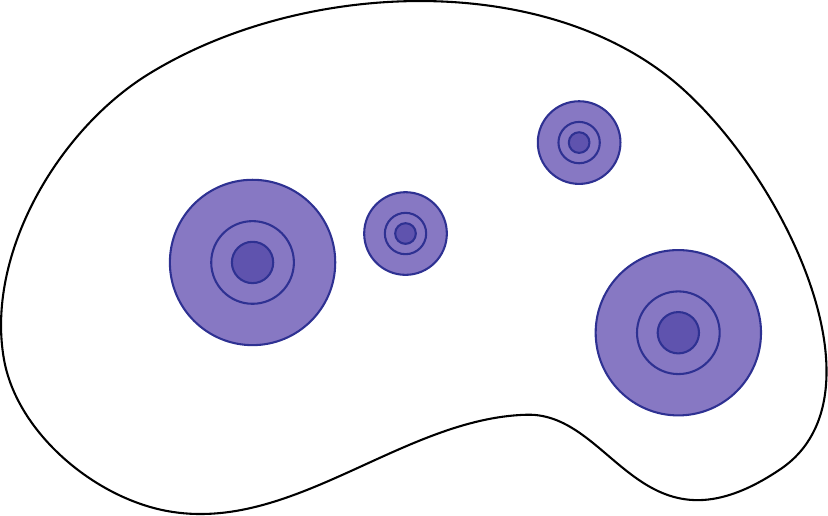}
\caption{Given \(u\) and \(v\), the map \(w\) is constructed so that it coincides with \(v\) outside the larger balls,
it is constant on the intermediate sphere and it is a rescaling of \(v\) on the smaller balls;
the map \(w_0\) defined similarly outside the intermediate balls and constant inside those is homotopic to \( u \).}
\end{figure}

Results similar to~\cref{proposition_bubble_decomposition}
are well established for sequences of harmonic maps~\citelist{\cite{Sacks_Uhlenbeck_1981}*{Proposition~4.3 \& Theorem~4.4}\cite{Nakauchi_Takakuwa_1995}*{Theorem~2}}.
Also in works related to density questions in Sobolev spaces of mappings, bubbling constructions have proved their usefulness in numerous places.
Examples include, but not only,~\citelist{\cite{Hardt_Riviere_2008}*{Proposition~3.4}\cite{Bethuel_2020}*{Remark~1}}, and also the study of the relaxed energy, see for instance~\cite{Giaquinta_Modica_Soucek_1998_II}*{Theorem~3.1.5.1}.
The estimate~\eqref{eq:MorreySobolevu} in the proof generalizes the Courant--Lebesgue lemma~\cite{Jost_1984}*{Lemma~3.1}.
The main conceptual difference between most of the above quoted results and our proposition is that, most often, bubbling statements are formulated as limiting results,
stating that, if a sequence \( \brk{u_{k}}_{k \in \Nset} \) converges weakly in \( W^{1,p} \) to a limiting map \( u \), then the gradients \( \abs{\Deriv{u}_{k}}^{p} \) converge weakly as measures to \( \abs{\Deriv{u}}^{p} \) plus a weighted sum of Dirac masses, which account for the formation and concentration of bubbles around isolated points.

Our statement in Proposition~\ref{proposition_bubble_decomposition}, on the contrary, is concerned with two \emph{fixed} maps, stating that, if they are taken to be sufficiently close and with suitable control on their energy, then they are homotopic to each other, upon removing a finite number of bubbles which are located in small balls.
This quite sharp --- and inevitably more complex --- statement will be needed to locate precisely enough the bubbles when applying Proposition~\ref{proposition_bubble_decomposition}.

\medskip

Before we move to the technical ingredients required in the proof of Proposition~\ref{proposition_bubble_decomposition} and then to the proof itself, we give an informal sketch of the argument, in order to allow the reader to have in mind the main lines of the proof.

The first step is to choose a finite collection of disjoint balls that contains the region of \( \manifold{M} \) where the energy of \( v \) is concentrated.
Letting these balls grow exponentially and merge for a well-chosen time, we may further take them so that the energy of \( u \) and \( v \) as well as the integral distance between \( u \) and \( v \) are controlled on their boundary.
Thanks to the Morrey--Sobolev embedding \( W^{1,p}\brk{\partial\Bset^{p}} \hookrightarrow C^{0}\brk{\partial\Bset^{p}} \), \( u \) and \( v \) both take their values on a common small ball on the boundary of these enlarged balls.

The map \( w \) is then constructed from \( v \) as follows.
Outside of the selected balls \( B_{\rho_j}\brk{a_j} \), the map \( v \) is left unchanged.
The values of \( v \) inside the balls are then shrunk by a linear change of variable in exponential coordinates to make them fit into the four times smaller balls \( B_{\rho_j/4}\brk{a_j} \).
Finally, since \( v \) takes values in a small ball \( B_{\varepsilon}\brk{b_j} \subseteq \manifold{N} \) on \( \partial B_{\rho_j}\brk{a_j} \), we may use an interpolation between the values of \( v \) and \( b_j \) plus a reprojection procedure to fill in the annulus \( B_{\rho_j}\brk{a_j} \setminus B_{\rho_j/4}\brk{a_j} \) with values in \( B_{\varepsilon}\brk{b_j} \) so that \( w \) is constantly equal to \( b_j \) on \(\partial B_{\rho_j/2}\brk{a_j} \).
This already shows assertions~\ref{item:BallsDisjoint} to~\ref{item:v_homotopic_w} in Proposition~\ref{proposition_bubble_decomposition}.

The proof that \( u \) is homotopic to the map \( w_0 \) defined through~\eqref{eq:defw0} relies essentially on a VMO (vanishing mean oscillation) criterion for homotopy.
Indeed, since the balls \( B_{\rho_j}\brk{a_j} \) have been chosen to contain the region of energy concentration of \( v \), the map \( w_0 \) can be proved to have small energy on all balls of radius \( \rho_0 \).
On the other hand, the map \( u \) satisfies the same property by assumption.
This combined with the integral estimate of the distance between \( u \) and \( v \) allows to apply the VMO criterion for homotopy to deduce that \( u \) and \( w_0 \) are homotopic as continuous maps.

The following proposition will allow us to perform the growing of balls procedure mentioned at the beginning of the above sketch.

\begin{proposition}[Growing balls]
\label{proposition_growing_balls}
Given a Riemannian manifold \(\manifold{M}\) and given balls \(B_{\rho_1} \brk{a_1}, \dotsc, B_{\rho_J}\brk{a_J} \subset \manifold{M}\),
there exist balls 
\begin{equation*}
B_{\rho_1\brk{t}} \brk{a_1\brk{t}},
\dotsc,
B_{\rho_{J\brk{t}}\brk{t}}\brk{a_{J\brk{t}}\brk{t}}
\end{equation*}
such that for every \(t \in \intvr{0}{\infty}\),
\begin{enumerate}[label=(\roman*)]
  \item 
  \label{it_eeb1iejoo1zuingaeth8gi0W}
  the balls \(\Bar{B}_{\rho_1\brk{t}} \brk{a_1\brk{t}}, \dotsc, \Bar{B}_{\rho_{J\brk{t}}\brk{t}}\brk{a_{J\brk{t}}\brk{t}}\) are disjoint,
 \item 
 \label{it_eetheibi0benaecuK6siRe0u}
 \(\displaystyle
  \bigcup_{j = 1}^J B_{\rho_j} \brk{a_j}
  \subseteq \bigcup_{j = 1}^{J\brk{t}} B_{\rho_j\brk{t}} \brk{a_j\brk{t}}\text{,}
 \)
 \item 
 \label{it_deetheiZenoroogae5cahqu3}
 \(\displaystyle \sum_{j = 1}^{J\brk{t}} \rho_j\brk{t} \le e^{t} \sum_{j = 1}^J \rho_j\)\text{,}
  \item
 \label{it_growing_balls_integral_formula}
 if \(f \colon \manifold{M} \to \intvc{0}{\infty}\) is Borel-measurable, then 
 \begin{equation*}
   \int_0^\infty \brk[\bigg]{\;\sum_{j = 1}^{J\brk{t}} \rho_j \brk{t}
   \int_{\partial B_{\rho_j \brk{t}} \brk{a_j \brk{t}}} f} \dif t 
   \le \int_{\manifold{M}}f \text{.}
 \end{equation*}
\end{enumerate}
\end{proposition}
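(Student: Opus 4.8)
The plan is to run an explicit continuous-time "ball growth" dynamical system: each ball grows its radius linearly in $e^t$ (so that $\rho_j(t) = e^t \rho_j$ as long as it stays alive), and whenever two balls touch they merge into a single ball whose closed ball contains the union of the two. Concretely, I would first reduce to the case where the initial balls $\bar B_{\rho_1}(a_1), \dots, \bar B_{\rho_J}(a_J)$ are already pairwise disjoint (shrinking them slightly if needed does not hurt, by the monotonicity in~\ref{it_eetheibi0benaecuK6siRe0u}, or one can simply start the merging procedure at $t = 0$). Then define $\rho_j(t) \defeq e^t \rho_j$ on each surviving index. For the merging rule, when two balls $B_{\rho_i(t)}(a_i(t))$ and $B_{\rho_j(t)}(a_j(t))$ first satisfy $\dist(a_i(t), a_j(t)) = \rho_i(t) + \rho_j(t)$, replace them by the single ball centered at the point on the geodesic between $a_i(t)$ and $a_j(t)$ that is the midpoint of the segment realizing the union, with radius $\tfrac12\brk{\dist(a_i(t),a_j(t)) + \rho_i(t) + \rho_j(t)} = \rho_i(t) + \rho_j(t)$. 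This way the new closed ball contains $\bar B_{\rho_i(t)}(a_i(t)) \cup \bar B_{\rho_j(t)}(a_j(t))$, so~\ref{it_eetheibi0benaecuK6siRe0u} is preserved across merges, and crucially the new radius equals the \emph{sum} of the old radii, so $\sum_j \rho_j(t)$ is continuous across merges. Since finitely many merges can occur (the number of balls strictly decreases at each merge), the process is well defined for all $t \in \intvr{0}{\infty}$; between merges each $\rho_j$ grows like $e^t$, so at a merge we restart the clock in the sense that the new ball's radius continues to grow like $e^t$ times its value at the merge time.

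For~\ref{it_eeb1iejoo1zuingaeth8gi0W} and~\ref{it_eetheibi0benaecuK6siRe0u}: disjointness of the closed balls holds by construction (we merge precisely when closures would touch), and the monotone inclusion of the union follows because growing radii only enlarges each ball and merging replaces two balls by a superset of their union. For~\ref{it_deetheiZenoroogae5cahqu3}: between merges, $\tfrac{d}{dt}\sum_j \rho_j(t) = \sum_j \rho_j(t) = \sum_j \rho_j(t)$, i.e.\ the quantity $S(t) \defeq \sum_{j=1}^{J(t)} \rho_j(t)$ satisfies $S'(t) = S(t)$ on each interval between merges, and $S$ is continuous at merge times by the radius-summing rule; hence $S(t) = e^t S(0) = e^t \sum_{j=1}^J \rho_j$, which is in fact equality in~\ref{it_deetheiZenoroogae5cahqu3} (so the stated inequality is immediate).

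The substantive point is~\ref{it_growing_balls_integral_formula}. Here the idea is a coarea/change-of-variables argument: the family of spheres $\partial B_{\rho_j(t)}(a_j(t))$, as $t$ ranges over $\intvr{0}{\infty}$ and $j$ over the surviving indices, sweeps out $\manifold{M}$ with multiplicity at most one, because distinct points of $\manifold{M}$ lie on the growing sphere boundary at distinct times (once a point is \emph{inside} some ball it stays inside, by~\ref{it_eetheibi0benaecuK6siRe0u}, so it is crossed by a moving boundary sphere at most once). More precisely, on each interval between consecutive merges the map $(t, \omega) \mapsto \exp_{a_j(t)}(\rho_j(t)\omega)$ parametrizes an annular region, and its Jacobian with respect to $t$ is comparable to $\rho_j(t)$ times the surface measure on $\partial B_{\rho_j(t)}(a_j(t))$ — the normal speed of the sphere is $\tfrac{d}{dt}\rho_j(t) = \rho_j(t)$ up to center motion, but the center $a_j(t)$ is constant between merges, so the normal speed is exactly $\rho_j(t)$ and the coarea factor is exactly $\rho_j(t)\,d\mathcal{H}^{p-1}$. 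Summing over $j$ and integrating over $t$, the left-hand side of~\ref{it_growing_balls_integral_formula} is the integral of $f$ over the union of all swept annuli counted with multiplicity $\le 1$, hence $\le \int_{\manifold{M}} f$. The main obstacle I expect is making the "multiplicity at most one" and the coarea identity rigorous across merge times and near the centers $a_j$ (where $\rho_j(t) \to 0$ is not an issue since $\rho_j > 0$, but one must check the sweeping map is injective enough and that merge times form a finite set of measure zero so they contribute nothing); everything else is bookkeeping. One clean way to handle the merge-time subtlety is to note that, up to the finitely many merge times, the swept region is an increasing (in $t$) exhaustion by disjoint annular shells whose total measure telescopes, so Fubini–Tonelli applies directly on each inter-merge interval and one adds up.
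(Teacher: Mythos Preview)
Your approach is essentially the same as the paper's: exponential growth $\rho_j(t)=e^t\rho_j$ between collision times, a merging rule that replaces two touching balls by a single ball of radius at most the sum (the paper isolates this as a separate merging lemma), and the coarea formula on each inter-merge interval to obtain~\ref{it_growing_balls_integral_formula} from the disjointness of the swept annuli. One small correction: your claim of \emph{equality} in~\ref{it_deetheiZenoroogae5cahqu3} is too strong, since a merge may produce a ball that now overlaps a third ball (no longer tangentially), forcing a further merge whose radius is strictly less than the sum; only the inequality survives such cascades, which is all that is stated.
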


Even though there is no upper bound on \(t\) appearing in the statement of Proposition~\ref{proposition_growing_balls}, 
if \( \manifold{M} \) is bounded, 
then for \(t\) sufficiently large, one will inevitably have \( \partial B_{\rho_j \brk{t}} \brk{a_j} = \emptyset \), and the statement will give no information about such \(t\).
Also, the conclusions are the most useful when \( \rho_j \brk{t} \) is sufficiently small to have \( \partial B_{\rho_j \brk{t}} \brk{a_j \brk{t}} \) diffeomorphic to a sphere.

\begin{proof}[Proof of Proposition~\ref{proposition_growing_balls}]
The proof follows the Euclidean case~\cite{Sandier_Serfaty_2007}*{Theorem 4.2} (see also~\citelist{\cite{Sandier_1998}\cite{Jerrard_1999}}).
Roughly speaking, one defines for \(t \in \intvo{0}{T_1}\)
\begin{align*}
 a_j \brk{t} &= a_j&
 &\text{and}&
 \rho_j \brk{t}= e^t \rho_j\text{,}
\end{align*}
in such a way that the closed balls \(\Bar{B}_{\rho_1\brk{t}} \brk{a_1\brk{t}}, \dotsc, \Bar{B}_{\rho_{J\brk{t}}\brk{t}}\brk{a_{J\brk{t}}\brk{t}}\) are disjoint for \(t < T_1\) and not for \(t = T_1\).
The assertions~\ref{it_eeb1iejoo1zuingaeth8gi0W},~\ref{it_eetheibi0benaecuK6siRe0u}, and~\ref{it_deetheiZenoroogae5cahqu3} are immediate.
For~\ref{it_growing_balls_integral_formula}, we have by the coarea formula and a change of variable 
\[
\begin{split}
 \int_{B_{\rho_j\brk{T_1}}\brk{a_j} \setminus B_{\rho_j \brk{0}}\brk{a_j}}
 f
 &= \int_{\rho_j}^{\rho_j e^{T_1}} \brk[\bigg]{\int_{\partial B_{r} \brk{a_j}} f } \dif r \\
 &= \int_{0}^{T_1} \rho_j \brk{t} \brk[\bigg]{\int_{\partial B_{\rho_j \brk{t}} \brk{a_j}} f } \dif t\text{.}
\end{split}
\]
To continue the construction,
we apply Lemma~\ref{lemma_ball_merging} sufficiently many times to the collection \(\Bar{B}_{\rho_1\brk{t}} \brk{a_1\brk{t}}, \dotsc, \Bar{B}_{\rho_{J\brk{t}}\brk{t}}\brk{a_{J\brk{t}}\brk{t}}\) to get a disjoint collection.
Repeating this procedure at most \(J\) times, we get the required collection of families of balls.
\end{proof}

\begin{lemma}[Merging balls]
\label{lemma_ball_merging}
If \(\manifold{M}\) is a complete Riemannian manifold, and if 
\(\Bar{B}_{\rho_0} \brk{a_0} \cap \Bar{B}_{\rho_1} \brk{a_1} \ne \emptyset\), then there exists a ball \(B_\rho \brk{a}\) such that 
\[
 B_\rho \brk{a} \supseteq B_{\rho_0} \brk{a_0} \cup B_{\rho_1} \brk{a_1}
\]
and 
\[
 \rho \le \rho_0 + \rho_1\text{.}
\]
\end{lemma}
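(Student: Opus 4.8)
The plan is to reduce the statement to the triangle inequality, combined with the existence of a minimizing geodesic between the two centres — which is precisely where the completeness hypothesis enters, via the Hopf–Rinow theorem. First I would pick a point $z \in \Bar{B}_{\rho_0}(a_0) \cap \Bar{B}_{\rho_1}(a_1)$, so that $\dist_{\manifold{M}}(a_0, a_1) \le \dist_{\manifold{M}}(a_0, z) + \dist_{\manifold{M}}(z, a_1) \le \rho_0 + \rho_1$; write $L \defeq \dist_{\manifold{M}}(a_0, a_1)$, so $L \le \rho_0 + \rho_1$.

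The elementary observation driving the construction is that, for any $a \in \manifold{M}$ and any $\rho > 0$, one has $B_{\rho_i}(a_i) \subseteq B_{\rho}(a)$ as soon as $\rho \ge \rho_i + \dist_{\manifold{M}}(a, a_i)$, again by the triangle inequality. It therefore suffices to produce $a$ and $\rho \le \rho_0 + \rho_1$ with $\rho \ge \rho_0 + \dist_{\manifold{M}}(a, a_0)$ and $\rho \ge \rho_1 + \dist_{\manifold{M}}(a, a_1)$. Assuming without loss of generality $\rho_1 \le \rho_0$, I would split into two cases. If $L \le \rho_0 - \rho_1$, one ball already contains the other: take $a = a_0$ and $\rho = \rho_0$, and note $B_{\rho_1}(a_1) \subseteq B_{\rho_1 + L}(a_0) \subseteq B_{\rho_0}(a_0)$. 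If instead $\rho_0 - \rho_1 < L \le \rho_0 + \rho_1$, let $\gamma \colon [0, L] \to \manifold{M}$ be a unit-speed minimizing geodesic from $a_0$ to $a_1$ (Hopf–Rinow), set $t \defeq (L + \rho_1 - \rho_0)/2 \in (0, L]$, and take $a \defeq \gamma(t)$ and $\rho \defeq \rho_0 + t = (L + \rho_0 + \rho_1)/2$. Because $\gamma$ is minimizing, $\dist_{\manifold{M}}(a, a_0) = t$ and $\dist_{\manifold{M}}(a, a_1) = L - t$, whence $\rho = \rho_0 + t = \rho_1 + (L - t)$, and $\rho \le \rho_0 + \rho_1$ follows from $L \le \rho_0 + \rho_1$; the two required inclusions are then exactly the displayed observation.

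I do not expect a genuine obstacle here: the argument is essentially three applications of the triangle inequality. The only points deserving a line of care are the invocation of completeness through Hopf–Rinow — needed so that $L = \dist_{\manifold{M}}(a_0, a_1)$ is realized by a geodesic, and hence that $\dist_{\manifold{M}}(\gamma(t), a_0) = t$ and $\dist_{\manifold{M}}(\gamma(t), a_1) = L - t$ hold exactly — and the separate, degenerate treatment of the case $L \le \rho_0 - \rho_1$ in which one ball is already contained in the other.
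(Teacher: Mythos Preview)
Your proof is correct and follows essentially the same approach as the paper's: both handle the degenerate containment case separately and then, in the main case, choose \(a\) on a minimizing geodesic from \(a_0\) to \(a_1\) so that \(\rho_0 + d(a_0,a) = \rho_1 + d(a_1,a)\), with \(\rho = (L + \rho_0 + \rho_1)/2\). Your explicit invocation of Hopf--Rinow and the parametrization \(t = (L + \rho_1 - \rho_0)/2\) make the argument slightly more detailed, but the content is the same.
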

\begin{proof}
This is again a classical argument (see for example \cite{Sandier_Serfaty_2007}*{Lemma 4.1}).
If \(B_{\rho_0} \brk{a_0} \subseteq B_{\rho_1} \brk{a_1}\) or \(B_{\rho_1} \brk{a_1} \subseteq B_{\rho_0} \brk{a_0}\) we can take \(a=a_1\) and \(\rho = \rho_1\), or \(a = a_0\) and \(\rho = \rho_0\).
Otherwise, we take a point \(a\) on a minimising geodesic from \(a_0\) to \(a_1\) such that 
\(
 \rho_0 + d \brk{a_0, a} = 
 \rho_1 + d \brk{a_1, a}
\)
and 
\(
 \rho = \brk{\rho_0 + d \brk{a_0, a_1} + \rho_1}/{2}
\).
\end{proof}

As already mentioned, we will also use a VMO criterion for homotopy, whose statement is as follows.

\begin{proposition}
\label{proposition_small_homotopy}
There exist \(\theta \in \intvo{0}{\infty}\) and \(\rho_* \in \intvo{0}{\infty} \) such that,
if \(u_0 \), \( u_1 \in C \brk{\manifold{M}, \manifold{N}} \cap W^{1, p} \brk{\manifold{M}, \manifold{N}}\) and \(\rho \in \intvo{0}{\rho_*}\) satisfy the condition that for every \(a \in \manifold{M}\),
\begin{equation*}
\frac{1}{\rho^p} \int_{B_\rho\brk{a}} \abs{u_0 - u_1}\le \theta
\end{equation*}
and 
\begin{equation*}
  \int_{B_{\rho} \brk{a}} \abs{\Deriv u_0}^p + \abs{\Deriv u_1}^p \le \theta^p\text{,}
\end{equation*}
then \(u_0\) and \(u_1\) are homotopic.
\end{proposition}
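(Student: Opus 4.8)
The plan is to implement quantitatively the idea behind the VMO criterion for homotopy of Brezis and Nirenberg (see~\cite{Brezis_Nirenberg_1995}). The point to keep in mind is that the hypotheses do \emph{not} bound \(\norm{u_0 - u_1}_{L^\infty}\), nor even the oscillation of \(u_i\) on a ball of radius \(\rho\): a map in \(W^{1,p}\) on a \(p\)-dimensional domain can carry arbitrarily little energy on \(B_\rho\) and still oscillate wildly there. What the assumptions control are the \emph{mean} oscillations at scale \(\rho\), and this will be exactly enough. First I would replace the given maps by their ball averages: for \(\varepsilon \in \intvl{0}{\rho}\) and \(a \in \manifold{M}\), set \(\bar u_i^\varepsilon\brk{a} \defeq \fint_{B_\varepsilon\brk{a}} u_i\), using geodesic balls and taking \(\rho_*\) below the injectivity radius of \(\manifold{M}\), so that \(\brk{\varepsilon, a} \mapsto \bar u_i^\varepsilon\brk{a}\) is continuous jointly, and extends continuously to \(\varepsilon = 0\) with value \(u_i\) by uniform continuity of \(u_i\) on the compact manifold \(\manifold{M}\).

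Next I would record three elementary estimates. Since \(p = \dim \manifold{M}\), the Poincaré inequality on geodesic balls together with Jensen's inequality gives, for every \(a \in \manifold{M}\) and every \(\varepsilon \le \rho\),
\[
 \fint_{B_\varepsilon\brk{a}} \abs[\big]{u_i - \bar u_i^\varepsilon\brk{a}}
 \le C \varepsilon \fint_{B_\varepsilon\brk{a}} \abs{\Deriv u_i}
 \le C \brk[\bigg]{\int_{B_\varepsilon\brk{a}} \abs{\Deriv u_i}^p}^{1/p}
 \le C \brk[\bigg]{\int_{B_\rho\brk{a}} \abs{\Deriv u_i}^p}^{1/p}
 \le C \theta\text{,}
\]
with \(C\) depending only on \(\manifold{M}\). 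Because \(u_i\brk{y} \in \manifold{N}\) for every \(y\), averaging \(\dist\brk{\bar u_i^\varepsilon\brk{a}, \manifold{N}} \le \abs{\bar u_i^\varepsilon\brk{a} - u_i\brk{y}}\) over \(y \in B_\varepsilon\brk{a}\) yields \(\dist\brk{\bar u_i^\varepsilon\brk{a}, \manifold{N}} \le C\theta\) for all \(\varepsilon \le \rho\); and the second hypothesis gives \(\abs{\bar u_0^\rho\brk{a} - \bar u_1^\rho\brk{a}} \le \fint_{B_\rho\brk{a}} \abs{u_0 - u_1} \le C \rho^{-p} \int_{B_\rho\brk{a}} \abs{u_0 - u_1} \le C\theta\). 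Now choose \(\theta\) so small that \(C\theta < \iota\). Then \(\bar u_i^\varepsilon\) takes values in \(\manifold{N} + B_\iota\) for every \(\varepsilon \in \intvc{0}{\rho}\) (with \(\bar u_i^0 = u_i\)), so \(\brk{\varepsilon, a} \mapsto \Pi_{\manifold{N}}\brk{\bar u_i^\varepsilon\brk{a}}\) is a continuous map \(\intvc{0}{\rho} \times \manifold{M} \to \manifold{N}\), i.e.\ a homotopy between \(u_i = \Pi_{\manifold{N}} \compose u_i\) and \(\Pi_{\manifold{N}} \compose \bar u_i^\rho\). Finally, since \(\abs{\bar u_0^\rho\brk{a} - \bar u_1^\rho\brk{a}} \le C\theta\) and \(\dist\brk{\bar u_i^\rho\brk{a}, \manifold{N}} \le C\theta\), the segment joining \(\bar u_0^\rho\brk{a}\) to \(\bar u_1^\rho\brk{a}\) remains in \(\manifold{N} + B_\iota\), so \(t \mapsto \Pi_{\manifold{N}}\brk{\brk{1-t}\bar u_0^\rho + t \bar u_1^\rho}\) is a homotopy between \(\Pi_{\manifold{N}} \compose \bar u_0^\rho\) and \(\Pi_{\manifold{N}} \compose \bar u_1^\rho\). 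Chaining the three homotopies gives \(u_0 \simeq \Pi_{\manifold{N}} \compose \bar u_0^\rho \simeq \Pi_{\manifold{N}} \compose \bar u_1^\rho \simeq u_1\).

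The main obstacle, and the reason the argument is not merely a uniform-closeness computation, is precisely the first of these three homotopies: one cannot estimate \(\norm{\bar u_i^\rho - u_i}_{L^\infty}\) in terms of \(\theta\) alone, so one is forced to let the regularisation scale \(\varepsilon\) run all the way down to \(0\) and invoke the continuity of \(u_i\) to reach \(u_i\) itself, while simultaneously verifying — uniformly in \(\varepsilon \in \intvc{0}{\rho}\), which is exactly what the energy bound on balls of radius \(\rho\) supplies — that the whole family \(\bar u_i^\varepsilon\) stays within \(B_\iota\) of \(\manifold{N}\) so that the reprojection \(\Pi_{\manifold{N}}\) can be applied throughout. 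Everything else (the Poincaré constant being uniform for \(\rho < \rho_*\), continuity of ball averages, the choice of \(\theta\)) is routine.
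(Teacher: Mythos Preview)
Your proof is correct and follows essentially the same route as the paper: average on balls, use the Poincar\'e--Wirtinger inequality to keep the averages within reach of the retraction \(\Pi_{\manifold{N}}\), interpolate linearly between the two \(\rho\)-averages, and let the averaging radius slide to \(0\) to recover the original continuous maps. One small slip: when you bound \(\abs{\bar u_0^\rho\brk{a} - \bar u_1^\rho\brk{a}}\) you invoke ``the second hypothesis'', but it is the first one (the \(L^1\) closeness at scale \(\rho\)) that is being used.
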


Even though the statement of Proposition~\ref{proposition_small_homotopy} might not be found under this exact form in the literature, it relies on classical arguments that go back to the work of Schoen and Uhlenbeck~\cite{Schoen_Uhlenbeck_1983}, and Brezis and Nirenberg~\cite{Brezis_Nirenberg_1995}.

\begin{proof}[Proof of Proposition~\ref{proposition_small_homotopy}]
Define 
\begin{equation*}
  u_j^r \brk{x} \defeq \fint_{B_r \brk{x}} u_j\text{.}
\end{equation*}
Since \(\manifold{M}\) is a Riemannian manifold, its injectivity radius is positive; we assume that it is \(2 \rho_*\) with \(\rho_* \in \intvo{0}{\infty}\); in particular, for every \(a \in \manifold{M}\), the exponential map at \(a\) is uniformly controlled on 
\(B_{\rho_*} \brk{a}\).
Therefore, by the Poincaré--Wirtinger inequality, for every \( 0 < r \leq \rho < \rho_* \), we have
\begin{equation*}
 \dist \brk{u_j^r \brk{x}, \manifold{N}}
 \le \C \fint_{B_r \brk{x}} \fint_{B_r \brk{x}} \abs{u_j \brk{y} - u_j \brk{z}} \dif y \dif z
 \le  \C \brk[\bigg]{\int_{B_{r} \brk{a}} \abs{\Deriv u_j}^p}^\frac{1}{p}
 \le \C \theta\text{,}
\end{equation*}
whereas by the triangle inequality
\begin{equation*}
  \abs{u_0^\rho \brk{x} - u_1^\rho \brk{x}}
  \le \frac{\C}{\rho^p} \int_{B_\rho\brk{x}} \abs{u_1 - u_0}
  \leq
  \C\theta\text{.}
\end{equation*}
Hence, if \(\theta\) was chosen sufficiently small, we get the required homotopy.
Indeed, one first goes from \( u_{0} \) to \( u_{0}^{\rho} \) via the \( u_{0}^{r} \), then from \( u_{0}^{\rho} \) to \( u_{1}^{\rho} \) via linear interpolation plus reprojection, and finally from \( u_{1}^{\rho} \) to \( u_{1} \) via the \( u_{1}^{r} \).
More precisely, one may e.g.\ use \( H \colon \intvc{0}{1} \times \manifold{M} \to \manifold{N} \) defined by
\[
	H\brk{t,x} = \begin{cases}
		u_{0}^{3t\rho}\brk{x} & \text{if \( 0 \leq t \leq 1/3 \),} \\
		\Pi_{\manifold{N}}\brk{\brk{3t-1}u_{1}^{\rho}\brk{x} + \brk{2-3t}u_{0}^{\rho}\brk{x}} & \text{if \( 1/3 \leq t \leq 2/3 \),} \\
		u_{1}^{3\brk{1 - t}\rho}\brk{x} & \text{if \( 2/3 \leq t \leq 1 \),}
	\end{cases}
\]
where \( \Pi_{\manifold{N}} \) is the Lipschitz-continuous retraction onto \( \manifold{N} \).
\resetconstant
\end{proof}

\begin{proof}[Proof of~\cref{proposition_bubble_decomposition}]
Since Proposition~\ref{proposition_bubble_decomposition} depends on quite a few parameters --- either part of the statement, or specific to the proof --- that may depend from each other, we adopt the following convention.
All constraints for the various parameters involved in the proof will be displayed inside boxes.
Then, at the end of the proof, we give the exact relations that explain how and in which order to choose those parameters so that they satisfy the required constraints.

We again assume that the injectivity radius of \( \manifold{M} \) is \(2 \rho_*\) with \(\rho_* \in \intvo{0}{\infty}\), which implies a uniform control on the exponential map on any ball 
\(B_{\rho_*} \brk{a}\), and we assume that \(
  \boxed{\rho_0 \le \rho_*\text{.}}\)
	
Given \(\eta \in \intvo{0}{\infty}\) and \(\rho \in \intvo{0}{\infty}\) to be chosen later on,
we consider the set 
\begin{equation}
\label{eq_cha5piephae2nee8geiZaeng}
  A \defeq 
  \set[\Big]{a \in \manifold{M} \st \int_{B_{\rho} \brk{a}} \abs{\Deriv v}^p \ge \eta^p}\text{,}
\end{equation}
and a maximal subset \(A_* \subseteq A\) such that the balls
\(\set{B_\rho \brk{a}}_{a \in A_*}\) are disjoint.
In particular, we have 
by disjointness and by~\eqref{eq:ControlGradientv}
\begin{equation}
\label{eq_ingaix2Oojeifohleixae1AB}
 \# A_* \le \frac{1}{\eta^p} \sum_{a \in A_*}
 \int_{B_\rho \brk{a}} \abs{\Deriv v}^p
 \le \frac{1}{\eta^p} \int_{\manifold{M}} \abs{\Deriv v}^p
 \le \frac{M}{\eta^p}\text{.}
\end{equation}
On the other hand, by maximality of \(A_*\),
\(
 A \subseteq \bigcup_{a \in A_*} B_{2\rho} \brk{a}
\),
so that for every \(a \in \manifold{M} \setminus \bigcup_{a' \in A_*} B_{2\rho} \brk{a'}\), \(a \not \in A\). (If \(A_* = \emptyset\), the following arguments remain valid albeit unnecessarily complicated in this trivial case.)

\begin{figure}
\includegraphics{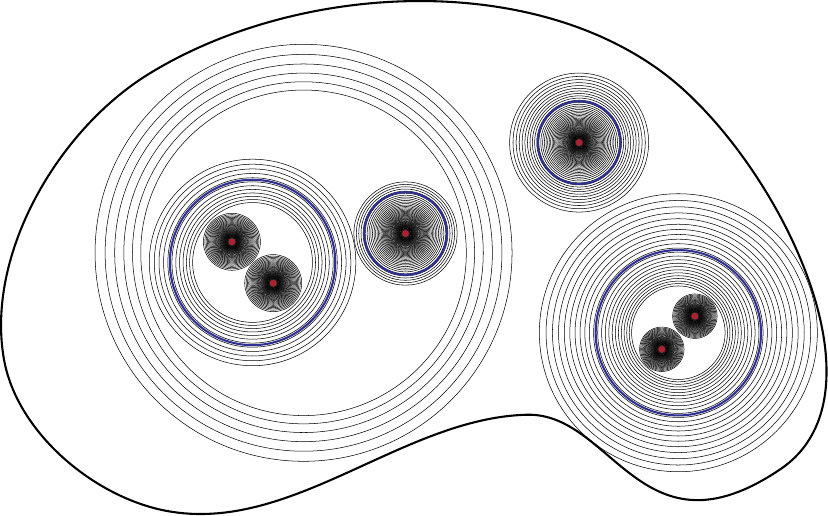}
\caption{Thanks to an averaging argument, the spheres \( \partial B_{\rho_j} \brk{a_j} \) (in blue) can be chosen out of the growing balls generated by the balls \( B_{\rho} \brk{a} \) (in red)
so that \( u \) and \( v \) have a small Sobolev energy and are at small average distance on them.}
\end{figure}

Applying Proposition~\ref{proposition_growing_balls} to \(\brk{B_{2 \rho} \brk{a}}_{a \in A_*}\),
we get a finite collection
\begin{equation*}
B_{\rho_1\brk{t}} \brk{a_1\brk{t}}, \dotsc, B_{\rho_{J\brk{t}}\brk{t}}\brk{a_{J\brk{t}}\brk{t}}
\end{equation*}
for \(t \in \intvo{0}{\infty}\);
by Proposition~\ref{proposition_growing_balls}~\ref{it_growing_balls_integral_formula}, it satisfies for every \( T_{\ast} \in \intvo{0}{\infty} \)
\begin{gather*}
\int_0^{T_*} \brk[\bigg]{\sum_{j = 1}^{J\brk{t}} \rho_j \brk{t}
   \int_{\partial B_{\rho_j \brk{t}} \brk{a_j \brk{t}}} \abs{\Deriv u}^p}
   \dif t \le\int_{\manifold{M}} \abs{\Deriv u}^p \le M\text{,}\\
  \int_0^{T_*}
  \brk[\bigg]{\sum_{j = 1}^{J\brk{t}} \rho_j \brk{t}
   \int_{\partial B_{\rho_j \brk{t}} \brk{a_j \brk{t}}} \abs{\Deriv v}^p}
   \dif t \le \int_{\manifold{M}} \abs{\Deriv v}^p \le  M\text{,}\\
\intertext{and}
  \int_0^{T_*} \brk[\bigg]{\sum_{j = 1}^{J\brk{t}} \rho_j \brk{t}
   \int_{\partial B_{\rho_j \brk{t}} \brk{a_j \brk{t}}} \abs{u - v}}
   \dif t \le \int_{\manifold{M}} \abs{u - v} \le  \rho_0^p \delta\text{,}
\end{gather*}
in view of~\eqref{eq_dei4ahxeehei4Uigoov6Soo1},~\eqref{eq:ControlGradientv}, and~\eqref{eq_ek3eith6shahp2eeleiweeSh}.
There exists thus \(t_* \in \intvo{0}{T_*}\) such that, if we set \(J \defeq J\brk{t_*}\), and for \(j \in \set{1, \dotsc, J}\),
\(
  a_j \defeq a_j \brk{t_*}\) and \(\rho_j \defeq \rho_j \brk{t_*}\),
then we have
\begin{gather}
\label{eq:ControluBoundary}
 \sum_{j = 1}^{J} \rho_j
   \int_{\partial B_{\rho_j} \brk{a_j}} \abs{\Deriv u}^p
   \le  \frac{3 M}{T_*}\text{,} \\
\label{eq:ControlvBoundary}
 \sum_{j = 1}^{J} \rho_j
   \int_{\partial B_{\rho_j} \brk{a_j}} \abs{\Deriv v}^p
   \le \frac{3 M}{T_*}\text{,}\\
\intertext{and}
\label{eq:ControluMinusvBoundary}
	\sum_{j = 1}^{J} \rho_j
	\int_{\partial B_{\rho_j} \brk{a_j}} \abs{u-v}
	\leq  \frac{3 \rho_0^p \delta}{T_*}\text{.}
\end{gather}

We observe now that 
\begin{equation}
\label{eq_poi9OoZaisheuJ8ahbie5yee}
A \subseteq
\bigcup_{a \in A_*} B_{2\rho} \brk{a}\subseteq 
  \bigcup_{j = 1}^J B_{\rho_j} \brk{a_j}
\end{equation}
and, by~\eqref{eq_ingaix2Oojeifohleixae1AB},
\begin{equation}
\label{eq_OophieG0oekoorohzai3rooj}
 \max_{j \in \set{1, \dotsc, J}} \rho_j \le  \sum_{j = 1}^{J} \rho_j
 \le \frac{2 e^{T_*} \rho M}{\eta^p} \le \rho_0 \le \rho_* \text{,}
\end{equation}
so that \ref{item:Radii} holds,
provided 
\begin{equation}
\label{eq_gei4Iba2phoheiN9siiFai0u}
  \boxed{2 \rho M e^{T_*} \le \rho_0 \eta^p\text{.}}
\end{equation}

By the Morrey--Sobolev embedding, we have for every \(j \in \set{1, \dotsc, J}\) and \(x\), \(y \in \partial B_{\rho_j} \brk{a_j}\), in view of~\eqref{eq:ControluBoundary},~\eqref{eq_OophieG0oekoorohzai3rooj}, and by the choice of \( \rho_* \) in terms of the injectivity radius,
\begin{equation}
	\label{eq:MorreySobolevu}
  \abs{u \brk{x} - u \brk{y}}
  \le \Cl{cst_eiPh4toh4ze6buN4dae8nohd} \brk[\bigg]{\int_{\partial B_{\rho_j}\brk{a_j}} \abs{\Deriv u}^p}^{\frac{1}{p}}
  \rho_j^\frac{1}{p}
  \le \frac{\Cl{cst_UaF3thoh5eengoubaiyeephe} M^{1/p}}{T_*^{1/p}}\text{.}
\end{equation}
We also have for every \(j \in \set{1, \dotsc, J}\) and every \(x\), \(y \in \partial B_{\rho_j} \brk{a_j}\), by~\eqref{eq:MorreySobolevu},~\eqref{eq:ControlvBoundary},~\eqref{eq:ControluMinusvBoundary}, by~\eqref{eq_OophieG0oekoorohzai3rooj} and the conditions on \(\rho_*\), since \( \rho \leq \rho_j \),
\begin{equation}
\label{eq:distutov}
\begin{split}
  &\abs{u \brk{x} - v \brk{y}}\\
  &\quad \le \sup_{z \in \partial B_{\rho_j} \brk{a_j}} \abs{u \brk{x} - u \brk{z}}
  + \inf_{z \in \partial B_{\rho_j} \brk{a_j}}
  \abs{u \brk{z} - v \brk{z}}
  + \sup_{z \in \partial B_{\rho_j} \brk{a_j}} \abs{v \brk{z} - v \brk{y}}
  \\
  &\quad \le \Cr{cst_eiPh4toh4ze6buN4dae8nohd} \brk[\bigg]{\int_{\partial B_{\rho_j}\brk{a_j}} \abs{\Deriv u}^p}^{\frac{1}{p}}
  \rho_j^\frac{1}{p}
  + \frac{\C}{\rho_j^{p - 1}} \int_{\partial  B_{\rho_j} \brk{a_j}} \abs{u - v}
  + \Cr{cst_eiPh4toh4ze6buN4dae8nohd} \brk[\bigg]{\int_{\partial B_{\rho_j}\brk{a_j}} \abs{\Deriv v}^p}^{\frac{1}{p}}
  \rho_j^\frac{1}{p}\\
  &\quad \le \frac{2 \Cr{cst_UaF3thoh5eengoubaiyeephe} M^{1/p}}{T_*^{1/p}} + \frac{\Cl{cst_BaesokiekooxohGaimai1Ush} \rho_0^p \delta }{\rho^p T_*}\text{.}
\end{split}
\end{equation}

Using the fact that \( \manifold{N} \) is a uniform Lipschitz neighborhood retract, we fix \(\tau \in \intvo{0}{\varepsilon}\) such that for every \(b \in \manifold{N}\), \(\Pi_{\manifold{N}}\) is well-defined on \(\Bar{B}_\tau \brk{b}\),
and such that for every \(z \in \Bar{B}_{\tau} \brk{b}\) and \(t \in \intvc{0}{1}\),
\[
  \abs{b - \Pi_{\manifold{N}} \brk{\brk{1 - t} z + t b}} \le \varepsilon\text{.}
\]
If
\begin{equation}
\label{eq_keHocai7vooyae4Cibipa1zo}
 \boxed{
 \frac{2 \Cr{cst_UaF3thoh5eengoubaiyeephe} M^{1/p}}{T_*^{1/p}}  + \frac{\Cr{cst_BaesokiekooxohGaimai1Ush} \rho_0^p \delta }{\rho^p T_*}
 \le \tau
  \text{,}
 }
\end{equation}
taking \(b_j \in u \brk{\partial B_{\rho_j} \brk{a_j}}\), for every \(x \in \partial B_{\rho_j} \brk{a_j}\),
we have in view of~\eqref{eq:MorreySobolevu} and~\eqref{eq:distutov},
\begin{align*}
 \abs{u \brk{x} - b_j} &\le \tau&
 &\text{and} &
 \abs{v \brk{x} - b_j} & \le \tau
 \text{.}
\end{align*}
We can thus define now, in exponential coordinates, by~\eqref{eq_OophieG0oekoorohzai3rooj} and the choice of \(\rho_*\),
\begin{equation*}
 w \brk{x}
 \defeq
 \begin{cases}
  v \brk{x} & \text{if \(x \in \manifold{M} \setminus \bigcup_{j = 1}^J B_{\rho_j} \brk{a_j}\),}\\
    \Pi_{\manifold{N}} \brk{\brk{2\frac{\abs{x}}{\rho_j} - 1} v \brk{\tfrac{\rho_j}{\abs{x}} x} + \brk{2 - 2 \frac{\abs{x}}{\rho_j}} b_j} &\text{if \(x \in B_{\rho_j}\brk{a_j} \setminus B_{\rho_j/2}\brk{a_j}\),}\\
    \Pi_{\manifold{N}} \brk{\brk{2 - 4\frac{\abs{x}}{\rho_j}} v \brk{\frac{\rho_j}{\abs{x}} x} + \brk{ 4 \frac{\abs{x}}{\rho_j} - 1} b_j} &\text{if \(x \in B_{\rho_j/2}\brk{a_j} \setminus B_{\rho_j/4} \brk{a_j}\),}\\
    v \brk{4x} &\text{if \(x \in B_{\rho_j/4}\brk{a_j}\).}
 \end{cases}
\end{equation*}

Since \(v\) is continuous, we have
\(
\operatorname{tr}_{\partial B_{\rho_j} \brk{a_j}} v
= v \vert_{\partial B_{\rho_j} \brk{a_j}}
\),
so that \(w \in W^{1, p} \brk{\manifold{M}, \manifold{N}}\).
Properties~\ref{item:ImageAnnulusw},~\ref{item:BoundaryConditionw},~\ref{item:Locationbj}, \ref{item:Outside}, and~\ref{item:wDilationv} clearly hold by construction of \(w\).
Moreover, the map defined in exponential coordinates as 
\[
 H \brk{t, x}
\defeq
\begin{cases}
   v \brk{x} & \text{if \(x \in \manifold{M} \setminus \bigcup_{j = 1}^J B_{\rho_j} \brk{a_j}\),}\\
   \Pi_{\manifold{N}} \brk[\big]{\brk{2\frac{\abs{x}}{\rho_j} - 1}v \brk{\tfrac{\rho_j}{\abs{x}} x} + \brk{2 - 2 \frac{\abs{x}}{\rho_j}} b_j} &\text{if \(x \in B_{\rho_j}\brk{a_j} \setminus B_{\brk{1 - t/2}\rho_j}\brk{a_j}\),}\\
   \begin{aligned}
   \Pi_{\manifold{N}} \brk[\big]{\brk{5 - 3t - 4\tfrac{\abs{x}}{\rho_j}}v \brk{\tfrac{\rho_j}{\abs{x}} x}\\
   + \brk{4 \tfrac{\abs{x}}{\rho_j} + 3t - 4} b_j}
   \end{aligned}
   &\text{if \(x \in B_{\brk{1 - t/2}\rho_j}\brk{a_j} \setminus B_{\brk{1 - 3t/4}\rho_j}\brk{a_j}\),}\\
   v \brk{x/\brk{1 - 3t/4}} &\text{if \(x \in B_{\brk{1 - 3t/4}\rho_j}\brk{a_j}\),}
\end{cases}
\]
can be checked to be a homotopy between the maps \(w\) and \(v\), so that~\ref{item:v_homotopic_w} holds.

Via an integration of~\eqref{eq:distutov}, we have 
for every \(j \in \set{1, \dotsc, J}\),
\[
\begin{split}
\int_{\partial B_{\rho_j} \brk{a_j}} \abs{v - b_j}^p
 &\le \C \rho_j^p \int_{\partial B_{\rho_j}\brk{a_j}} \abs{\Deriv u}^p
+ \C \rho_j^{p - 1}  \brk[\bigg]{\frac{1}{\rho_j^{p - 1}} \int_{\partial B_{\rho_j} \brk{a_j}} \abs{u - v}}^p\\
&\qquad\qquad + \C \rho_j^p \int_{\partial B_{\rho_j} \brk{a_j}} \abs{\Deriv v}^p\text{,}
 \end{split}
\]
and thus, by the conditions on \(\rho_*\) and \(\rho \le \rho_j\)
\begin{equation*}
\begin{split}
	\int_{B_{\rho_j} \brk{a_j} \setminus B_{\rho_j/4} \brk{a_j}} \abs{\Deriv w}^p
    & \le \C\rho_{j} \int_{\partial B_{\rho_j} \brk{a_j}} \abs{\Deriv v}^p + \frac{\C}{\rho_j^{p - 1}} \int_{\partial B_{\rho_j} \brk{a_j}} \abs{v - b_j}^p\\
	&\le
	\C
	\rho_j
	\int_{\partial B_{\rho_j } \brk{a_j}} \abs{\Deriv v}^p
	+ \Cl{cst_middle_eelopeheezo4vai9iQuoop8y}\brk[\bigg]{\frac{\rho_j}{\rho^p} \int_{\partial B_{\rho_j} \brk{a_j}} \abs{u - v}}^p\\
	&\qquad +
	 \C \rho_j \int_{\partial B_{\rho_j} \brk{a_j}} \abs{\Deriv u}^p\text{.}
\end{split}
\end{equation*}
Therefore, in view of~\eqref{eq:ControluBoundary},~\eqref{eq:ControlvBoundary},
and~\eqref{eq:ControluMinusvBoundary}, we find
\begin{equation}
\label{eq_och4coopahd6ii2Suish6Eim}
 \sum_{j = 1}^J
 \int_{B_{\rho_j} \brk{a_j} \setminus B_{\rho_j/4} \brk{a_j}} \abs{\Deriv w}^p
 \le \frac{\Cl{cst_oL2aitaiH6Ri1thie8eV7oTh} M}{T_*}
 + \Cl{cst_eiGua5ieQuu0opahzahghaep} \brk[\Big]{\frac{\rho_0^p \delta}{\rho^pT_*}}^p\text{.}
\end{equation}
In particular, if
\begin{equation}
\label{eq_Oejei6Phathuigh5ahw2Iet6}
  \boxed{\frac{\Cr{cst_oL2aitaiH6Ri1thie8eV7oTh} M}{T_*} + \Cr{cst_eiGua5ieQuu0opahzahghaep}
 \brk[\Big]{\frac{\rho_0^p \delta}{\rho^pT_*}}^p
 \le \varepsilon^p\text{,}}
\end{equation}
then \ref{item:annuli} follows from \eqref{eq_och4coopahd6ii2Suish6Eim}.

We now prove that \( u \) and \( w_{0} \) are homotopic, relying on Proposition~\ref{proposition_small_homotopy}.
Since \(w_0 = w\) on \( B_{\rho_j} \brk{a_j} \setminus B_{\rho_j/4} \brk{a_j} \) and \( w_0 = b \) on \( B_{\rho_j/4} \brk{a_j} \),
we have by \eqref{eq_och4coopahd6ii2Suish6Eim}
\begin{equation}
\label{eq_uFohghie2Saeneen2Noh1oot}
 \sum_{j = 1}^J
 \int_{B_{\rho_j} \brk{a_j}} \abs{\Deriv w_0}^p
\le
 \sum_{j = 1}^J
 \int_{B_{\rho_j} \brk{a_j} \setminus B_{\rho_j/4} \brk{a_j}} \abs{\Deriv w}^p
 \le \frac{\Cr{cst_oL2aitaiH6Ri1thie8eV7oTh} M}{T_*}
 + \Cr{cst_eiGua5ieQuu0opahzahghaep} \brk[\Big]{\frac{\rho_0^p \delta}{\rho^pT_*}}^p\text{.}
\end{equation}
Given \(a \in \manifold{M}\), we have, either
\(B_{\rho/2} \brk{a} \subseteq \bigcup_{j = 1}^J B_{\rho_j} \brk{a_j}\), and thus, by \eqref{eq_uFohghie2Saeneen2Noh1oot},
\[
 \int_{B_{\rho/2} \brk{a}} \abs{\Deriv w_0}^p
 \le \frac{\Cr{cst_oL2aitaiH6Ri1thie8eV7oTh} M}{T_*}
 + \Cr{cst_eiGua5ieQuu0opahzahghaep} \brk[\Big]{\frac{\rho_0^p \delta}{\rho^pT_*}}^p\text{,}
\]
or there exists \(a' \in \manifold{M} \setminus \bigcup_{j = 1}^J B_{\rho_j} \brk{a_j}\)
such that \(B_{\rho/2} \brk{a} \subseteq B_{\rho} \brk{a'}\), and thus, by \eqref{eq_uFohghie2Saeneen2Noh1oot} again
\[
\begin{split}
  \int_{B_{\rho/2} \brk{a}} \abs{\Deriv w_0}^p
  &\le
   \int_{B_{\rho} \brk{a'} \setminus \bigcup_{j = 1}^J B_{\rho_j} \brk{a_j}} \abs{\Deriv v}^p
   +  \sum_{j = 1}^J
 \int_{B_{\rho_j} \brk{a_j}} \abs{\Deriv w_0}^p\\
 &\le \eta^p + \frac{\Cr{cst_oL2aitaiH6Ri1thie8eV7oTh} M}{T_*}  + \Cr{cst_eiGua5ieQuu0opahzahghaep}
  \brk[\Big]{\frac{\rho_0^p \delta}{\rho^pT_*}}^p
\end{split}
\]
in view of~\eqref{eq_poi9OoZaisheuJ8ahbie5yee},~\eqref{eq_cha5piephae2nee8geiZaeng} and~\eqref{eq_och4coopahd6ii2Suish6Eim}.
In particular, if
\begin{equation}
\label{eq_neeFahj1eFu6chee3aiGhaeHbis}
 \boxed{\eta^p + \frac{\Cr{cst_oL2aitaiH6Ri1thie8eV7oTh} M}{T_*} + \Cr{cst_eiGua5ieQuu0opahzahghaep}
 \brk[\Big]{\frac{\rho_0^p \delta}{\rho^pT_*}}^p
 \le \theta^p\text{,}}
\end{equation}
then
\[
 \int_{B_{\rho/2} \brk{a}} \abs{\Deriv w_0}^p \le \theta^p\text{.}
\]

In order to verify the assumptions of Proposition~\ref{proposition_small_homotopy}, we actually need to work with a modified version of \( u \), constructed from \( u \) by a similar process to the one used to construct \( w_0 \) from \( v \).
More specifically, given \(\sigma_1, \dotsc, \sigma_J \) such that \(0 < \sigma_j < \rho_j/4\), we define the map \(u_0 \colon \manifold{M} \to \manifold{N}\) by
\begin{equation*}
 u_0 \brk{x}
 \defeq
 \begin{cases}
  u \brk{x} & \text{if \(x \in \manifold{M} \setminus \bigcup_{j = 1}^J B_{\rho_j} \brk{a_j}\),}\\
    \Pi_{\manifold{N}} \brk[\big]{\brk{2\tfrac{\abs{x}}{\rho_j} - 1} u \brk{\tfrac{\rho_j}{\abs{x}} x} + \brk[\big]{2 - 2 \frac{\abs{x}}{\rho_j}} b_j} &\text{if \(x \in B_{\rho_j}\brk{a_j} \setminus B_{\rho_j/2}\brk{a_j}\),}\\
    b_j & \text{if \(x \in B_{\rho_j/2} \brk{a_j}\setminus B_{2 \sigma_j} \brk{a_j}\),}\\
    \Pi_{\manifold{N}} \brk[\big]{\brk{2 - \frac{\abs{x}}{\sigma_j}} u \brk{\tfrac{\rho_j}{\abs{x}} x} + \brk[\big]{ \frac{\abs{x}}{\sigma_j} - 1} b_j} &\text{if \(x \in B_{2\sigma_j}\brk{a_j} \setminus B_{\sigma_j} \brk{a_j}\),}\\
    u \brk{\tfrac{\rho_j}{\sigma_j}x} &\text{if \(x \in B_{\sigma_j}\brk{a_j}\).}
 \end{cases}
\end{equation*}
We observe that the map \(u\) is homotopic to \(u_0\).
Therefore, it suffices to show that \( w_0 \) is homotopic to \( u_0 \).

As we did for \( v \) above, we have for every \(j \in \set{1, \dotsc, J}\) that, by~\eqref{eq:MorreySobolevu},
\[
 \int_{\partial B_{\rho_j} \brk{a_j}} \abs{u \brk{x} - b_j}^p
 \le \C \rho_j^p \int_{\partial B_{\rho_j} \brk{a_j}} \abs{\Deriv u}^p\text{,}
\]
and thus, by~\eqref{eq_OophieG0oekoorohzai3rooj} and the conditions on \(\rho_*\),
\[
 \int_{B_{\rho_j} \brk{a_j} \setminus B_{\sigma_j} \brk{a_j}} \abs{\Deriv u_0}^p
 \le \C \rho_j \int_{\partial B_{\rho_j} \brk{a_j}} \abs{\Deriv u}^p\text{.}
\]
Therefore, by~\eqref{eq:ControluBoundary},
\[
 \sum_{j = 1}^J
 \int_{B_{\rho_j} \brk{a_j} \setminus B_{\sigma_j} \brk{a_j}} \abs{\Deriv u_0}^p
 \le \frac{\Cl{cst_shoo4Keenohn6caifoo7ieta} M}{T_*}\text{.}
\]
We also have, by~\eqref{eq_OophieG0oekoorohzai3rooj}, the conditions on \(\rho_*\), and by the assumption \eqref{eq_ek3eith6shahp2eeleiweeSh},
\[
 \int_{B_{\sigma_j} \brk{a_j}} \abs{\Deriv u_0}^p
 \le \Cl{cst_Zoh1mooxoyoom6chahFohH4o} \int_{B_{\rho_j} \brk{a_j}} \abs{\Deriv u}^p\le \Cr{cst_Zoh1mooxoyoom6chahFohH4o} \delta^p \text{,}
\]
with \(\Cr{cst_Zoh1mooxoyoom6chahFohH4o} \ge 1\).
(In the case where \( \manifold{M} \) is flat, one can take \( \Cr{cst_Zoh1mooxoyoom6chahFohH4o} = 1 \);
the constant comes from the bound on the geometry of \( \manifold{M} \) on scales below \(\rho_*\).)

Given \(a \in \manifold{M}\), we consider two cases.
If \(B_{\rho/2} \brk{a} \subseteq B_{\rho_j} \brk{a_j}\), then
\[
  \int_{B_{\rho/2} \brk{a}} \abs{\Deriv u_0}^p
  \le \frac{\Cr{cst_shoo4Keenohn6caifoo7ieta} M}{T_*}
  + \Cr{cst_Zoh1mooxoyoom6chahFohH4o} \delta^p\text{,}
\]
provided \eqref{eq_gei4Iba2phoheiN9siiFai0u} holds. 
Otherwise, since \(\rho \le \rho_j/2\) and \(\sigma_j \le \rho_j/4\),
\(B_{\rho/2} \brk{a} \cap B_{\sigma_j} \brk{a_j} = \emptyset\), and thus by~\eqref{eq_vohzohehie2aeWa0Ie8vuesh},
\[
   \int_{B_{\rho/2} \brk{a}} \abs{\Deriv u_0}^p
   \le  \frac{\Cr{cst_shoo4Keenohn6caifoo7ieta} M}{T_*} +
   \delta^p\text{,}
\]
provided \(2 \rho \le \rho_0\),
which follows from the condition~\eqref{eq_gei4Iba2phoheiN9siiFai0u},
assuming without loss of generality that 
\begin{equation}
\label{eq_Na2Fie7eish1Echee3ce0too}
\boxed{
\eta^{p} \leq M \text{.}} 
\end{equation}
If
\begin{equation}
\label{eq_neeFahj1eFu6chee3aiGhaeH}
  \boxed{\frac{\Cr{cst_shoo4Keenohn6caifoo7ieta} M}{T_*} +
   \Cr{cst_Zoh1mooxoyoom6chahFohH4o} \delta^p \le \theta^p\text{,}}
\end{equation}
we then have
\begin{equation*}
  \int_{B_{\rho/2} \brk{a}} \abs{\Deriv u_0}^p
  \le \theta^p\text{.}
\end{equation*}

Finally, since \(u_0 = u\) and \( w_0 = v \) outside of the balls \( B_{\rho_j}\brk{a_j} \), we find that
\begin{equation*}
\begin{split}
 \int_{B_{\rho/2} \brk{a}}
 \abs{u_0 - w_0}
 &\le
 \int_{\manifold{M}} \abs{u - v}
 + \Cl{cst_ez3Eeshae3foaphiavai4IZu} \rho^p \varepsilon
 + \sum_{j = 1}^J \Cl{cst_oos0iene8fov3joog7zoo8Lo} \sigma_j^p\\
 &\le \rho_0^p \delta + \Cr{cst_ez3Eeshae3foaphiavai4IZu}\rho^p \varepsilon
 + \Cr{cst_oos0iene8fov3joog7zoo8Lo}
 \sum_{j = 1}^J \sigma_j^p\text{.}
\end{split}
\end{equation*}
We note that here \( \Cr{cst_oos0iene8fov3joog7zoo8Lo} \) depends on \( u \) and \( v \) through their \( L^{\infty} \) norm.
If
\[
 \rho_0^p \delta + \Cr{cst_ez3Eeshae3foaphiavai4IZu}\rho^p \varepsilon
 + \Cr{cst_oos0iene8fov3joog7zoo8Lo}
 \sum_{j = 1}^J \sigma_j^p \le \frac{\rho^p \theta}{2^p} \text{,}
\]
we have
\begin{equation}
\label{eq_wai0zehechuho7och2Zeefah}
\int_{B_{\rho/2} \brk{a}}
 \abs{u_0 - w_0} \le \frac{\rho^p \theta}{2^p}\text{.}
\end{equation}
If
\begin{equation}
\label{eq_ooc7phoo4ohneeng4Ioz3ied}
\boxed{
  \frac{\rho_0^p}{\rho^p} \delta + \Cr{cst_ez3Eeshae3foaphiavai4IZu} \varepsilon \le \frac{\theta}{2^{p + 1}}\text{,}}
\end{equation}
then \(\sigma_1, \dotsc, \sigma_k\) can be chosen sufficiently small, depending on \(u\) and \(v\), so that~\eqref{eq_wai0zehechuho7och2Zeefah} holds.
By Proposition~\ref{proposition_small_homotopy},
\(w_0\) is homotopic to \(u_0\) and thus to \(u\), proving~\ref{item:u_homotopic_w0}.

It remains to show that \(\eta\), \(\rho\), and \(T_*\) satisfying the conditions~\eqref{eq_gei4Iba2phoheiN9siiFai0u}, \eqref{eq_keHocai7vooyae4Cibipa1zo}, \eqref{eq_Oejei6Phathuigh5ahw2Iet6}, \eqref{eq_neeFahj1eFu6chee3aiGhaeHbis}, \eqref{eq_Na2Fie7eish1Echee3ce0too},~\eqref{eq_neeFahj1eFu6chee3aiGhaeH}, and ~\eqref{eq_ooc7phoo4ohneeng4Ioz3ied} can be found.
We first assume without loss of generality that
\begin{align*}
 \varepsilon &\le \frac{ \theta}{2^{p + 2} \Cr{cst_ez3Eeshae3foaphiavai4IZu}}&
 &\text{ and} &
 \rho_0 &\le \rho_*\text{.}
\end{align*}
We next choose successively
\begin{gather*}
\begin{aligned}
 \eta &\defeq \min \brk[\Big]{M^{1/p}, \frac{\theta}{3^{1/p}}, \frac{\varepsilon}{2^{1/p}}} \text{,}&
 T_* &\defeq
 \max \brk[\Big]{
 \frac{\brk{4\Cr{cst_UaF3thoh5eengoubaiyeephe}}^{p}M}{\tau^{p}}, 
 \frac{\Cr{cst_oL2aitaiH6Ri1thie8eV7oTh} M}{\eta^p},
 \frac{2\Cr{cst_shoo4Keenohn6caifoo7ieta} M}{\theta^p}}\text{,}&
\end{aligned}\\
\begin{aligned}
  \lambda &\defeq \frac{\eta^p}{2M  e^{T_*}}\text{,}&
  \rho &\defeq \lambda \rho_0\text{,}&
\delta
&\defeq
\min 
  \brk[\Big]{ 
    \frac{T_* \lambda^p \tau}{2 \Cr{cst_BaesokiekooxohGaimai1Ush}},
  \frac{T_* \lambda^p \eta}{\brk{\Cr{cst_eiGua5ieQuu0opahzahghaep}}^{1/p}},
  \frac{\theta}{\brk{2 \Cr{cst_Zoh1mooxoyoom6chahFohH4o}}^{1/p}}, \frac{\theta\lambda^p}{2^{p + 2}}}\text{,}
\end{aligned}
\end{gather*}
and check that all the conditions are satisfied.
\resetconstant
\end{proof}

\section{Analytical obstruction for integer exponent}
\label{section_analytical_obstruction}

This section is devoted to the proof of Theorem~\ref{theorem_main}.
We start by explaining some tools that will be crucial to construct a suitable family of Sobolev mappings with values into a skeleton of \( \Rset^{n} \) and prove that their relaxed energy grows superlinearly with respect to their Sobolev energy.
We then explain the procedure to transfer these constructions, first to a compact skeleton, and then to a compact manifold without boundary, yielding the proof of Theorem~\ref{theorem_main} via the nonlinear uniform boundedness principle.

\subsection{Conical joint estimate on the Brouwer degree}

Our goal in this section is to establish an analytic estimate of the joint Brouwer degrees of a map \( f \in C^{1}\brk{\Sset^{n-1}, \Rset^{n}} \) with respect to a finite number of points in \( \Rset^{n} \) that are avoided by \( f \).

We first recall that Brouwer's topological degree of a map \(f \in C^1 \brk{\Sset^{n-1}, \Sset^{n-1}}\) 
can be computed by the formula
\begin{equation}
\label{eq_degree_formula}
\deg \brk{f}
=
\frac{\displaystyle
  \int_{\Sset^{n-1}}
  \brk{\det \Deriv f} w \circ f }
  {\displaystyle
  \int_{\Sset^{n-1}} w}\text{,}
\end{equation}
for any weight function \(w \in C \brk{\Sset^{n-1}, \Rset}\) with \(\int_{\Sset^{n-1}} w \ne 0\).
(The determinant in~\eqref{eq_degree_formula} is computed on the tangent space with the orientation induced by the canonical orientation on the ambient space \(\Rset^{n}\).)

Actually, formula~\eqref{eq_degree_formula} is still valid for computing the degree of a continuous map which is not \( C^{1} \), but merely \( W^{1,n-1} \), see \citelist{\cite{Brezis_Coron_1983}\cite{Brezis_Nirenberg_1995}}.
Therefore, from now on, we will work with continuous and \( W^{1,n-1} \) maps --- this notably avoids some technical issues when working with smooth maps between cell complexes.
For every open set \(G \subset \Sset^{n-1}\), taking \(w = w_{k} \), where \( \brk{w_{k}}_{k \in \Nset} \) is a sequence approximating the characteristic function of \(G\), and letting \( k \to \infty \), we get the estimate
\begin{equation}
	\label{eq:estimate_degree_sphere}
 \abs{\deg \brk{f}}
 \le
 \frac{1}{\mathcal{H}^{n-1} \brk{G}}
 \int_{f^{-1} \brk{G}} \abs{\Deriv f}^{n-1}\text{.}
\end{equation}

Given \(f \in W^{1,n-1} \brk{\Sset^{n-1}, \Rset^{n}\setminus \set{0}} \cap C \brk{\Sset^{n-1}, \Rset^{n}\setminus \set{0}} \), we have
\begin{equation}
	\label{eq:degre_sphere_vs_origin}
 \deg f \defeq \deg \brk{f/\abs{f}}\text{.}
\end{equation}
We say that a set \(\manifold{C} \subseteq \Rset^{n}\) is a cone whenever, for every \(t \in \intvo{0}{\infty}\) and \(x \in \manifold{C}\), \(tx \in \manifold{C}\).
If \(\manifold{C}\subseteq \Rset^{n}\) is an open cone, then
it follows from~\eqref{eq:estimate_degree_sphere} and~\eqref{eq:degre_sphere_vs_origin} that
\begin{equation}
	\label{eq:estimate_degree_origin}
  \abs{\deg \brk{f}}
  \le
  \frac{1}{\mathcal{H}^{n-1} \brk{\manifold{C} \cap \Sset^{n-1}}}
  \int_{f^{-1} \brk{\manifold{C}}} \frac{\abs{\Deriv f}^{n-1}}{\abs{f}^{n-1}}\text{,}
\end{equation}
since \(
 \abs{\Deriv \brk{f/\abs{f}}} \le \abs{\Deriv f}/\abs{f}\) everywhere on \(\Sset^{n - 1}\).

We will use the degree with respect to a point \(\sigma \in \Rset^{n}\), defined for \(f \in C \brk{\Sset^{n-1}, \Rset^{n} \setminus \set{\sigma}}\) as
\begin{equation*}
 \deg_\sigma \brk{f} \defeq \deg \brk{f - \sigma}\text{.}
\end{equation*}
We are now going to give a joint estimate on the degrees with respect to a finite set of points \(\Sigma \subseteq \Rset^{n}\).
We start by observing that~\eqref{eq:estimate_degree_origin} implies that
\begin{equation}
	\label{eq:joint_estimate_with_sum}
\begin{split}
 \sum_{\sigma \in \Sigma} \abs{ \deg_\sigma \brk{f}}
  &\le
 \frac{1}{\mathcal{H}^{n-1} \brk{\manifold{C} \cap \Sset^{n-1}}}
 \sum_{\sigma \in \Sigma}\int_{f^{-1} \brk{\manifold{C} + \sigma}} \frac{\abs{\Deriv f}^{n-1}}{\abs{f - \sigma}^{n-1}}\\
 &\le
 \frac{1}{\mathcal{H}^{n-1} \brk{\manifold{C} \cap \Sset^{n-1}}}
 \brk[\Big]{\int_{f^{-1} \brk{\manifold{C} + \Sigma}} \abs{\Deriv f}^{n-1}}
  \sup_{x \in \Sset^{n-1}}
 \sum_{\sigma \in \Sigma}
 \frac{1}{\abs{f \brk{x} - \sigma}^{n-1}}\text{.}
\end{split}
\end{equation}
We now estimate the sum that appears on the last line of~\eqref{eq:joint_estimate_with_sum}.

\begin{proposition}
	\label{proposition_rearrangement_inequality}
If \(\Sigma \subseteq \Zset^{n}\), \(y \in \Rset^n\), and if \(\dist \brk{y, \Sigma} \ge 1/2\), then
\begin{equation*}
  \sum_{\sigma \in \Sigma}
 \frac{1}{\abs{y - \sigma}^{n-1}}
 \le C \brk{\# \Sigma}^{\frac{1}{n}}\text{,}
\end{equation*}
where the constant \( C > 0 \) depends only on \( n \).
\end{proposition}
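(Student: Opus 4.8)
The plan is to read this as a rearrangement‐type estimate: among all subsets $\Sigma \subseteq \Zset^n$ with prescribed cardinality $N \defeq \#\Sigma$ and with $\dist\brk{y,\Sigma}\ge 1/2$, the sum $\sum_{\sigma\in\Sigma}\abs{y-\sigma}^{-\brk{n-1}}$ is largest when $\Sigma$ consists of the $N$ lattice points nearest to $y$, and for that extremal configuration the $k^{\text{th}}$ smallest distance grows like $k^{1/n}$. Rather than invoke an abstract rearrangement inequality, I would realise this directly by ordering the points of $\Sigma$ by distance to $y$ and bounding each distance from below by a lattice‐counting argument.

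Concretely, enumerate $\Sigma=\set{\sigma_1,\dots,\sigma_N}$ so that $\abs{y-\sigma_1}\le\dotsb\le\abs{y-\sigma_N}$. Since distinct points of $\Zset^n$ are at distance at least $1$, the balls $B_{1/2}\brk{\sigma}$, $\sigma\in\Sigma$, are pairwise disjoint; moreover $\abs{y-\sigma}\le r$ implies $B_{1/2}\brk{\sigma}\subseteq B_{r+1/2}\brk{y}$, so comparing volumes shows that the number of $\sigma\in\Sigma$ with $\abs{y-\sigma}\le r$ is at most $\brk{2r+1}^n$, hence at most $\brk{4r}^n$ whenever $r\ge 1/2$. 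Applying this with $r\defeq\abs{y-\sigma_k}$, which is $\ge\dist\brk{y,\Sigma}\ge 1/2$ by hypothesis and which dominates the distances to $\sigma_1,\dots,\sigma_k$, yields $k\le\brk{4\abs{y-\sigma_k}}^n$, that is, $\abs{y-\sigma_k}\ge k^{1/n}/4$ for every $k\in\set{1,\dots,N}$. It then remains to compare a series of exponent below $1$ with an integral:
\begin{align*}
	\sum_{\sigma\in\Sigma}\frac{1}{\abs{y-\sigma}^{n-1}}
	&=\sum_{k=1}^{N}\frac{1}{\abs{y-\sigma_k}^{n-1}}
	\le 4^{n-1}\sum_{k=1}^{N}k^{-\frac{n-1}{n}}\\
	&\le 4^{n-1}\int_{0}^{N}t^{-\frac{n-1}{n}}\dif t
	= n\,4^{n-1}\,N^{\frac{1}{n}}\text{,}
\end{align*}
using that $t\mapsto t^{-\brk{n-1}/n}$ is nonincreasing and that $\brk{n-1}/n<1$, so its antiderivative $n\,t^{1/n}$ is finite at $0$. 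This is the claimed inequality, with $C\defeq n\,4^{n-1}$, depending only on $n$.

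I do not expect any serious obstacle here: the only substantive point is the volume‐comparison count giving $\abs{y-\sigma_k}\gtrsim k^{1/n}$, and everything else is the elementary series–integral comparison above. It is worth noting that the hypothesis $\dist\brk{y,\Sigma}\ge 1/2$ enters precisely to keep the nearest point ($k=1$) from contributing an unbounded term, and that the exponent $\brk{n-1}/n$ being strictly less than $1$ is exactly what produces the rate $N^{1/n}$ rather than $\log N$ or $N$.
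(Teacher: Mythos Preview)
Your proof is correct and follows essentially the same rearrangement idea as the paper, resting on the identical lattice-counting bound $\#\brk{\Sigma\cap B_r\brk{y}}\lesssim r^n$ for $r\ge 1/2$. The only cosmetic difference is packaging: you order the points and bound the $k^{\text{th}}$ distance from below, while the paper writes the sum via the layer-cake identity $\sum_\sigma\abs{y-\sigma}^{-\brk{n-1}}=\int_0^\infty\#\brk{\Sigma\cap B_r\brk{y}}\,\brk{n-1}r^{-n}\dif r$ and bounds the counting function by $\min\brk{\#\Sigma,\,Cr^n}$ before integrating---the two computations are dual to one another and yield the same constant up to harmless factors.
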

\begin{proof}
We have
\begin{equation*}
\begin{split}
 \sum_{\sigma \in \Sigma}
 \frac{1}{\abs{y - \sigma}^{n-1}}
  &= \int_0^\infty \# \brk{\Sigma \cap B_r \brk{y}} \frac{n-1}{r^{n}}\dif r\\
  &\le \Cl{cst_AeFii7kae9phohg2naeruiYi} \int_{0}^\infty \min \brk{\# \Sigma, r^{n}} \frac{1}{r^{n}} \dif r\\
  &= \Cr{cst_AeFii7kae9phohg2naeruiYi} 
  \brk[\bigg]{\int_{0}^{\brk{\# \Sigma}^{1/n}} \dif r +
  \int_{\brk{\# \Sigma}^{1/n}}^\infty \frac{\# \Sigma}{r^{n}} \dif r}\\
  &= \Cr{cst_AeFii7kae9phohg2naeruiYi}\brk[\big]{1 + \tfrac{1}{n-1}} \brk{\# \Sigma}^{\frac{1}{n}}\text{.}
\end{split}
\end{equation*}
In the first inequality above, we have used the fact that
\begin{equation*}
 \# \brk{\Sigma \cap B_r \brk{y}}
 \le \mathcal{L}^{n} \brk{B_{r + \sqrt{n}/2} \brk{y}}
 = \mathcal{L}^{n} \brk{B_1} \brk{r + \sqrt{n}/2}^{n}
 \le \C r^{n}
\end{equation*}
provided \( r \ge 1/2 \), whereas \( \# \brk{\Sigma \cap B_r \brk{y}} = 0 \) when \(r < 1/2\).
\resetconstant
\end{proof}

Injecting Proposition~\ref{proposition_rearrangement_inequality} in~\eqref{eq:joint_estimate_with_sum}, we obtain the following conical joint estimate on the degrees.

\begin{proposition}
\label{proposition_conical_joint_degree_estimate}
Given a finite set \(\Sigma \subseteq \Zset^{n}\),  an open cone \( \manifold{C} \subset \Rset^{n} \), and \( f \in W^{1,n-1} \brk{\Sset^{n-1}, \Rset^{n} \setminus \brk{B_{1/2} + \Sigma}} \cap C\brk{\Sset^{n-1}, \Rset^{n} \setminus \brk{B_{1/2} + \Sigma}} \), one has 
\begin{equation*}
\brk[\bigg]{
\sum_{\sigma \in \Sigma} \abs{ \deg_\sigma \brk{f}}}^{1- \frac{1}{n}}
\le
\frac{C}{\mathcal{H}^{n-1} \brk{\manifold{C} \cap \Sset^{n-1}}}
\int_{f^{-1} \brk{\manifold{C} + \Sigma}} \abs{\Deriv f}^{n-1}\text{,}
\end{equation*}
where the constant \( C > 0 \) depends only on \( n \).
\end{proposition}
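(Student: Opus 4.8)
The plan is to read off the conclusion from the estimate~\eqref{eq:joint_estimate_with_sum} that has already been established, combined with Proposition~\ref{proposition_rearrangement_inequality}, the only genuinely new ingredient being an elementary bookkeeping step that upgrades the crude bound to one with the sharp exponent \( 1 - \frac1n \).

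First I would observe that, since \( f \) takes its values outside \( B_{1/2} + \Sigma \), for every \( x \in \Sset^{n-1} \) one has \( \dist\brk{f\brk{x}, \Sigma} \ge 1/2 \), so Proposition~\ref{proposition_rearrangement_inequality} applies with \( y = f\brk{x} \) and gives
\[
  \sup_{x \in \Sset^{n-1}} \sum_{\sigma \in \Sigma} \frac{1}{\abs{f\brk{x} - \sigma}^{n-1}} \le C\brk{\#\Sigma}^{\frac1n}\text{.}
\]
Injecting this into~\eqref{eq:joint_estimate_with_sum} already yields
\[
  \sum_{\sigma \in \Sigma} \abs{\deg_\sigma\brk{f}} \le \frac{C\brk{\#\Sigma}^{\frac1n}}{\mathcal{H}^{n-1}\brk{\manifold{C} \cap \Sset^{n-1}}} \int_{f^{-1}\brk{\manifold{C} + \Sigma}} \abs{\Deriv f}^{n-1}\text{,}
\]
but this is not yet the claimed inequality, because \( \#\Sigma \) may be arbitrarily large even when all the degrees \( \deg_\sigma\brk{f} \) vanish.

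To fix this, the key point is to apply the preceding reasoning not to \( \Sigma \) but to the smaller set \( \Sigma' \defeq \set{\sigma \in \Sigma \st \deg_\sigma\brk{f} \neq 0} \). This is legitimate: \( \Sigma' \subseteq \Sigma \subseteq \Zset^{n} \), the map \( f \) still avoids \( B_{1/2} + \Sigma' \), and \( \manifold{C} + \Sigma' \subseteq \manifold{C} + \Sigma \), so that \( f^{-1}\brk{\manifold{C} + \Sigma'} \subseteq f^{-1}\brk{\manifold{C} + \Sigma} \). Writing \( S \defeq \sum_{\sigma \in \Sigma} \abs{\deg_\sigma\brk{f}} = \sum_{\sigma \in \Sigma'} \abs{\deg_\sigma\brk{f}} \), and using that each nonzero degree is an integer of absolute value at least \( 1 \), one gets \( \#\Sigma' \le S \), whence
\[
  S \le \frac{C\, S^{\frac1n}}{\mathcal{H}^{n-1}\brk{\manifold{C} \cap \Sset^{n-1}}} \int_{f^{-1}\brk{\manifold{C} + \Sigma}} \abs{\Deriv f}^{n-1}\text{.}
\]
If \( S = 0 \) the stated inequality is trivial, and otherwise it suffices to divide through by \( S^{1/n} \).

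I do not expect any serious obstacle here: the argument is short once~\eqref{eq:joint_estimate_with_sum} and Proposition~\ref{proposition_rearrangement_inequality} are in place. The only point that requires a little care is precisely the passage from \( \Sigma \) to \( \Sigma' \), namely recognizing that the wasteful counting factor \( \brk{\#\Sigma}^{1/n} \) ought to be replaced by \( \brk{\#\Sigma'}^{1/n} \), and that the latter is itself controlled by the left-hand side, which is exactly what converts the linear bound into the inequality with exponent \( 1 - \frac1n \).
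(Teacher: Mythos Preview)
Your proof is correct and is essentially identical to the paper's own argument: the paper begins with ``without loss of generality, we assume that for every \(\sigma \in \Sigma\), \(\deg_\sigma \brk{f} \ne 0\)'', which is exactly your passage from \(\Sigma\) to \(\Sigma'\), and then combines~\eqref{eq:joint_estimate_with_sum} with Proposition~\ref{proposition_rearrangement_inequality} and the bound \(\#\Sigma' \le S\) in the same way. Your write-up is in fact slightly more explicit about why the replacement of \(\Sigma\) by \(\Sigma'\) is legitimate (the inclusion \(f^{-1}\brk{\manifold{C}+\Sigma'} \subseteq f^{-1}\brk{\manifold{C}+\Sigma}\)), which the paper leaves implicit.
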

\begin{proof}
Without loss of generality, we assume that for every \(\sigma \in \Sigma\), 
\(\deg_\sigma \brk{f} \ne 0\).
We then have by \eqref{eq:joint_estimate_with_sum} and Proposition~\ref{proposition_rearrangement_inequality},
\begin{equation*}
\begin{split}
 \sum_{\sigma \in \Sigma} \abs{ \deg_\sigma \brk{f}}
&\le
\frac{\Cl{cst:Prop32} \brk{\# \Sigma}^\frac{1}{n}}{\mathcal{H}^{n-1} \brk{\manifold{C} \cap \Sset^{n-1}}}
\int_{f^{-1} \brk{\manifold{C} + \Sigma}} \abs{\Deriv f}^{n-1} \\
&\le \frac{\Cr{cst:Prop32} }{\mathcal{H}^{n-1}
\brk{\manifold{C} \cap \Sset^{n-1}}} \brk[\bigg]{
\sum_{\sigma \in \Sigma} \abs{ \deg_\sigma \brk{f}}}^{\frac{1}{n}}
\int_{f^{-1} \brk{\manifold{C} + \Sigma}} \abs{\Deriv f}^{n-1}\text{,}
\end{split}
\end{equation*}
and the conclusion follows.
\resetconstant
\end{proof}

\subsection{Lower bounds on energies on spheres}

In this section, we combine our conical joint degrees estimate with our bubbling construction to give a lower estimate on the Sobolev energy of approximating sequences in different homotopy classes.

We first get a lower bound in a general setting in terms of joint degree differences between two continuous Sobolev maps.

\begin{proposition}
\label{proposition_sphere_joint_degree_bubbled}
Given a finite set \(\Sigma \subseteq \Zset^{n}\), open cones \(\manifold{C}_1, \dotsc, \manifold{C}_I \subseteq \Rset^{n}\),
and open sets \(G_1, \dotsc, G_{I} \subseteq \Rset^{n}\)
such that for every \(i \in \set{1,\dotsc, I}\),
\(
  G_i \cap \brk{\manifold{C}_i + \Sigma} = \emptyset
\),
there exists a constant \(C\) such that, given \(u \in W^{1, n-1} \brk{\Sset^{n-1}, \Rset^{n} \setminus \brk{B_{1/2}+\Sigma}} \cap C \brk{\Sset^{n-1}, \Rset^{n} \setminus \brk{B_{1/2}+\Sigma}}\) satisfying
\begin{equation*}
  u \brk{\Sset^{n-1}} \subseteq \bigcup_{i = 1}^{I} G_i\text{,}
\end{equation*}
for every \(M \in \intvo{0}{\infty}\), there exists \(\eta \in \intvo{0}{\infty}\) such that,
if \(v \in W^{1, n-1} \brk{\Sset^{n-1}, \Rset^{n} \setminus \brk{B_{1/2}+\Sigma}} \cap C \brk{\Sset^{n-1}, \Rset^{n} \setminus \brk{B_{1/2}+\Sigma}}\),
if
\begin{equation*}
  \int_{\Sset^{n-1}} \abs{\Deriv v}^{n-1} \le M\text{,}
\end{equation*}
and if
\begin{equation*}
 \int_{\Sset^{n-1}} \abs{u - v} \le \eta\text{,}
\end{equation*}
then
\begin{equation*}
 \brk[\bigg]{\sum_{\sigma \in \Sigma}
 \abs{\deg_\sigma \brk{u} - \deg_{\sigma} \brk{v}}}^{1 - \frac{1}{n}}
 \le
 C
 \sum_{i = 1}^{I} \int_{u^{-1} \brk{G_i} \cap v^{-1} \brk{\manifold{C}_i + \Sigma}} \abs{\Deriv v}^{n-1}\text{.}
\end{equation*}
Moreover, the constant \( C > 0 \) depends only on \( n \) and on \( \mathcal{H}^{n-1}\brk{\manifold{C}_{i} \cap \Sset^{n-1}} \).
\end{proposition}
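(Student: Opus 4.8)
The plan is to combine the bubbling proposition (Proposition~\ref{proposition_bubble_decomposition}, applied with $\manifold{M}=\Sset^{n-1}$ and $p=n-1$) with the conical joint degree estimate (Proposition~\ref{proposition_conical_joint_degree_estimate}), exploiting the additivity of the degree over disjoint small balls. First I would fix $u$ as in the statement; since $u(\Sset^{n-1})$ is compact and contained in the open set $\bigcup_{i=1}^I G_i$, a Lebesgue number argument produces $\varepsilon \in \intvo{0}{\infty}$ such that for every $x \in \Sset^{n-1}$ the ball $B_\varepsilon(u(x))$ is contained in some $G_i$, and moreover $\varepsilon$ is small enough that $B_\varepsilon(u(x))$ stays away from $B_{1/2}+\Sigma$. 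With $M$ given, I apply Proposition~\ref{proposition_bubble_decomposition} with this $\varepsilon$ and with the energy bound $M$ (increasing $M$ if needed so that $\int_{\Sset^{n-1}}\abs{\Deriv u}^{n-1}\le M$ as well — note $u$ is fixed, so this is harmless), obtaining a threshold $\delta$. I then fix $\rho_0 \in \intvo{0}{\delta}$ small enough that also the small-energy hypothesis \eqref{eq_vohzohehie2aeWa0Ie8vuesh} holds for the fixed map $u$ (possible by absolute continuity of $\int\abs{\Deriv u}^{n-1}$), and set $\eta \defeq \rho_0^{\,n-1}\delta$, so that the hypothesis $\int_{\Sset^{n-1}}\abs{u-v}\le\eta$ is exactly \eqref{eq_ek3eith6shahp2eeleiweeSh}. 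This produces the map $w$, the balls $B_{\rho_j}(a_j)$, the radii $\rho_j$, and the points $b_j\in u(\partial B_{\rho_j}(a_j))$.

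The heart of the argument is a degree bookkeeping step. By property~\ref{item:v_homotopic_w}, $w$ is homotopic to $v$, and by~\ref{item:u_homotopic_w0}, $w_0$ is homotopic to $u$; since homotopic maps into $\Rset^n\setminus\set{\sigma}$ have the same degree with respect to $\sigma$, we get $\deg_\sigma(v)=\deg_\sigma(w)$ and $\deg_\sigma(u)=\deg_\sigma(w_0)$ for every $\sigma\in\Sigma$. Now $w$ and $w_0$ agree outside $\bigcup_j B_{\rho_j/2}(a_j)$, and on each $\partial B_{\rho_j/2}(a_j)$ the map $w$ equals the constant $b_j$ (property~\ref{item:BoundaryConditionw}). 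Cutting $\Sset^{n-1}$ along these spheres and using additivity of the degree, I would write, for each $\sigma\in\Sigma$,
\[
 \deg_\sigma(w) - \deg_\sigma(w_0) = \sum_{j=1}^{J} \deg_\sigma\brk{w\restr{\partial B_{\rho_j/2}(a_j)}}\text{ contributions,}
\]
more precisely $\deg_\sigma(u)-\deg_\sigma(v) = \sum_{j : b_j \in \manifold{C}_? + \sigma \text{ side}} (\pm)\,\deg_\sigma(w_j)$ where $w_j$ is the restriction of $w$ to the closed ball $\Bar B_{\rho_j/2}(a_j)$ viewed as a sphere via the standard collapse of its boundary to the point $b_j$. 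Since $b_j\notin B_{1/2}+\Sigma$ this degree is well defined, and by the rescaling property~\ref{item:wDilationv} the ball $B_{\rho_j/4}(a_j)$ carries $w(x)=v(4x)$, so $w_j$ is, up to homotopy rel boundary, the restriction of $v$ to a small sphere — the bubble. Summing the absolute values,
\[
 \sum_{\sigma\in\Sigma}\abs{\deg_\sigma(u)-\deg_\sigma(v)} \le \sum_{j=1}^J \sum_{\sigma\in\Sigma}\abs{\deg_\sigma(w_j)}.
\]

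Finally I estimate each inner sum by Proposition~\ref{proposition_conical_joint_degree_estimate}. For the ball indexed by $j$, the value $b_j$ lies in $B_\varepsilon(u(x))$ for suitable $x$, hence in some $G_{i(j)}$; since $w_j$ on the annulus $B_{\rho_j}(a_j)\setminus B_{\rho_j/4}(a_j)$ takes values in $B_\varepsilon(b_j)\subseteq G_{i(j)}$ (property~\ref{item:Locationbj}) and $G_{i(j)}\cap(\manifold{C}_{i(j)}+\Sigma)=\emptyset$, the preimage $w_j^{-1}(\manifold{C}_{i(j)}+\Sigma)$ is contained in $B_{\rho_j/4}(a_j)$, where $w_j=v(4\cdot)$. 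Applying Proposition~\ref{proposition_conical_joint_degree_estimate} with cone $\manifold{C}_{i(j)}$ and using the change of variables $x\mapsto 4x$ (which is conformal-invariant for the $(n-1)$-energy on the $(n-1)$-sphere), each term is bounded by $C\int_{v^{-1}(\manifold{C}_{i(j)}+\Sigma)\cap (\text{image of the }j\text{-th bubble})}\abs{\Deriv v}^{n-1}$, and since $w_j^{-1}(G_{i(j)})\supseteq w_j^{-1}(\manifold{C}_{i(j)}+\Sigma)$ trivially one can further intersect with $u^{-1}(G_{i(j)})$ using that $u$ and $v$ are close — in fact on $\partial B_{\rho_j}(a_j)$ both land near $b_j\in G_{i(j)}$, and the bubbles sit inside balls where $u\approx v$, so the relevant region of $\partial B_{\rho_j/4}(a_j)$, after rescaling, lies in $u^{-1}(G_{i(j)})\cap v^{-1}(\manifold{C}_{i(j)}+\Sigma)$. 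Because the balls $B_{\rho_j}(a_j)$ are disjoint (property~\ref{item:BallsDisjoint}), summing over $j$ gives
\[
 \sum_{\sigma\in\Sigma}\abs{\deg_\sigma(u)-\deg_\sigma(v)} \le C\sum_{i=1}^{I}\int_{u^{-1}(G_i)\cap v^{-1}(\manifold{C}_i+\Sigma)}\abs{\Deriv v}^{n-1},
\]
and raising to the power $1-\tfrac1n$ (using subadditivity of $t\mapsto t^{1-1/n}$ versus the grouping, or rather applying the power estimate bubble-by-bubble before summing and then using concavity) yields the claimed inequality, with $C$ depending only on $n$ and the $\mathcal{H}^{n-1}(\manifold{C}_i\cap\Sset^{n-1})$.

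The main obstacle I anticipate is making the degree-splitting over the bubbles fully rigorous: one must verify that the degree of $w$ with respect to $\sigma$ genuinely decomposes as the sum over the balls of the bubble degrees, which requires care about orientations, about the fact that $w_0$ is only \emph{homotopic} to $u$ rather than equal (so the decomposition is really for $w$ vs.\ $w_0$, with $u,v$ brought in at the endpoints via homotopy invariance), and about the passage from $w_j$ on the full small ball to the rescaled copy of $v$ living on $B_{\rho_j/4}(a_j)$ — the annular region $B_{\rho_j/2}(a_j)\setminus B_{\rho_j/4}(a_j)$ where $w$ interpolates to $b_j$ contributes no net degree because its image misses $\manifold{C}_{i(j)}+\Sigma$, but this needs to be stated precisely using \eqref{eq:estimate_degree_origin}-type vanishing. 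A secondary technical point is matching the choice of $\eta$ (and hence $\rho_0,\delta$) so that all hypotheses of Proposition~\ref{proposition_bubble_decomposition} hold simultaneously for the given $M$; this is a bookkeeping matter but must be arranged in the right order, exactly as the boxed-constraint discipline in the proof of that proposition illustrates.
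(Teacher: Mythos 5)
Your proposal is correct and follows essentially the same route as the paper: apply the bubbling proposition with \(\varepsilon\) chosen by a Lebesgue-number argument and \(\eta = \rho_0^{n-1}\delta\), use homotopy invariance of the degree together with the splitting of \(w\) into \(w_0\) plus the bubbles \(w_j\) to get \(\deg_\sigma\brk{v} - \deg_\sigma\brk{u} = \sum_j \deg_\sigma\brk{w_j}\), estimate each bubble by the conical joint degree estimate (the annulus contributes nothing since its image lies in \(G_{i\brk{j}}\), which is disjoint from \(\manifold{C}_{i\brk{j}} + \Sigma\)), transfer to \(v\) by the rescaling property, and conclude by disjointness of the balls and subadditivity of \(t \mapsto t^{1-\frac{1}{n}}\) applied bubble-by-bubble before summing. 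The one justification to fix: the localization of the final integral to \(u^{-1}\brk{G_{i\brk{j}}}\) does not follow from any pointwise closeness of \(u\) and \(v\) (you only control \(\norm{u - v}_{L^1}\)); instead, as in the paper, one also chooses \(\rho_0\) via the uniform continuity of \(u\) so that \(u\brk{B_{\rho_j}\brk{a_j}} \subseteq B_{\varepsilon}\brk{u\brk{a_j}} \subseteq G_{i\brk{j}}\) (taking \(2\varepsilon\) as the Lebesgue number so that \(B_\varepsilon\brk{b_j}\) fits in the same \(G_{i\brk{j}}\)), which places the whole ball \(B_{\rho_j}\brk{a_j}\), hence the rescaled bubble region, inside \(u^{-1}\brk{G_{i\brk{j}}}\).
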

\begin{proof}
We choose \(\varepsilon\) sufficiently small so that
for every \(x \in \Sset^{n-1}\), there exists
\(i \in \set{1, \dotsc, I}\) for which
\(B_{2\varepsilon} \brk{u \brk{x}} \subseteq G_i\) (in other words, \(2\varepsilon\) is the Lebesgue number of the covering of the compact set by \(u \brk{\Sset^{n-1}}\) by \(G_1, \dotsc, G_I\)).

Without loss of generality, we may also assume that 
\[
	\int_{\Sset^{n-1}} \abs{\Deriv u}^{n-1}
	\leq
	M\text{.}
\]
We take \(\rho_0\) such that for every \(x \in \Sset^{n-1}\),
\(u \brk{B_{\rho_0} \brk{x}} \subseteq B_{\varepsilon} \brk{u \brk{x}}\).
Invoking Lebesgue's lemma, we may furthermore assume that \( \rho_0 \) has been chosen so that \( \rho_0 \leq \delta \) and
\[
	\sup_{a \in \Sset^{n-1}} \int_{B_{\rho_{0}} \brk{a}} \abs{\Deriv u}^{n-1}
	\leq
	\delta^{n-1}\text{,}
\]
where \( \delta \in \intvo{0}{\infty} \) is given by Proposition~\ref{proposition_bubble_decomposition}.
Finally, we let \( \eta = \rho_0^{n-1}\delta \).
Let us note that \( \rho_{0} \), and therefore \( \eta \), depend on \( u \) via the use of Lebesgue's lemma.
On the other hand, the constants that will appear in the proof, and therefore the final constant \( C \), do not depend on \( u \).

Let \(w\) be given by Proposition~\ref{proposition_bubble_decomposition}.
Defining
\(
  w_j \defeq w \restr{B_{\rho_j/2} \brk{a_j}}
\),
we observe that for every \(\sigma \in \Sigma\),
\begin{equation*}
\begin{split}
 \deg_\sigma \brk{v} - \deg_{\sigma} \brk{u} = \deg_{\sigma} \brk{w_0}
 + \sum_{j = 1}^J \deg_{\sigma} \brk{w_j} - \deg_{\sigma} \brk{u}
 = \sum_{j = 1}^J \deg_{\sigma} \brk{w_j}\text{.}
\end{split}
\end{equation*}
It follows thus by the triangle inequality and sublinearity that
\begin{equation*}
 \brk[\bigg]{
 \sum_{\sigma \in \Sigma} \abs{
  \deg_\sigma \brk{v} - \deg_{\sigma} \brk{u}}}^{1 - \frac{1}{n}}
\le \brk[\bigg]{\sum_{j = 1}^J \sum_{\sigma \in \Sigma} \abs{\deg_{\sigma} \brk{w_j}}}^{1 - \frac{1}{n}} \le \sum_{j = 1}^J
\brk[\bigg]{\sum_{\sigma \in \Sigma} \abs{\deg_{\sigma} \brk{w_j}}}^{1 - \frac{1}{n}}\text{.}
\end{equation*}
By the choice of \(\rho_0\) and of \(\varepsilon\), for every \(j \in \set{1, \dotsc, J}\), there is \(i \in \set{1, \dotsc, I}\) such that
\begin{equation}
	\label{eq:InclusionuBallInGi}
u \brk{B_{\rho_j} \brk{a_j}}
\subseteq
 B_{\varepsilon} \brk{u \brk{a_j}}
 \subseteq G_{i}\text{.}
\end{equation}
Therefore,
\[
  w_j \brk{B_{\rho_j/2} \brk{a_j} \setminus B_{\rho_j/4} \brk{a_j}}
  \subseteq
  B_{2\varepsilon} \brk{u \brk{a_j}}
  \subseteq G_i\text{,}
\]
where we have used assertions~\ref{item:Locationbj} and~\ref{item:ImageAnnulusw} in Proposition~\ref{proposition_bubble_decomposition}.
It follows thus from Proposition~\ref{proposition_conical_joint_degree_estimate} that
\begin{equation*}
\begin{split}
\brk[\Big]{
 \sum_{\sigma \in \Sigma} \abs{\deg_\sigma \brk{w_j}}}^{1 - \frac{1}{n}}
 &\le\Cl{cst:AppPropDegEst} \int_{w_j^{-1} \brk{\manifold{C}_i + \Sigma}} \abs{\Deriv w_j}^{n-1}\\
 &= \Cr{cst:AppPropDegEst} \int_{B_{\rho_j/4} \brk{a_j} \cap w_j^{-1} \brk{\manifold{C}_i + \Sigma}} \abs{\Deriv w_j}^{n-1}\\
 &\le \Cl{cst:AppPropDegEstBis} \int_{B_{\rho_j} \brk{a_j} \cap v^{-1} \brk{\manifold{C}_i + \Sigma}} \abs{\Deriv v}^{n-1}\text{,}
 \end{split}
\end{equation*}
since
\begin{enumerate*}
\item[(j)] \( w_j \brk{B_{\rho_j} \brk{a_j} \setminus B_{\rho_j/4} \brk{a_j}} \subseteq G_{i} \), 
\item[(jj)] \( G_i \cap \brk{\manifold{C}_i + \Sigma} = \emptyset \), and 
\item[(jjj)] \(v\brk{4x} = w_j\brk{x}\) on \(B_{\rho_j/4} \brk{a_j}\).
\end{enumerate*}
We note that actually, in order to apply Proposition~\ref{proposition_conical_joint_degree_estimate}, one needs to view the map \( w_{j} \), which is originally defined as a map on the disk \( B_{\rho_j/2}\brk{a_j} \) with constant value on the boundary, as a map defined on the sphere \( \Sset^{n-1} \), 
which can be done extending \( w_{j} \) to the exterior of the ball by a constant.
We also note that \( \Cr{cst:AppPropDegEst} \) depends on the measure of \( \manifold{C}_{i} \cap \Sset^{n-1} \) through the use of Proposition~\ref{proposition_conical_joint_degree_estimate}, while \( \Cr{cst:AppPropDegEstBis} \) is a purely geometric constant that depends only on \( n \).

It follows thus, using also~\eqref{eq:InclusionuBallInGi}, that 
\[
\begin{split}
  \sum_{\substack{j \in \set{1, \dotsc, J}\\ B_{\varepsilon} \brk{u\brk{a_j}}\subseteq G_i}
  }\brk[\Big]{
  \sum_{\sigma \in \Sigma} \abs{\deg_\sigma \brk{w_j}}}^{1 -\frac{1}{n}}
  &\le \Cr{cst:AppPropDegEstBis} \sum_{j = 1}^J \int_{u^{-1} \brk{G_i} \cap B_{\rho_j} \brk{a_j} \cap v^{-1} \brk{\manifold{C}_i + \Sigma}} \abs{\Deriv v}^{n-1}\\
  &\le \Cr{cst:AppPropDegEstBis} \int_{u^{-1}\brk{G_{i}} \cap v^{-1} \brk{\manifold{C}_i + \Sigma}} \abs{\Deriv v}^{n-1}\text{.}
\end{split}
\]
The conclusion follows by summing the above estimate over all \( i \in \set{1,\dotsc,I} \).
\resetconstant
\end{proof}

\begin{figure}
\includegraphics{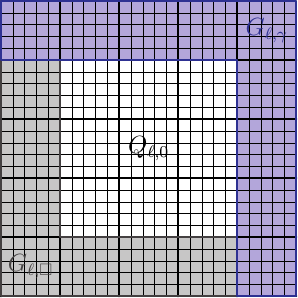}
\caption{The sets \(Q_{\ell, 0}\), \(G_{\ell, \square}\), and \(G_{\ell, \gamma}\) for \(\gamma = \brk{-1, -1}\).
The colored cubes on the boundary form the set \( G_{\ell,\square} \), and the cubes that are further colored in blue form the set \( G_{\ell,\gamma} \) for \( \gamma = \brk{-1,-1} \).} 
\label{fig:Glgamma}
\end{figure}
In order to get a lower estimate on sequences of maps on spheres related to our counterexample,
we will choose some specific sets \(\mathcal{C}_i\) and \(G_i\) in Proposition~\ref{proposition_sphere_joint_degree_bubbled}.
We define the cube
\begin{equation*}
  Q_{\ell}\defeq \intvc{0}{\ell}^{n}\text{.}
\end{equation*}
For every \(\alpha \in \mathbf{A} \defeq \set{-2, -1, 0, 1, 2}^{n}\), we set
\begin{equation*}
 Q_{\ell, \alpha}
 \defeq Q_{\ell} +\ell \alpha + \brk{2\ell, \dotsc, 2\ell}\text{,}
\end{equation*}
so that
\begin{equation}
\label{eq_nuijaiDaofoKahbiebee0aix}
 Q_{5\ell} = \bigcup_{\alpha \in \mathbf{A}} Q_{\ell, \alpha}\text{,}
\end{equation}
and the interiors of the \(\brk{Q_{\ell, \alpha}}_{\alpha \in \mathbf{A}}\) are mutually disjoint.
We also define, for every \(\gamma \in \mathbf{\Gamma}
  \defeq \set{-1, 1}^{n}\) 
and for every \(\gamma \in \mathbf{\Gamma}\), the cone
\begin{equation*}
 \mathcal{C}_{\gamma}
 \defeq  \set{\brk{x_1, \dotsc, x_{n}} \in \Rset^{n}
 \st \text{for every \(i \in \set{1, \dotsc, n}\), \( \gamma_i x_i > 0 \)}}\text{.}
\end{equation*}
Setting
\begin{align*}
  \mathbf{A}_{\square} &\defeq
  \set{\alpha \in \mathbf{A} \st \max_{1 \le i \le n} \abs{\alpha_i} = 2}\text{,}&
 G_{\ell, \square}
 &\defeq \operatorname{int}\;
 \brk[\Big]{\bigcup_{\alpha \in A_{\square}} Q_{\ell, \alpha}}\text{,}\\
  \mathbf{A}_{\gamma} &\defeq
  \set{\alpha \in \mathbf{A} \st \min_{1 \le i \le n} \alpha_i \gamma_i = - 2}\text{,}&
 G_{\ell, \gamma}
 &\defeq \operatorname{int}\; \brk[\Big]{
 \bigcup_{\alpha \in A_{\gamma}} Q_{\ell, \alpha}}\text{,}
\end{align*}
and viewing \( Q_{5\ell} \) as made of \( 5^{n} \) cubes of sidelength \( \ell \) according to~\eqref{eq_nuijaiDaofoKahbiebee0aix}, we observe that  \( \mathbf{A}_{\square} \) is the set of all indices \( \alpha \) such that the cube \( Q_{\ell, \alpha} \) lies on the boundary of \( Q_{5\ell} \), while \( G_{\ell, \square} \) is made of the interior of the union of all such cubes.
The set of indices \( \mathbf{A}_{\gamma} \) is a refinement of \( \mathbf{A}_{\square} \), corresponding to cubes that lie on the boundary of \( Q_{5\ell} \) on some specific faces, indicated by the multi-index \( \gamma \) --- more precisely, \( \mathbf{A}_{\gamma} \) selects the cubes on the boundary of \( Q_{5\ell} \) that lie at the opposite of at least one face indicated by \( \gamma \) (see Figure~\ref{fig:Glgamma}).
We observe that
\begin{equation*}
 G_{\ell, \square} = \bigcup_{\gamma \in \mathbf{\Gamma}} G_{\ell, \gamma}\text{.}
\end{equation*}
Indeed, given \(x \in G_{\ell, \square} \cap Q_{\ell, \alpha}\), we have \(x \in G_{\ell, \gamma}\)
with any \(\gamma \in \mathbf{\Gamma}\) such that for every \(i \in \set{1, \dotsc, n}\), \(\alpha_i \gamma_i \ne 2\).

We now introduce some notation for the centers of the cubes of our decomposition.
More precisely, we define
\[
\Sigma_{\ell}  \defeq Q_{\ell,0} \cap \brk{\Zset+1/2}^{n}\text{.}
\]
Otherwise stated, if we consider the standard decomposition of \( \Rset^{n} \) into unit cubes, such that the origin is a vertex of a cube, then \( \Sigma_{\ell} \) is the set of all centers of those cubes that lie inside to \( Q_{\ell,0} \).
We observe that, for every \( \gamma \in \mathbf{\Gamma} \) and every \( \alpha \in \mathbf{A}_{\gamma} \), it holds that
\begin{equation}
\label{eq_dist_cone_boundary_cubes}
\dist_{\infty} \brk{y, Q_{\ell, \alpha}} \geq \ell
\quad
\text{for every \( y \in \manifold{C}_{\gamma} + \Sigma_{\ell} \).}
\end{equation}
Indeed, by definition, there exists an index \( i \in \set{1, \dotsc, n} \) such that \( \alpha_{i}\gamma_{i} = -2 \).
Assume without loss of generality that \( \gamma_{i} = 1 \).
Then, for every \( y \in \manifold{C}_{\gamma} + \Sigma_{\ell} \) and \( z \in Q_{\ell,\alpha} \), we have
\( y_{i} \geq 2\ell+1/2 \) while \( z_{i} \leq \ell \), so that
\[
\abs{y-z}_{\infty}
\geq
y_{i}-z_{i}
\geq
\ell\text{,}
\]
which proves our claim.

Finally, we define a retraction from \( \Rset^{n} \) into \( Q_{\ell, \alpha} \).
More specifically, given any  \(\alpha \in \mathbf{A}\) and \(x \in \Rset^{n} \),
we let
\begin{equation*}
 \Theta_{\ell, \alpha} \brk{x}
 \defeq 
 \begin{cases}
   x&\text{if \(x \in Q_{\ell, \alpha}\),}\\
   c_{\ell, \alpha} + \frac{\ell \brk{x - c_{\ell, \alpha}}}{2 \abs{x - c_{\ell, \alpha}}_{\infty}}&\text{otherwise,}
 \end{cases}
\end{equation*}
where \(c_{\ell, \alpha}\) is the center of the cube \(Q_{\ell, \alpha}\).

We can now state and prove the main result of this section, which is a lower bound on the energy gap of approximating sequences that will be the key ingredient in order to prove the required estimate to strengthen inequality~\ref{eq_OogaiSh3neiy7ahl6eeF7eiV}, according to the strategy described in the introduction.

\begin{proposition}
\label{proposition_bound_degree_gap_by_projection}
	Given \( u \in W^{1,n-1}\brk{\Sset^{n-1}, \Rset^{n} \setminus \brk{B_{1/2}+\Sigma_{\ell}}} \cap C\brk{\Sset^{n-1}, \Rset^{n} \setminus \brk{B_{1/2}+\Sigma_{\ell}}} \) such that
	\[
	u\brk{\Sset^{n-1}}
	\subseteq
	G_{\ell, \square}\text{,}
	\]
	\( M \in \intvo{0}{\infty} \), and \(\brk{E_\alpha}_{\alpha \in \mathbf{A}_{\square}}\) a family of subsets of \(\Sset^{n-1}\) such that for every \(\gamma \in \mathbf{\Gamma}\),
	\begin{equation}
		\label{eq:FamilyEalpha}
		u^{-1} \brk{G_{\ell, \gamma}}
		\subseteq \bigcup_{\alpha \in \mathbf{A}_{\gamma}} E_\alpha\text{,}
	\end{equation}
	there exists \( \eta \in \intvo{0}{\infty} \) such that, for every \( v \in W^{1,n-1}\brk{\Sset^{n-1}, \Rset^{n} \setminus \brk{B_{1/2}+\Sigma_{\ell}}} \cap C\brk{\Sset^{n-1}, \Rset^{n} \setminus \brk{B_{1/2}+\Sigma_{\ell}}} \) satisfying
	\[
	\int_{\Sset^{n-1}} \abs{\Deriv v}^{n-1}
	\leq
	M
	\]
	and  
	\[
	\int_{\Sset^{n-1}}\abs{u-v}
	\leq
	\eta\text{,}
	\]
	we have
	\begin{equation*}
		\brk[\bigg]{\sum_{\sigma \in \Sigma_{\ell}}
			\abs{\deg_\sigma \brk{u} - \deg_{\sigma} \brk{v}}}^{1 - \frac{1}{n}}
		\le
		C
		\sum_{\alpha \in \mathbf{A}_{\square}} \int_{E_\alpha} \abs{\Deriv v}^{n-1} -
		\abs{\Deriv \brk{\Theta_{\ell,\alpha} \compose v}}^{n-1}\text{,}
	\end{equation*}
	where the constant \( C > 0 \) depends only on \( n \).
	Moreover, the above statement remains valid if the domain \( \Sset^{n-1} \) is replaced by the boundary \( \partial Q \) of any cube \( Q \subset \Rset^{n} \), with a constant \( C > 0 \) independent of the choice of the cube.
\end{proposition}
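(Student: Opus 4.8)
The plan is to deduce the estimate from \cref{proposition_sphere_joint_degree_bubbled}, applied with the octant cones and the sets \(G_{\ell, \gamma}\), followed by a combinatorial reorganisation and one analytic step about the retraction \(\Theta_{\ell, \alpha}\). Concretely, I would first apply \cref{proposition_sphere_joint_degree_bubbled} with index set \(\mathbf{\Gamma} = \set{-1, 1}^{n}\), taking the open sets to be the \(G_{\ell, \gamma}\), the cones to be the \(\manifold{C}_\gamma\), and \(\Sigma \defeq \Sigma_\ell\). Its hypotheses hold: \(u \brk{\Sset^{n-1}} \subseteq G_{\ell, \square} = \bigcup_{\gamma \in \mathbf{\Gamma}} G_{\ell, \gamma}\), and \eqref{eq_dist_cone_boundary_cubes} gives \(\dist_\infty \brk{y, Q_{\ell, \alpha}} \ge \ell > 0\) whenever \(\alpha \in \mathbf{A}_\gamma\) and \(y \in \manifold{C}_\gamma + \Sigma_\ell\), so that \(G_{\ell, \gamma} \cap \brk{\manifold{C}_\gamma + \Sigma_\ell} = \emptyset\). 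Since \(\mathcal{H}^{n-1} \brk{\manifold{C}_\gamma \cap \Sset^{n-1}} = 2^{-n} \, \mathcal{H}^{n-1} \brk{\Sset^{n-1}}\) depends only on \(n\), this yields \(\eta \in \intvo{0}{\infty}\) (depending on \(u\), \(M\), and \(\ell\)) such that every \(v\) as in the statement satisfies
\[
\brk[\bigg]{\sum_{\sigma \in \Sigma_\ell} \abs{\deg_\sigma \brk{u} - \deg_\sigma \brk{v}}}^{1 - \frac{1}{n}} \le C_{0} \sum_{\gamma \in \mathbf{\Gamma}} \int_{u^{-1} \brk{G_{\ell, \gamma}} \cap v^{-1} \brk{\manifold{C}_\gamma + \Sigma_\ell}} \abs{\Deriv v}^{n-1} ,
\]
with \(C_{0}\) depending only on \(n\).

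I would then feed in the hypothesis \eqref{eq:FamilyEalpha}, \(u^{-1} \brk{G_{\ell, \gamma}} \subseteq \bigcup_{\alpha \in \mathbf{A}_\gamma} E_\alpha\), to bound the right-hand side by \(C_{0} \sum_{\gamma \in \mathbf{\Gamma}} \sum_{\alpha \in \mathbf{A}_\gamma} \int_{E_\alpha \cap v^{-1} \brk{\manifold{C}_\gamma + \Sigma_\ell}} \abs{\Deriv v}^{n-1}\). Reindexing over \(\alpha \in \mathbf{A}_\square\) and then over the \(\gamma\) with \(\alpha \in \mathbf{A}_\gamma\), of which there are at most \(2^{n}\), and invoking \eqref{eq_dist_cone_boundary_cubes} once more in the form \(v^{-1} \brk{\manifold{C}_\gamma + \Sigma_\ell} \subseteq \set{z \st \dist_\infty \brk{v \brk{z}, Q_{\ell, \alpha}} \ge \ell}\) when \(\alpha \in \mathbf{A}_\gamma\), the bound becomes \(C_{0} \, 2^{n} \sum_{\alpha \in \mathbf{A}_\square} \int_{E_\alpha \cap \set{z \st \dist_\infty \brk{v \brk{z}, Q_{\ell, \alpha}} \ge \ell}} \abs{\Deriv v}^{n-1}\). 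It then remains to establish the local estimate, for each \(\alpha \in \mathbf{A}_\square\),
\[
\int_{E_\alpha \cap \set{z \st \dist_\infty \brk{v \brk{z}, Q_{\ell, \alpha}} \ge \ell}} \abs{\Deriv v}^{n-1} \le C_{1} \int_{E_\alpha} \abs{\Deriv v}^{n-1} - \abs{\Deriv \brk{\Theta_{\ell, \alpha} \compose v}}^{n-1} ,
\]
with \(C_{1}\) depending only on \(n\); taking \(C \defeq C_{0} \, 2^{n} \, C_{1}\) then concludes.

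This local estimate is the step I expect to be the main obstacle. One would like to use that \(\Theta_{\ell, \alpha}\) is the identity on \(Q_{\ell, \alpha}\) — so the integrand on the right vanishes on \(v^{-1} \brk{Q_{\ell, \alpha}}\) — and that on \(\Rset^{n} \setminus Q_{\ell, \alpha}\) it is the radial \(\ell^{\infty}\)-projection onto \(\partial Q_{\ell, \alpha}\), whose differential at a point \(x\) with \(\abs{x - c_{\ell, \alpha}}_{\infty} = R \ge 3\ell/2\) — as is the case for \(v \brk{z}\) on the relevant set, by \eqref{eq_dist_cone_boundary_cubes} — is controlled by \(\ell / R\) up to a factor depending only on \(n\), so that the radial contraction on the far region produces a gain in energy density. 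The difficulty is that \(\Theta_{\ell, \alpha}\) is \emph{not} \(1\)-Lipschitz (its differential is distorted near the vertex directions of \(\partial Q_{\ell, \alpha}\) by a dimensional amount), so this gain must be balanced, scale by scale, against the possible loss on the part of \(v^{-1} \brk{\Rset^{n} \setminus Q_{\ell, \alpha}}\) where \(v\) is close to \(\partial Q_{\ell, \alpha}\); it is here that the one-cube buffer encoded in the factor \(5\) of \eqref{eq_nuijaiDaofoKahbiebee0aix}, and hence the distance \(\ell\) in \eqref{eq_dist_cone_boundary_cubes}, is essential, and where I expect the bulk of the technical work.

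For the version with \(\partial Q\) in place of \(\Sset^{n-1}\), no new idea is needed: \cref{proposition_bubble_decomposition} is stated for arbitrary compact manifolds and hence applies to \(\partial Q\), while \cref{proposition_conical_joint_degree_estimate} and \cref{proposition_sphere_joint_degree_bubbled} transfer to maps on \(\partial Q\) by precomposing with a bi-Lipschitz homeomorphism \(\Sset^{n-1} \to \partial Q\), which for the boundary of a cube can be taken with bi-Lipschitz constant depending only on \(n\); the degrees are unchanged and the \((n-1)\)-energy changes only by a bounded factor, so the argument above carries over with a constant \(C\) still depending only on \(n\) and independent of the cube.
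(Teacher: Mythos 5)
Your reduction is exactly the paper's: apply Proposition~\ref{proposition_sphere_joint_degree_bubbled} with the cones \(\manifold{C}_{\gamma}\), the sets \(G_{\ell,\gamma}\) and \(\Sigma = \Sigma_{\ell}\), insert \eqref{eq:FamilyEalpha}, sum over \(\gamma\) at the cost of a factor \(2^{n}\), and transfer to \(\partial Q\) by a bi-Lipschitz change of domain. But the attempt stops at the decisive point: the comparison between \(\int \abs{\Deriv v}^{n-1}\) on \(E_{\alpha} \cap v^{-1}\brk{\manifold{C}_{\gamma}+\Sigma_{\ell}}\) and the defect \(\int_{E_{\alpha}} \abs{\Deriv v}^{n-1} - \abs{\Deriv\brk{\Theta_{\ell,\alpha}\compose v}}^{n-1}\) is only announced as a ``local estimate'' that you expect to be the main obstacle, and no proof of it is offered. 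That estimate is precisely what the proposition adds to Proposition~\ref{proposition_sphere_joint_degree_bubbled}, and the paper disposes of it pointwise: from the inequality \(\abs{\Deriv\brk{\Theta_{\ell,\alpha}\compose v}\brk{x}} \le \abs{\Deriv v\brk{x}}/\brk{1 + 2\dist_{\infty}\brk{v\brk{x}, Q_{\ell,\alpha}}/\ell}\) one gets \eqref{eq:DevDerivvAndProj}, hence nonnegativity of the integrand (this is what allows dropping the restriction to \(v^{-1}\brk{\manifold{C}_{\gamma}+\Sigma_{\ell}}\) and summing over \(\gamma\) and \(\alpha\)), while on \(v^{-1}\brk{\manifold{C}_{\gamma}+\Sigma_{\ell}}\) the separation \eqref{eq_dist_cone_boundary_cubes} makes the denominator at least \(3\), which is \eqref{eq:DevDerivvAndProjFinal}. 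There is no scale-by-scale balancing anywhere in the paper; the one-cube buffer is used exactly once, through \eqref{eq_dist_cone_boundary_cubes}. Since you supply neither this pointwise contraction nor any substitute for it, the proposal has a genuine gap at the heart of the statement.

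Two further remarks. First, your formulation of the local estimate, with \(\int_{E_{\alpha}}\) of the difference on the right-hand side, already presupposes control of the sign of the integrand on the part of \(E_{\alpha}\) where \(v\brk{z}\) is close to \(Q_{\ell,\alpha}\): if postcomposition with \(\Theta_{\ell,\alpha}\) could increase the gradient there, the right-hand side could a priori be small even when the left-hand side is not, so even the reduction as you state it is incomplete without addressing exactly the point you flag. Second, your observation about the distortion of \(\Theta_{\ell,\alpha}\) is pertinent: at a point \(y \notin Q_{\ell,\alpha}\) the differential of \(y \mapsto c_{\ell,\alpha} + \ell\brk{y-c_{\ell,\alpha}}/\brk{2\abs{y-c_{\ell,\alpha}}_{\infty}}\) has operator norm \(\tfrac{\ell}{2\abs{y-c_{\ell,\alpha}}_{\infty}}\cdot\tfrac{\abs{y-c_{\ell,\alpha}}}{\abs{y-c_{\ell,\alpha}}_{\infty}}\), which near diagonal directions exceeds the factor used in the paper's pointwise bound by up to \(\sqrt{n}\); this affects the nonnegativity just mentioned and, when \(\sqrt{n} \ge 3\), even the margin in \eqref{eq:DevDerivvAndProjFinal}, so you have genuinely spotted a subtlety that the paper's one-line inequality glosses over. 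But noticing the difficulty is not resolving it: a complete argument must either establish a corrected pointwise contraction with enough room (for instance by exploiting the separation \eqref{eq_dist_cone_boundary_cubes} quantitatively or by adjusting the retraction) or bound the possible negative contributions, and none of this appears in the proposal.
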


\begin{proof}
	Given an open set \(\Omega \subseteq \Sset^{n-1}\) and \(v \in W^{1, n-1} \brk{\Sset^{n-1}, \Rset^{n}}\), we have for every \( x \in \Omega \)
	\begin{equation*}
		\abs{\Deriv \brk{\Theta_{\ell,\alpha} \compose v} \brk{x}}
		\le
		\frac{\abs{\Deriv v \brk{x}}}{1 + 2\dist_{\infty} \brk{v\brk{x}, Q_{\ell, \alpha}}/\ell}\text{,}
	\end{equation*}
	and therefore
	\begin{equation}
		\label{eq:DevDerivvAndProj}
		\begin{split}
			\abs{\Deriv v \brk{x}}^{n-1} -
			\abs{\Deriv \brk{\Theta_{\ell,\alpha} \compose v} \brk{x}}^{n-1}
			&\ge
			\brk[\Big]{1 -
				\frac{1}{\brk{1 + 2\dist_{\infty} \brk{v\brk{x}, Q_{\ell, \alpha}}/\ell}^{n-1}}}
			\abs{\Deriv v \brk{x}}^{n-1}\text{.}
		\end{split}
	\end{equation}
	Hence, for every \(\alpha \in \mathbf{A}_{\gamma}\), we deduce from~\eqref{eq_dist_cone_boundary_cubes} and~\eqref{eq:DevDerivvAndProj} that 
	\begin{equation}
		\label{eq:DevDerivvAndProjFinal}
		\int_{E_\alpha \cap v^{-1} \brk{\manifold{C}_\gamma+\Sigma_{\ell}} }
		\abs{\Deriv v}^{n-1}
		\le
		\frac{1}{1 - 3^{-\brk{n-1}}}
		\int_{E_\alpha \cap v^{-1} \brk{\manifold{C}_\gamma+\Sigma_{\ell}}} \abs{\Deriv v}^{n-1} -
		\abs{\Deriv \brk{\Theta_{\ell,\alpha} \compose v}}^{n-1}\text{.}
	\end{equation}
	
	On the other hand, Proposition~\ref{proposition_sphere_joint_degree_bubbled} ensures that we can choose \( \eta \in \intvo{0}{\infty} \) such that 
	\begin{equation}
	\label{eq_thiPhu7Thimooc9dai6ka9va}
	\begin{split}
		\brk[\bigg]{\sum_{\sigma \in \Sigma_{\ell}}
			\abs{\deg_\sigma \brk{u} - \deg_{\sigma} \brk{v}}}^{1 - \frac{1}{n}}
		\le
		\Cl{cst:SplitUnion}
		\sum_{\gamma \in \mathbf{\Gamma}} \int_{u^{-1}\brk{G_{\ell,\gamma}} \cap v^{-1}\brk{\manifold{C}_\gamma+\Sigma_{\ell}}} \abs{\Deriv v}^{n-1}\\
		\le 
		\Cr{cst:SplitUnion} \sum_{\gamma \in \mathbf{\Gamma}} \sum_{\alpha \in \mathbf{A}_{\gamma}}
		\int_{E_\alpha \cap v^{-1}\brk{\manifold{C}_\gamma+\Sigma_{\ell}}} \abs{\Deriv v}^{n-1}
		\text{,}
      \end{split}
	\end{equation}
	in view of \eqref{eq:FamilyEalpha}.
	We deduce from \eqref{eq:DevDerivvAndProjFinal} and~\eqref{eq_thiPhu7Thimooc9dai6ka9va} that
	\begin{equation*}
		\begin{split}
		\brk[\bigg]{\sum_{\sigma \in \Sigma_{\ell}}
			\abs{\deg_\sigma \brk{u} - \deg_{\sigma} \brk{v}}}^{1 - \frac{1}{n}}
		&\le
		\Cl{cst_oixai4ooweiJe6Ee8aimuo4u}
		 \sum_{\gamma \in \mathbf{\Gamma}} \sum_{\alpha \in \mathbf{A}_{\square}} \int_{E_\alpha} \abs{\Deriv v}^{n-1}  -
		\abs{\Deriv \brk{\Theta_{\ell,\alpha} \compose v}}^{n-1}\\
		&\le \Cr{cst_oixai4ooweiJe6Ee8aimuo4u} 2^n 
		\sum_{\alpha \in \mathbf{A}_{\square}} 
		\int_{E_\alpha} \abs{\Deriv v}^{n-1}  -
		\abs{\Deriv \brk{\Theta_{\ell,\alpha} \compose v}}^{n-1}
		\text{,}
		\end{split}
	\end{equation*}
	since \(\# \mathbf{\Gamma} = 2^n\).
	Moreover, since \( \mathcal{H}^{n-1}\brk{\manifold{C}_{\gamma} \cap \Sset^{n-1}} \) only depends on \( n \), the constant \( \Cr{cst_oixai4ooweiJe6Ee8aimuo4u} \) only depends  on \( n \).

	This proves the proposition when the domain is \( \Sset^{n-1} \).
	The fact that the statement remains valid if the domain is instead the boundary of a cube follows from the fact that there is a bi-Lipschitz transformation between \( \Sset^{n-1} \) and the boundary of a unit cube in \( \Rset^{n} \), along with the scaling invariance of the \( W^{1,n-1} \) energy in dimension \( n-1 \).
	\resetconstant
\end{proof}

\subsection{Lower bound on the relaxed energy}

In this section, we give the final and key estimate that allows to prove that the maps we construct indeed lead to an analytical obstruction to weak density.
Given \( n \geq 2 \), we define the set 
\begin{equation*}
	\widetilde{\manifold{N}}_{0}
	\defeq
	\set{\brk{x_1, \dotsc, x_n} \in \Rset^{n} \st \text{there exists \( i \in \set{1,\dotsc,n} \) such that \( x_{i} \in \Zset \)}}\text{.}
\end{equation*}
We observe that \( \widetilde{\manifold{N}}_{0} \) is actually the \( \brk{n-1} \)-skeleton of the standard decomposition of \( \Rset^{n} \) into unit cubes, such that the origin is a vertex of one cube.

\begin{proposition}
\label{proposition_gap_v_and_proj}
Let \(u \in W^{1, n-1} \brk{G_{\ell, \square}, \widetilde{\manifold{N}}_{0}} \cap C\brk{G_{\ell, \square} \setminus \brk{\Zset+1/2}^{n}, \widetilde{\manifold{N}}_{0}} \) be such that, for every \(\alpha \in \mathbf{A}_{\square}\),
\begin{equation}
\label{eq_keleopheuvahsuuhu0hei3Ee}
 u \brk{Q_{\ell, \alpha}} \subseteq Q_{\ell, \alpha}\text{.}
\end{equation}
Assume also that \(v_{k} \in C \brk{Q_{5\ell}, \widetilde{\manifold{N}}_{0}}\) and that \(\brk{v_{k} \restr{G_{\ell, \square}}}_{k \in \Nset}\) converges weakly to \(u\) in \( W^{1,n-1} (G_{\ell, \square},\widetilde{\manifold{N}}_{0})\).
Then,
\begin{equation}
\label{eq_ahDi3iegeinoobe8om5shee8}
 \liminf_{k \to \infty} \sum_{\alpha \in \mathbf{A}_{\square}}
 \int_{Q_{\ell, \alpha}} \abs{\Deriv v_{k}}^{n-1}
 - \abs{\Deriv \brk{\Theta_{\ell, \alpha} \compose v_{k}}}^{n-1}
 \ge C \ell^{n}\text{,}
\end{equation}
for some constant \( C > 0 \) depending only on \( n \).
\end{proposition}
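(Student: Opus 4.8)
The plan is to deduce the $n$-dimensional estimate~\eqref{eq_ahDi3iegeinoobe8om5shee8} from slicewise joint‑degree estimates on the concentric spheres $\partial Q_{t}$, where $Q_{t}$ denotes the cube with the same center as $Q_{5\ell}$ and edge length $t$, integrated over $t \in \intvo{3\ell}{5\ell}$. For such $t$ one has $\partial Q_{t} \subseteq G_{\ell,\square}$, the spheres $\partial Q_{t}$ foliate the shell $\overline{G_{\ell,\square}}$, and for a.e.\ $t$ both $u\restr{\partial Q_{t}}$ and $v_{k}\restr{\partial Q_{t}}$ are continuous $W^{1,n-1}$ maps into $\widetilde{\manifold{N}}_{0}$ avoiding $\Sigma_{\ell}$, the only exceptions being the finitely many $t$ for which $\partial Q_{t}$ meets a point of $\brk{\Zset+\tfrac12}^{n}$. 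The natural choice in~\cref{proposition_bound_degree_gap_by_projection} is $E_{\alpha} \defeq \partial Q_{t} \cap Q_{\ell,\alpha}$ for $\alpha \in \mathbf{A}_{\square}$: the inclusion $u^{-1}\brk{G_{\ell,\gamma}} \cap \partial Q_{t} \subseteq \bigcup_{\alpha \in \mathbf{A}_{\gamma}} E_{\alpha}$ follows at once from $u\brk{Q_{\ell,\alpha}} \subseteq Q_{\ell,\alpha}$ together with the elementary fact that $Q_{\ell,\alpha} \cap G_{\ell,\gamma} = \emptyset$ whenever $\alpha \notin \mathbf{A}_{\gamma}$.

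The conceptual heart is the computation of the degree gap on a slice. On the one hand, since $v_{k} \in C\brk{Q_{5\ell}, \widetilde{\manifold{N}}_{0}}$, the restriction $v_{k}\restr{\partial Q_{t}}$ extends continuously to the cube $Q_{t}$ with values in $\widetilde{\manifold{N}}_{0} \subseteq \Rset^{n} \setminus \set{\sigma}$ for every $\sigma \in \Sigma_{\ell} \subset \operatorname{int} Q_{t}$, so $\deg_{\sigma}\brk{v_{k}\restr{\partial Q_{t}}} = 0$. On the other hand, because $u$ maps each coarse cube $Q_{\ell,\alpha}$, $\alpha \in \mathbf{A}_{\square}$, into itself, it maps every shared face $Q_{\ell,\alpha} \cap Q_{\ell,\alpha'}$ with $\alpha,\alpha' \in \mathbf{A}_{\square}$ into itself; since these faces encode the $\Sset^{n-1}$‑structure of the shell, $u$ induces the identity on $H_{n-1}\brk{\overline{G_{\ell,\square}}} \simeq \Zset$ (most cleanly via the nerve of the cover $\set{Q_{\ell,\alpha}}_{\alpha \in \mathbf{A}_{\square}}$, which $u$ preserves; alternatively one composes with the retraction $\Theta_{\ell,0}$ onto the central cube and reduces to showing that $\Theta_{\ell,0} \compose u\restr{\partial Q_{t}}$ has degree $1$ as a map $\partial Q_{t} \to \partial Q_{\ell,0}$). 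As the inclusion $\partial Q_{t} \hookrightarrow \Rset^{n} \setminus \set{\sigma}$ has degree $1$ and factors through $\overline{G_{\ell,\square}}$, this gives $\deg_{\sigma}\brk{u\restr{\partial Q_{t}}} = 1$ for every $\sigma \in \Sigma_{\ell}$, whence, on every such slice,
\begin{equation*}
	\sum_{\sigma \in \Sigma_{\ell}} \abs{\deg_{\sigma}\brk{u\restr{\partial Q_{t}}} - \deg_{\sigma}\brk{v_{k}\restr{\partial Q_{t}}}} = \#\Sigma_{\ell} = \ell^{n}\text{.}
\end{equation*}

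Next I would apply~\cref{proposition_bound_degree_gap_by_projection} slice by slice. The genuine difficulty is that the threshold $\eta$ it provides depends on the reference map $u\restr{\partial Q_{t}}$ (through a modulus of continuity, a Lebesgue number, and the distribution of its energy), hence on $t$, while the $W^{1,n-1}$‑energy of $v_{k}$ could a priori concentrate on a vanishingly small set of slices; this is what prevents a naive interchange of $k \to \infty$ and the slicing. To get around this I would fix an interval $I \subseteq \intvo{3\ell}{5\ell}$ of length $\asymp \ell$, delete from it small neighborhoods of the finitely many $t$ whose sphere meets a point of $\brk{\Zset+\tfrac12}^{n}$, and delete further sets of small measure, so that on the remaining set $I' \subseteq I$ (still of measure $\gtrsim \ell$) the family $\smash{\overline{\bigcup_{t \in I'} \partial Q_{t}}}$ is compact and disjoint from $\brk{\Zset+\tfrac12}^{n}$, $u$ is uniformly continuous on it, and the energy of $u$ on the slices is uniformly controlled; the threshold then becomes a single $\eta_{0} > 0$ valid for all $t \in I'$. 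By the Rellich theorem $v_{k} \to u$ in $L^{1}\brk{G_{\ell,\square}}$, so by the coarea formula $\int_{I'} \norm{v_{k}\restr{\partial Q_{t}} - u\restr{\partial Q_{t}}}_{L^{1}\brk{\partial Q_{t}}} \dif t \to 0$, while $\int_{I'} \int_{\partial Q_{t}} \abs{\Deriv\brk{v_{k}\restr{\partial Q_{t}}}}^{n-1} \dif t \le 2\int_{G_{\ell,\square}} \abs{\Deriv v_{k}}^{n-1}$ is bounded; Chebyshev's inequality then produces, for $k$ large, a set $I'_{k} \subseteq I'$ of measure $\gtrsim \ell$ on which simultaneously $\int_{\partial Q_{t}} \abs{\Deriv\brk{v_{k}\restr{\partial Q_{t}}}}^{n-1}$ is bounded and $\norm{v_{k}\restr{\partial Q_{t}} - u\restr{\partial Q_{t}}}_{L^{1}} \le \eta_{0}$. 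For $t \in I'_{k}$, estimate~\eqref{eq_thiPhu7Thimooc9dai6ka9va} from the proof of~\cref{proposition_bound_degree_gap_by_projection} applies and yields
\begin{equation*}
	\ell^{n-1} = \brk{\#\Sigma_{\ell}}^{1-\frac1n} \le C \sum_{\gamma \in \mathbf{\Gamma}} \sum_{\alpha \in \mathbf{A}_{\gamma}} \int_{\partial Q_{t} \cap Q_{\ell,\alpha} \cap v_{k}^{-1}\brk{\manifold{C}_{\gamma} + \Sigma_{\ell}}} \abs{\Deriv\brk{v_{k}\restr{\partial Q_{t}}}}^{n-1}\text{.}
\end{equation*}

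Finally I would integrate this over $t \in I'_{k}$ and use the coarea formula together with $\abs{\Deriv\brk{v_{k}\restr{\partial Q_{t}}}} \le \abs{\Deriv v_{k}}$ to bound the left‑hand side, after integration, by a constant times $\sum_{\gamma \in \mathbf{\Gamma}}\sum_{\alpha \in \mathbf{A}_{\gamma}} \int_{Q_{\ell,\alpha} \cap v_{k}^{-1}\brk{\manifold{C}_{\gamma}+\Sigma_{\ell}}} \abs{\Deriv v_{k}}^{n-1}$; on the set $Q_{\ell,\alpha} \cap v_{k}^{-1}\brk{\manifold{C}_{\gamma}+\Sigma_{\ell}}$ with $\alpha \in \mathbf{A}_{\gamma}$ one has $\dist_{\infty}\brk{v_{k}\brk{x}, Q_{\ell,\alpha}} \ge \ell$ by~\eqref{eq_dist_cone_boundary_cubes}, so the pointwise bound behind~\eqref{eq:DevDerivvAndProj} gives $\abs{\Deriv v_{k}}^{n-1} - \abs{\Deriv\brk{\Theta_{\ell,\alpha}\compose v_{k}}}^{n-1} \ge \brk{1 - 3^{-(n-1)}}\abs{\Deriv v_{k}}^{n-1}$ there, and since this gap is nonnegative everywhere one may enlarge the domain of integration to $Q_{\ell,\alpha}$. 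Summing over $\gamma$ and $\alpha$ and using $\abs{I'_{k}} \gtrsim \ell$ then gives $\ell^{n} \lesssim \sum_{\alpha \in \mathbf{A}_{\square}} \int_{Q_{\ell,\alpha}} \abs{\Deriv v_{k}}^{n-1} - \abs{\Deriv\brk{\Theta_{\ell,\alpha}\compose v_{k}}}^{n-1}$ for $k$ large, which is~\eqref{eq_ahDi3iegeinoobe8om5shee8}. I expect the main obstacle to be precisely this reconciliation of the $t$‑dependence of the threshold in~\cref{proposition_bound_degree_gap_by_projection} with the possible energy concentration of $v_{k}$, which is what forces the careful choice of the interval $I'$ and the Chebyshev argument; a minor additional point is that $u$ need only take values in $\overline{G_{\ell,\square}}$ rather than in the open set $G_{\ell,\square}$ demanded by~\cref{proposition_bound_degree_gap_by_projection}, which is dealt with by a harmless preliminary retraction pushing the values of $u$ and $v_{k}$ slightly into the interior of the shell, or by enlarging $G_{\ell,\square}$ and the cones $\manifold{C}_{\gamma}$ accordingly.
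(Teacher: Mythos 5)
Your proposal is correct, and its skeleton is the paper's: slice the shell by the concentric cube boundaries \(\partial Q^t\), \(t \in \intvo{3\ell}{5\ell}\), note that \(\deg_\sigma\brk{u\restr{\partial Q^t}} = 1\) while \(\deg_\sigma\brk{v_k\restr{\partial Q^t}} = 0\) for every \(\sigma \in \Sigma_\ell\), so the joint degree gap is \(\ell^n\), and convert it, through \cref{proposition_bound_degree_gap_by_projection} with \(E_\alpha = \partial Q^t \cap Q_{\ell,\alpha}\) together with \eqref{eq_dist_cone_boundary_cubes} and the pointwise inequality \eqref{eq:DevDerivvAndProj}, into the \(\Theta_{\ell,\alpha}\)-energy gap. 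Where you genuinely diverge is the slice-to-bulk transfer. The paper needs no uniformity in \(t\): since the gap integrand \(\abs{\Deriv v_k}^{n-1} - \abs{\Deriv\brk{\Theta_{\ell,\alpha}\compose v_k}}^{n-1}\) is nonnegative, a single Fubini--Fatou selection yields one slice \(t\) on which, along a subsequence, the liminf of the slice gap is at most \(\tfrac{1}{\ell}\) times the liminf of the bulk gap (inequality \eqref{eq_paejohchilije9ew3inge3Ei}), the slice energies of \(v_k\) are bounded, and \(\int_{\partial Q^t}\abs{v_k - u} \to 0\); the threshold \(\eta\) of \cref{proposition_bound_degree_gap_by_projection} is then invoked for that single slice only, and the factor \(\ell\) is gained through \eqref{eq_paejohchilije9ew3inge3Ei} rather than by integrating over many slices. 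Your route instead keeps a family of slices of measure \(\gtrsim \ell\), demands a threshold \(\eta_0\) uniform over it, applies \eqref{eq_thiPhu7Thimooc9dai6ka9va} on each slice and integrates; this works and is a legitimate alternative, but it is the heavier option, and it contains the one step you should make precise: a uniform bound on the slice energies of \(u\) plus uniform continuity on the compact union of good slices is \emph{not} enough to produce a single \(\eta_0\), because the proof of \cref{proposition_sphere_joint_degree_bubbled} also needs a radius \(\rho_0\) with \(\sup_a \int_{B_{\rho_0}\brk{a} \cap \partial Q^t}\abs{\Deriv u}^{n-1} \le \delta^{n-1}\) for all retained \(t\), i.e.\ equi-absolute continuity of the slice energies of \(u\); this is obtained by an additional Egorov-type selection discarding a further small set of \(t\), which your phrase ``the energy of \(u\) on the slices is uniformly controlled'' glosses over.

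Two smaller remarks. The degree claim \(\deg_\sigma\brk{u\restr{\partial Q^t}} = 1\) is obtained most directly from convexity: the linear homotopy \(\brk{1-s}x + s\,u\brk{x}\) stays in \(Q_{\ell,\alpha}\) for \(x \in \partial Q^t \cap Q_{\ell,\alpha}\) by \eqref{eq_keleopheuvahsuuhu0hei3Ee}, hence avoids every \(\sigma \in \Sigma_\ell\), so \(u\restr{\partial Q^t}\) is homotopic in \(\Rset^n \setminus \set{\sigma}\) to the inclusion; the nerve/homology argument you sketch is correct but heavier than needed. Finally, the point that \(u\) takes values only in the closed shell rather than in \(G_{\ell,\square}\), which you flag, is indeed resolved by slightly enlarging the sets \(G_{\ell,\gamma}\), which leaves \eqref{eq_dist_cone_boundary_cubes} intact (with \(\ell\) replaced by \(\ell - 1/4\), say) and affects none of the constants.
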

\begin{proof}
We consider \(Q^t\) to be the cube of edge length \(t \in \intvo{0}{5 \ell}\) and same center as \(Q_{5\ell}\).
For almost every \(t \in \brk{3\ell, 5\ell}\), the map 
\(u \restr{\partial Q^t}\) is continuous,
and by~\eqref{eq_keleopheuvahsuuhu0hei3Ee}, for every \(\sigma \in \Sigma_{\ell} \), \(\deg_{\sigma} \brk{u \restr{\partial Q^{t}}} = 1\).
On the other hand, we know that \( \deg_{\sigma} \brk{v_{k} \restr{\partial Q^t}} = 0 \).
Hence,
\begin{equation*}
 \sum_{\sigma \in \Sigma_{\ell}} \abs{\deg_{\sigma} \brk{u \restr{\partial Q^t}} - \deg_{\sigma} \brk{v_{k} \restr{\partial Q^t}}}
 = \ell^{n}\text{.}
\end{equation*}

By Fubini--Tonelli’s theorem and Fatou’s lemma, we can assume up to a subsequence that
\begin{multline}
\label{eq_paejohchilije9ew3inge3Ei}
  \liminf_{k \to \infty} \sum_{\alpha \in \mathbf{A}_{\square}} \int_{\partial Q^t \cap Q_{\ell, \alpha}} \abs{\Deriv v_{k}}^{n-1}
 - \abs{\Deriv \brk{\Theta_{\ell, \alpha} \compose v_{k}}}^{n-1}\\
 \le \frac{1}{\ell} \liminf_{k \to \infty} \sum_{\alpha \in \mathbf{A}_{\square}} \int_{Q_{\ell, \alpha}} \abs{\Deriv v_{k}}^{n-1}
 - \abs{\Deriv \brk{\Theta_{\ell, \alpha} \compose v_{k}}}^{n-1}\text{,}
\end{multline}
that
\begin{equation*}
 \sup_{k \in \Nset} \int_{\partial Q^t} \abs{\Deriv v_{k}}^{n-1} < \infty\text{,}
\end{equation*}
and that
\begin{equation*}
 \lim_{k \to \infty} \int_{\partial Q^t} \abs{v_{k} - v} = 0\text{.}
\end{equation*}

Taking \(E_\alpha \defeq \partial Q^t \cap Q_{\ell, \alpha}\),
we observe that~\eqref{eq:FamilyEalpha} is indeed satisfied.
Therefore, if \( k \) is sufficiently large depending on \( \eta \), we deduce that Proposition~\ref{proposition_bound_degree_gap_by_projection} applies and yields
\begin{equation*}
 \ell^{n-1} \le \C \liminf_{k \to \infty} \sum_{\alpha \in \mathbf{A}_{\square}} \int_{\partial Q^t \cap Q_{\ell, \alpha}} \abs{\Deriv v_{k}}^{n-1}
 - \abs{\Deriv \brk{\Theta_{\ell, \alpha} \compose v_{k}}}^{n-1}\text{.}
\end{equation*}
The conclusion~\eqref{eq_ahDi3iegeinoobe8om5shee8} follows then from~\eqref{eq_paejohchilije9ew3inge3Ei}.
\resetconstant
\end{proof}

We recall the definition of the relaxed energy of a map \(u \in W^{1, p} \brk{\manifold{M}, \lifting{\manifold{N}}_0}\) as
\begin{equation*}
\begin{split}
  &\mathcal{E}^{1, p}_{\textnormal{rel}} \brk{u, \manifold{M}}\\
  &\qquad \defeq \inf\set[\Big]{\liminf_{k \to \infty} \mathcal{E}^{1,p}\brk{v_{k}, \manifold{M}} \st v_{k} \to u \text{ a.e.} \text{ and } v_{k} \in W^{1,p}\brk{\manifold{M}, \lifting{\manifold{N}}_0} \cap C^{0} \brk{\manifold{M}, \lifting{\manifold{N}}_0}}\text{.}
\end{split}
\end{equation*}
Using the tools constructed before, we may finally give a lower bound on the relaxed energy of maps with values into \( \widetilde{\manifold{N}}_{0} \).
More precisely, we work with the map \( u \) defined as the retraction from \( \Rset^{n} \) to \( \widetilde{\manifold{N}}_{0} \).

\begin{proposition}
\label{prop:RelaxedEnergyuell}
For every \(n \geq 2 \), 
\begin{equation*}
 \liminf_{\ell \to \infty} \frac{\mathcal{E}^{1, n-1}_{\textnormal{rel}} \brk{u, Q_{\ell}}}{\ell^{n} \ln \ell} > 0\text{.}
\end{equation*}
\end{proposition}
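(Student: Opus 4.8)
The plan is to reduce the statement to the following scaling inequality:
\[
 \mathcal{E}^{1, n-1}_{\textnormal{rel}} \brk{u, Q_{5\ell}}
 \ge
 5^{n}\, \mathcal{E}^{1, n-1}_{\textnormal{rel}} \brk{u, Q_{\ell}}
 + c\, \ell^{n}
\]
for every integer \(\ell \ge 1\), with \(c > 0\) depending only on \(n\), and then to iterate it. Indeed, writing \(g\brk{\ell} \defeq \mathcal{E}^{1, n-1}_{\textnormal{rel}} \brk{u, Q_{\ell}} / \ell^{n}\) and dividing by \(\brk{5\ell}^{n}\), the scaling inequality becomes \(g\brk{5\ell} \ge g\brk{\ell} + c / 5^{n}\), whence \(g\brk{5^{k}} \ge k c / 5^{n}\) for every \(k \in \Nset\). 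Since \(\ell \mapsto \mathcal{E}^{1, n-1}_{\textnormal{rel}} \brk{u, Q_{\ell}}\) is nondecreasing --- the restriction to \(Q_{\ell}\) of a continuous \(W^{1,n-1}\) sequence converging almost everywhere to \(u\) on a larger cube is admissible on \(Q_{\ell}\) and carries no more energy --- it follows that, for real \(\ell\) with \(5^{k} \le \ell < 5^{k+1}\),
\[
 \mathcal{E}^{1, n-1}_{\textnormal{rel}} \brk{u, Q_{\ell}}
 \ge
 \mathcal{E}^{1, n-1}_{\textnormal{rel}} \brk{u, Q_{5^{k}}}
 \ge
 \frac{kc}{5^{n}}\, 5^{kn}
 \gtrsim
 \ell^{n} \ln \ell\text{,}
\]
which is the conclusion. (If \(\mathcal{E}^{1, n-1}_{\textnormal{rel}} \brk{u, Q_{\ell_{0}}} = \infty\) for some \(\ell_{0}\), then it equals \(\infty\) for all \(\ell \ge \ell_{0}\) by monotonicity and the statement is trivial; so we may assume all the relaxed energies below are finite.)

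To prove the scaling inequality, fix an integer \(\ell \ge 1\) and choose a sequence \(\brk{v_{k}}_{k \in \Nset}\) in \(W^{1, n-1}\brk{Q_{5\ell}, \widetilde{\manifold{N}}_{0}} \cap C\brk{Q_{5\ell}, \widetilde{\manifold{N}}_{0}}\) with \(v_{k} \to u\) almost everywhere and \(\lim_{k \to \infty} \mathcal{E}^{1, n-1}\brk{v_{k}, Q_{5\ell}} = \mathcal{E}^{1, n-1}_{\textnormal{rel}} \brk{u, Q_{5\ell}}\). Composing each \(v_{k}\) with a Lipschitz retraction of \(\widetilde{\manifold{N}}_{0}\) onto \(\widetilde{\manifold{N}}_{0} \cap Q'\), where \(Q' \supseteq Q_{5\ell}\) is a large fixed cube --- a retraction of the same type as the maps \(\Theta_{\ell, \alpha}\), which does not increase the Sobolev energy --- we may assume that the \(v_{k}\) take values in a fixed bounded set. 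Then \(\brk{v_{k}}_{k \in \Nset}\) is bounded in \(W^{1, n-1}\) and, after passing to a subsequence, \(v_{k} \rightharpoonup u\) weakly in \(W^{1, n-1}\brk{Q_{5\ell}}\), hence also weakly in \(W^{1, n-1}\brk{G_{\ell, \square}}\). We split \(Q_{5\ell} = \bigcup_{\alpha \in \mathbf{A}} Q_{\ell, \alpha}\) as in~\eqref{eq_nuijaiDaofoKahbiebee0aix}.

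For every \(\alpha \in \mathbf{A}\), since \(\ell \in \Nset\) the cube \(Q_{\ell, \alpha}\) is an integer translate of \(Q_{\ell}\), and \(u\) satisfies \(u\brk{x + w} = u\brk{x} + w\) for \(w \in \Zset^{n}\); reading \(\brk{v_{k}\restr{Q_{\ell, \alpha}}}_{k}\) through this translation yields a sequence admissible for \(\mathcal{E}^{1, n-1}_{\textnormal{rel}}\brk{u, Q_{\ell}}\), so \(\liminf_{k} \int_{Q_{\ell, \alpha}} \abs{\Deriv v_{k}}^{n-1} \ge \mathcal{E}^{1, n-1}_{\textnormal{rel}}\brk{u, Q_{\ell}}\). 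When \(\alpha \in \mathbf{A}_{\square}\), we do better: the faces of \(Q_{\ell, \alpha}\) lie in integer hyperplanes, so \(\partial Q_{\ell, \alpha} \subseteq \widetilde{\manifold{N}}_{0}\) and \(\Theta_{\ell, \alpha}\) maps \(\widetilde{\manifold{N}}_{0}\) into \(\widetilde{\manifold{N}}_{0} \cap Q_{\ell, \alpha}\); as \(\Theta_{\ell, \alpha}\) is moreover Lipschitz and equals the identity on \(Q_{\ell, \alpha} \supseteq u\brk{Q_{\ell, \alpha}}\), the map \(\Theta_{\ell, \alpha} \compose v_{k}\) is a continuous \(W^{1, n-1}\) map into \(\widetilde{\manifold{N}}_{0}\) converging almost everywhere on \(Q_{\ell, \alpha}\) to \(\Theta_{\ell, \alpha} \compose u = u\), so \(\brk{\Theta_{\ell, \alpha} \compose v_{k}\restr{Q_{\ell, \alpha}}}_{k}\) is again admissible and \(\liminf_{k} \int_{Q_{\ell, \alpha}} \abs{\Deriv\brk{\Theta_{\ell, \alpha} \compose v_{k}}}^{n-1} \ge \mathcal{E}^{1, n-1}_{\textnormal{rel}}\brk{u, Q_{\ell}}\). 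On the other hand, \(u\) and \(\brk{v_{k}}_{k}\) satisfy the hypotheses of Proposition~\ref{proposition_gap_v_and_proj}, which gives
\[
 \liminf_{k \to \infty} \sum_{\alpha \in \mathbf{A}_{\square}} \int_{Q_{\ell, \alpha}} \abs{\Deriv v_{k}}^{n-1} - \abs{\Deriv\brk{\Theta_{\ell, \alpha} \compose v_{k}}}^{n-1} \ge c\, \ell^{n}\text{,}
\]
with \(c > 0\) depending only on \(n\). Decomposing \(\int_{Q_{5\ell}} \abs{\Deriv v_{k}}^{n-1} = \sum_{\alpha \in \mathbf{A}} \int_{Q_{\ell, \alpha}} \abs{\Deriv v_{k}}^{n-1}\) and, on the boundary cubes, adding and subtracting \(\int_{Q_{\ell, \alpha}} \abs{\Deriv\brk{\Theta_{\ell, \alpha} \compose v_{k}}}^{n-1}\), then passing to the \(\liminf\) and using its superadditivity together with the three lower bounds above and \(\# \mathbf{A} = 5^{n}\), we arrive at
\[
 \mathcal{E}^{1, n-1}_{\textnormal{rel}}\brk{u, Q_{5\ell}}
 = \lim_{k \to \infty} \int_{Q_{5\ell}} \abs{\Deriv v_{k}}^{n-1}
 \ge c\, \ell^{n} + 5^{n}\, \mathcal{E}^{1, n-1}_{\textnormal{rel}}\brk{u, Q_{\ell}}\text{,}
\]
which is the desired scaling inequality.

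The substantive input is Proposition~\ref{proposition_gap_v_and_proj}, which has already been established, so the remaining work is essentially bookkeeping; the two delicate points are that the post-composed sequences \(\brk{\Theta_{\ell, \alpha} \compose v_{k}}\) really are admissible competitors for the relaxed energy on \(Q_{\ell}\) --- which is exactly where one uses \(\partial Q_{\ell, \alpha} \subseteq \widetilde{\manifold{N}}_{0}\) and the \(\Zset^{n}\)-equivariance of \(u\) --- and the matching of scales: the energy gained from the boundary cubes is of order precisely \(\ell^{n}\), the critical rate that turns the iteration into a logarithm. A gain \(o\brk{\ell^{n}}\) would be too weak to close the iteration, and the resulting logarithmic lower bound is, together with the branched-transport upper bound alluded to in the introduction, the sharp one.
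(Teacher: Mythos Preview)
Your proof is correct and follows essentially the same route as the paper's: establish the scaling inequality
\(\mathcal{E}^{1,n-1}_{\textnormal{rel}}\brk{u, Q_{5\ell}} \ge 5^{n}\mathcal{E}^{1,n-1}_{\textnormal{rel}}\brk{u, Q_{\ell}} + c\ell^{n}\)
by splitting the energy of a minimizing sequence over the subcubes \(Q_{\ell,\alpha}\), using \(\Theta_{\ell,\alpha}\compose v_{k}\) as a competitor on the boundary cubes, and invoking Proposition~\ref{proposition_gap_v_and_proj} for the gap term; then iterate. Your treatment is in fact slightly more careful than the paper's in two places: you explicitly force the \(v_{k}\) into a bounded set by composing with a \(1\)-Lipschitz retraction of \(\widetilde{\manifold{N}}_{0}\) onto \(\widetilde{\manifold{N}}_{0}\cap Q'\) (so that the weak-convergence hypothesis of Proposition~\ref{proposition_gap_v_and_proj} is genuinely met), and you handle real \(\ell\) via monotonicity rather than leaving the passage from \(\ell = 5^{m}\) implicit.
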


\begin{proof}
If \(v \in W^{1, n-1} \brk{Q_{5\ell}, \widetilde{\manifold{N}}_{0}}\) and \( v_{k} \in W^{1,n-1}\brk{Q_{5\ell}, \widetilde{\manifold{N}}_{0}} \cap C^{0} \brk{Q_{5\ell}, \widetilde{\manifold{N}}_{0}} \) are such that \(v_{k} \to v\) almost everywhere, then
\begin{equation*}
\begin{split}
  \int_{Q_{5\ell}} \abs{\Deriv v_{k}}^{n-1}
  &= \sum_{\alpha \in \mathbf{A}} \int_{Q_{\ell, \alpha}} \abs{\Deriv v_{k}}^{n-1}\\
  &= \sum_{\alpha \in \mathbf{A} \setminus \mathbf{A}_{\square}}
  \int_{Q_{\ell, \alpha}} \abs{\Deriv v_{k}}^{n-1}\\
  &\qquad
  + \sum_{\alpha \in \mathbf{A}_{\square}}  \int_{Q_{\ell, \alpha}} \abs{\Deriv \brk{\Theta_{\ell, \alpha} \compose v_{k}}}^{n-1}\\
  &\qquad
  + \sum_{\alpha \in \mathbf{A}_{\square}} \int_{Q_{\ell, \alpha}}
  \abs{\Deriv v_{k}}^{n-1}-
  \abs{\Deriv \brk{\Theta_{\ell, \alpha} \compose v_{k}}}^{n-1}\text{.}
\end{split}
\end{equation*}
If \(v \brk{Q_{\ell, \alpha}}\subseteq Q_{\ell, \alpha}\), we have
\(\Theta_{\ell, \alpha} \compose v_{k} \restr{Q_{\ell, \alpha}} \to v\restr{Q_{\ell, \alpha}} \) almost everywhere, and thus, invoking Proposition~\ref{proposition_gap_v_and_proj}, we deduce that
\begin{equation*}
  \mathcal{E}^{1, n-1}_{\textnormal{rel}} \brk{v, Q_{5\ell}}
  \ge \sum_{\alpha \in \mathbf{A}} \mathcal{E}^{1, n-1}_{\mathrm{rel}} \brk{v, Q_{\ell, \alpha}} + C \ell^{n}\text{.}
\end{equation*}

In particular, if we take \( v = u \), we have
\begin{equation*}
 \mathcal{E}^{1, n-1}_{\textnormal{rel}} \brk{u, Q_{5\ell}}
 \ge 5^{n} \mathcal{E}^{1, n-1}_{\textnormal{rel}} \brk{u, Q_{\ell}} + C \ell^{n}\text{,}
\end{equation*}
or equivalently,
\begin{equation*}
\frac{\mathcal{E}^{1, n-1}_{\textnormal{rel}} \brk{u, Q_{5\ell}}}{\brk{5\ell}^{n}}
 \ge \frac{\mathcal{E}^{1, n-1}_{\textnormal{rel}} \brk{u, Q_{\ell}}}{\ell^{n}} + \frac{C}{5^{n}}\text{.}
\end{equation*}
It follows by induction that
\begin{equation*}
 \frac{\mathcal{E}^{1, n-1}_{\textnormal{rel}} \brk{u, Q_{5^{m}}}}{5^{mn}}
 \ge \frac{C m}{5^{n}}\text{,}
\end{equation*}
and the conclusion follows.
\end{proof}

As explained in the introduction, if \( \widetilde{\manifold{N}}_{0} \) was a compact Riemannian manifold, we would be done with the proof of our main result.
In the next section, we explain how to remedy the non compactness issue, by taking a suitable quotient space of \( \widetilde{\manifold{N}}_{0} \).

\subsection{Analytical obstruction with values into a skeleton}
\label{subsection:AnalyticalObstructionSkeleton}

In this section, we explain how to construct a sequence of Sobolev mappings whose relaxed energy grows superlinearly with respect to its Sobolev energy when the target is not a manifold but merely a skeleton of the \( n \)-dimensional torus \( \Tset^{n} \).
Later on, in Section~\ref{section_from_skeleton_to_manifold}, we will explain how to move from a skeleton to a legitimate compact manifold without boundary, while preserving all the required properties of our construction.

We define \( \manifold{N}_{0} \) to be the \(\brk{n-1}\)-skeleton of the torus \( \Tset^{n} \), or equivalently the quotient
\[
  \manifold{N}_{0} = \lifting{\manifold{N}}_0/\Zset^n.
\]
In particular, \( \manifold{N}_{0} \) can be isometrically embedded into a Euclidean space, as a subset of the compact Riemannian manifold \( \Tset^{n} \subseteq \Rset^{2n}\).
It enjoys the following properties.

\begin{proposition}
\label{proposition_topological_properties_skeleton}
The quotient map \(\pi\colon \widetilde{\manifold{N}}_{0} \to \manifold{N}_{0} \) is a universal cover, and
\[ \pi_{1}\brk{\manifold{N}_{0}} \simeq \Zset^{n}\text{,}
\]
\[ \pi_{n-1}\brk{\manifold{N}_{0}} \simeq \textstyle\bigoplus_{\Zset^{n}}\Zset\text{,} \]and
\[ 
\pi_{2}\brk{\manifold{N}_{0}} \simeq \dotsb \simeq \pi_{n-2}\brk{\manifold{N}_{0}} \simeq \set{0} \text{.}
\]
In addition, \( \pi_{n - 1}\brk{\manifold{N}_{0}} \) is finitely generated over the action of \( \pi_{1}\brk{\manifold{N}_{0}} \).
Finally, the higher order homotopy groups of \( \manifold{N}_{0} \), \( \pi_{k}\brk{\manifold{N}_{0}} \) with \( k > n-1 \), may be computed as the corresponding homotopy groups of a bouquet of infinitely many \( \brk{n - 1} \)-spheres, one for each element of \( \Zset^{n} \).
\end{proposition}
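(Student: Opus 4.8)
The plan is to use that \(\manifold{N}_0\) is the base of the covering \(\pi\), to determine the homotopy type of the total space \(\widetilde{\manifold{N}}_0\) by elementary cellular topology, and then to transfer the information along the covering. First I would note that \(\Zset^n\) acts on \(\Rset^n\) by translations; since this action preserves the cubical decomposition of \(\Rset^n\), it restricts to a free and properly discontinuous action on the subcomplex \(\widetilde{\manifold{N}}_0\), with quotient \(\manifold{N}_0\), so that \(\pi\) is a covering map with deck transformation group \(\Zset^n\). To see that it is the \emph{universal} cover, it suffices that \(\widetilde{\manifold{N}}_0\) be simply connected: since \(\widetilde{\manifold{N}}_0\) is precisely the \(\brk{n-1}\)-skeleton of \(\Rset^n\) endowed with its standard cubical CW structure, cellular approximation shows that the inclusion \(\widetilde{\manifold{N}}_0 \hookrightarrow \Rset^n\) induces isomorphisms on \(\pi_j\) for \(j < n - 1\); as \(\Rset^n\) is contractible, \(\widetilde{\manifold{N}}_0\) is therefore \(\brk{n-2}\)-connected, in particular simply connected for \(n \geq 3\). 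Consequently \(\pi_1\brk{\manifold{N}_0} \simeq \Zset^n\), and \(\pi_k\brk{\manifold{N}_0} \simeq \pi_k\brk{\widetilde{\manifold{N}}_0}\) for every \(k \geq 2\), which reduces everything to the homotopy of \(\widetilde{\manifold{N}}_0\).

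Next I would compute \(H_{n-1}\brk{\widetilde{\manifold{N}}_0}\) from its cellular chain complex, which coincides with the cellular chain complex of \(\Rset^n\) in degrees at most \(n-1\) and vanishes in higher degrees. Since \(\Rset^n\) is acyclic and has no cell of dimension \(\geq n+1\), its boundary operator \(\partial_n\) is injective with image \(\ker \partial_{n-1}\), whence
\[
 H_{n-1}\brk{\widetilde{\manifold{N}}_0}
 = \ker \partial_{n-1}
 = \operatorname{im} \partial_n
 \cong C_n\brk{\Rset^n}
 \cong \textstyle\bigoplus_{\Zset^n}\Zset\text{,}
\]
a free abelian group with one generator per unit cube of \(\Rset^n\), the cubes being indexed by their lowest vertex in \(\Zset^n\); all lower homology of \(\widetilde{\manifold{N}}_0\) vanishes. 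By the Hurewicz theorem, \(\pi_{n-1}\brk{\widetilde{\manifold{N}}_0} \cong H_{n-1}\brk{\widetilde{\manifold{N}}_0}\). To upgrade this identification to a homotopy equivalence, I would map, for each unit cube \(Q\), a copy of \(\Sset^{n-1}\) onto the boundary sphere \(\partial Q \subseteq \widetilde{\manifold{N}}_0\); the resulting map \(\bigvee_{\Zset^n} \Sset^{n-1} \to \widetilde{\manifold{N}}_0\) carries a basis to a basis in degree \(n-1\) and hence induces an isomorphism on every homology group, so that, both sides being simply connected CW complexes, Whitehead's theorem yields \(\widetilde{\manifold{N}}_0 \simeq \bigvee_{\Zset^n} \Sset^{n-1}\).

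Finally I would combine these facts. From \(\pi_k\brk{\manifold{N}_0} \simeq \pi_k\brk{\widetilde{\manifold{N}}_0}\) for \(k \geq 2\) one reads off \(\pi_k\brk{\manifold{N}_0} \simeq \set{0}\) for \(2 \leq k \leq n-2\), \(\pi_{n-1}\brk{\manifold{N}_0} \simeq \bigoplus_{\Zset^n}\Zset\), and \(\pi_k\brk{\manifold{N}_0} \simeq \pi_k\brk{\bigvee_{\Zset^n}\Sset^{n-1}}\) for \(k > n-1\). For the module statement I would observe that the deck action of \(\Zset^n = \pi_1\brk{\manifold{N}_0}\) on \(\pi_{n-1}\brk{\manifold{N}_0} \cong \pi_{n-1}\brk{\widetilde{\manifold{N}}_0}\) corresponds, under the identifications above, to the translation action on cellular chains, that is, to the permutation action of \(\Zset^n\) on the set of unit cubes; this action being simply transitive, \(\pi_{n-1}\brk{\manifold{N}_0}\) is isomorphic to the group ring \(\Zset[\Zset^n]\) as a module over \(\pi_1\brk{\manifold{N}_0}\), hence generated by a single element over \(\pi_1\brk{\manifold{N}_0}\) while of infinite rank over \(\Zset\).

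None of the individual steps is difficult; the point requiring most care is keeping track of the \(\Zset^n\)-action through the whole chain of identifications (cellular chains, \(H_{n-1}\), \(\pi_{n-1}\), and then across the covering) so as to recognise it as the simply transitive permutation action on the unit cubes — this is exactly what makes \(\pi_{n-1}\brk{\manifold{N}_0}\) cyclic over \(\pi_1\brk{\manifold{N}_0}\) yet not finitely generated over \(\Zset\). A secondary delicate point is the use of Whitehead's theorem to pass from the homology computation to the homotopy equivalence \(\widetilde{\manifold{N}}_0 \simeq \bigvee_{\Zset^n}\Sset^{n-1}\), which rests on the simple connectivity of \(\widetilde{\manifold{N}}_0\) provided by the skeletal inclusion.
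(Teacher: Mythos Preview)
Your proof is correct and follows essentially the same architecture as the paper's: establish that \(\pi\) is a covering with deck group \(\Zset^n\), show \(\lifting{\manifold{N}}_0\) is \(\brk{n-2}\)-connected, compute \(H_{n-1}\brk{\lifting{\manifold{N}}_0}\) cellularly, apply Hurewicz, and track the \(\Zset^n\)-action to get the module statement.

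The one genuinely different step is your argument for \(\lifting{\manifold{N}}_0 \simeq \bigvee_{\Zset^n}\Sset^{n-1}\). The paper works in the opposite direction: it collapses the \(\brk{n-2}\)-skeleton to obtain a map \(\lifting{\manifold{N}}_0 \to \lifting{\manifold{N}}_0/\lifting{\manifold{N}}_0^{n-2}\) onto a wedge with one sphere per \(\brk{n-1}\)-face, then uses the long exact sequence of the pair and a section of \(\partial\) to identify which spheres must be further collapsed, finally invoking the homology Whitehead theorem. Your route---mapping \emph{into} \(\lifting{\manifold{N}}_0\) by sending each wedge summand onto \(\partial Q\) for the corresponding unit cube---is more direct: since the classes \(\intvc{\partial Q}{}\) form a basis of \(H_{n-1}\brk{\lifting{\manifold{N}}_0}\) and all other reduced homology vanishes on both sides, the homology Whitehead theorem applies immediately. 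The paper's construction has the incidental virtue that its equivalence is realised by a Lipschitz collapse map, which feeds into their later analytical discussion; your equivalence runs the other way and would not give that for free, but for the bare homotopy statement your argument is cleaner. One small point to make explicit: to assemble the maps \(\Sset^{n-1} \to \partial Q \subset \lifting{\manifold{N}}_0\) into a single map out of the wedge you need them to share a basepoint, which requires prepending a path from a fixed vertex to each \(\partial Q\); this does not affect the induced map on \(H_{n-1}\).
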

\begin{proof}
	First of all, it is straightforward from the natural cell complex structure of \( \widetilde{\manifold{N}}_{0} \)  that its integer homology groups of are given by
	\begin{equation*}
		H_{j}\brk{\widetilde{\manifold{N}}_{0}}
		\simeq
		\begin{cases}
			\set{0} & \text{if \( j \neq n-1 \),} \\
			\bigoplus_{\Zset^{n}}\Zset & \text{if \( j = n-1 \).}
		\end{cases}
	\end{equation*}
	Actually, there is one copy of \( \Zset \) in \( H_{n-1}\brk{\widetilde{\manifold{N}}_{0}} \) for every boundary of an \( n \)-cube in \( \widetilde{\manifold{N}}_{0} \).

	Therefore, Hurewicz's theorem or a direct argument implies that \( \pi_{1}\brk{\widetilde{\manifold{N}}_{0}} \simeq \dotsb \simeq \pi_{n-2}\brk{\widetilde{\manifold{N}}_{0}} \simeq \set{0} \), while \( \pi_{n-1}\brk{\widetilde{\manifold{N}}_{0}} \simeq \bigoplus_{\Zset^{n}}\Zset \).
	Since \( \pi \colon \widetilde{\manifold{N}}_{0} \to \manifold{N}_{0} \) is the covering associated to the action of \( \Zset^{n} \) on \( \widetilde{\manifold{N}}_{0} \), we deduce that \( \pi_{1}\brk{\manifold{N}_{0}} = \Zset^{n} \), \( \pi_{n-1}\brk{\manifold{N}_{0}} \simeq \bigoplus_{\Zset^{n}}\Zset \), and \( \pi_{2}\brk{\manifold{N}_{0}} \simeq \dotsb \simeq \pi_{n-2}\brk{\manifold{N}_{0}} \simeq \set{0} \).
	Moreover, we know that \( \pi_{1}\brk{\manifold{N}_{0}} \) acts as \( \Zset^{n} \) by translation on \( \widetilde{\manifold{N}}_{0} \); see e.g.~\cite{Hatcher_2002}*{Proposition~1.40}.
	Let us investigate the effect of this action on \( \pi_{n-1}\brk{\manifold{N}_{0}} \simeq \pi_{n-1}\brk{\widetilde{\manifold{N}}_{0}} \).
	Assume without loss of generality that the origin \(0\) has been chosen as the basepoint for the homotopy groups of \(\widetilde{\manifold{N}}_{0} \).
	Then, any loop in \( \gamma \in \pi_{1}\brk{\manifold{N}_{0}} \simeq \Zset^{n} \) can be lifted to a path in \( \widetilde{\manifold{N}}_{0} \) connecting the basepoint \(0\) to the associated endpoint \( z_{\gamma} \in \Zset^{n} \).
	Let \( a \) be the element of \( \pi_{n-1}\brk{\manifold{N}_{0}} \) obtained by projecting the element of \( \pi_{n-1}\brk{\widetilde{\manifold{N}}_{0}} \) that covers once the cycle \( \partial\intvc{0}{1}^{n} \).
	By construction, the element \( \gamma a \in \pi_{n-1}\brk{\manifold{N}_{0}} \) lifts to the element of \( \pi_{n-1}\brk{\widetilde{\manifold{N}}_{0}} \) that covers once the cycle \( z_{\gamma} +  \partial\intvc{0}{1}^{n} \); see also~\cite{Hatcher_2002}*{\S4.1, Exercise~4}.
	(The action can also be described as a particular case of a general bijection between pointed homotopy classes and free homotopy classes in the covering space~\citelist{\cite{Spanier_1966}*{Corollary 7.3.7}\cite{Whitehead_1978}*{III~(1.15)}}.)
	When \( \gamma \) runs over \( \pi_{1}\brk{\manifold{N}_{0}} \), it is readily checked that we obtain the generating family of \( \pi_{n-1}\brk{\widetilde{\manifold{N}}_{0}} \) associated with the homology basis shown above, one element for each boundary of a unit \( n \)-cube in \( \widetilde{\manifold{N}}_{0} \).
	Therefore, \(  \pi_{n-1}\brk{\manifold{N}_{0}} \) is generated by one element over the action of \( \pi_{1}\brk{\manifold{N}_{0}} \).

	Finally, the affirmation concerning the higher order homotopy groups of \( \manifold{N}_{0} \) comes from the fact that \( \widetilde{\manifold{N}}_{0} \) is homotopically equivalent to a bouquet of infinitely many \( \brk{n-1} \)-spheres, one for each cube in \(\widetilde{\manifold{N}}_{0} \).
	This can be proved by a hand-made construction.
	However, for the sake of completeness, we sketch a proof, relying on a more general argument.
	We start with the following exact sequence, which is a part of the long exact sequence relating the homology of a space, a subspace, and the associated quotient space, see e.g.~\cite{Hatcher_2002}*{Theorem~2.13}:
	\begin{equation}
	\label{eq_exact_sequence_quotient_space}
		\begin{tikzcd}
			\set{0} \simeq H_{n-1}\brk{\lifting{\manifold{N}}_{0}^{n-2}} \arrow[r] & H_{n-1}\brk{\lifting{\manifold{N}}_{0}} \arrow[r, "j_{\sharp}"] & H_{n-1}\brk{\lifting{\manifold{N}}_{0}/\lifting{\manifold{N}}_{0}^{n-2}} \arrow[lld, "\partial" above] \\[1em] H_{n-2}\brk{\lifting{\manifold{N}}_{0}^{n-2}} \arrow[r] & H_{n-2}\brk{\lifting{\manifold{N}}_{0}} \simeq \set{0}\text{,} &
		\end{tikzcd}
	\end{equation}
	where \( \lifting{\manifold{N}}_{0}^{n-2} \) is the \( \brk{n-2} \)-skeleton of \( \lifting{\manifold{N}}_{0} \).
	The quotient space \( \lifting{\manifold{N}}_{0}/\lifting{\manifold{N}}_{0}^{n-2} \) is a bouquet of \( \brk{n-1} \)-spheres, one for each \( \brk{n-1} \)-cell in \( \lifting{\manifold{N}}_{0} \), the map \( j \) is the quotient map that sends each cell to the corresponding sphere, and the map \( \partial \) sends every sphere to the boundary of the associated cell, which is an \( \brk{n-2} \)-cycle.
	The exact sequence~\eqref{eq_exact_sequence_quotient_space} implies that the map \( \partial \) is an epimorphism. Since \( H_{n-2}\brk{\lifting{\manifold{N}}_{0}^{n-2}} \) is free, this map admits a section \( \sigma \), and moreover, it can constructed so that its image \( \operatorname{Im} \sigma \) is the submodule generated by a subset of the spheres constituting \( \lifting{\manifold{N}}_{0}/\lifting{\manifold{N}}_{0}^{n-2} \).
	This section induces the direct sum decomposition%
	\[
	H_{n-1}\brk{\lifting{\manifold{N}}_{0}/\lifting{\manifold{N}}_{0}^{n-2}} \simeq H_{n-1}\brk{\lifting{\manifold{N}}_{0}} \oplus \operatorname{Im} \sigma\text{.}
	\]
	Collapsing the spheres which generate \( \operatorname{Im}\sigma \), the map \( j \) induces a map \( h \) from \( \lifting{\manifold{N}}_{0} \) to a (possibly smaller) bouquet of spheres, and our argument shows that this map induces an isomorphism in homology in degree \( n-1 \).
	It also induces an isomorphism in homology in all the other degrees, since all the other homology groups of both spaces involved are trivial (except for the \( H_{0} \), which are both \( \Zset \), and on which this map also induces an isomorphism).
	A corollary of Whitehead's and Hurewicz's theorems, see e.g.~\cite{Hatcher_2002}*{Corollary~4.33}, allows to conclude that \( h \) is a homotopy equivalence.
\end{proof}

We note importantly that, although there is a correspondence between the cubes in \(\widetilde{\manifold{N}}_{0} \) and the spheres of the bouquet to which it is homotopic, the homotopy equivalence between the bouquet of spheres and \(\widetilde{\manifold{N}}_{0} \) \emph{does not} map each generator associated to a sphere to the generator associated to the corresponding cube.
Instead, a generator associated to a sphere may be mapped to a generator associated to a cycle containing an arbitrary large number of cubes in \(\widetilde{\manifold{N}}_{0} \).
This can be intuitively seen from the proof: \( \lifting{\manifold{N}}_{0} \) is sent to a bouquet of spheres by collapsing its \( \brk{n-2} \)-skeleton, sending all \( \brk{n-1} \)-faces to spheres, and then collapsing a certain number of those spheres.
Therefore, the spheres of the bouquet are in correspondence with a family of cycles that form a basis of \( H_{n-1}\brk{\lifting{\manifold{N}}_{0}} \) chosen such that the faces that have not been collapsed are part of one and only one cycle.
However, it is readily seen that it is impossible to select some faces of \( \lifting{\manifold{N}}_{0} \) in such a way that the corresponding family of cycles can be chosen to be all boundaries of unit cubes.

This can be put into the framework of the third strategy for proving weak approximation explained in the introduction, based on the construction and elimination of connections \citelist{\cite{BrezisCoronLieb1986}\cite{AlmgrenBrowderLieb1988}\cite{Bethuel_1990}\cite{AlbertiBaldoOrlandi2003}\cite{Canevari_Orlandi_2019}\cite{pakzad_riviere_2003}}.
Indeed, by composing an \( \manifold{N}_{0} \)-valued \( W^{1,n-1} \) mapping \( u \) with the homotopy equivalence \( h \colon \lifting{\manifold{N}}_{0} \to \bigvee_{\Zset^{n}} \Sset^{n-1} \) constructed above --- which can be taken to be Lipschitz, as it merely amounts to collapse some cells of \( \lifting{\manifold{N}}_{0} \) and map the other ones to spheres --- one can proceed with the first part of the strategy, namely construct and analyze the topological singular set of the map \( u \) and obtain a minimal connection for this set, whose length is controlled by the Sobolev energy of \( u \), identifying separately the contributions of the different spheres of the bouquet.
However, one cannot proceed with the second step of the strategy, as eliminating the singularities corresponding to the different spheres of the bouquet cannot be done with uniformly controlled maps for creating the dipoles, due to the fact that some of these spheres are associated with arbitrarily large cycles in \( \lifting{\manifold{N}}_{0} \).
This explains why this strategy cannot be applied to obtain weak approximation in our context.

We now provide an instance of a sequence of Sobolev mappings with values into the \emph{compact} skeleton \( \manifold{N}_{0} \) whose relaxed energy grows up superlinearly with respect to the Sobolev energy.
We define \( v = \pi \circ u \), where the \( u \) is the map from Proposition~\ref{prop:RelaxedEnergyuell}.
In particular, we note that the map \( v \) is \( \Zset^{n} \)-periodic.

\begin{proposition}
\label{prop_counterexample_skeleton}
	For every \( n \geq 3 \), 
	\begin{equation*}
		\liminf_{\ell \to \infty} \frac{\mathcal{E}^{1, n-1}_{\textnormal{rel}} \brk{v, Q_{\ell}}}{\ell^{n} \ln \ell} > 0\text{.}
	\end{equation*}
\end{proposition}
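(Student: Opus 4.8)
The plan is to reduce the statement to \cref{prop:RelaxedEnergyuell} by lifting competitors to the universal cover. Concretely, I would show that $\mathcal{E}^{1, n-1}_{\textnormal{rel}} \brk{v, Q_{\ell}} \ge \mathcal{E}^{1, n-1}_{\textnormal{rel}} \brk{u, Q_{\ell}}$ for every $\ell$, which combined with \cref{prop:RelaxedEnergyuell} gives the conclusion at once. Let $\brk{w_k}_{k \in \Nset}$ be an arbitrary sequence in $W^{1,n-1}\brk{Q_\ell, \manifold{N}_0} \cap C\brk{Q_\ell, \manifold{N}_0}$ with $w_k \to v$ almost everywhere, and assume $\sup_k \int_{Q_\ell} \abs{\Deriv w_k}^{n-1} =: M < \infty$, as otherwise there is nothing to prove. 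Since $Q_\ell$ is simply connected and $\pi \colon \widetilde{\manifold{N}}_0 \to \manifold{N}_0$ is a covering map, each $w_k$ has a continuous lift $\widetilde{w}_k \colon Q_\ell \to \widetilde{\manifold{N}}_0$ with $\pi \compose \widetilde{w}_k = w_k$; it lies in $W^{1,n-1}$ and, $\pi$ being a local isometry, $\abs{\Deriv \widetilde{w}_k} = \abs{\Deriv w_k}$ almost everywhere. I would normalise $\widetilde w_k$ using the deck transformation group $\Zset^n$ so that $\widetilde{w}_k \brk{x_0} \to u \brk{x_0}$ at a fixed base point $x_0 \in Q_\ell \setminus \brk{\Zset + 1/2}^n$ chosen so that $w_k\brk{x_0} \to v\brk{x_0}$.

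The heart of the matter is the claim that, after passing to a subsequence, $\widetilde{w}_k \to u$ almost everywhere on $Q_\ell$. Granting it, pick a subsequence realising $\liminf_k \int_{Q_\ell} \abs{\Deriv w_k}^{n-1}$, then a further subsequence along which $\widetilde{w}_k \to u$ almost everywhere; then $\brk{\widetilde{w}_k}$ is admissible for $\mathcal{E}^{1,n-1}_{\textnormal{rel}} \brk{u, Q_\ell}$, so $\liminf_k \int_{Q_\ell} \abs{\Deriv w_k}^{n-1} = \liminf_k \int_{Q_\ell} \abs{\Deriv \widetilde{w}_k}^{n-1} \ge \mathcal{E}^{1,n-1}_{\textnormal{rel}} \brk{u, Q_\ell}$, and taking the infimum over $\brk{w_k}$ gives the wanted inequality. (If the normalisation produced the limit $u + z$ for some $z \in \Zset^n$ rather than $u$, this would be harmless: $x \mapsto x + z$ is an isometry of $\Rset^n$ preserving $\widetilde{\manifold{N}}_0$, so $\mathcal{E}^{1,n-1}_{\textnormal{rel}} \brk{u + z, Q_\ell} = \mathcal{E}^{1,n-1}_{\textnormal{rel}} \brk{u, Q_\ell}$.)

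Proving this claim is where $n \ge 3$ is genuinely used, and I expect it to be the main obstacle. The maps $v$ and $u$ are continuous on the connected set $Q_\ell \setminus \brk{\Zset + 1/2}^n$. Fixing $\rho_0 > 0$ below a quarter of the minimal distance between distinct points of a fibre of $\pi$ and setting $A_k \defeq \set{x \in Q_\ell \st \dist_{\manifold{N}_0}\brk{w_k\brk{x}, v\brk{x}} < \rho_0}$, one gets a relatively open set with $\abs{Q_\ell \setminus A_k} \to 0$ on which $\widetilde{w}_k\brk{x}$ lies within $\rho_0$ of a unique point $u\brk{x} + \zeta_k\brk{x}$ of $\pi^{-1}\brk{v\brk{x}}$, with $\zeta_k \colon A_k \to \Zset^n$ locally constant and $\zeta_k \equiv 0$ near $x_0$. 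It then remains to show $\abs{\set{x \in A_k \st \zeta_k\brk{x} \ne 0}} \to 0$; this is a statement about the continuous map $\widetilde{w}_k - u \in W^{1,n-1}$, which has $\norm{\Deriv\brk{\widetilde{w}_k - u}}_{L^{n-1}}$ bounded and is, off a set of measure tending to $0$, within $\rho_0$ of $\Zset^n$. A subset of non-vanishing measure on which it is near a nonzero lattice point would be separated from the region near $x_0$ by a set of vanishing measure, forcing $\widetilde{w}_k - u$ to cover a distance bounded below across an ever thinner transition layer; by the coarea formula together with an isoperimetric estimate on the separating set, a layer of thickness $\delta$ contributes at least of order $\delta^{-\brk{n-2}}$ to $\int \abs{\Deriv\brk{\widetilde{w}_k - u}}^{n-1}$, which diverges precisely because $n - 2 \ge 1$ — contradicting the energy bound. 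Making this energy-concentration argument fully rigorous (either through a careful slicing/isoperimetric estimate, or by a lower-semicontinuity argument showing that any almost-everywhere limit of the $\widetilde{w}_k$ is a genuine continuous lift of $v$ on $Q_\ell \setminus \brk{\Zset + 1/2}^n$ and hence, by connectedness, equals $u$ up to a constant) is the delicate point; everything else is routine covering-space theory combined with the already-established \cref{prop:RelaxedEnergyuell}.
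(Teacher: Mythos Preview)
Your overall strategy coincides with the paper's: lift competitors through the covering \(\pi \colon \widetilde{\manifold{N}}_0 \to \manifold{N}_0\), use that \(\pi\) is a local isometry to preserve the energy, and conclude from \cref{prop:RelaxedEnergyuell}. The difference lies entirely in how you establish that the lifts \(\widetilde{w}_k\) converge to (a translate of) \(u\).

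You attempt a direct energy-concentration argument to rule out that \(\widetilde{w}_k\) jumps between different sheets on a set of non-vanishing measure; as you yourself note, making this rigorous is the delicate point. The paper bypasses this entirely. Instead of normalising at a basepoint, it normalises so that the averages \(\fint_{Q_\ell}\widetilde{w}_k\) stay bounded; then the Poincar\'e--Wirtinger inequality gives a uniform \(L^{n-1}\) bound on \(\widetilde{w}_k\), and since \(n-1>1\) (this is exactly where \(n\ge 3\) enters, via reflexivity of \(W^{1,n-1}\)), a subsequence converges weakly in \(W^{1,n-1}\) to some \(w\). Rellich then gives a.e.\ convergence, so \(\pi\compose w = v = \pi\compose u\) a.e., and the paper concludes by invoking uniqueness of liftings in Sobolev spaces (citing \cite{Bourgain_Brezis_Mironescu_2000}, \cite{Bethuel_Chiron_2007}, \cite{Mironescu_VanSchaftingen_2021_APDE}) to get \(w = u + a\) for some \(a\in\Zset^n\). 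This is considerably shorter and avoids any bespoke slicing or isoperimetric estimate; your approach, if completed, would essentially be reproving a version of that uniqueness-of-lifting result by hand.
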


The map \( v \) being \( \Zset^{n} \)-periodic, it satisfies in particular \( \mathcal{E}^{1,n-1}\brk{v,Q_{\ell}} \lesssim \ell^{n} \), so that the above theorem indeed provides a superlinear growth of the relaxed energy with respect to the Sobolev energy.

\begin{proof}[Proof of Proposition~\ref{prop_counterexample_skeleton}]
	Assume that we are given a sequence \( \brk{w_{k}}_{k \in \Nset} \) of mappings in \( W^{1,n-1}\brk{Q_{\ell}, \manifold{N}_{0}} \cap C\brk{Q_{\ell}, \manifold{N}_{0}} \) that converges weakly to \( v \) in \( W^{1,n-1} \) on \( Q_{\ell} \).
	We perform a lifting in the spirit of \citelist{\cite{Bourgain_Brezis_Mironescu_2000}\cite{Bethuel_Chiron_2007}\cite{Bethuel_Zheng_1988}}.
	Since \( Q_{\ell} \) is simply connected, the maps \( w_{k} \) may be lifted as maps \( \widetilde{w}_{k} \colon Q_{\ell} \to \widetilde{\manifold{N}}_{0} \).
	In addition, since \( \pi \) is a local isometry, it holds that
	\begin{equation*}
		\int_{Q_{\ell}} \abs{\Deriv w_{k}}^{n-1}
		=
		\int_{Q_{\ell}} \abs{\Deriv \widetilde{w}_{k}}^{n-1}\text{.}
	\end{equation*}
	Moreover, taking profit of the fact that \( \Zset^{n} \) acts on \( \widetilde{\manifold{N}}_{0} \) and that those deck transformations are translations, we may choose the liftings \( \widetilde{w}_{k} \) so that 
	\begin{equation*}
		\brk[\bigg]{\fint_{Q_{\ell}}\widetilde{w}_{k}}_{k \in \Nset}
		\quad
		\text{is bounded.}
	\end{equation*}
	Therefore, the Poincaré--Wirtinger inequality ensures that the sequence \( \brk{\widetilde{w}_{k}}_{k \in \Nset} \) is bounded in \( L^{n-1} \), and hence, up to extraction of a subsequence, it converges weakly to some map \( w \in W^{1,n-1}\brk{Q_{\ell}, \widetilde{\manifold{N}}_{0}} \).
	Here we use the fact that \( n-1 > 1 \).
	By the continuity of the covering map \( \pi \), we have almost everywhere
	\begin{equation}
	\label{eq:PointwiseConvergenceLiftings}
		\pi \compose w 
		= 
		\lim_{k \to \infty} \pi \compose \widetilde{w}_{k}
		=
		\lim_{k \to \infty} w_{k}
		=
		v
		=
		\pi \compose u\text{.}
	\end{equation}
	Since the action of \( \Zset^{n} \) by translation is transitive, by the uniqueness of liftings in Sobolev spaces (see e.g.~\citelist{\cite{Bourgain_Brezis_Mironescu_2000}\cite{Bethuel_Chiron_2007}\cite{Bethuel_Zheng_1988}\cite{Mironescu_VanSchaftingen_2021_APDE}*{Proposition~4.2}}), it follows that \( w = u + a \) for some \( a \in \Zset^{n} \).
	Hence, we deduce that
	\begin{equation*}
		\liminf_{k \to \infty} 	\int_{Q_{\ell}} \abs{\Deriv w_{k}}^{n-1}
		=
		\liminf_{k \to \infty} \int_{Q_{\ell}} \abs{\Deriv \widetilde{w}_{k}}^{n-1}
		\geq
		\mathcal{E}^{1,n-1}_{\textnormal{rel}}\brk{u, Q_{\ell}}\text{.}
	\end{equation*}
	Taking the infimum over all sequences weakly converging to \( v \), we obtain
	\begin{equation*}
		\mathcal{E}^{1,n-1}_{\textnormal{rel}}\brk{v, Q_{\ell}} \geq \mathcal{E}^{1,n-1}_{\textnormal{rel}}\brk{u, Q_{\ell}}\text{.}
	\end{equation*}
	The conclusion follows from Proposition~\ref{prop:RelaxedEnergyuell}.
	\resetconstant
\end{proof}

We point out that all our analysis before Proposition~\ref{prop_counterexample_skeleton} did not rely on the fact that \( n \ge 3 \).
Since weak density always holds in \( W^{1,1} \), a uniform linear bound on the relaxed energy must hold and thus Proposition~\ref{prop_counterexample_skeleton} cannot be extended to that case. 
Let us examine why our construction does not contradict weak density in \( W^{1,1} \). 
Proposition~\ref{prop:RelaxedEnergyuell} yields
\begin{equation*}
	 \liminf_{\ell \to \infty} \frac{\mathcal{E}^{1, 1}_{\textnormal{rel}} \brk{u, Q_{\ell}}}{\ell^{2} \ln \ell} > 0\text{.}
\end{equation*}
When \( n = 2 \), \( \widetilde{\manifold{N}}_{0} \) is the \( 1 \)-skeleton of \( \Rset^{2} \), and \( \manifold{N}_{0} \) is a bouquet of two circles.
In particular, \( \pi \colon \widetilde{\manifold{N}}_{0} \to \manifold{N}_{0} \) is no longer the universal cover, but it is nevertheless a covering.
Actually, the effect of passing from \( \manifold{N}_{0} \) to \( \widetilde{\manifold{N}}_{0} \) is to abelianize the \( \pi_{1} \): the covering map \(\pi  \colon \lifting{\manifold{N}}_{0} \to \manifold{N}_{0} \) is normal, maps \(\pi_1 \brk{\lifting{\manifold{N}}_0}\) to the commutator group of \(\pi_1 \brk{\manifold{N}_0}\), and has the abelianization of \(\pi_1 \brk{\manifold{N}_0}\) as a group of deck transformations.
This is however no obstruction to construct the map \( v \), and to proceed to the above reasoning up to~\eqref{eq:PointwiseConvergenceLiftings} included.
However, the main difference is that here, \( n-1 = 1 \), and hence, boundedness in \( W^{1,1} \) only implies weak convergence to a BV map, not to a \( W^{1,1} \) map.
Since there is no uniqueness of lifting in BV, one cannot conclude that \( w = u + a \) and hence transfer the energy gap estimate for \( u \) to \( v \).
Actually, it is not difficult to construct by hand a BV lifting of \( v \) such that there indeed exists a sequence of continuous maps, bounded in \( W^{1,1} \), that converges to this lifting almost everywhere.
Of course, it is therefore not \( W^{1,1} \) itself.

\subsection{From the skeleton to a manifold}
\label{section_from_skeleton_to_manifold}
We now transfer our construction to a manifold.

Given \(m \in \Nset \) and \(\lambda \in \intvr{0}{\infty}\), we define the set
\[
	\manifold{N}_{\lambda}
	\defeq V^{-1} \brk{\set{\lambda}}\text{,}
\]
where the function \(V \colon \Tset^{n} \times \Rset^{m} \to \Rset\) is given for \(x \in \Tset^{n} \times \Rset^{m} \subseteq \Rset^\nu\) by
\begin{equation*}
V \brk{x} \defeq \prod_{j = 1}^{n} \frac{1 + x_{2j - 1}}{2}
  + \sum_{j = 1}^m \abs{x_{2n +j}}^2\text{.}
\end{equation*}
We observe that 
\[
 \manifold{N}_0 =  V^{-1} \brk{\set{0}}
 = \set{ x \in \Tset^{n} \st \min_{1 \le j \le n} x_{2j - 1} = -1}
 \times \set{0}
\]
is the set that was used in the previous section, along with its universal covering \(\lifting{\manifold{N}}_{0} \).

\begin{proposition}
\label{proposition_manifold}
For every \(\lambda \in \intvo{0}{1}\),
\(\manifold{N}_\lambda\) is a smooth manifold and there exists a Lipschitz map from \(\manifold{N}_\lambda\) to \(\manifold{N}_0\).
If moreover \(m > 0 \),
\[
 \manifold{N}_{\lambda} \supseteq
 \manifold{N}_{0} \times \Sset^{m - 1}_{\sqrt\lambda}\text{,}
\]
and the restriction of the Lipschitz map \( \manifold{N}_{\lambda} \to \manifold{N}_{0} \) above to \( \manifold{N}_{0} \times \Sset^{m - 1}_{\sqrt\lambda} \) coincides with \( \id_{\manifold{N}_{0}} \times \; 0 \).
If \(i < m\), then
\[
  \pi_i \brk{\manifold{N}_\lambda} \simeq \pi_i \brk{\manifold{N}_0}
  \simeq
  \begin{cases}
     \Zset^{n} & \text{if \(i = 1\)},\\
     \set{0} & \text{if \(2 \le i \le n-2\)},\\
     \pi_i \brk{\bigvee_{\Zset^{n}} \Sset^{n-1}}
     &\text{if \(i \ge n - 1\)}\text{.}
  \end{cases}
\]
Moreover, if \( n-1 < m \), then \( \pi_{n-1}\brk{\manifold{N}_{\lambda}} \) is finitely generated over the action of \( \pi_{1}\brk{\manifold{N}_{\lambda}} \).
\end{proposition}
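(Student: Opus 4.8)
The plan is to realise \(\manifold{N}_\lambda\) as the boundary of a regular neighbourhood of \(\manifold{N}_0\) inside \(\Tset^n \times \Rset^m\) and to read all the assertions off from that description. Throughout I would write \(\theta = \brk{\theta_1, \dotsc, \theta_n}\) for angular coordinates on \(\Tset^n\), so that \(\brk{1 + x_{2j-1}}/2 = \cos^2\brk{\theta_j/2}\), and set \(P\brk{\theta} \defeq \prod_{j=1}^n \cos^2\brk{\theta_j/2} \in \intvc{0}{1}\) and \(y = \brk{x_{2n+1}, \dotsc, x_{2n+m}}\), so \(V = P + \abs{y}^2\) on \(\Tset^n \times \Rset^m\). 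First I would check that every \(\lambda \in \intvo{0}{1}\) is a regular value of \(V\): at a point of \(\manifold{N}_\lambda\), either some \(y_i \neq 0\), giving \(\partial V / \partial y_i = 2 y_i \neq 0\), or \(y = 0\) and then \(P\brk{\theta} = \lambda \in \intvo{0}{1}\) forces every \(\cos^2\brk{\theta_k/2} \in \intvo{0}{1}\), hence some \(\theta_j \notin \set{0, \pi}\), so that \(\partial V/\partial\theta_j = -\tfrac12 \sin\theta_j \prod_{k\neq j}\cos^2\brk{\theta_k/2} \neq 0\). This shows \(\manifold{N}_\lambda\) is a smooth submanifold of dimension \(n + m - 1\), and more precisely that \(\widehat{D} \defeq \set{V \le \lambda}\) is a compact smooth manifold with \(\partial \widehat{D} = \manifold{N}_\lambda\).

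Next I would build the Lipschitz map by flowing \(\widehat{D}\) onto \(\manifold{N}_0\) (this works for all \(m \ge 0\)). Let \(\rho \colon \Tset^n \setminus \set{0} \to \set{P = 0}\) be the \(\ell^\infty\)-radial retraction onto the boundary of the fundamental cube \(\intvc{-\pi}{\pi}^n\), i.e.\ \(\rho\brk{\theta} = \pi\theta/\abs{\theta}_\infty\) in the lift --- the analogue of the basic construction \(u\) of the introduction --- and note that \(\set{P = 0} \times \set{0} = \manifold{N}_0\). Since \(\rho\brk{\theta} = c\brk{\theta}\,\theta\) with \(c\brk{\theta} = \pi/\abs{\theta}_\infty \ge 1\), and since dilating \(\theta\) by a factor \(\ge 1\) within the cube only decreases each \(\cos^2\brk{\theta_k/2}\), the flow \(\brk{s, \brk{\theta, y}} \mapsto \brk{\brk{1-s}\theta + s\,\rho\brk{\theta},\, \brk{1-s}y}\) maps \(\intvc{0}{1} \times \widehat{D}\) into \(\widehat{D}\), is the identity at \(s=0\), sends \(\widehat{D}\) into \(\manifold{N}_0\) at \(s=1\), and fixes \(\manifold{N}_0\); it is Lipschitz on \(\widehat D\) because \(\abs{\theta}_\infty \ge \varepsilon_\lambda > 0\) on the compact set \(\set{P \le \lambda}\) (where \(P = 1\) only at \(0\)). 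Its time-\(1\) map restricted to \(\manifold{N}_\lambda\) is the desired Lipschitz map \(r \colon \manifold{N}_\lambda \to \manifold{N}_0\), and as a by-product \(\widehat{D}\) deformation retracts onto \(\manifold{N}_0\), so \(\widehat{D}\) has the homotopy type of a complex of dimension \(n-1\). For the inclusion: if \(m > 0\), then \(V\brk{\theta, y} = 0 + \lambda = \lambda\) for \(\theta \in \set{P = 0}\) and \(y \in \Sset^{m-1}_{\sqrt\lambda}\), whence \(\manifold{N}_0 \times \Sset^{m-1}_{\sqrt\lambda} \subseteq \manifold{N}_\lambda\); and since \(\rho\) fixes \(\set{P=0}\), \(r\) restricts to \(\id_{\manifold{N}_0} \times\; 0\) there.

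The heart of the proof, and the step I expect to be most delicate, is the computation of the homotopy groups. The guiding idea is that \(\widehat{D}\) is (the smooth incarnation of) a regular neighbourhood of the \(\brk{n-1}\)-dimensional complex \(\manifold{N}_0\) inside the \(\brk{n+m}\)-dimensional manifold \(\Tset^n \times \Rset^m\), so that the pair \(\brk{\widehat{D}, \manifold{N}_\lambda}\) is \(m\)-connected: by Lefschetz duality \(H_i\brk{\widehat{D}, \manifold{N}_\lambda; A} \cong H^{n+m-i}\brk{\widehat{D}; A}\) vanishes for \(i \le m\) and every local coefficient system \(A\), because \(\widehat{D} \simeq \manifold{N}_0\) has homotopical dimension \(n - 1 < n + m - i\); equivalently, turning upside down a handle decomposition of \(\widehat{D}\) with handles of index \(\le n-1\) shows \(\widehat{D}\) is obtained from a collar of \(\manifold{N}_\lambda\) by attaching handles of index \(\ge m+1\). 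The long exact homotopy sequence of the pair then makes \(r_* \colon \pi_i\brk{\manifold{N}_\lambda} \to \pi_i\brk{\widehat{D}} = \pi_i\brk{\manifold{N}_0}\) an isomorphism for \(i \le m-1\), and Proposition~\ref{proposition_topological_properties_skeleton} supplies the listed values. The genuinely subtle degree is \(i = m-1\): the naive picture of \(\manifold{N}_\lambda\) as an \(\Sset^{m-1}\)-bundle over \(\set{P \le \lambda}\) would predict a spurious extra \(\Zset\), which is avoided precisely because the fibre sphere is capped off over \(\partial\set{P \le \lambda}\); and the passage from vanishing of relative homology to vanishing of relative homotopy must be justified (handle trading requires \(n+m\) large enough, which holds in all the applications). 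Finally, when \(n - 1 < m\), both \(r_* \colon \pi_{n-1}\brk{\manifold{N}_\lambda} \to \pi_{n-1}\brk{\manifold{N}_0}\) and \(r_* \colon \pi_1\brk{\manifold{N}_\lambda} \to \pi_1\brk{\manifold{N}_0}\) are isomorphisms, compatible with the \(\pi_1\)-actions by naturality, so the finite generation of \(\pi_{n-1}\brk{\manifold{N}_0}\) over \(\pi_1\brk{\manifold{N}_0}\) from Proposition~\ref{proposition_topological_properties_skeleton} transfers to \(\manifold{N}_\lambda\).

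To make the last step airtight in an elementary way (and since in the applications \(n \ge 3\)), I would instead identify the homotopy type of \(\manifold{N}_\lambda\) by hand: over \(\set{P < \lambda}\) it is the trivial bundle \(\set{P < \lambda} \times \Sset^{m-1}\), while a neighbourhood of the collapsed locus \(\set{P = \lambda,\, y = 0}\) is a trivial disk bundle over \(\partial\set{P \le \lambda} \cong \Sset^{n-1}\); a Mayer--Vietoris and van Kampen patching then gives \(\manifold{N}_\lambda \simeq \brk{\manifold{N}_0 \times \Sset^{m-1}} \cup_{g \times \id} \brk{\Sset^{n-1} \times D^m}\) with \(g \colon \Sset^{n-1} \hookrightarrow \manifold{N}_0\) the attaching map. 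By excision the pair \(\brk{\manifold{N}_\lambda,\, \manifold{N}_0 \times \Sset^{m-1}}\) is \(\brk{m-1}\)-connected with \(\pi_m \cong \Zset\) mapping isomorphically onto the \(\Sset^{m-1}\)-summand of \(\pi_{m-1}\brk{\manifold{N}_0 \times \Sset^{m-1}}\); this kills exactly that summand and leaves \(\pi_i\brk{\manifold{N}_\lambda} \cong \pi_i\brk{\manifold{N}_0}\) for all \(i \le m-1\), the case \(i = 1\) coming directly from van Kampen.
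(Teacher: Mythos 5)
Your proposal is correct, and on the first two assertions it coincides with the paper: the regular-value computation for smoothness and the Lipschitz retraction obtained from the radial flow $(1-s)\brk{\theta,y}+s\brk{\pi\theta/\abs{\theta}_\infty,0}$ are exactly the paper's map $\Theta$. Where you genuinely diverge is the computation of $\pi_i\brk{\manifold{N}_\lambda}$ for $i<m$. The paper never decomposes $\manifold{N}_\lambda$: it uses $\Theta$ a second time to show that $\manifold{N}_\lambda=V^{-1}\brk{\set{\lambda}}$ is a strong deformation retract of $V^{-1}\brk{\intvl{0}{\lambda}}$, and then a single cellular/simplicial approximation (general position) argument to identify $\pi_j\brk{V^{-1}\brk{\intvl{0}{\lambda}}}$ with $\pi_j\brk{V^{-1}\brk{\intvc{0}{\lambda}}}\simeq\pi_j\brk{\manifold{N}_0}$ for $j\le m-1$, since the removed set $\manifold{N}_0$ is an $\brk{n-1}$-complex in an $\brk{n+m}$-dimensional space; the same general position argument also transfers the finite generation of $\pi_{n-1}$ over $\pi_1$. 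Your first route (Lefschetz duality plus handle trading) carries the gaps you yourself flag --- in particular one needs $\pi_1\brk{\manifold{N}_\lambda}\to\pi_1\brk{\widehat{D}}$ to be an isomorphism before relative Hurewicz with local coefficients upgrades the homological vanishing to $m$-connectivity of the pair --- but your second, explicit route is sound and self-contained: the homotopy pushout $\manifold{N}_\lambda\simeq\brk{\manifold{N}_0\times\Sset^{m-1}}\cup_{g\times\id}\brk{\Sset^{n-1}\times D^m}$ amounts, up to homotopy, to attaching one $m$-cell (along a fibre sphere) and one $\brk{n+m-1}$-cell, and since the $\pi_1\brk{\manifold{N}_0}$-action on the fibre summand of $\pi_{m-1}\brk{\manifold{N}_0\times\Sset^{m-1}}$ is trivial, killing the $\pi_1$-orbit of the attaching class removes exactly that $\Zset$ summand. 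Two small inaccuracies do not affect the argument: $\pi_m$ of the pair is a free $\Zset[\pi_1]$-module of rank one rather than $\Zset$, and $g$ is the (non-injective) radial projection of the level hypersurface $\set{P=\lambda}\simeq\Sset^{n-1}$ onto the skeleton, of which only the homotopy class is used. In exchange for this bookkeeping, your decomposition describes the full homotopy type of $\manifold{N}_\lambda$, whereas the paper's general-position argument is softer and shorter and yields all $i<m$ together with the $\pi_1$-module statement in one stroke.
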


\begin{proof}
In angular coordinates 
\(
 x = \brk{\cos \theta_1, \sin \theta_1, \dotsc,
 \cos \theta_{n}, \sin \theta_{n}, z_1, \dotsc, z_{m}}
\),
we have 
\[
 V \brk{x} = \prod_{j = 1}^{n} \frac{1 + \cos \theta_j}{2} + \abs{z}^2 = \prod_{j = 1}^{n} \brk[\big]{\cos \tfrac{\theta_j}{2}}^2 + \abs{z}^2\text{.}
\]
In particular, if \(V \brk{x} < 1\), we have \(\theta \ne 0\).
Taking the gradient, we have, if \(0 < V \brk{x} < 1\),
\[
\abs{\nabla V \brk{x}}^{2} = 
\brk[\bigg]{\sum_{j = 1}^{n} \tan \brk[\big]{\tfrac{\theta_{j}}{2}}^{2}}
\prod_{j = 1}^{n} \brk[\big]{\cos \tfrac{\theta_j}{2}}^4
+ 4 \abs{z}^2 > 0\text{.}
\]
Hence, for every \(\lambda \in \intvo{0}{1}\), the set \(\manifold{N}_{\lambda} \defeq V^{-1} \brk{\set{\lambda}}\) is a smooth submanifold of \( \Tset^{n} \).

Next, we define 
for \(\brk{\theta, z} \in V^{-1} \brk{\intvr{0}{1}} \subseteq \brk{\intvc{-\pi}{\pi}^n \setminus \set{0}} \times \Rset^m \),
\[
 \Theta \brk{t, \theta, z}
 \defeq \brk[\Big]{\brk{1 + t \brk{\tfrac{\pi}{\abs{\theta}_\infty} - 1}} \theta,\brk{1 - t}z}
  = \brk{1 - t} \brk{\theta, z}
 + t \Theta \brk{1,\theta, z}\text{.}
\]
We observe that if \(t_0, t_1 \in \intvc{0}{1}\) satisfy \(t_0 < t_1\) and if \(x \not \in \manifold{N}_0\), then
\[
 V \brk{\Theta \brk{t_0, x}} > V \brk{\Theta \brk{t_1, x}}\text{.}
\]

It follows from the facts that
\begin{enumerate*}[label=(\roman*)]
\item \(\Theta\) is continuous,
\item
\(
  \Theta \brk{\intvc{0}{1} \times V^{-1} \brk{\intvc{0}{\lambda}}}
  \subseteq V^{-1} \brk{\intvc{0}{\lambda}}
\),
\item \(\Theta \brk{\set{1} \times V^{-1} \brk{\intvc{0}{\lambda}}}
\subseteq V^{-1} \brk{\set{0}}\), and
\item for every \(x \in V^{-1} \brk{\set{0}}\),
\(\Theta \brk{t, x} = \brk{t,x} \),
\end{enumerate*}
that the set \(\manifold{N}_0 = V^{-1} \brk{\set{0}}\) is a (strong) deformation rectract of \(V^{-1} \brk{\intvc{0}{\lambda}}\).
In particular, for every \(j \in \Nset\), we have 
\[
 \pi_{j} \brk{V^{-1} \brk{\intvc{0}{\lambda}}} = \pi_{j} \brk{\manifold{N}_0}\text{.}
\]
Moreover, it is readily checked that the map \( \Theta\brk{1,\cdot}\restr{\manifold{N}_{\lambda}} \) is the required Lipschitz map \( \manifold{N}_{\lambda} \to \manifold{N}_{0} \).

The map 
\(
 \Theta \restr{\intvr{0}{1} \times \manifold{N}_\lambda} \colon \intvr{0}{1} \times \manifold{N}_\lambda \to V^{-1} \brk{\intvl{0}{\lambda}}
\) is a bijection.
An inspection of its definition shows its injectivity. For the surjectivity, defining for \(\brk{\theta, z} \in V^{-1} \brk{\intvr{0}{1}} \subseteq \brk{\intvc{-\pi}{\pi}^n \setminus \set{0}} \times \Rset^m \),
\begin{equation*}
\Lambda \brk{t, \theta, z}
\defeq \brk[\bigg]{\frac{1 -t \pi/\abs{\theta}_\infty}{1 - t} \theta, \frac{z}{1 - t}},
\end{equation*}
we observe that if \(t < \abs{\theta}_{\infty}/\pi\), \(\Lambda \brk{t, \theta, z}
\in V^{-1} \brk{\intvr{0}{1}} \subseteq \brk{\intvc{-\pi}{\pi}^n \setminus \set{0}} \times \Rset^m \) and \(\Theta \brk{t, \Lambda\brk{t, \theta, z}} = \brk{\theta, z}\).
Since \(t \in \intvo{0}{\abs{\theta}_{\infty}/\pi} \mapsto V \brk{\Lambda \brk{t, \theta, z}}\) is increasing and continuous and since
\(\lim_{t \to \abs{\theta}_{\infty}/\pi} V \brk{\Lambda \brk{t, \theta, z}} > 1\), \(\Theta\) has the required surjectivity property.

For every \(\varepsilon \in \intvo{0}{\lambda}\), the set \(\Theta^{-1} \brk{V^{-1} \brk{\intvc{\varepsilon}{\lambda}}} \subseteq \intvr{0}{1} \times \manifold{N}_{\lambda} \subseteq \intvc{0}{1}  \times \manifold{N}_{\lambda}\) is closed and thus \(\Theta \restr{\intvr{0}{1} \times \manifold{N}_\lambda} ^{-1} \restr{V^{-1} \brk{\intvc{\varepsilon}{\lambda}}}\) is continuous.
Since \(V\) is continuous, 
\(\Theta\restr{\intvr{0}{1} \times \manifold{N}_\lambda}^{-1}\) is continuous on \(V^{-1} \brk{\intvl{0}{\lambda}}\).
Writing \(\brk{T, R} \defeq  \Theta \restr{\intvr{0}{1} \times \manifold{N}_\lambda}^{-1}\) with \(T \in  C \brk{V^{-1} \brk{\intvl{0}{\lambda}}, \intvr{0}{1}}\) and \(R \in C \brk{V^{-1} \brk{\intvl{0}{\lambda}}, \manifold{N}_{\lambda}}\), we note that the map 
\[
 \Xi \colon \brk{t, x} \in \intvc{0}{1} \times V^{-1} \brk{\intvl{0}{\lambda}}
 \mapsto \Theta \brk{\brk{1 -t} T \brk{x}, R \brk{x}}
\]
is continuous, that \(\Xi \brk{0, \cdot} = \id\), and that \(\Xi \brk{t, \cdot}\restr{\manifold{N}_\lambda} = \operatorname{id}_{\manifold{N}_\lambda}\).
Therefore, \(V^{-1} \brk{\set{\lambda}}\) is a (strong) deformation retract of \(V^{-1} \brk{\intvl{0}{\lambda}}\), and thus for every \(j \in \Nset\),
\[
 \pi_{j} \brk{V^{-1} \brk{\intvl{0}{\lambda}}} \simeq \pi_{j} \brk{\manifold{N}_{\lambda}}\text{.}
\]
By a cellular or simplicial approximation (see~\cite{Hatcher_2002}*{Theorem 2C.1 or 4.8}), since \(\manifold{N}_0 = V^{-1}\brk{\set{0}}\) and \(\dim \manifold{N}_0 = n - 1\), we have if
\(j \in \set{1, \dotsc,m - 1}\),
\[
 \pi_{j} \brk{V^{-1} \brk{\intvl{0}{\lambda}}} \simeq
 \pi_{j} \brk{V^{-1} \brk{\intvc{0}{\lambda}}}\text{.}
\]

It remains to prove that, if \( n-1 < m \), then \( \pi_{n-1}\brk{\manifold{N}_{\lambda}} \) is finitely generated over the action of \( \pi_{1}\brk{\manifold{N}_{\lambda}} \).
Since \( \manifold{N}_{0} \simeq V^{-1} \brk{\intvc{0}{\lambda}} \) and \( V^{-1} \brk{\intvl{0}{\lambda}} \simeq \manifold{N}_{\lambda} \), it suffices to prove that this property is preserved when passing from \( V^{-1} \brk{\intvc{0}{\lambda}} \) to \( V^{-1} \brk{\intvl{0}{\lambda}} \).
But this follows from the same general position argument.
Indeed, let \( g \) be a generator of \( \pi_{n-1}\brk{V^{-1} \brk{\intvc{0}{\lambda}}} \simeq \pi_{n-1}\brk{V^{-1} \brk{\intvl{0}{\lambda}}} \) over the action of \( \pi_{1}\brk{V^{-1} \brk{\intvc{0}{\lambda}}} \), where the equality has been proved above.
Let also \( a \) be any element of \( \pi_{n-1}\brk{V^{-1} \brk{\intvl{0}{\lambda}}} \simeq \pi_{n-1}\brk{V^{-1} \brk{\intvc{0}{\lambda}}} \).
By assumption, there exists \( \gamma \in \pi_{1}\brk{V^{-1} \brk{\intvc{0}{\lambda}}} \simeq \pi_{1}\brk{V^{-1} \brk{\intvl{0}{\lambda}}} \) such that \( \gamma g = a \) in \( \pi_{n-1}\brk{V^{-1} \brk{\intvc{0}{\lambda}}} \).
But an additional application of the general position argument shows that this equality then also holds in \( \pi_{n-1}\brk{V^{-1} \brk{\intvl{0}{\lambda}}} \), which establishes our claim and concludes the proof.
\end{proof}

\begin{remark}
\label{rmk:target_form}
Let us write
\[
\manifold{N}_{\lambda}
= \manifold{N}_{\lambda}^0 \cup \manifold{N}_{\lambda}^1\text{,}
\]
where
\begin{align*}
 \manifold{N}_{\lambda}^0 &= \manifold{N}_\lambda \setminus \brk{\Rset^{2n}\times B_{\sqrt{\lambda}/2} \brk{0}}&\text{and}&&
 \manifold{N}_{\lambda}^0 &= \manifold{N}_\lambda \cap \brk{\Rset^{2n}\times \Bar{B}_{\sqrt{\lambda}/2} \brk{0}}\text{.}
\end{align*}
We observe that
\begin{align*}
 \manifold{N}_{\lambda}^0 &\simeq \brk{\Tset^n \setminus \Bset^n} \times \Sset^{m - 1}\text{,}&
 \manifold{N}_{\lambda}^1 &\simeq \Sset^{n - 1} \times \Bset^m\text{,}&
 &\text{and}&
 \manifold{N}_{\lambda}^0 \cap \manifold{N}_{\lambda}^1
 \simeq \Sset^{n - 1} \times \Sset^{m - 1}\text{.}
\end{align*}
Without giving detailed arguments, these claims can be justified as follows.
In the sequel, we denote an element of \( \manifold{N}_{\lambda} \subset \Rset^{2n} \times \Rset^{m} \) as \( \brk{x,z} \).
For the description of \( \manifold{N}_{\lambda}^{0} \), we first note that, for a fixed \( x \in \Rset^{2n} \), the set of those \( z \in \Rset^{m} \) such that \( \brk{x,z} \in \manifold{N}_{\lambda} \) forms a sphere centered at the origin, unless it is empty.
In order to enforce the additional constraint that this sphere should have a radius at least \( \sqrt{\lambda}/2 \), we have to remove those \( x \) such that \( \prod_{j=1}^{n}\frac{1+x_{2j-1}}{2} \geq \sqrt{\lambda}/2 \), which corresponds to removing from \( \Tset^{n} \) a set homeomorphic to a ball, centered at the point \( \brk{1,\dotsc,1} \), hence yielding \( \manifold{N}_{\lambda}^0 \simeq \brk{\Tset^n \setminus \Bset^n} \times \Sset^{m - 1} \).

For the description of \( \manifold{N}_{\lambda}^1 \), we argue in the other direction, by first observing from the definition of \( V \) that, for a fixed \( z \in \Rset^{m} \), the set of those \( x \in \Rset^{2n} \) such that \( \brk{x,z} \in \manifold{N}_{\lambda} \) is homeomorphic to the sphere \( \Sset^{n-1} \), unless it is empty.
The fact that we restrict to \( z \in \Bar{B}_{\sqrt{\lambda}/2} \brk{0} \) yields the homeomorphism \( \manifold{N}_{\lambda}^1 \simeq \Sset^{n - 1} \times \Bset^m \).

The description of \( \manifold{N}_{\lambda}^0 \cap \manifold{N}_{\lambda}^1 \) follows from a careful combination of the considerations above for both \( \manifold{N}_{\lambda}^{0} \) and \( \manifold{N}_{\lambda}^1 \).

This shows that
\[
	\manifold{N}_{\lambda} \simeq \brk{\Tset^n \setminus \Bset^n} \times \Sset^{m - 1} \cup_{\partial} \Sset^{n - 1} \times \Bset^m\text{,}
\]
where \( \cup_{\partial} \) denotes the attachment along the boundary (see e.g.~\cite[Theorem~9.29]{Lee2013}).

In the special case where \( m = 1 \), then \( \brk{\Tset^n \setminus \Bset^n} \times \Sset^{m - 1} = \brk{\Tset^n \setminus \Bset^n} \times \set{0,1} \) consists of two disjoint copies of \( \brk{\Tset^n \setminus \Bset^n} \), and \( \Sset^{n - 1} \times \Bset^m = \Sset^{n - 1} \times \intvo{-1}{1} \) is a cylinder that collapses after attaching, so that 
\[
	\manifold{N}_{\lambda} \simeq \Tset^{n} \,\#\, \Tset^{n}\text{.} 
\]
\end{remark}

We extend now to Proposition~\ref{prop_counterexample_skeleton} to a map in a manifold.

\begin{proposition}
\label{prop_nonestimate_manifold}
	For every \( n \geq 3 \) and \(\lambda \in \intvo{0}{1}\),
	\begin{equation*}
		\liminf_{\ell \to \infty} \frac{\mathcal{E}^{1, n-1}_{\textnormal{rel}} \brk{v, Q_{\ell}}}{\ell^{n} \ln \ell} > 0\text{.}
	\end{equation*}
\end{proposition}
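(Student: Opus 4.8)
The plan is to transfer the lower bound of Proposition~\ref{prop_counterexample_skeleton} from the skeleton \(\manifold{N}_{0}\) to the smooth manifold \(\manifold{N}_{\lambda}\) by composing competitors with the Lipschitz map provided by Proposition~\ref{proposition_manifold}. First I would fix \(m \geq 1\), choose \(\lambda \in \intvo{0}{1}\) and a point \(q \in \Sset^{m-1}_{\sqrt{\lambda}}\), and reinterpret the \(\Zset^{n}\)-periodic map \(v\) of Proposition~\ref{prop_counterexample_skeleton} --- which takes its values in \(\manifold{N}_{0}\) --- as a map with values in \(\manifold{N}_{\lambda}\), through the inclusion \(\manifold{N}_{0} \hookrightarrow \manifold{N}_{0} \times \Sset^{m-1}_{\sqrt{\lambda}} \subseteq \manifold{N}_{\lambda}\) sending \(x\) to \(\brk{x, q}\). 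Adjoining a constant component does not change the Dirichlet energy, so this \(\manifold{N}_{\lambda}\)-valued map is still periodic and its energy on \(Q_{\ell}\) still grows at most like \(\ell^{n}\), which is what makes the statement meaningful.

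Let \(\Pi \colon \manifold{N}_{\lambda} \to \manifold{N}_{0}\) denote the Lipschitz map of Proposition~\ref{proposition_manifold}, with Lipschitz constant \(L \in \intvo{0}{\infty}\), whose restriction to \(\manifold{N}_{0} \times \Sset^{m-1}_{\sqrt{\lambda}}\) equals \(\id_{\manifold{N}_{0}} \times \; 0\); in particular \(\Pi \compose v = v\) after the identification above. Given any sequence \(\brk{w_{k}}_{k \in \Nset}\) in \(W^{1, n-1}\brk{Q_{\ell}, \manifold{N}_{\lambda}} \cap C\brk{Q_{\ell}, \manifold{N}_{\lambda}}\) converging almost everywhere to \(v\), the maps \(\Pi \compose w_{k}\) belong to \(W^{1, n-1}\brk{Q_{\ell}, \manifold{N}_{0}} \cap C\brk{Q_{\ell}, \manifold{N}_{0}}\), converge almost everywhere to \(\Pi \compose v = v\), and obey the pointwise chain-rule estimate \(\abs{\Deriv\brk{\Pi \compose w_{k}}} \leq L\abs{\Deriv w_{k}}\) almost everywhere, so that
\[
  \int_{Q_{\ell}} \abs{\Deriv w_{k}}^{n-1}
  \geq
  L^{-(n-1)} \int_{Q_{\ell}} \abs{\Deriv\brk{\Pi \compose w_{k}}}^{n-1}\text{.}
\]
Passing to the \(\liminf\) in \(k\), recognising \(\brk{\Pi \compose w_{k}}_{k \in \Nset}\) as an admissible sequence for the relaxed energy of \(v\) viewed as a map into \(\manifold{N}_{0}\), and finally taking the infimum over all such \(\brk{w_{k}}_{k \in \Nset}\), I would obtain that the relaxed energy \(\mathcal{E}^{1, n-1}_{\textnormal{rel}}\brk{v, Q_{\ell}}\) computed with \(\manifold{N}_{\lambda}\)-valued continuous approximations is at least \(L^{-(n-1)}\) times the relaxed energy \(\mathcal{E}^{1, n-1}_{\textnormal{rel}}\brk{v, Q_{\ell}}\) computed with \(\manifold{N}_{0}\)-valued ones.

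Since \(L\) is a constant independent of \(\ell\), it only remains to divide by \(\ell^{n} \ln \ell\) and invoke Proposition~\ref{prop_counterexample_skeleton}. The argument is essentially bookkeeping, and the main point that requires care --- the only genuine obstacle --- is that \(\manifold{N}_{0}\) is \emph{not} literally contained in \(\manifold{N}_{\lambda}\), the level sets \(V^{-1}\brk{\set{0}}\) and \(V^{-1}\brk{\set{\lambda}}\) being disjoint for \(\lambda > 0\); one therefore has to route the inclusion through the sphere factor \(\Sset^{m-1}_{\sqrt{\lambda}}\) and use the precise form \(\id_{\manifold{N}_{0}} \times \; 0\) of \(\Pi\) on that subset. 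This is also where the hypothesis \(m \geq 1\) enters here, the stronger requirement \(m > n-1\) being needed only for the homotopy-theoretic assertions of Proposition~\ref{proposition_manifold}.
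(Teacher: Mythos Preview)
Your proposal is correct and follows essentially the same approach as the paper's own proof: view \(v\) as taking values in \(\manifold{N}_{\lambda}\) via the inclusion \(\manifold{N}_{0} \times \set{q} \subseteq \manifold{N}_{\lambda}\), then push any \(\manifold{N}_{\lambda}\)-valued competitor sequence through the Lipschitz map \(\Pi \colon \manifold{N}_{\lambda} \to \manifold{N}_{0}\) of Proposition~\ref{proposition_manifold}, exploiting that \(\Pi\) is a left inverse of this inclusion, and invoke Proposition~\ref{prop_counterexample_skeleton}. You are in fact more explicit than the paper about the need to route the embedding through the sphere factor and about the resulting constant \(L^{-(n-1)}\), which the paper absorbs into the phrase ``weakly converging sequence \ldots{} can be projected.''
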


The difference between Propositions~\ref{prop_counterexample_skeleton} and \ref{prop_nonestimate_manifold} is that the relaxed energy is taken with respect to sequences respectively in \(\manifold{N}_{0} \) and \(\manifold{N}_\lambda\).
\begin{proof}

	We view the map \( v \) from Proposition~\ref{prop_counterexample_skeleton} as a Sobolev mapping into \( \manifold{N}_{\lambda} \), taking advantage of the inclusion \( \manifold{N}_{0} \subset \manifold{N}_{\lambda} \) exhibited in Proposition~\ref{proposition_manifold}.
	The conclusion hence follows from Proposition~\ref{prop_counterexample_skeleton} and the fact that any weakly converging sequence of continuous Sobolev maps into \( \manifold{N}_{\lambda} \) can be projected to a weakly converging sequence of continuous Sobolev maps into \( \manifold{N}_{0} \)  through the Lipschitz map \( \Phi \colon \manifold{N}_{\lambda} \to \manifold{N}_{0} \) provided by Proposition~\ref{proposition_manifold}.
	We note that this relies on the fact that \( \Phi \) is a left-inverse of the embedding of \( \manifold{N}_{0} \) into \( \manifold{N}_{\lambda} \).
\end{proof}

We are now in position to prove our first main result as stated in the introduction.

\begin{proof}[Proof of Theorem~\ref{theorem_main}]
	It suffices to apply the nonlinear uniform boundedness principle for the weak approximation \cite{Hang_Lin_2003_III}*{Th.\ 9.6} (see also~\cite{Monteil_VanSchaftingen_2019}).
\end{proof}

We observe that this proof yields a slightly stronger conclusion than required: it shows that the obstruction to weak approximation already arises if one considers continuous Sobolev maps rather than smooth maps.

\section{Whitehead products and spheres}
\label{section_whitehead}

\subsection{Construction of mappings}

We now move towards our second main result, Theorem~\ref{theorem_counterexample_hopf}.
We first use the Whitehead product construction~\cite{Whitehead_1941} to construct a periodic map \( \Rset^{4n} \to \Sset^{2n} \) with singularities located at every point in \( \Zset^{4n} \) and Hopf degree \( 2 \) around each singularity.

\begin{proposition}
\label{proposition_whitehead_periodic_link}
For every \( n \geq 1 \), there exists a mapping 
\(u \colon \Rset^{4n} \to \Sset^{2n}\) such that 
\begin{enumerate}[label=(\roman*)]
 \item 
for every \(h \in \Zset^{4n}\),
\[
 u \brk{x + h} = u \brk{x}\text{,}
\]
\item 
\(
 \deg_H u\restr{\partial \intvc{-1/2}{1/2}^{4n}} = 2
\),
\item 
\(u \in W^{1,4n-1}_{\textnormal{loc}}\brk{\Rset^{4n}, \Sset^{2n}} \cap C \brk{\Rset^{4n} \setminus \Zset^{4n}, \Sset^{2n}}\)
and 
\(\abs{\Deriv u \brk{x}} \dist \brk{x, \Zset^{4n}} \le C\) for a.e.\ \( x \in \Rset^{4n} \setminus \Zset^{4n} \).
\end{enumerate}
\end{proposition}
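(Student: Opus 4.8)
The plan is to realise Whitehead's model for the product $[\iota_{2n}, \iota_{2n}] \in \pi_{4n - 1}\brk{\Sset^{2n}}$ directly on a periodic cubical structure and then extend it conically across the singular lattice, in parallel with the skeleton construction of Section~\ref{section_analytical_obstruction}. Write $\Rset^{4n} = \Rset^{2n} \times \Rset^{2n}$ with coordinates $\brk{x, y}$, fix a basepoint $\ast \in \Sset^{2n}$, and choose once and for all, by a standard bump construction, a smooth $\Zset^{2n}$-periodic map $\psi \colon \Rset^{2n} \to \Sset^{2n}$ with $\psi \equiv \ast$ outside $\bigcup_{h \in \Zset^{2n}} \brk{h + \intvc{-1/4}{1/4}^{2n}}$ and whose restriction to the unit cube, seen as a map $\brk{\intvc{-1/2}{1/2}^{2n}, \partial\intvc{-1/2}{1/2}^{2n}} \to \brk{\Sset^{2n}, \ast}$, has Brouwer degree $1$; being smooth and periodic, $\psi$ has bounded differential.

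First I would define $u$ on the $\brk{4n - 1}$-skeleton $\manifold{K}$ of the dual cubical decomposition of $\Rset^{4n}$ --- the cubes $Q_{v} = v + \intvc{-1/2}{1/2}^{4n}$ for $v \in \Zset^{4n}$, so that $\manifold{K}$ consists of the points with at least one coordinate in $\Zset + 1/2$ --- by the Whitehead formula
\[
 u\brk{x, y}
 \defeq
 \begin{cases}
  \psi\brk{y} & \text{if some coordinate of $x$ lies in $\Zset + 1/2$,}\\
  \psi\brk{x} & \text{if some coordinate of $y$ lies in $\Zset + 1/2$.}
 \end{cases}
\]
This is well defined because on the overlap of the two cases both prescriptions return $\ast$, as $\psi$ takes the value $\ast$ whenever one of its arguments has a coordinate in $\Zset + 1/2$; a short case analysis at the faces and lower cells of $\manifold{K}$ then shows $u\restr{\manifold{K}}$ is Lipschitz. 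Next I would extend $u$ to $\Rset^{4n} \setminus \Zset^{4n}$, cube by cube, by the conical (sup-norm radial) formula $u\brk{z} \defeq u\restr{\manifold{K}}\brk{v + \brk{z - v}/\brk{2\abs{z - v}_{\infty}}}$ for $z$ in the dual cube centred at $v$, and set $u \defeq \ast$ on $\Zset^{4n}$. Periodicity of $u$ is immediate, and continuity on $\Rset^{4n} \setminus \Zset^{4n}$ follows because the sup-norm radial projection from $v$ is the identity on $\partial Q_{v}$, so the definitions coming from two adjacent cubes match with $u\restr{\manifold{K}}$ on their common face.

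The main obstacle will be item~(ii): identifying $u\restr{\partial\intvc{-1/2}{1/2}^{4n}}$ with the Whitehead square and computing its Hopf invariant. On $\partial Q_{0} = \partial\intvc{-1/2}{1/2}^{4n}$, with $A = \partial\intvc{-1/2}{1/2}^{2n}_{x} \times \intvc{-1/2}{1/2}^{2n}_{y}$ and $B = \intvc{-1/2}{1/2}^{2n}_{x} \times \partial\intvc{-1/2}{1/2}^{2n}_{y}$, one has $\partial Q_{0} = A \cup B$, and $u$ equals $\brk{x, y} \mapsto \psi\brk{y}$ on $A$, $\brk{x, y} \mapsto \psi\brk{x}$ on $B$, and $\ast$ on $A \cap B$; since $\psi$ restricted to a cube represents a generator $\iota_{2n} \in \pi_{2n}\brk{\Sset^{2n}}$, this is precisely Whitehead's model for $[\iota_{2n}, \iota_{2n}]$ in cubical coordinates, after folding $\Sset^{2n} \vee \Sset^{2n} \to \Sset^{2n}$. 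I would then invoke the classical fact that the Hopf invariant of $[\iota_{m}, \iota_{m}]$ equals $\pm 2$ for even $m$ --- this is what makes the full range $n \ge 1$ accessible, whereas Hopf invariant $1$ is available only for $n \in \set{1, 2, 4}$ by Adams's theorem~\cite{Adams_1960} --- and, composing with a reflection of $\Rset^{4n}$ if needed, conclude $\deg_{H} u\restr{\partial\intvc{-1/2}{1/2}^{4n}} = 2$. As the restriction of $u$ to a small $\abs{\,\cdot\,}_{\infty}$-sphere around any point of $\Zset^{4n}$ is, up to rescaling, this same map, the charge $2$ sits around every singularity.

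Finally, for item~(iii), away from $\Zset^{4n}$ the map $u$ is continuous (and in fact locally Lipschitz) by the above, and the chain rule applied to the conical formula gives $\abs{\Deriv u\brk{z}}\, \dist\brk{z, \Zset^{4n}} \le C$ for a.e.\ $z$, using that $u\restr{\manifold{K}}$ is Lipschitz and that the radial projection from the nearest lattice point $v$ has differential of size $\asymp \abs{z - v}_{\infty}^{-1}$. Integrating this bound in sup-norm polar coordinates on a single dual cube yields $\int_{\intvc{-1/2}{1/2}^{4n}} \abs{\Deriv u}^{4n - 1} \lesssim \int_{0}^{1/2} r^{4n - 1}\, r^{-(4n - 1)} \dif r < \infty$, and periodicity upgrades this to $u \in W^{1, 4n - 1}_{\textnormal{loc}}\brk{\Rset^{4n}, \Sset^{2n}}$, completing~(iii). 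Everything beyond the Hopf-invariant identification is a careful but routine verification.
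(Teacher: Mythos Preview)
Your proposal is correct and follows essentially the same approach as the paper: you build the Whitehead square of a degree-one map on the boundary of the unit cube, extend it conically, and invoke the classical Hopf-invariant computation $\deg_H[\iota_{2n},\iota_{2n}]=\pm 2$. The only cosmetic differences are that you make periodicity manifest from the start by choosing $\psi$ to be $\Zset^{2n}$-periodic and defining $u$ on the entire skeleton, whereas the paper works on a single cube with a compactly supported $f$ and checks afterwards that opposite faces match; you also spell out the Lipschitz and $W^{1,4n-1}$ verifications in slightly more detail than the paper does.
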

\begin{proof}
We consider a mapping 
\(f \in C^{\infty} \brk{\Rset^{2n}, \Sset^{2n}}\) such that
\(f = b\) in \(\Rset^{2n} \setminus \intvc{-1/2}{1/2}^{2n}\) and \(\deg f = 1\).
We note that, since \(f\) is constant outside a compact set, it can be associated thanks to a stereographic projection to a map from \(\Sset^{2n}\) to \(\Sset^{2n}\).
Therefore, \( \deg f \) is well-defined. 
In fact, such a map \(f\) can actually be defined as a truncated inverse stereographic projection.

We define then \(v \colon \partial\brk{\intvc{-1/2}{1/2}^{4n}} \to \Sset^{2n} \) as the Whitehead product of \(f\) with itself~\cite{Whitehead_1941}. More precisely we set, for
\(x = \brk{x', x''} \in \partial \brk{\intvc{-1/2}{1/2}^{4n}} \subseteq \Rset^{4n} = \Rset^{2n} \times \Rset^{2n}\),
\begin{equation}
\label{eq_Eich4AiPhebahshaichoa7te}
 v \brk{x',x''}
 \defeq 
 \begin{cases}
  f \brk{x'} &\text{if \(\abs{x'}_{\infty} < 1/2\),}\\
  f \brk{x''} &\text{if \(\abs{x''}_{\infty} < 1/2\),}\\
  b & \text{otherwise.}
 \end{cases}
\end{equation}
The Hopf invariant of \(v\) can be computed by classical properties of the Whitehead product as
\(
 \deg_H \brk{v} = 2 \brk{\deg \brk{f}}^2 = 2\);
see e.g.~\cite{Whitehead_1978}*{XI~(2.5)}.
Here we use the fact that we work with a sphere of even dimension.
Indeed, the exact same construction could have been performed to obtain a map \(v \colon \partial\intvc{-1/2}{1/2}^{4n+2} \to \Sset^{2n+1} \), but one would then have had \( \deg_H \brk{v} = 0 \).
More generally, the Hopf degree of \emph{any} continuous \( \Sset^{4n+1} \to \Sset^{2n+1} \) map is zero; see e.g.~\cite{Whitehead_1978}*{XI~(2.4)}.

We extend then \(v\) homogeneously to \(\intvc{-1/2}{1/2}^{4n} \setminus \set{0}\) by setting \(v \brk{x} \defeq v\brk{x/\brk{2 \abs{x}_\infty}}\).
It follows from \eqref{eq_Eich4AiPhebahshaichoa7te} that, 
if \(x\), \(y \in \intvc{-1/2}{1/2}^{4n} \setminus \Zset^{4n}\) and
\(x - y \in \Zset^{4n}\),
then \(v \brk{x} = v \brk{y}\), so that \(v\) can be extended periodically to \(\Rset^{4n}\).
\end{proof}

\subsection{Lower estimate on the relaxed energy}

Below, the target \( \manifold{N} \) is assumed to be a compact manifold without boundary.
In the application we have in mind, \( \manifold{N} \) will be a sphere.
We start by describing a basic cylinder-type construction.
Given \( m \geq 1 \), we let \( Q = \intvc{0}{1}^{m} \) be the unit \( m \)-dimensional cube.
If \( u \), \( v \in W^{1,p}\brk{Q^{m-1}, \manifold{N}} \cap C\brk{Q^{m-1}, \manifold{N}} \), then there exists \( \delta \in \intvo{0}{\infty} \) such that, if \( \norm{u-v}_{L^{\infty}\brk{\partial Q^{m-1}}} \leq \delta \), then the map \( w \colon \partial Q^{m} \to \manifold{N} \) given by
\begin{equation*}
	w\brk{x',x_{m}}
	=
	\begin{cases}
		u\brk{x'} & \text{if \( x_{m} = 0 \),} \\
		\Pi_{\manifold{N}}\brk{\brk{1-x_{m}}u\brk{x'} + x_{m}v\brk{x'}} & \text{if \( x' \in \partial Q^{m-1} \),} \\
		v\brk{x'} & \text{if \( x_{m} = 1 \)}
	\end{cases}
\end{equation*}
is well-defined and belongs to \( W^{1,p}\brk{\partial Q^{m}, \manifold{N}} \cap C\brk{\partial Q^{m}, \manifold{N}} \).
Moreover, for every such \( \delta \), a straightforward computation shows that
\begin{equation}
\label{eq:CylinderEstimate}
	\int_{\partial Q^{m}} \abs{\Deriv w}^{p}
	\leq
	\int_{Q^{m-1}} \abs{\Deriv u}^{p} + \int_{Q^{m-1}} \abs{\Deriv v}^{p} + C\int_{\partial Q^{m-1}} \abs{\Deriv u}^{p} + C\int_{\partial Q^{m-1}} \abs{\Deriv v}^{p} + C\delta^{p}\text{.}
\end{equation}

With this construction at hand, we prove the following lower estimate on the relaxed energy.
In order to state the following proposition, we recall that any (subset of a) hyperplane can be endowed with two different orientations.
For a face \( \sigma \) of \( Q^{m} \subset \Rset^{m} \), we consider the natural orientation induced by the outward normal vector with respect to \( Q^{m} \). 
Of course, if \( \sigma \) is now viewed as a face of a skeleton made of several cubes, then it may receive both orientations, depending on the cube with respect to which it is considered.
We will view the two copies of the same face with reverse orientations as two distinct oriented faces.
Moreover, we will denote by \( \abs{\sigma}  \) the unoriented face \( \sigma \).

\begin{proposition}
\label{proposition_grid_branched_transport}
Let \(L_{\ell}^{j}\) be the \(j\)-dimensional skeleton of the cube \(Q_{\ell} = \intvc{0}{\ell}^{4n}\).
Assume that \((u_{k})_{k \in \Nset}\) is a sequence in \(C\brk{Q_{\ell}, \Sset^{2n}} \cap W^{1, 4n-1} \brk{Q_{\ell}, \Sset^{2n}}\) converging weakly to some \(u \in C\brk{L_{\ell}^{4n-1}, \Sset^{2n}} \cap W^{1, 4n-1} \brk{L_{\ell}^{4n-1}, \Sset^{2n}}\)
and such that the traces on \(L_{\ell}^{4n-2}\) also converge weakly in \(W^{1, 4n-1}\).
Then, for every oriented face \(\sigma\) of \(L_{\ell}^{4n-1}\), there exists some \(d_{\sigma} \in \Zset\) such that \( d_{-\sigma} = -d_{\sigma} \), where \( -\sigma \) denotes the face \( \sigma \) with reverse orientation, and there exists some \(k_* \in \Nset\) such that for every cube \(Q\) of \(L_{\ell}^{4n}\), up to a subsequence,
\[
 \sum_{\sigma \textnormal{ face of }Q} d_{\sigma} = \deg_{H} \brk{ u\restr{\partial Q}} - \deg_{H} \brk{u_{k_*} \restr{\partial Q}}\text{,}
\]
and 
\[
 \sum_{\sigma \textnormal{ face of } L^{4n - 1}} \abs{d_{\sigma}}^{1 - \frac{1}{4n}}
 \le \liminf_{k \to \infty} C\brk[\bigg]{\int_{L_{\ell}^{4n-1}} \abs{\Deriv u_{k}}^{4n-1}
 + \int_{L_{\ell}^{4n-2}} \abs{\Deriv u_{k}}^{4n-1}}\text{.}
\]
\end{proposition}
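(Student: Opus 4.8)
The plan is to extract, for each oriented face $\sigma$ of $L_{\ell}^{4n-1}$, an integer $d_{\sigma}$ measuring the ``Hopf charge'' that has escaped through that face in the limit $k \to \infty$, and to control the total weighted charge by a branched-transport argument in the spirit of~\cite{Bethuel_2014}. First I would fix a generic radius argument on each $(4n-1)$-dimensional face $\abs{\sigma}$: since $u_{k} \restr{\abs{\sigma}} \to u\restr{\abs{\sigma}}$ weakly in $W^{1,4n-1}$ and the traces on $L_{\ell}^{4n-2}$ converge weakly as well, by Fubini--Tonelli and Fatou one may, up to a subsequence, find inside each face a slightly shrunk copy on whose boundary $u_{k}$ converges strongly in $L^{1}$ to $u$ and stays bounded in $W^{1,4n-1}$; the Morrey--Sobolev embedding $W^{1,4n-1}(\partial Q^{4n-1}) \hookrightarrow C^{0}$ then forces $u_{k}$ and $u$ to be uniformly close on that boundary, hence homotopic there. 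Using the cylinder construction~\eqref{eq:CylinderEstimate} across $\abs{\sigma} \times \intvc{0}{1}$ (interpolating between $u_{k}\restr{\abs{\sigma}}$ and $u\restr{\abs{\sigma}}$ through $\Pi_{\manifold{N}}$), I can build a map on $\partial(\abs{\sigma} \times \intvc{0}{1}) \cong \Sset^{4n-1}$, whose Hopf invariant I \emph{define} to be $d_{\sigma}$ (for a fixed large $k = k_{*}$, after passing to the subsequence). Reversing the orientation of $\abs{\sigma}$ reverses the orientation of this sphere and hence flips the sign of the Hopf invariant, giving $d_{-\sigma} = -d_{\sigma}$; and by the additivity of the Hopf invariant over a decomposition of $\partial Q$ into faces capped with cylinders, $\sum_{\sigma \text{ face of } Q} d_{\sigma} = \deg_{H}(u\restr{\partial Q}) - \deg_{H}(u_{k_{*}}\restr{\partial Q})$.

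The quantitative bound is the heart of the matter. For each face, the cylinder estimate~\eqref{eq:CylinderEstimate} gives
\[
 \int_{\partial(\abs{\sigma}\times\intvc{0}{1})} \abs{\Deriv w_{\sigma}}^{4n-1} \le C\brk[\Big]{\int_{\abs{\sigma}} \abs{\Deriv u_{k}}^{4n-1} + \int_{\abs{\sigma}} \abs{\Deriv u}^{4n-1} + \int_{\partial \abs{\sigma}} \abs{\Deriv u_{k}}^{4n-1} + \int_{\partial\abs{\sigma}}\abs{\Deriv u}^{4n-1} + \delta^{4n-1}}\text{,}
\]
and since a map $\Sset^{4n-1} \to \Sset^{2n}$ of Hopf invariant $d$ has Dirichlet-type energy at least $c\,\abs{d}^{1-\frac{1}{4n}}$ (the isoperimetric/Gagliardo--Nirenberg lower bound on the $W^{1,4n-1}$ energy in terms of the Hopf invariant — the analogue for $\Sset^{2n}$ of the degree estimate, see~\cite{Bethuel_2014}), the relaxed energy of the minimizing cylinder filling gives $c\,\abs{d_{\sigma}}^{1-\frac{1}{4n}}$ bounded above by the right-hand side. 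Summing over all faces $\sigma$ of $L_{\ell}^{4n-1}$ and using that $u$ has bounded energy on $L_{\ell}^{4n-1}$ (the weak limit) together with $\liminf_{k}$ of the $u_{k}$ energies on $L_{\ell}^{4n-1}$ and $L_{\ell}^{4n-2}$, one obtains exactly the claimed inequality, with the terms involving $u$ absorbed into the $\liminf$ bound and $\delta$ taken arbitrarily small.

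The main obstacle, I expect, is making precise the lower bound ``energy $\gtrsim |d|^{1-1/(4n)}$ in terms of the Hopf invariant'' and, more delicately, ensuring that the $d_{\sigma}$ can be chosen \emph{consistently} across adjacent cubes so that both the additivity relation on each cube and the total energy bound hold \emph{simultaneously} for a single subsequence and a single index $k_{*}$. This requires a careful diagonal extraction: one first passes to a subsequence so that all the generic-slice conditions hold on every face at once (finitely many faces, so this is harmless), then fixes $k_{*}$ large enough that on every face the Morrey--Sobolev closeness holds, and only then reads off the integers $d_{\sigma}$; the branched-transport bound is then a consequence of the per-face estimates summed, with the subadditivity $(a+b)^{1-1/(4n)} \le a^{1-1/(4n)} + b^{1-1/(4n)}$ used to distribute the energy. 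A secondary subtlety is that the Hopf invariant is only defined for maps that are continuous (or $W^{1,4n-1}\cap C^0$) on the relevant sphere, so one must check the cylinder map $w_{\sigma}$ indeed lands in that class, which is guaranteed by the $L^\infty$-closeness ensuring the linear interpolation stays in the tubular neighborhood where $\Pi_{\manifold{N}}$ is Lipschitz.
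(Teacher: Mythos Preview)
Your proposal is correct and follows essentially the same approach as the paper: define $d_{\sigma}$ as the Hopf invariant of the cylinder map built from $u_{k}\restr{\abs{\sigma}}$ and $u\restr{\abs{\sigma}}$, deduce $d_{-\sigma}=-d_{\sigma}$ from orientation reversal, obtain the per-cube identity by additivity of the Hopf invariant under gluing, and bound $\abs{d_{\sigma}}^{1-1/(4n)}$ by the cylinder energy via the Hopf-invariant lower estimate, then sum.

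Two minor remarks. First, the Fubini--Fatou ``generic slice'' detour is unnecessary here: the hypothesis already gives weak convergence of the traces on $L_{\ell}^{4n-2}$ in $W^{1,4n-1}$, and since $\dim L_{\ell}^{4n-2}=4n-2<4n-1$, the compact embedding $W^{1,4n-1}\hookrightarrow L^{\infty}$ immediately yields $\norm{u_{k}-u}_{L^{\infty}(L_{\ell}^{4n-2})}\to 0$, which is exactly what feeds the cylinder construction on every $\partial\abs{\sigma}$ simultaneously. Second, the energy lower bound $\abs{\deg_{H}(f)}^{1-1/(4n)}\le C\int_{\Sset^{4n-1}}\abs{\Deriv f}^{4n-1}$ is Rivi\`ere's estimate~\cite{Riviere_1998}, not~\cite{Bethuel_2014}.
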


The apparent notational complexity of the statement should not obscure the intuition behind it: the difference of degrees between the maps \( u_{k} \) for large \( k \) and the limiting map \( u \) can be attributed in a coherent way to the different faces of \( L^{4n-1}_{\ell} \), so that the difference of degrees on one cube is the sum of the contributions of all its oriented faces.
Moreover, we have an estimate available for these partial degrees on faces by the Sobolev energy of the approximating maps, involving a power \( 1 - \frac{1}{4n} \), whose importance will become apparent later on, in relation with branched optimal transportation considerations.

Proposition~\ref{proposition_grid_branched_transport} leaves some freedom to the \(d_\sigma\) on the boundary.
Indeed, whereas interior faces all belong to two cubes and are involved, since \(d_\sigma = -d_{-\sigma}\), in two degree conservation constraints, the interior faces are merely involved in a single one, so that they are much less constrained.

\begin{proof}[Proof of Proposition~\ref{proposition_grid_branched_transport}]
By compactness of the embedding \( W^{1,4n-1} \hookrightarrow L^{\infty} \) in dimension \( 4n-2 \), we assume without loss of generality that
\[\norm{u_k - u}_{L^\infty \brk{L^{4n - 2}_\ell}} \le \delta\text{.}
\]
For every face \( \sigma \) of \( L^{4n-1}_{\ell} \), we consider \( u_{\sigma} \) and \( u_{\sigma,k} \) the restrictions of \( u \) and \( u_{k} \) to \( \sigma \), and we let \( w_{\sigma,k} \) be the map obtained using the above cylinder construction with respect to \( u_{\sigma,k} \) and \( u_{\sigma} \), with the following orientations: \( \sigma \) is placed in \( \Rset^{4n-1} \times \set{0} \) so that the orienting normal vector points upwards, the map \( u_{\sigma,k} \) is placed at the bottom of the cylinder, and the map \( u_{\sigma} \) is placed at the top of the cylinder.
We define \( d_{\sigma,k} \) to be the Hopf degree of the map \( w_{\sigma,k} \).
With our orientation convention, we have \( d_{-\sigma,k} = -d_{\sigma,k} \):
Indeed, reversing the orientation in our construction amounts to reflect the domain with respect to the hyperplane \( \Rset^{4n-1} \times \set{1/2} \), which, in terms of homotopy, corresponds to taking the inverse.

We first prove that 
\[
\sum_{\sigma \textnormal{ face of }Q} d_{\sigma,k} = \deg_{H} \brk{u\restr{\partial Q}} - \deg_{H} \brk{u_{k} \restr{\partial Q}}\text{.}
\]
For this purpose, we consider the set \( \widetilde{Q} \) obtained by taking the union of \( Q \) and all the cubes congruent to \( Q \) and sharing exactly one face with it.
We observe that \( \partial\widetilde{Q} \) is homeomorphic to the boundary of a cube.
We define a map \( w \) on the \( \brk{4n-1} \)-skeleton of \( \widetilde{Q} \) by letting \( w \) be equal to \( u_{k} \) on \( \partial Q \), and to \( w_{\sigma,k} \) on the boundary of the cube that has been glued to \( Q \) along the face \( \abs{\sigma} \) (after a suitable rotation so that both maps coincide on \( \abs{\sigma} \)).
It is straightforward to see that \( w\restr{\partial\widetilde{Q}} \) is homotopic to \( u\restr{\partial Q} \), since the maps \( w_{\sigma,k} \) are constructed on the vertical faces as the projection of a linear interpolation between \( u_{\sigma,k} \) and \( u_{\sigma} \) on \( \partial\abs{\sigma} \).
On the other hand, we have the identity
\[
	\deg_{H} \brk{w\restr{\partial\widetilde{Q}}} = \sum_{\sigma \textnormal{ face of }Q} d_{\sigma,k} + \deg_{H} \brk{u_{k} \restr{\partial Q}}\text{.}
\]
This follows directly from the construction of \( w \), since gluing two maps defined on the boundary of two neighboring cubes corresponds exactly to taking the sum of the associated homotopy classes.
Combining both these pieces of information, we deduce that
\[
	\deg_{H} \brk{u\restr{\partial Q}} = \deg_{H} \brk{w\restr{\partial\widetilde{Q}}} = \sum_{\sigma \textnormal{ face of }Q} d_{\sigma,k} + \deg_{H} \brk{u_{k} \restr{\partial Q}}\text{,}
\]
which proves the required formula.

It remains to prove the integral estimate.
For this purpose, we recall Rivière’s estimate on the Hopf invariant~\cite{Riviere_1998}
\[
 	\abs{\deg_H \brk{f}}^{1 - \frac{1}{4n}}\le 
 	\C
 	\int_{\Sset^{4n - 1}} \abs{\Deriv f}^{4n-1}\text{.}
\]
Applied to the maps \( w_{\sigma,k} \), this yields
\[
	\abs{d_{\sigma,k}}^{1 - \frac{1}{4n}}
	\leq
	\C
	\int_{\partial Q^{4n}} \abs{\Deriv w_{\sigma,k}}^{4n-1}\text{.}
\]
Invoking~\eqref{eq:CylinderEstimate}, the compactness of the embedding \( W^{1,4n-1} \hookrightarrow L^{\infty} \) in dimension \( 4n-2 \),  and the weak lower semicontinuity of the norm of the gradient, we obtain
\[
  \limsup_{k \to \infty}  \sum_{\sigma \textnormal{ face of } L^{4n - 1}} \abs{d_{\sigma, k}}^{1 - \frac{1}{4n}}
 \le \C \liminf_{k \to \infty} \brk[\bigg]{\int_{L_{\ell}^{4n-1}} \abs{\Deriv u_{k}}^{4n-1}
 + \int_{L_{\ell}^{4n-2}} \abs{\Deriv u_{k}}^{4n-1} }\text{.}
\]
The conclusion then holds provided \( k_* \in \Nset \) is taken sufficiently large.
\resetconstant
\end{proof}

\begin{corollary}
Under the assumptions of Proposition~\ref{proposition_grid_branched_transport}, if for every cube \(Q\) of \(L_{\ell}^{4n}\),
\[
 \deg_{H} \brk{u\restr{Q}} = 2 \text{,}
\]
then 
\[
 \liminf_{k \to \infty} \int_{L_{\ell}^{4n-1}} \abs{\Deriv u_{k}}^{4n-1}
 + \int_{L_{\ell}^{4n - 2}} \abs{\Deriv u_{k}}^{4n-1}
 \ge C\ell^{4n} \ln \ell\text{.}
\]
\end{corollary}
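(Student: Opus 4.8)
The plan is to combine Proposition~\ref{proposition_grid_branched_transport} with a purely combinatorial lower bound of branched‑transport type. First I would apply Proposition~\ref{proposition_grid_branched_transport} to the sequence \(\brk{u_{k}}_{k\in\Nset}\), obtaining the integers \(d_{\sigma}\) attached to the oriented faces of \(L_{\ell}^{4n-1}\) and the index \(k_{*}\). Since \(u_{k_{*}}\) is continuous on \(Q_{\ell}\), its restriction to \(\partial Q\) extends continuously over the contractible cube \(Q\) for every cube \(Q\) of \(L_{\ell}^{4n}\), so that \(\deg_{H}\brk{u_{k_{*}}\restr{\partial Q}} = 0\); combined with the hypothesis \(\deg_{H}\brk{u\restr{\partial Q}} = 2\) and the conservation identity of the proposition, this gives \(\sum_{\sigma \textnormal{ face of }Q}d_{\sigma} = 2\) for every cube \(Q\). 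Thus \(\brk{d_{\sigma}}\) is a cellular flow on \(\intvc{0}{\ell}^{4n}\) carrying a source of mass \(2\) out of every unit cube, with unconstrained flux through \(\partial Q_{\ell}\). In view of the energy estimate in Proposition~\ref{proposition_grid_branched_transport}, the statement reduces to the combinatorial lower bound \(\sum_{\sigma}\abs{d_{\sigma}}^{1-\frac{1}{4n}} \ge c\,\ell^{4n}\ln \ell\), valid for \emph{any} such admissible flow, with \(c>0\) depending only on \(n\).

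The core of the argument is then this combinatorial lower bound, which is exactly the estimate for irrigating a uniform mass at the critical branched‑transport exponent \(1 - \tfrac{1}{4n}\), in the spirit of~\cite{Bethuel_2014}. The mechanism producing the factor \(\ln \ell\) is best seen on the model optimal pattern, a dyadic tree: at the dyadic scale \(2^{-j}\ell\) one uses about \(2^{4nj}\) ``pipes'', each transporting a flux of order \(\brk{2^{-j}\ell}^{4n}\) over a length of order \(2^{-j}\ell\), contributing \(2^{4nj}\brk[\big]{\brk{2^{-j}\ell}^{4n}}^{1-\frac{1}{4n}}\brk{2^{-j}\ell} \sim \ell^{4n}\) at each of the \(\sim \log_{2}\ell\) scales \(j = 0, \dotsc, \log_{2}\ell\). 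To make this rigorous I would let \(\Gamma\brk{\ell}\) be the minimum of \(\sum_{\sigma}\abs{d_{\sigma}}^{1-\frac{1}{4n}}\) over all admissible flows on \(\intvc{0}{\ell}^{4n}\), split \(\intvc{0}{\ell}^{4n}\) into \(2^{4n}\) congruent sub‑cubes, and observe that the restriction of a flow to each sub‑cube is admissible there, so that it already contributes at least \(\Gamma\brk{\ell/2}\) --- restricting the exit boundary can only raise the cost --- while a further additive term of order \(\ell^{4n}\) is produced by the fact that a fixed fraction of the total mass sits at distance \(\gtrsim \ell\) from \(\partial Q_{\ell}\) and must be transported across that distance, which costs at least \(c\,\ell^{4n}\) by a pipe‑crossing estimate. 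This yields a self‑improving recursion of the form \(\Gamma\brk{\ell} \ge 2^{4n}\Gamma\brk{\ell/2} + c\,\ell^{4n}\), and dividing by \(\ell^{4n}\) and iterating gives \(\Gamma\brk{\ell} \ge c\,\ell^{4n}\ln \ell\).

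Combining this with the inequality \(\sum_{\sigma}\abs{d_{\sigma}}^{1-\frac{1}{4n}} \le C\liminf_{k\to\infty}\brk[\big]{\int_{L_{\ell}^{4n-1}}\abs{\Deriv u_{k}}^{4n-1} + \int_{L_{\ell}^{4n-2}}\abs{\Deriv u_{k}}^{4n-1}}\) furnished by Proposition~\ref{proposition_grid_branched_transport} finishes the proof. I expect the only genuine difficulty to lie in the combinatorial lower bound, and more precisely in capturing the logarithm: the naive estimate \(\sum_{\sigma}\abs{d_{\sigma}}^{1-\frac{1}{4n}} \ge \brk[\big]{\sum_{\sigma}\abs{d_{\sigma}}}^{1-\frac{1}{4n}}\) applied on a family of disjoint cube‑shells surrounding the centre only delivers \(\gtrsim \ell^{4n}\) and loses the factor \(\ln \ell\), because the flux may concentrate on very few faces; recovering the logarithm requires exploiting, across all dyadic scales, the incompressible cost of branching out so as to reach every unit cube, which is precisely the content of the branched‑transport argument and where the technical work concentrates.
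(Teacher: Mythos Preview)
Your reduction matches the paper's exactly: apply Proposition~\ref{proposition_grid_branched_transport}, use continuity of \(u_{k_*}\) on \(Q_\ell\) to get \(\deg_H\brk{u_{k_*}\restr{\partial Q}}=0\) and hence \(\sum_{\sigma}d_\sigma=2\) on every cube, interpret \(\brk{d_\sigma}\) as a discrete branched-transport flow irrigating a uniform measure at the critical exponent \(1-\tfrac{1}{4n}\), and invoke the corresponding lower bound. The paper does not reprove that lower bound; it quotes it as a black box from~\cite{Bethuel_2020}*{Theorem~A.1}.

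The only genuine gap is in your attempted sketch of that quoted lemma. In a dyadic \(2^{4n}\) split, the faces on the interfaces between adjacent sub-cubes are counted once in each of the two neighbouring sub-cubes, so summing the sub-cube costs \emph{overcounts} the total and yields \(\Gamma\brk{\ell}\ge 2^{4n}\Gamma\brk{\ell/2}-\text{(interface cost)}\) rather than the clean inequality you write. And since in a \(2\times\dotsb\times2\) split every sub-cube already meets \(\partial Q_\ell\), no mass is forced to traverse any internal interface, so the ``further additive term of order \(\ell^{4n}\)'' that you postulate from pipe-crossing is not a contribution disjoint from what the sub-cubes already account for. You are right that the naive shell estimate loses the logarithm and that the branched-transport step is where the real work sits; making the multiscale recursion produce the extra \(c\,\ell^{4n}\) per scale is precisely the content of~\cite{Bethuel_2020}*{Theorem~A.1}, and citing it (as the paper does) is the clean fix.
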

\begin{proof}
This follows from the lower estimate on branched transportation~\cite{Bethuel_2020}*{Theorem A.1}, 
 and is linked to the fact that the exponent \( \alpha = 1 - 1/N \) is the critical exponent for the irrigability of the Lebesgue measure in dimension \( N \); see~\cite{Bernot_Caselles_Morel_2009} and the references therein for a detailed discussion about branched optimal transportation problems.
 Indeed, since the maps \( u_{k} \) are assumed to be continuous on the whole \( Q_{\ell} \), it follows that \( \deg_{H}u_{k}\restr{\partial Q} = 0 \) for every cube \(Q\) of \(L_{\ell}^{4n}\), which implies that
 \begin{equation}
 \label{eq:KirchoffLaw}
 	\sum_{\sigma \textnormal{ face of }Q} d_{\sigma,k} = 2\text{.}
 \end{equation}
 We define a transport plan to the boundary for the measure that consists of one Dirac mass with weight equal to \( 2 \) at the center of each cube of \( L_{\ell}^{4n} \) by connecting the center of each cube to the centers of all the neighboring cubes by a segment, where each segment is weighted by the number \( d_{\sigma,k} \) corresponding to the only face \( \sigma \) that it crosses.
 The orientation of the segments is determined by the sign of \( d_{\sigma,k} \).
 By the fact that \( d_{-\sigma,k} = -d_{\sigma,k} \) and~\eqref{eq:KirchoffLaw}, this indeed defines a transport plan from the above described measure to the boundary, and the mass of this transport plan with respect to the branched optimal transportation with power \( \alpha = 1-\frac{1}{4n} \) is exactly given by
 \[
 	\sum_{\abs{\sigma}} \abs{d_{\sigma}}^{1 - \frac{1}{4n}}\text{.}
 \]
 We conclude by combining the lower estimate on branched transportation and Proposition~\ref{proposition_grid_branched_transport}.
\end{proof}

\begin{remark}
Strictly speaking, our lower bound just uses sublinear transport (without branching), which means that a self-contained analysis would not require to take into account the appearance of additional points.
\end{remark}

\subsection{The counterexample}

We take \(u\) given by Proposition~\ref{proposition_whitehead_periodic_link}.
Thanks to Fubini--Tonelli and Fatou, we can apply Proposition~\ref{proposition_grid_branched_transport}, and we get the conclusion by the uniform boundedness principle for the weak approximation.

\section{Obstruction for higher order Sobolev energies}
\label{section_higher_order}

In this final section, we briefly explain how a higher order counterpart can be deduced from our main results
by adapting our previous constructions so that the higher order nonlinear uniform boundedness principle \cite{Monteil_VanSchaftingen_2019}*{Theorem~6.1} can be applied.
In order to do this, we need maps with improved regularity and with a control on the energy on an enlarged domain;
the explicit construction that we made in Sections~\ref{section_analytical_obstruction} and~\ref{section_whitehead} can be adapted with reasonable effort.

We start with the construction in Section~\ref{section_analytical_obstruction} of the map into the skeleton of the torus.
Let \( n \geq 3 \), let \( s \geq 1 \), and let \( 1 \leq p < \infty \) be such that \( sp = n-1 \).
We keep denoting by \( L^{j} \) the \( j \)-dimensional skeleton of \( \Rset^n \).
In Section~\ref{section_from_skeleton_to_manifold}, our counterexample was built up from \(v \defeq \pi \compose u\) defined in Proposition~\ref{prop_counterexample_skeleton},  where \(\pi \colon \lifting{\manifold{N}}_0 \to \manifold{N}_0\) was the universal covering and \(u\)
was the homogeneous extension of the inclusion
\( i \colon L^{n-1} \to \lifting{\manifold{N}}_{0} \).
This construction can be adapted to higher order regularity.

\begin{lemma}
\label{lemma_higher_order_construction}
	Given \( \varepsilon > 0 \), there exists a map \( v^{\sm} \in C^{\infty}\brk{\Rset^n \setminus \Sigma, \manifold{N}_{\lambda}} \), with \(\Sigma = \brk{\Zset+1/2}^{n}\), such that
	\begin{enumerate}[label=(\roman*)]
	\item \( v^{\sm} \) is periodic under the action of \( \Zset^{n} \),
	\item\label{it_estimate_derivatives_thickening} for every \(k \in \Nset \setminus\set{0}\),
	\(\sup_{x \in \Rset^n \setminus \Sigma} \abs{\Deriv^k v^{\sm} \brk{x}} \dist \brk{x, \Sigma}^k < \infty\),
	\item
	\label{it_uniform_distance_smoothing}
	\(
		\norm{v - v^{\sm}}_{L^{\infty}\brk{\Rset^n}} \le \varepsilon\text{.}
	\)
	\end{enumerate}
\end{lemma}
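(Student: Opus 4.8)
The idea is to regularize the Lipschitz map $v = \pi\compose u$ away from the singular set $\Sigma$ while keeping the homogeneity structure that gives the quantitative derivative bounds. First I would work on a single cube $C_\sigma$ centered at $\sigma \in \Sigma$ and recall that on $C_\sigma \setminus \{\sigma\}$ the map $u$ is the homogeneous (degree-zero) extension of the inclusion $i\colon L^{n-1}\to\lifting{\manifold{N}}_0$ of the boundary skeleton, i.e. $u(x) = i(\sigma + (x-\sigma)/(2|x-\sigma|_\infty))$. The plan is to replace $i\colon \partial C_\sigma \to \lifting{\manifold{N}}_0 \subseteq \manifold{N}_\lambda$ (after passing to the quotient, a map $\partial C_\sigma \to \manifold{N}_0 \subseteq \manifold{N}_\lambda$) by a smooth map $g^{\sm}\colon \partial C_\sigma \to \manifold{N}_\lambda$ that is uniformly $\varepsilon'$-close to it, using the fact that $\manifold{N}_\lambda$ is a smooth compact manifold: mollify $i$ in $\Rset^\nu$, reproject onto $\manifold{N}_\lambda$ via $\Pi_{\manifold{N}_\lambda}$, and round the corners of the cube via a fixed bi-Lipschitz identification $\partial C_\sigma \cong \Sset^{n-1}$ composed with a standard smoothing on the sphere. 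Then extend $g^{\sm}$ homogeneously of degree zero to $C_\sigma\setminus\{\sigma\}$, setting $v^{\sm}(x) \defeq g^{\sm}(\sigma + (x-\sigma)/(2|x-\sigma|_\infty))$ on $C_\sigma$; crucially one must also smooth in the radial variable near $\partial C_\sigma$ so that the pieces on neighbouring cubes glue to a globally $C^\infty$ map on $\Rset^n\setminus\Sigma$ — concretely, interpolate (again via $\Pi_{\manifold{N}_\lambda}$) between $v$ itself on a neighbourhood of the faces (where all the cube maps already agree, since $v$ is continuous across faces) and the homogeneous extension of $g^{\sm}$ in the interior, choosing the interpolation to respect the periodicity. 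Periodicity, i.e.\ (i), is automatic if one performs the identical construction on every cube and uses a $\Zset^n$-equivariant choice of mollifier and bi-Lipschitz chart.

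For (ii), the derivative bounds, the point is the scale invariance of the homogeneous extension: if $w(x) = G(x/|x|_\infty)$ with $G$ smooth on the unit sphere (boundary of the unit cube), then $|\Deriv^k w(x)| \le C_k \norm{G}_{C^k}\, |x|_\infty^{-k}$ by the chain rule and homogeneity. Since $g^{\sm}$ is a fixed smooth map on $\partial C_\sigma$ (the same one on every cube by periodicity), all its $C^k$ norms are finite, so on the portion of $C_\sigma$ where $v^{\sm}$ is purely homogeneous one gets $|\Deriv^k v^{\sm}(x)|\,\dist(x,\Sigma)^k \le C_k$, and by periodicity the same $C_k$ works on every cube. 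On the fixed-width collar near the faces where the radial interpolation happens, $\dist(x,\Sigma)$ is bounded below by a positive constant, so there the bound is just $|\Deriv^k v^{\sm}| \le C_k'$, which again suffices; one should note $v$ itself is Lipschitz but not $C^k$ there, so the collar construction must actually smooth $v$, not just interpolate with it — this can be arranged by first mollifying $v$ on the collar (a region where $v$ is genuinely Lipschitz, hence mollification is harmless) and reprojecting. Assertion (iii) follows by making every approximation step (mollification radius, interpolation region width) small enough in terms of $\varepsilon$, using uniform continuity of $v$ and the Lipschitz bound on $\Pi_{\manifold{N}_\lambda}$, together with the elementary fact that $\manifold{N}_0 \subseteq \manifold{N}_\lambda$ so that $v$, originally $\manifold{N}_0$-valued, is legitimately viewed as $\manifold{N}_\lambda$-valued.

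The main obstacle I anticipate is the \emph{gluing across the faces of the cubes}: each cube's homogeneously-extended smoothed map $g^{\sm}$ need not match its neighbour's on a shared face $|\sigma|$ of the skeleton, because the smoothing of $i$ depends on which cube (hence which center) one uses for the homogeneous extension, even though $i$ itself is consistent. The resolution is to not smooth right up to the faces at all: keep $v^{\sm}$ equal to a \emph{common} smoothing of $v$ (which is face-consistent because $v$ is) on a definite neighbourhood of $L^{n-1}$ minus small balls around $\Sigma$, and only insert the homogeneous smooth model deep inside each cube, bridging the two by a $\Pi_{\manifold{N}_\lambda}$-reprojected convex interpolation in the collar. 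One must check this interpolation stays in the tubular neighbourhood of $\manifold{N}_\lambda$ where $\Pi_{\manifold{N}_\lambda}$ is defined — guaranteed once the two maps being interpolated are $\varepsilon'$-close with $\varepsilon'$ below the reach of $\manifold{N}_\lambda$ — and that it is smooth, which holds since both endpoints and the cutoff are smooth on the (compact, $\Sigma$-avoiding) collar. All of this is routine once organized, but the bookkeeping of the three regimes (pure homogeneous model near $\Sigma$ inside cubes; collar interpolation; common mollification of $v$ near the skeleton) is where care is needed.
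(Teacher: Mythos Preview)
Your overall strategy is close to the paper's, but there is a genuine technical gap in the smoothness step. You propose to set
\(
  v^{\sm}(x) = g^{\sm}\bigl(\sigma + (x-\sigma)/(2\abs{x-\sigma}_\infty)\bigr)
\)
and then to deduce \ref{it_estimate_derivatives_thickening} from the chain rule via ``\(\abs{\Deriv^k w(x)} \le C_k \norm{G}_{C^k}\,\abs{x}_\infty^{-k}\)''. The problem is that the map \(x \mapsto x/\abs{x}_\infty\) is only Lipschitz, not \(C^1\): it fails to be differentiable on the diagonal hyperplanes \(\abs{x_i} = \abs{x_j}\). Hence the composition is not \(C^\infty\) no matter how smooth \(g^{\sm}\) is, and the chain rule cannot be iterated to order \(k\). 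Your ``bi-Lipschitz identification \(\partial C_\sigma \cong \Sset^{n-1}\)'' does not cure this, since bi-Lipschitz maps do not preserve smoothness; you would need a smooth radial structure, and then you must also check that \(L^\infty\)-closeness to \(v\) survives the change of radial variable.

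The paper sidesteps both this issue and your gluing obstacle in one stroke. It first mollifies the \emph{global} periodic map \(v\) and reprojects onto \(\manifold{N}_\lambda\), obtaining a smooth periodic \(v^{\ext}\) defined on a full tubular neighbourhood \(L^{n-1} + B_\delta\); this is already consistent across faces, so no interpolation collar is needed. It then extends into each cube by the \emph{thickening} procedure of Bousquet--Ponce--Van~Schaftingen: precompose \(v^{\ext}\) with \(x \mapsto \lambda(\abs{x}_{2q})\,x\), where \(\abs{\cdot}_{2q}\) is a smooth \(\ell^{2q}\) norm with \(q\) large enough that the level set \(\{\abs{x}_{2q} \ge 1\}\) lies inside \(L^{n-1} + B_\delta\), and \(\lambda\) is a suitable smooth profile. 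Because \(\abs{\cdot}_{2q}\) is genuinely smooth away from the origin, the composition is \(C^\infty\) on \(\Rset^n \setminus \Sigma\), and the thickening estimates give \ref{it_estimate_derivatives_thickening} directly. Your three-regime bookkeeping (homogeneous model / collar / common mollification) collapses to two steps in the paper's argument.
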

\begin{proof}
By convolution of the map \(v\) with a suitable mollifier and  projection on  \( \manifold{N}_{\lambda} \) via the smooth nearest point projection, we obtain a smooth periodic map \( v^{\ext} \colon L^{n-1} + B_{\delta} \to \manifold{N}_{\lambda} \).
	
	Then, we rely on a thickening procedure to extend \( v^{\ext}\) smoothly to \( \Rset^n \setminus \Sigma \).
	More precisely, on each cube of \( L^{n} \), that we identify with the cube \( \intvc{-1}{1}^{n} \), instead of precomposing \( v^{\ext} \) with \( x/\abs{x}_\infty \) as we would do for homogeneous extension, we precompose it with a map of the form \( \lambda\brk{\abs{x}_{2q}}x \), where \( \lambda \colon \intvl{0}{\infty} \to \intvr{1}{\infty} \) is a suitable smooth nonincreasing map which is constantly equal to \( 1 \) in a neighborhood of \( \intvr{1}{\infty} \) and \( q \in \Nset\) is chosen sufficiently large so that
	\(\set{x \in \Rset^n \st \abs{x}_{2q} \ge 1} \subseteq \partial \brk{\intvc{-1}{1}^n} + B_\delta\); see~\cite{Bousquet_Ponce_VanSchaftingen_2015}*{Section~4}.
	Let us call \( v^{\sm} \) the map obtained through this process.
	
	Since the thickening procedure was applied to the periodic map \( v^{\ext} \), the map \( v^{\sm}\) is periodic.
	In addition, a suitable choice of the number \( \delta \), the convolution parameter, and the function \( \lambda \), ensures that~\ref{it_uniform_distance_smoothing} holds.
	Property~\ref{it_estimate_derivatives_thickening} follows from the estimates for thickening.
\end{proof}

We may now compose \( v^{\sm} \) with the Lipschitz map \( \Phi \colon \manifold{N}_{\lambda} \to \manifold{N}_{0} \) provided by Proposition~\ref{proposition_manifold}, which yields a map \( v^{\lip} \defeq  \Phi \compose v^{\sm} \colon \Rset^n \to \manifold{N}_{0} \) with the following properties:
\begin{enumerate}[label=(\roman*)]
  \item
	\label{item:vlip_regular} 
	\( v^{\lip} \in W^{1,n-1}_{\mathrm{loc}} \brk{\Rset^n, \manifold{N}_{0}} \cap C\brk{\Rset^n \setminus \Sigma, \manifold{N}_{0}} \);
  \item
	\label{item:vlip_periodic} 
	\( v^{\lip} \) is periodic under the action of \( \Zset^{n} \);
  \item\label{item:dist_vlip_v} \( \norm{v^{\lip} - v}_{L^{\infty}\brk{\Rset^n}} \le \Cl{cst_proj}\varepsilon \).
\end{enumerate}

Assertion~\ref{item:dist_vlip_v} follows from the fact that \( \Phi \colon \manifold{N}_{\lambda} \to \manifold{N}_{0} \) is a left-inverse of the embedding of \( \manifold{N}_{0} \) into \( \manifold{N}_{\lambda} \).

If \(Q_\ell \defeq \intvc{0}{\ell}^n\), the map \( v^{\lip} \restr{Q_\ell}\) can then be lifted to a map \( u \in W^{1,n-1}\brk{Q_{\ell}, \widetilde{\manifold{N}}_{0}}  \cap C\brk{Q_{\ell} \setminus \Sigma, \widetilde{\manifold{N}}_{0}} \), which can easily be checked to satisfy the assumptions of Proposition~\ref{proposition_gap_v_and_proj}, with the exception that~\eqref{eq_keleopheuvahsuuhu0hei3Ee} should be replaced by \( u\brk{Q_{\ell,\alpha}} \subset Q_{\ell,\alpha} + B_{\Cr{cst_proj}\varepsilon} \).
However, it is readily verified that the conclusion of Proposition~\ref{proposition_gap_v_and_proj} remains valid under this slightly weaker assumption, provided that \( \varepsilon > 0 \) is chosen sufficiently small.
Therefore, as the reasoning carried out in Section~\ref{subsection:AnalyticalObstructionSkeleton} does not depend on the specific form of the map, we deduce that the map \( v^{\lip} \) also satisfies the estimate
\begin{equation*}
	\liminf_{\ell \to \infty}\frac{\mathcal{E}^{1,n-1}_{\textnormal{rel}}\brk{v^{\lip}, Q_{\ell}}}{\ell^{n}\ln{\ell}} > 0\text{.}
\end{equation*}
This implies that also 
\begin{equation}
\label{eq_ohph1icip2eitei2gei0Leez}
	\liminf_{\ell \to \infty}\frac{\mathcal{E}^{1,n-1}_{\textnormal{rel}}\brk{v^{\sm}, Q_{\ell}}}{\ell^{n}\ln{\ell}} > 0\text{,}
\end{equation}
since any sequence of smooth maps that weakly converges to \( v^{\sm} \) can be projected to a sequence of Lipschitz maps weakly converging to \( v^{\lip} \) through the Lipschitz map \( \Phi \colon \manifold{N}_{\lambda} \to \manifold{N}_{0} \) provided by Proposition~\ref{proposition_manifold}.

By compactness of \( \manifold{N}_{\lambda} \),
we have \(W^{s,p}\brk{\manifold{M}, \manifold{N}_{\lambda}}\subseteq L^\infty\) and thus, by the Gagliardo--Nirenberg interpolation inequality \( W^{s,p} \cap L^{\infty} \subset W^{1,sp} \), see e.g.~\cite{Runst1986}*{Lemma~3.1 p.~329} or~\cite{BrezisMironescu2001}*{Corollary~3.2}, for every \( w \in W^{s,p}\brk{Q_\ell, \manifold{N}_{\lambda}} \), we have
\begin{equation}
\label{eq_tuLooTadaoya9Ceilie2ieta}
\begin{split}
\mathcal{E}^{1,sp}\brk{w, Q_{\ell}}
\le \C \norm{w}_{L^\infty}^{p\brk{s - 1}}
\brk{
\mathcal{E}^{s,p}\brk{w, Q_{\ell}} + \ell^{-sp} \norm{w}_{L^p}^p}
	\leq
	\C \brk{\mathcal{E}^{s,p}\brk{w, Q_{\ell}} + \ell}\text{,}
\end{split}
\end{equation}
so that by definition of relaxed energy and \eqref{eq_ohph1icip2eitei2gei0Leez},
\begin{equation*}
	\liminf_{\ell \to \infty}\frac{\mathcal{E}^{s, p}_{\textnormal{rel}}\brk{v^{\sm}, Q_{\ell}}}{\ell^{n}\ln{\ell}} > 0\text{.}
\end{equation*}

On the other hand, by the periodic character of the map \( v^{\sm} \), it is straightforward to check that, if \( 2Q_{\ell} \) denotes the cube with the same center as \( Q_{\ell} \) and double edge length,  we have
\begin{align*}
	\mathcal{E}^{s,p}\brk{v^{\sm}, 2Q_{\ell}} &\leq \C\ell^{n}&
	&\text{and} &
	\mathcal{E}^{1,p}\brk{v^{\sm}, 2Q_{\ell}} &\leq \C\ell^{n}\text{.}
\end{align*}
The case where \( s \notin \Nset \) might be somehow more subtle, since the Gagliardo seminorm is not additive.
Working with mixed integrals, one has 
\[
 \iint_{Q^m \times \Rset^m} \frac{\abs{\Deriv^{\floor{s}}v\brk{x} - \Deriv^{\floor{s}}v\brk{y}}^{p}}{\abs{x - y}^{m + \brk{s-\floor{s}}p}} \dif x \dif y < \infty\text{,}
\]
and one can conclude then by additivity and periodicity (see~\cite{Detaille2023}*{Lemma~2.1} for a precise statement and a brief history of such techniques).

We are thus in position to apply the higher order nonlinear uniform boundedness principle \cite{Monteil_VanSchaftingen_2019}*{Theorem~6.1}, which proves the existence of a map \( u \in W^{s,p}\brk{\manifold{M}, \manifold{N}_{\lambda}} \) which cannot be weakly approximated by smooth maps in \( W^{s,p}\brk{\manifold{M}, \manifold{N}_{\lambda}} \), hence completing the proof of Theorem~\ref{theorem_counterexample_higher_order}.
\resetconstant

A similar process applied to our construction involving the Whitehead product yields a higher order counterpart to Theorem~\ref{theorem_counterexample_hopf}.
Since the procedure is actually simpler than for Theorem~\ref{theorem_main}, as there is no lifting or passing from a cell complex to a manifold involved, the target being a sphere, we omit the details.
However, we importantly mention that there is indeed a smoothing procedure to be performed.
Indeed, even though the map \( f \) in the proof of Proposition~\ref{proposition_whitehead_periodic_link} is taken to be smooth, the resulting map \( u \) does not need to be smooth, not only because of the homogeneous extension procedure, but already in the definition of \( v \), which need not be constant near the boundaries of the faces of \( \partial \brk{\intvc{-1/2}{1/2}^{4n}} \).
(The image of the construction on \( \partial \brk{\intvc{-1/2}{1/2}^{2}}\) is misleading as it is the only dimension where
\( v \) is constant near the boundary of the faces.)
Therefore, we need to argue as for the higher order counterpart of Theorem~\ref{theorem_main}: build a smooth variant of the original construction by first extending continuously on a neighborhood of the faces, then use regularization by convolution and thickening to replace homogeneous extension, and finally study the relaxed energy thanks to the Gagliardo--Nirenberg interpolation inequality to get Theorem~\ref{theorem_counterexample_hopf_higher_order}.

Let us close this section with some thoughts about the case \(0 < s < 1\).
In this case, there is no smoothing needed for our constructions to be in \(W^{s, p}\), but the Gagliardo--Nirenberg interpolation inequality cannot be used to estimate the relaxed energy.
Although it should be possible to adapt the essential part of the slicing and bubbling arguments, the crucial lower estimates on energies would require more work.
In order to extend Theorem~\ref{theorem_counterexample_higher_order} to \(0 < s < 1\), one would need a fractional version of the localized degree estimate of Proposition~\ref{proposition_conical_joint_degree_estimate};
for Theorem~\ref{theorem_counterexample_hopf_higher_order}, Proposition~\ref{proposition_grid_branched_transport} relies on Rivière’s Hopf invariant estimate~\cite{Riviere_1998}, which has only been partially extended to the fractional case for \(s \ge 1 - \frac{1}{4n}\)~\cite{Schikorra_VanSchaftingen_2020}.

\begin{bibdiv}

\begin{biblist}

\bib{Adams_1960}{article}{
   author={Adams, J. F.},
   title={On the non-existence of elements of Hopf invariant one},
   journal={Ann. of Math. (2)},
   volume={72},
   date={1960},
   pages={20--104},
   issn={0003-486X},
   doi={10.2307/1970147},
}

\bib{AlbertiBaldoOrlandi2003}{article}{
	author    = {Alberti, Giovanni},
	author    = {Baldo, Sisto},
	author    = {Orlandi, Giandomenico},
	journal   = {J. Eur. Math. Soc. (JEMS)},
	title     = {Functions with prescribed singularities},
	date      = {2003},
	number    = {3},
	pages     = {275--311},
	volume    = {5},
	doi       = {10.1007/s10097-003-0053-5},
	publisher = {European Mathematical Society - {EMS} - Publishing House {GmbH}},
}

\bib{AlmgrenBrowderLieb1988}{article}{
	author    = {Almgren, Frederik},
	author    = {Browder, William},
	author    = {Lieb, Elliott Hershel},
	booktitle = {Partial differential equations},
	title     = {Co-area, liquid crystals, and minimal surfaces},
	date      = {1988},
	editor    = {Chern, Shiing-shen},
	pages     = {1--22},
	publisher = {Springer},
	series    = {Lecture Notes in Math.},
	volume    = {1306},
	doi  = {10.1007/BFb0082921},
}

\bib{Bernot_Caselles_Morel_2009}{book}{
	author    = {Bernot, Marc},
	author    = {Caselles, Vicent},
	author    = {Morel, Jean-Michel},
	publisher = {Springer},
	title     = {Optimal transportation networks. {Models} and theory},
	date      = {2009},
	isbn      = {978-3-540-69314-7; 978-3-540-69315-4},
	number    = {1955},
	series    = {Lect. Notes Math.},
	doi       = {10.1007/978-3-540-69315-4},
	issn      = {0075-8434},
}

\bib{Bethuel_1990}{article}{
   author={Bethuel, F.},
   title={A characterization of maps in $H^1(B^3,S^2)$ which can be
   approximated by smooth maps},
   journal={Ann. Inst. H. Poincar\'e{} C Anal. Non Lin\'eaire},
   volume={7},
   date={1990},
   number={4},
   pages={269--286},
   issn={0294-1449},
   doi={10.1016/S0294-1449(16)30292-X},
}

\bib{Bethuel_1991}{article}{
    author={Bethuel, Fabrice},
    title={The approximation problem for Sobolev maps between two manifolds},
    journal={Acta Math.},
    volume={167},
    date={1991},
    number={3-4},
    pages={153--206},
    issn={0001-5962},
    doi={10.1007/BF02392449},
}

\bib{Bethuel_2014}{article}{
   author={Bethuel, Fabrice},
   title={A new obstruction to the extension problem for Sobolev maps
   between manifolds},
   journal={J. Fixed Point Theory Appl.},
   volume={15},
   date={2014},
   number={1},
   pages={155--183},
   issn={1661-7738},
   doi={10.1007/s11784-014-0185-0},
}
  
\bib{Bethuel_2020}{article}{
    author={Bethuel, Fabrice},
    title={A counterexample to the weak density of smooth maps between manifolds in Sobolev spaces},
    journal={Invent. Math.},
    volume={219},
    date={2020},
    number={2},
    pages={507--651},
    issn={0020-9910},
    doi={10.1007/s00222-019-00911-3},
}
    
\bib{Bethuel_Chiron_2007}{article}{
    author={Bethuel, Fabrice},
    author={Chiron, David},
    title={Some questions related to the lifting problem in Sobolev spaces},
    conference={
        title={Perspectives in nonlinear partial differential equations},
    },
    book={
        series={Contemp. Math.},
        volume={446},
        publisher={Amer. Math. Soc.}, 
        address={Providence, R.I.},
    },
    isbn={978-0-8218-4190-7},
    date={2007},
    pages={125--152},
    doi={10.1090/conm/446/08628},
}

\bib{Bethuel_Demengel_1995}{article}{
   author={Bethuel, F.},
   author={Demengel, F.},
   title={Extensions for Sobolev mappings between manifolds},
   journal={Calc. Var. Partial Differential Equations},
   volume={3},
   date={1995},
   number={4},
   pages={475--491},
   issn={0944-2669},
   doi={10.1007/BF01187897},
}

\bib{Bethuel_Zheng_1988}{article}{
    author={Bethuel, Fabrice},
    author={Zheng, Xiao Min},
    title={Density of smooth functions between two manifolds in Sobolev
    spaces},
    journal={J. Funct. Anal.},
    volume={80},
    date={1988},
    number={1},
    pages={60--75},
    issn={0022-1236},
    doi={10.1016/0022-1236(88)90065-1},
}

\bib{Bourgain_Brezis_Mironescu_2000}{article}{
    author={Bourgain, Jean},
    author={Brezis, Ha\"{\i}m},
    author={Mironescu, Petru},
    title={Lifting in Sobolev spaces},
    journal={J. Anal. Math.},
    volume={80},
    date={2000},
    pages={37--86},
    issn={0021-7670},
    doi={10.1007/BF02791533},
}

\bib{Bousquet_Ponce_VanSchaftingen_2013}{article}{
	author  = {Bousquet, Pierre},
	author  = {Ponce, Augusto C.},
	author  = {Van Schaftingen, Jean},
	journal = {Confluentes Math.},
	title   = {Density of smooth maps for fractional {Sobolev} spaces {\(W^{s, p}\)} into {\(\ell\)} simply connected manifolds when {\(s \geq 1\)}},
	date    = {2013},
	number  = {2},
	pages   = {3--22},
	volume  = {5},
	doi     = {10.5802/cml.5},
}

\bib{Bousquet_Ponce_VanSchaftingen_2015}{article}{
	author={Bousquet, Pierre},
	author={Ponce, Augusto C.},
	author={Van Schaftingen, Jean},
	title={Strong density for higher order Sobolev spaces into compact
		manifolds},
	journal={J. Eur. Math. Soc. (JEMS)},
	volume={17},
	date={2015},
	number={4},
	pages={763--817},
	doi = {10.4171/jems/518},
}

\bib{Bousquet_Ponce_VanSchaftingen_2018}{article}{
   author={Bousquet, Pierre},
   author={Ponce, Augusto C.},
   author={Van Schaftingen, Jean},
   title={Weak approximation by bounded Sobolev maps with values into
   complete manifolds},
   journal={C. R. Math. Acad. Sci. Paris},
   volume={356},
   date={2018},
   number={3},
   pages={264--271},
   issn={1631-073X},
   doi={10.1016/j.crma.2018.01.017},
}

\bib{Brezis_Coron_1983}{article}{
   author={Brezis, Ha\"im},
   author={Coron, Jean-Michel},
   title={Large solutions for harmonic maps in two dimensions},
   journal={Comm. Math. Phys.},
   volume={92},
   date={1983},
   number={2},
   pages={203--215},
   issn={0010-3616},
   doi = {10.1007/bf01210846},
}

\bib{BrezisCoronLieb1986}{article}{
	author    = {Brezis, Ha{\"{i}}m},
	author    = {Coron, Jean-Michel},
	author    = {Lieb, Elliott Hershel},
	journal   = {Comm. Math. Phys.},
	title     = {Harmonic maps with defects},
	date      = {1986},
	number    = {4},
	pages     = {649--705},
	volume    = {107},
	doi       = {10.1007/bf01205490},
	publisher = {Springer},
}

\bib{BrezisMironescu2001}{article}{
	author    = {Brezis, Ha{\"{i}}m},
	author    = {Mironescu, Petru},
	journal   = {J. Evol. Equ.},
	title     = {{Gagliardo-Nirenberg}, composition and products in fractional {Sobolev} spaces},
	date      = {2001},
	number    = {4},
	pages     = {387--404},
	volume    = {1},
	doi       = {10.1007/pl00001378},
	publisher = {Springer},
}

\bib{Brezis_Mironescu_2015}{article}{
	author  = {Brezis, Ha{\"{i}}m},
	author = {Mironescu, Petru},
	journal = {J. Funct. Anal.},
	title   = {Density in {\(W^{s, p}(\Omega; N)\)}},
	date    = {2015},
	number  = {7},
	pages   = {2045--2109},
	volume  = {269},
	doi = {10.1016/j.jfa.2015.04.005},
}

\bib{Brezis_Nirenberg_1995}{article}{
    author={Brezis, Ha\"{\i}m},
    author={Nirenberg, Louis},
    title={Degree theory and BMO},
    part={I}, 
    subtitle={Compact manifolds without boundaries},
    journal={Selecta Math. (N.S.)},
    volume={1},
    date={1995},
    number={2},
    pages={197--263},
    issn={1022-1824},
    doi={10.1007/BF01671566},
}

\bib{Canevari_Orlandi_2019}{article}{
   author={Canevari, Giacomo},
   author={Orlandi, Giandomenico},
   title={Topological singular set of vector-valued maps, I: applications to
   manifold-constrained Sobolev and BV spaces},
   journal={Calc. Var. Partial Differential Equations},
   volume={58},
   date={2019},
   number={2},
   pages={Paper No. 72, 40},
   issn={0944-2669},
   doi={10.1007/s00526-019-1501-8},
}

\bib{DaLio_Riviere_2011}{article}{
   author={Da Lio, Francesca},
   author={Rivi\`ere, Tristan},
   title={Three-term commutator estimates and the regularity of
   $\frac12$-harmonic maps into spheres},
   journal={Anal. PDE},
   volume={4},
   date={2011},
   number={1},
   pages={149--190},
   issn={2157-5045},
   doi={10.2140/apde.2011.4.149},
}

\bib{DenyLions1954}{article}{
	author   = {Deny, Jacques},
	author   = {Lions, Jacques-Louis},
	journal  = {Ann. Inst. Fourier},
	title    = {Les espaces du type de {Beppo} {Levi}},
	date     = {1954},
	issn     = {0373-0956},
	pages    = {305--370},
	volume   = {5},
	doi      = {10.5802/aif.55},
}

\bib{Detaille2023}{article}{
	author       = {Detaille, Antoine},
	eprint = {https://arxiv.org/abs/2305.12589},
	title        = {A complete answer to the strong density problem in {Sobolev} spaces with values into compact manifolds},
	year         = {2023},
}

\bib{Eells_Lemaire_1978}{article}{
   author={Eells, James},
   author={Lemaire, L.},
   title={A report on harmonic maps},
   journal={Bull. London Math. Soc.},
   volume={10},
   date={1978},
   number={1},
   pages={1--68},
   issn={0024-6093},
   doi={10.1112/blms/10.1.1},
}

\bib{Eells_Sampson_1966}{article}{
   author={Eells, James},
   author={Sampson, Joseph H.},
   title={Variational theory in fibre bundles},
   conference={
      title={Proc. U.S.-Japan Seminar in Differential Geometry},
      address={Kyoto},
      date={1965},
   },
   book={
      publisher={Nippon Hyoronsha Co., Ltd., Tokyo},
   },
   date={1966},
   pages={22--33},
}

\bib{Ericksen_Truesdell_1958}{article}{
	author   = {Ericksen, Jerald Laverne},
	author   = {Truesdell, Clifford},
	journal  = {Arch. Ration. Mech. Anal.},
	title    = {Exact theory of stress and strain in rods and shells},
	date     = {1958},
	issn     = {0003-9527},
	pages    = {295--323},
	volume   = {1},
	doi      = {10.1007/BF00298012},
}

\bib{Gastel_2016}{article}{
   author={Gastel, Andreas},
   title={Partial regularity of polyharmonic maps to targets of sufficiently
   simple topology},
   journal={Z. Anal. Anwend.},
   volume={35},
   date={2016},
   number={4},
   pages={397--410},
   issn={0232-2064},
   doi={10.4171/ZAA/1571},
}

\bib{Giaquinta_Modica_Soucek_1998_II}{book}{
	author    = {Giaquinta, Mariano},
	author    = {Modica, Giuseppe},
	author    = {Sou{\v{c}}ek, Ji{\v{r}}{\'{i}}},
	publisher = {Springer},
	title     = {Cartesian currents in the calculus of variations {II}: {Variational} integrals},
	date      = {1998},
	number    = {38},
	series    = {Ergeb. Math. Grenzgeb.},
	pages = {697},
	doi = {10.1007/978-3-662-06218-0},
}

\bib{Hajlasz_1994}{article}{
   author={Haj\l asz, Piotr},
   title={Approximation of Sobolev mappings},
   journal={Nonlinear Anal.},
   volume={22},
   date={1994},
   number={12},
   pages={1579--1591},
   issn={0362-546X},
   doi={10.1016/0362-546X(94)90190-2},
}

\bib{Hang_2002}{article}{
   author={Hang, Fengbo},
   title={Density problems for $W^{1,1}(M,N)$},
   journal={Comm. Pure Appl. Math.},
   volume={55},
   date={2002},
   number={7},
   pages={937--947},
   issn={0010-3640},
   doi={10.1002/cpa.3020},
}

\bib{Hang_Lin_2003_II}{article}{
    author={Hang, Fengbo},
    author={Lin, Fanghua},
    title={Topology of Sobolev mappings},
    part={II},
    journal={Acta Math.},
    volume={191},
    date={2003},
    number={1},
    pages={55--107},
    issn={0001-5962},
    doi={10.1007/BF02392696},
}

\bib{Hang_Lin_2003_III}{article}{
    author={Hang, Fengbo},
    author={Lin, Fanghua},
    title={Topology of Sobolev mappings},
    part={III},
    journal={Comm. Pure Appl. Math.},
    volume={56},
    date={2003},
    number={10},
    pages={1383--1415},
    issn={0010-3640},
    doi={10.1002/cpa.10098},
}

\bib{Hardt_Riviere_2003}{article}{
   author={Hardt, Robert},
   author={Rivi\`ere, Tristan},
   title={Connecting topological Hopf singularities},
   journal={Ann. Sc. Norm. Super. Pisa Cl. Sci. (5)},
   volume={2},
   date={2003},
   number={2},
   pages={287--344},
   issn={0391-173X},
   url = {http://eudml.org/doc/84503},
}

\bib{Hardt_Riviere_2008}{article}{
   author={Hardt, Robert},
   author={Rivi\`ere, Tristan},
   title={Connecting rational homotopy type singularities},
   journal={Acta Math.},
   volume={200},
   date={2008},
   number={1},
   pages={15--83},
   issn={0001-5962},
   doi={10.1007/s11511-008-0023-6},
}

\bib{Hardt_Riviere_2015}{article}{
	author   = {Hardt, Robert},
	author   = {Rivi{\`{e}}re, Tristan},
	journal  = {Calc. Var. Partial Differential Equations},
	title    = {Sequential weak approximation for maps of finite {Hessian} energy},
	date     = {2015},
	issn     = {0944-2669},
	number   = {3},
	pages    = {2713--2749},
	volume   = {54},
	doi      = {10.1007/s00526-015-0881-7},
}

\bib{Hatcher_2002}{book}{
   author={Hatcher, Allen},
   title={Algebraic topology},
   publisher={Cambridge University Press, Cambridge},
   date={2002},
   pages={xii+544},
   isbn={0-521-79160-X},
   isbn={0-521-79540-0},
}

\bib{Helein_Wood_2008}{article}{
   author={H\'elein, Fr\'ed\'eric},
   author={Wood, John C.},
   title={Harmonic maps},
   conference={
      title={Handbook of global analysis},
   },
   book={
      publisher={Elsevier}, 
      address={Amsterdam},
   },
   isbn={978-0-444-52833-9},
   date={2008},
   pages={417--491, 1213},
   doi={10.1016/B978-044452833-9.50009-7},
}
\bib{Huang_Tong_Wei_Bao_2011}{article}{
	author    = {Huang, Jin},
	author    = {Tong, Yiying},
	author    = {Wei, Hongyu},
	author    = {Bao, Hujun},
	journal   = {ACM Transactions on Graphics},
	title     = {Boundary aligned smooth {3D} cross-frame field},
	date      = {2011},
	issn      = {1557-7368},
	number    = {6},
	pages     = {1--8},
	volume    = {30},
	doi       = {10.1145/2070781.2024177},
	publisher = {Association for Computing Machinery (ACM)},
}

\bib{Jerrard_1999}{article}{
   author={Jerrard, Robert L.},
   title={Lower bounds for generalized Ginzburg--Landau functionals},
   journal={SIAM J. Math. Anal.},
   volume={30},
   date={1999},
   number={4},
   pages={721--746},
   issn={0036-1410},
   doi = {10.1137/S0036141097300581},
}

\bib{Jost_1984}{book}{
   author={Jost, J\"urgen},
   title={Harmonic maps between surfaces},
   series={Lecture Notes in Mathematics},
   volume={1062},
   publisher={Springer-Verlag, Berlin},
   date={1984},
   pages={x+133},
   isbn={3-540-13339-9},
   doi={10.1007/BFb0100160},
}

\bib{Lee2013}{book}{
author    = {Lee, John M.},
publisher = {Springer},
title     = {Introduction to smooth manifolds},
date      = {2013},
edition   = {2\textsuperscript{nd} revised ed},
isbn      = {978-1-4419-9981-8; 978-1-4419-9982-5},
series    = {Grad. Texts in Math.},
volume    = {218},
doi       = {10.1007/978-1-4419-9982-5},
fseries   = {Graduate Texts in Mathematics},
groups    = {Differential geometry},
issn      = {0072-5285},
keywords  = {53-01,53-02,58-02,57-02,53Cxx,57Rxx,58Axx},
}

\bib{Mermin_1979}{article}{
    author={Mermin, N. D.},
    title={The topological theory of defects in ordered media},
    journal={Rev. Modern Phys.},
    volume={51},
    date={1979},
    number={3},
    pages={591--648},
    issn={0034-6861},
   doi={10.1103/RevModPhys.51.591},
}

\bib{Meyers_Serrin_1964}{article}{
	author={Meyers, Norman G.},
	author={Serrin, James},
	title={$H=W$},
	journal={Proc. Nat. Acad. Sci. U.S.A.},
	volume={51},
	date={1964},
	pages={1055--1056},
	doi={10.1073/pnas.51.6.1055},
}

\bib{Mironescu_VanSchaftingen_2021_APDE}{article}{
   author={Mironescu, Petru},
   author={Van Schaftingen, Jean},
   title={Lifting in compact covering spaces for fractional Sobolev
   mappings},
   journal={Anal. PDE},
   volume={14},
   date={2021},
   number={6},
   pages={1851--1871},
   issn={2157-5045},
   doi={10.2140/apde.2021.14.1851},
}

\bib{Mironescu_VanSchaftingen_2021_AFST}{article}{
   author={Mironescu, Petru},
   author={Van Schaftingen, Jean},
   title={Trace theory for Sobolev mappings into a manifold},
   journal={Ann. Fac. Sci. Toulouse Math. (6)},
   volume={30},
   date={2021},
   number={2},
   pages={281--299},
   issn={0240-2963},
   doi={10.5802/afst.1675},
}

\bib{Monteil_VanSchaftingen_2019}{article}{
    author={Monteil, Antonin},
    author={Van Schaftingen, Jean},
    title={Uniform boundedness principles for Sobolev maps into manifolds},
    journal={Ann. Inst. H. Poincar\'{e} C Anal. Non Lin\'{e}aire},
    volume={36},
    date={2019},
    number={2},
    pages={417--449},
    issn={0294-1449},
    doi={10.1016/j.anihpc.2018.06.002},
}

\bib{Nakauchi_Takakuwa_1995}{article}{
   author={Nakauchi, Nobumitsu},
   author={Takakuwa, Sh\= oichir\= o},
   title={A remark on $p$-harmonic maps},
   journal={Nonlinear Anal.},
   volume={25},
   date={1995},
   number={2},
   pages={169--185},
   issn={0362-546X},
   doi={10.1016/0362-546X(94)00225-7},
}

\bib{Nash54}{article}{
	author={Nash, John},
	title={$C^1$ isometric imbeddings},
	journal={Ann. of Math. (2)},
	volume={60},
	date={1954},
	pages={383--396},
	doi = {10.2307/1969840},
}

\bib{Nash56}{article}{
	author={Nash, John},
	title={The imbedding problem for Riemannian manifolds},
	journal={Ann. of Math. (2)},
	volume={63},
	date={1956},
	pages={20--63},
	doi = {10.2307/1969989},
}

\bib{pakzad_2003}{article}{
	author={Pakzad, Mohammad Reza},
	title={Weak density of smooth maps in $W^{1,1}(M,N)$ for non-abelian
		$\pi_1(N)$},
	journal={Ann. Global Anal. Geom.},
	volume={23},
	date={2003},
	number={1},
	pages={1--12},
	doi = {10.1023/a:1021227017504},
}

\bib{pakzad_riviere_2003}{article}{
	author={Pakzad, M. R.},
	author={Rivi\`ere, T.},
	title={Weak density of smooth maps for the Dirichlet energy between
		manifolds},
	journal={Geom. Funct. Anal.},
	volume={13},
	date={2003},
	number={1},
	pages={223--257},
	doi = {10.1007/s000390300006},
}

\bib{Riviere_1998}{article}{
	author    = {Rivi{\`{e}}re, Tristan},
	journal   = {Comm. Anal. Geom.},
	title     = {Minimizing fibrations and {\(p\)-harmonic} maps in homotopy classes from {\(S^3\)} into {\(S^2\)}},
	date      = {1998},
	number    = {3},
	pages     = {427--483},
	volume    = {6},
	doi       = {10.4310/cag.1998.v6.n3.a2},
	publisher = {International Press of Boston},
}

\bib{Runst1986}{article}{
	author   = {Runst, Thomas},
	journal  = {Anal. Math.},
	title    = {Mapping properties of nonlinear operators in spaces of {Triebel}-{Lizorkin} and {Besov} type},
	date     = {1986},
	issn     = {0133-3852},
	number   = {4},
	pages    = {313--346},
	volume   = {12},
	doi      = {10.1007/BF01909369},
}

\bib{Sacks_Uhlenbeck_1981}{article}{
   author={Sacks, J.},
   author={Uhlenbeck, K.},
   title={The existence of minimal immersions of $2$-spheres},
   journal={Ann. of Math. (2)},
   volume={113},
   date={1981},
   number={1},
   pages={1--24},
   issn={0003-486X},
   doi={10.2307/1971131},
}

\bib{Sandier_1998}{article}{
   author={Sandier, Etienne},
   title={Lower bounds for the energy of unit vector fields and applications},
   journal={J. Funct. Anal.},
   volume={152},
   date={1998},
   number={2},
   pages={379--403},
   doi={10.1006/jfan.1997.3170},
   issn={0022-1236},
}

\bib{Sandier_Serfaty_2007}{book}{
   author={Sandier, Etienne},
   author={Serfaty, Sylvia},
   title={Vortices in the magnetic Ginzburg--Landau model},
   series={Progress in Nonlinear Differential Equations and their Applications},
   volume={70},
   publisher={Birkh\"auser},
   address={Boston, Mass.},
   date={2007},
   pages={xii+322},
   isbn={978-0-8176-4316-4},
   isbn={0-8176-4316-8},
   doi={10.1007/978-0-8176-4550-2},
}

\bib{Schoen_Uhlenbeck_1983}{article}{
author={Schoen, Richard},
author={Uhlenbeck, Karen},
title={Boundary regularity and the Dirichlet problem for harmonic maps},
journal={J. Differential Geom.},
volume={18},
date={1983},
number={2},
pages={253--268},
doi = {10.4310/jdg/1214437663},
}

\bib{Spanier_1966}{book}{
   author={Spanier, Edwin H.},
   title={Algebraic topology},
   publisher={McGraw-Hill}, 
   address={New York-Toronto-London},
   date={1966},
   pages={xiv+528},
doi={10.1007/978-1-4684-9322-1},
}

\bib{Schikorra_VanSchaftingen_2020}{article}{
   author={Schikorra, Armin},
   author={Van Schaftingen, Jean},
   title={An estimate of the Hopf degree of fractional Sobolev mappings},
   journal={Proc. Amer. Math. Soc.},
   volume={148},
   date={2020},
   number={7},
   pages={2877--2891},
   issn={0002-9939},
   doi = {10.1090/proc/15026},
}

\bib{Urakawa_2019}{book}{
   author={Urakawa, Hajime},
   title={Geometry of biharmonic mappings},
   subtitle={Differential geometry of variational methods},
   publisher={World Scientific},
   address={Hackensack, N.J.},
   date={2019},
   pages={x+338},
   isbn={978-981-3236-39-4},
doi={10.1142/10886},
}

\bib{White_1988}{article}{
   author={White, Brian},
   title={Homotopy classes in Sobolev spaces and the existence of energy
   minimizing maps},
   journal={Acta Math.},
   volume={160},
   date={1988},
   number={1-2},
   pages={1--17},
   issn={0001-5962},
   doi={10.1007/BF02392271},
}

\bib{Whitehead_1941}{article}{
	author   = {Whitehead, J. H. C.},
	journal  = {Ann. of Math. (2)},
	title    = {On adding relations to homotopy groups},
	date     = {1941},
	issn     = {0003-486X},
	number   = {2},
	pages    = {409--428},
	volume   = {42},
	doi      = {10.2307/1968907},
}

\bib{Whitehead_1978}{book}{
	author    = {Whitehead, George W.},
	publisher = {Springer},
	title     = {Elements of homotopy theory},
	date      = {1978},
	number    = {61},
	series    = {Grad. Texts in Math.},
	issn      = {0072-5285},
	doi={10.1007/978-1-4612-6318-0},
}

\end{biblist}

\end{bibdiv}

\end{document}